\DeclareSymbolFont{tipa}{T3}{cmr}{m}{n}
\DeclareMathAccent{\invbreve}{\mathalpha}{tipa}{16}
\begin{document}

%%%% Erez
\newcommand{\zele}[1]{\operatorname{Z}( #1 )}
\newcommand{\supp}{\operatorname{supp}}
\newcommand{\Z}{\mathbb{Z}}
\newcommand{\R}{\mathbb{R}}
\newcommand{\C}{\mathbb{C}}
\newcommand{\Ker}{\operatorname{Ker}}
\newcommand{\Hom}{\operatorname{Hom}}
\newcommand{\Irr}{\operatorname{Irr}}
\newcommand{\GL}{\operatorname{GL}}
\newcommand{\std}{\zeta}
\newcommand{\lstd}{\lambda}
\newcommand{\lrstd}{\gamma}
\newcommand{\rest}{\big|}
\newcommand{\EI}{SI}
\newcommand{\lshft}[1]{\overset{\leftarrow}{#1}}
\newcommand{\rshft}[1]{\overset{\rightarrow}{#1}}
\newcommand{\abs}[1]{\left|{#1}\right|}
\newcommand{\Rep}{\mathfrak{R}}
\newcommand{\Nrd}{\operatorname{Nrd}}
\newcommand{\RI}{LM}
\newcommand{\TU}{totally unlinked}
\newcommand{\ulb}{unlabeled}
\newcommand{\rltn}{\leadsto}
\newcommand{\trns}{traversable}
\newcommand{\linjct}{LS}
\newcommand{\lrs}{LRS}
\newcommand{\speh}{\mathfrak{sp}}
\newcommand{\opp}[1]{#1^\circ}
\newcommand{\zi}{\mathbf{t}}
\newcommand{\combsoclang}{\mathfrak{socl}}
\newcommand{\combsoczele}{\mathfrak{socz}}
\newcommand{\newm}{\mathfrak{N}}
\newcommand{\rprd}{\mathfrak{R}}
\newcommand{\sngls}{\mathcal{S}}
\newcommand{\socc}{\operatorname{cos}}
\newcommand{\rigid}{\operatorname{rigid}}
%\newcommand{\Alberto}[1]{{\color{red}{#1}}}
%\newcommand{\Erez}[1]{{\color{magenta}{#1}}}

%%%% Alberto

\newcommand{\lang}[1]{\operatorname{L}( #1 )}
\newcommand{\seg}{\operatorname{Seg}}
\newcommand{\soc}{\operatorname{soc}}
\newcommand{\MS}{\operatorname{Mult}}
\newcommand{\rp}{\mathbf{r}}
\newcommand{\m}{\mathfrak{m}}
\newcommand{\n}{\mathfrak{n}}
\newcommand{\Mat}{\operatorname{Mat}}

\newcommand{\id}{\operatorname{id}}
\newcommand{\maxdel}{\Delta_{\operatorname{max}}}
\newcommand{\mindel}{\Delta_{\operatorname{min}}}

\newcommand{\ordered}{aligned}
\newcommand{\rangee}{ordered}

\newcommand{\II}{\mathcal{I}}
\newcommand{\JJ}{\mathcal{J}}
\newcommand{\tII}{\tilde\II}
\newcommand{\tJJ}{\tilde\JJ}

\newcommand{\G}{G}
\newcommand{\B}{B}
\newcommand{\F}{F}
\newcommand{\D}{D}
\newcommand{\M}{M}
\newcommand{\X}{X}
\newcommand{\NN}{\mathbb{N}}

\newcommand{\Cusp}{\mathscr{C}}
\newcommand{\Rr}{\mathscr{R}}

\newcommand{\lcond}{LI}
\newcommand{\rcond}{RI}
\newcommand{\llcond}{LC}
\newcommand{\rrcond}{RC}
\newcommand{\nlcond}{NC}

%%%%% theorems

\newtheorem{thm}{Theorem}[section]
\newtheorem{lem}[thm]{Lemma}
\newtheorem{prop}[thm]{Proposition}
\newtheorem{hps}[thm]{Hypothesis}
\newtheorem{ass}[thm]{Assumptions}
\newtheorem{cnd}[thm]{Conditions}
\newtheorem{coro}[thm]{Corollary}
\newtheorem{defi}[thm]{Definition}
\newtheorem{fct}[thm]{Fact}
\newtheorem{clm}[thm]{Claim}
\newtheorem{conj}[thm]{Conjecture}
\newtheorem{rem}[thm]{Remark}

    \textwidth=125mm
    \textheight=185mm
    \parindent=8mm
    \evensidemargin=0pt
    \oddsidemargin=0pt
    \frenchspacing

\sectionfont{\center\sc\normalsize}
\subsectionfont{\bf\normalsize}

\numberwithin{equation}{section}
%\renewcommand{\-}{\hyp{}}
%\hyphenation{pa-ra-bo-lic}

\title{On parabolic induction on inner forms of the general linear group over a non-archimedean local field}

\author{Erez Lapid\footnote{Partially supported by grant \# 711733 from the Minerva Foundation} \and Alberto M\'inguez\footnote{partially supported by the
ANR ArShiFo ANR-BLAN-0114, MTM2010-19298 (FEDER) and P12-FQM-2696}}
%\thanks{}

%\thanks{Thank you}

\date{\today}

\maketitle

\begin{center}
%Para Lou, en el d\'ia de su nacimiento
To Joseph Bernstein, in admiration
\end{center}

%\begin{center}
%ABC
%\end{center}

\begin{abstract}
We give new criteria for the irreducibility of parabolic induction on the general linear group and its inner forms over
a local non-archimedean field. In particular, we give a necessary and sufficient condition
when the inducing data is of the form $\pi\otimes\sigma$ where $\pi$ is a ladder representation
and $\sigma$ is an arbitrary irreducible representation.
As an application we simplify the proof of the classification of the unitary dual.
\end{abstract}

\setcounter{tocdepth}{1}
\tableofcontents

\section*{Introduction}

One of the cornerstones of the representation theory of reductive groups over non-archimedean local fields is the seminal work of Bernstein and Zelevinsky
in the 1970s \cites{MR0425030, MR0425031, MR0579172}.
Much of their work is concentrated on the general linear group. It culminated in Zelevinsky's classification of the (complex, smooth) irreducible
representations of $\GL_n(F)$ (where $F$ is a non-archimedean local field) in terms of multisegments --
an almost purely combinatorial object \cite{MR584084}.
Roughly speaking, the basic building blocks are supercuspidal representations.
From them one constructs (using parabolic induction) the so-called segment representations.
A general irreducible representation is given as a subrepresentation of the parabolic induction of the tensor product, in a certain order, of segment representations.

Later on this classification was extended to inner forms of the general linear groups, namely to the groups $\GL_n(D)$
where $D$ is a local non-archimedean division algebra. (See Appendix \ref{sec: appendAD} for the history of this problem.)

While the work of Bernstein and Zelevinsky answered some of the most fundamental questions in the representation theory of $\GL_n(F)$,
there are still some outstanding problems. For instance, it is not clear how to combinatorially characterize (in terms of multisegments)
the irreducibility of parabolic induction in general or even whether such a characterization is realistic.
(In principle, it is possible to determine irreducibility of parabolic induction
either by computing a vast number of Kazhdan--Lustzig polynomials \cites{MR617466, MR783619, MR2838836, MR2320806}
%, MinguezSecherreNew}
or using algorithms related to canonical bases (cf. \cite{MR1959765} and the references therein).
However, neither method is very practical for large $n$.)

In this paper we will study a special case of this problem.
First, we give a complete answer in the case where one of the representations (call it $\pi$) is a segment representation.
(The case where $\pi$ is supercuspidal was considered in \cite{MR2527415} and \cite{MR2306606} independently.)
We will actually give two combinatorial criteria -- one in terms of the Zelevinsky involution (for which a combinatorial recipe was given
by M\oe glin and Waldspurger in \cite{MR863522}) and another which looks superficially simpler.
See Proposition \ref{prop: delsig} and Corollary \ref{cor:irred2} in the body of the paper for the statement of the
first and second combinatorial criteria respectively.
It is straightforward to extend these criteria from segment representations to unramified or generic representations
(see Corollary \ref{coro: gentimesany}).
Remarkably, the first criterion works equally well when $\pi$ is a Speh representation (Corollary \ref{coro:subspeh}).
Moreover, building on the square-integrable case we will also provide (Corollary \ref{cor: laddercombsoc}) a complete (and computationally feasible,
albeit complicated) answer in the case where $\pi$ is a ladder representation
(or more generally, an irreducible representation parabolically induced from ladder representations).
This is a class of representations introduced in \cite{MR3163355} -- see also \cite{1409.1367}.
We also give an alternative, more aesthetic, necessary and sufficient combinatorial condition for irreducibility in this case
(Proposition \ref{prop: laddercond}).

Our method is largely combinatorial using the Bernstein--Zelevinsky geometric lemma as the main tool.
We will also use intertwining operators and the Zelevinsky involution which interplays between the Zelevinsky classification and the (refined)
Langlands classification in the case at hand. The proofs apply equally well for $\GL_n(D)$.
As an application we will proffer some (easily verifiable) sufficient conditions for the irreducibility of parabolic induction.
As a further application we present a simple, essentially combinatorial,
proof of the necessary ingredients in the Tadi\'c program for the classification of the unitary dual of $\GL_n(D)$ \cite{MR1040995}.
This problem emerged after Tadi\'c had completed the classification in the case $D=F$ \cite{MR870688}.
In the general case, the program was completed in \cite{MR2732351} but the proof uses rather heavy machinery.
(See \S\ref{sec: U0} for a more complete history of the problem.)

The contents of the paper are as follows.
After setting up the notation and terminology in \S\ref{sec: notation} we recall Zelevinsky's classification in \S\ref{sec: classification1},
deferring a more elaborate discussion to the appendix.
In \S\ref{sec: leftmultiplier} we discuss the central notion of this paper, namely a `left multiplier' representation (\RI).
This is an irreducible representation $\pi$ of $\GL_n(D)$ such that for any $m\ge1$ and any irreducible representation $\sigma$ of $\GL_m(D)$,
the parabolic induction $\pi\times\sigma$ admits a unique irreducible subrepresentation, which occurs with multiplicity one in the Jordan--H\"older sequence of $\pi\times\sigma$.
We provide examples for such $\pi$'s. We note that an example of Leclerc shows that not every irreducible representation is \RI.

In \S\ref{sec: square-integrable} we study the parabolic induction $\pi\times\sigma$ where $\pi$ is a segment representation
and $\sigma$ is an arbitrary representation.
We provide a combinatorial description of the Zelevinsky parameters of the socle of $\pi\times\sigma$ as well as
a characterization of the irreducibility of $\pi\times\sigma$.
In \S\ref{sec: applications} we give some applications of this characterization. Namely, we provide a sufficient combinatorial criterion for the irreducibility
of $\pi\times\sigma$ (for general irreducible representations $\pi$ and $\sigma$). While this condition is not necessary in general,
we do get a precise criterion for irreducibility in the case where $\pi$ is a Speh representation, or more generally a ladder representation.
%We also provide in \S\ref{sec: applications} additional examples of \RI\ representations.
In \S\ref{sec: U0} we revisit the classification of the unitary dual of $\GL_n(D)$.
In the case where $D$ is commutative, this is a well known result of Tadi\'c \cite{MR870688}.
Tadi\'c also formulated the necessary ingredients to deal with general $D$ and proved some of them \cite{MR1040995}.
His program was eventually completed by the combined efforts of several mathematicians (culminating in \cite{MR2732351}).
However, so far all proofs ultimately reduced the problem to the case $D=F$, and require ponderous prerequisites.
As a consequence of our results in previous sections, we provide in \S\ref{sec: U0} a simple proof of the classification of the unitary dual
which uses minimum prerequisites and which is uniform in $D$ and the characteristic of $F$.
In the appendix, joint with Marko Tadi\'c we revisit the classification of the admissible dual of $GL_n(F)$ and its inner forms and outline
a simple proof of the classification with minimal sine qua nons.
More precisely, we only use the combinatorics of the Jacquet modules and certain unitarity tricks in the spirit of \cite{CassNotes} to deal with some minimal cases.
(Thus, the proof is not completely algebraic.)
Notably, at some point we use the group $\GL_{n^2}$ to derive results about $\GL_n$.
The proof is very much along the lines of \cite{MR3049700} except that we replace the ingredients
which rely on type theory by a self-contained argument.

We hope that better understanding of the representation theory of the general linear group will shed some light on
the representation theory of classical groups, and in particular on their unitary dual.

\vspace{.5cm}

\noindent\textbf{Acknowledgment}

We are happy to humbly dedicate this paper in honor of Joseph Bernstein on the occasion of his 70th birthday.
Apart from his global mathematical impact, Bernstein has been an inspirational figure and a role model to us, as he has been to a great many mathematicians.
The first-named author is fortunate to have been taught by Bernstein several courses in his early mathematical education,
with a lasting impact.
Beyond that, Bernstein has always been extremely generous with his time and ideas.

Part of this work was done while the authors were visiting NYU Shanghai in spring 2014.
We are both grateful for the generous hospitality of NYU Shanghai.
During the last part of this work, the first-named author was visiting MSRI.
He would like to thank MSRI and the organizers of the program
``New Geometric Methods in Number Theory and Automorphic Forms'' for the invitation.
We would like to thank Joseph Bernstein, Bernard Leclerc and Marko Tadi\'c for helpful discussions and communication.
Especially we would like to thank Marko Tadi\'c for joining us in writing the appendix.
Thanks are also due to Stephen DeBacker, Guy Henniart and Colette M\oe glin for useful correspondence
and to Arnab Mitra, Omer Offen and Eitan Sayag for their comments on the paper.
Finally, we are grateful to the referee for carefully reading the manuscript and for providing an argument
which simplifies some aspects of the original version of the paper.

\section{Preliminaries} \label{sec: notation}

Throughout the paper let $\F$ be a non-archimedean local field with normalized absolute value $\abs{\cdot}$
and let $\D$ be a finite-dimensional central division $\F$-algebra.

For any $m\in\Z_{\ge1}$ let $\G_m = \GL_m(\D)$ and let $\Rep(\G_m)$ be the category of smooth complex representations of $\G_m$ of finite length.
If $\pi$ is an object in $\Rep(\G_m)$ we write $\deg(\pi)=m$.
Denote by $\Irr(G_m)$ the set of equivalence classes of irreducible objects of $\Rep(\G_m)$
and by $\Cusp(G_m)$ the subset consisting of supercuspidal representations.
(By convention we define $\G_0$ as the trivial group and $\Irr(G_0)$ consists of the one-dimensional representation of $\G_0$.)
Let $\Irr$ be the disjoint union of $\Irr(G_m)$, $m \geq 0$. Similarly, let $\Cusp=\cup_{m\geq 1}\Cusp(G_m)$.

Let $\Rr_m$ be the Grothendieck group of $\Rep(\G_m)$ and $\Rr= \underset{m \geq 0}{\oplus} \Rr_m$.
The submonoid of $\Rr$ consisting of finite sums of elements in $\Irr$ (including $0$) is denoted by $\Rr^+$.
The ordering in $\Rr$ is defined by $\pi_1 \leq \pi_2$ if $\pi_2 - \pi_1 \in \Rr^+$.
The canonical map from the objects of $\Rep(\G_m)$ to $\Rr_m$ will be denoted by $\pi\mapsto[\pi]$.

For any $\pi \in \Rep(\G_m)$, we denote by $\pi^\vee$ the contragredient of $\pi$.
(The sets $\Irr$ and $\Cusp$ are invariant under $^\vee$.)
If moreover $\chi$ is a complex-valued character of $F^*$, we write $\pi\chi$ for the twist of $\pi$
by $\chi\circ\Nrd$ where $\Nrd:\G_m\rightarrow F^*$ is the reduced norm.

\subsection{Induction and Jacquet functor}
Let $\alpha=\left( m_1,\dots,m_r\right)$ be a composition of $m$.
(Sometimes, it will be convenient to allow $m_i=0$.)
We denote by $\M_{\alpha}$ the subgroup of $\G_m$ isomorphic to $\G_{m_1}\times\dots\times\G_{m_r}$ consisting of invertible matrices
which are diagonal by blocks of size $m_1,  \dots,m_r$ and by $P_{\alpha}$ the
subgroup of $\G_m$ generated by $\M_\alpha$ and the upper unitriangular matrices.
A standard parabolic subgroup of $\G_m$ is a subgroup of the form $P_{\alpha}$ and its (standard) Levi factor is $\M_{\alpha}$.

Write $\rp_\alpha$ for the normalized Jacquet functor associated to $P_\alpha$.
Often, $\alpha$ will be clear from the context and we will simply write $\rp$.
If $\pi_i\in\Rep(\G_{m_i})$, $i=1,\dots,r$ we write $\pi_1\times\dots\times\pi_r\in\Rep(\G_m)$ for the representation
parabolically induced from $\pi_1\otimes\dots\otimes\pi_r$ via the parabolic subgroup $P_\alpha$. If $\pi \in \Rep(\G_m)$,
we also write $\pi^{\times n}=\pi\times\dots\times\pi$ ($n$ times).

For any $\pi \in \Irr$ there exist $\rho_1, \dots, \rho_r \in \Cusp$, uniquely determined up to permutation,
such that $\pi$ is a subrepresentation of $\rho_1\times\dots\times\rho_r$.
In the sequel we denote by $\supp(\pi)\subset\Cusp$ the set (not the multiset) $\{\rho_1, \dots, \rho_r\}$ and call it the \emph{supercuspidal support} of $\pi$.
More generally, if $\pi\in\Rep(\G_m)$ then $\supp(\pi)$ is by definition the union of the supercuspidal supports of
the composition factors of $\pi$. If $\pi$ is of the form $\pi_1\times\dots\times\pi_r$ with $\pi_1,\dots,\pi_k\in\Irr$
then all composition factors of $\pi$ have the same supercuspidal support.

The map:
\[
([\pi_1],\dots,[\pi_r])\mapsto[\pi_1\times\cdots\times\pi_r]
\]
induces by linearity a $\Z$-multilinear map from $\Rr_{m_1}\times\dots\times\Rr_{m_r}$ to $\Rr_{m}$.
This endows $\Rr$ with the structure of an associative, commutative, $\Z_{\ge0}$-graded ring.
(For the commutativity see \cite{MR874050}*{\S 5.4} which is valid in any characteristic.)

\subsection{Geometric Lemma}
\label{lemmegeo}
Next we recall the combinatorial version of the Geometric Lemma of Bernstein--Zelevinsky \cite{MR0579172}
(or more precisely, its consequence after semisimplification).
Let $\alpha=(m_1,\dots,m_r)$ and $\beta=(n_1,\dots,n_s)$ be two compositions of an integer $m\ge1$.
For every $i\in\{1,\ldots,r\}$, let $\pi_i\in\Rep(\G_{m_i})$.
Denote by $\Mat^{\alpha,\beta}$ the (finite) set of $r\times s$ matrices $\B=(b_{i,j})$ with non-negative integer entries such that:
\begin{equation*}
\label{2}
\sum_{j=1}^s b_{i,j}=m_i,
\quad i\in\{1,\ldots,r\}, \quad \sum_{i=1}^rb_{i,j}=n_j, \quad j\in\{1,\ldots,s\}.
\end{equation*}
Fix $\B\in\Mat^{\alpha,\beta}$.
For any $i\in\{1,\ldots,r\}$, $\alpha_i=(b_{i,1},\dots,b_{i,s})$ is a composition of $m_i$ and we write
the composition factors of $\rp_{\alpha_i}(\pi_i)$ as
\begin{equation*}
\sigma^{(k)}_{i}=\sigma^{(k)}_{i,1}\otimes\cdots\otimes\sigma^{(k)}_{i,s},
\quad
\sigma^{(k)}_{i,j}\in\Irr(G_{b_{i,j}}),
\quad k\in\{1,\ldots,l_i\},
\end{equation*}
where $l_i$ is the length of $\rp_{\alpha_i}(\pi_i)$.
For any $j\in\{1,\ldots,s\}$ and a sequence $\underline{k}=(k_1,\ldots,k_r)$ of integers such that $1\le k_i\le l_i$, define
\begin{equation*}
\Sigma_j^{B,\underline{k}}=\sigma^{(k_1)}_{1,j}\times\cdots\times\sigma^{(k_r)}_{r,j}\in\Rep(G_{n_j}).
\end{equation*}
Then we have:
\begin{equation*}
\label{Druon}
[\rp_{\beta}(\pi_1\times\dots\times\pi_r)]=\underset{\B\in \Mat^{\alpha,\beta},\, \underline{k}}{\sum}
[\Sigma_1^{B,\underline{k}}\otimes\cdots\otimes\Sigma_s^{B,\underline{k}}].
\end{equation*}

Let $\rho\in\Cusp$ and $\pi\in\Rep(G_n)$. We say that $\pi$ is \emph{left $\rho$-reduced} if there does not exist $\pi'\in\Irr$
such that $\rho\otimes\pi'\le [\rp(\pi)]$. If $\pi \in \Irr$, this is equivalent to not having $\pi'\in\Irr$ such that $\pi\hookrightarrow\rho\times\pi'$.

If $I\subset\Cusp$ then we say that $\pi$ is left $I$-reduced if $\pi$ is left $\rho$-reduced for all $\rho\in I$.

\begin{lem}\label{lem: decomI}
For every $\pi\in\Irr$ and $I\subset\Cusp$ there exist unique $\pi_I,\pi_{\lnot I}\in\Irr$ such that
\begin{enumerate}
\item $\pi\hookrightarrow\pi_I\times\pi_{\lnot I}$.
\item $\supp(\pi_I)\subset I$.
\item $\pi_{\lnot I}$ is left $I$-reduced.\footnote{We caution
that $\pi_{\lnot I}$ is not $\pi_J$ where $J$ is the complement of $I$.}
\end{enumerate}
Moreover, $\pi$ is the unique irreducible subrepresentation of $\pi_I\times\pi_{\lnot I}$.
\end{lem}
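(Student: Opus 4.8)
The plan is to reduce the whole statement to one multiplicity‑one computation in a Jacquet module, performed via the geometric lemma of \S\ref{lemmegeo}. First I would record that computation. Suppose $\mu,\nu\in\Irr$ satisfy $\supp(\mu)\subseteq I$ and $\nu$ is left $I$-reduced, and put $n=\deg\mu+\deg\nu$. I claim that (a) for every $d$, any composition factor $\xi\otimes\eta$ of $\rp_{(d,n-d)}(\mu\times\nu)$ with $\supp(\xi)\subseteq I$ satisfies $d\le\deg\mu$, and (b) $\mu\otimes\nu$ is the unique composition factor $\xi\otimes\eta$ of $\rp_{(\deg\mu,\deg\nu)}(\mu\times\nu)$ with $\supp(\xi)\subseteq I$, occurring there with multiplicity one. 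To prove this one expands $[\rp_{(d,n-d)}(\mu\times\nu)]$ by the geometric lemma: each term has the form $(\tau_1\times\tau_3)\otimes(\tau_2\times\tau_4)$ where $\tau_1\otimes\tau_2$ is a composition factor of some $\rp(\mu)$, $\tau_3\otimes\tau_4$ of some $\rp(\nu)$, and $\deg\tau_1+\deg\tau_3=d$. All composition factors of $\tau_1\times\tau_3$ share the supercuspidal support $\supp(\tau_1)\cup\supp(\tau_3)$, so $\supp(\xi)\subseteq I$ forces $\supp(\tau_3)\subseteq I$; if moreover $\deg\tau_3\ge1$ then $\tau_3$ is not left $I$-reduced, and by transitivity of the Jacquet functor $[\rp(\nu)]$ would contain a term $\rho\otimes\pi''$ with $\rho\in I$, contradicting that $\nu$ is left $I$-reduced. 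Hence $\deg\tau_3=0$, so $d=\deg\tau_1\le\deg\mu$; and when $d=\deg\mu$ this forces $\tau_1=\mu$, $\tau_4=\nu$, with $\tau_2,\tau_3\in\Irr(\G_0)$ and a unique admissible matrix, so the only surviving term is $\mu\otimes\nu$, with multiplicity one.

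For existence I would induct on $\deg\pi$. If $\pi$ is left $I$-reduced, take $\pi_I\in\Irr(\G_0)$ and $\pi_{\lnot I}=\pi$. Otherwise choose $\rho\in I$ and $\sigma\in\Irr$ with $\pi\hookrightarrow\rho\times\sigma$; since $\deg\sigma<\deg\pi$, the induction hypothesis gives $\sigma\hookrightarrow\sigma_I\times\sigma_{\lnot I}$ satisfying (1)--(3). Then $\pi\hookrightarrow(\rho\times\sigma_I)\times\sigma_{\lnot I}$, and an easy induction on the length of $\rho\times\sigma_I$ — using exactness of $-\times\sigma_{\lnot I}$ and the irreducibility of $\pi$ — yields an irreducible subquotient $\tau$ of $\rho\times\sigma_I$ with $\pi\hookrightarrow\tau\times\sigma_{\lnot I}$. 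As $\supp(\tau)\subseteq\{\rho\}\cup\supp(\sigma_I)\subseteq I$, the pair $(\pi_I,\pi_{\lnot I}):=(\tau,\sigma_{\lnot I})$ satisfies (1)--(3).

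Uniqueness and the final assertion then follow formally. If some $\pi'\in\Irr$ with $\pi'\ne\pi$ also embeds in $\pi_I\times\pi_{\lnot I}$, then $\pi\oplus\pi'\hookrightarrow\pi_I\times\pi_{\lnot I}$, so $[\rp(\pi)]+[\rp(\pi')]\le[\rp(\pi_I\times\pi_{\lnot I})]$; but Frobenius reciprocity makes $\pi_I\otimes\pi_{\lnot I}$ a quotient of both $\rp(\pi)$ and $\rp(\pi')$, so it occurs in the left-hand side with multiplicity at least two, contradicting (b). This proves the last sentence. For uniqueness, given two pairs $(\pi_I,\pi_{\lnot I})$ and $(\pi'_I,\pi'_{\lnot I})$ as in the statement, Frobenius reciprocity puts $\pi'_I\otimes\pi'_{\lnot I}$ into $[\rp_{(\deg\pi'_I,\,\deg\pi-\deg\pi'_I)}(\pi)]\le[\rp_{(\deg\pi'_I,\,\deg\pi-\deg\pi'_I)}(\pi_I\times\pi_{\lnot I})]$; since $\supp(\pi'_I)\subseteq I$, part (a) applied with $\mu=\pi_I$, $\nu=\pi_{\lnot I}$ gives $\deg\pi'_I\le\deg\pi_I$, by symmetry these degrees are equal, and then (b) forces $\pi'_I\otimes\pi'_{\lnot I}=\pi_I\otimes\pi_{\lnot I}$, i.e.\ $\pi'_I=\pi_I$ and $\pi'_{\lnot I}=\pi_{\lnot I}$.

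The delicate step is the first one: one has to check that the supercuspidal-support bookkeeping in the geometric lemma genuinely forces the $\nu$-contribution to the extracted left factor to vanish — this is exactly where left $I$-reducedness of $\nu$ enters — and that multiplicities are tracked so that precisely one term survives at the critical Jacquet level. Everything else is formal manipulation of exact functors and Frobenius reciprocity.
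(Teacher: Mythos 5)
Your argument is correct, but it is worth noting that the paper itself does not prove this lemma: it simply cites Jantzen's Lemma 2.1.2 in \cite{MR2306606} and observes that the proof there carries over. Your write-up is therefore a genuinely self-contained substitute, and it holds together. The one genuinely new ingredient is your claim (a), the degree bound: the verification that a composition factor $\tau_3\otimes\tau_4$ of $\rp(\nu)$ with $\deg\tau_3\ge1$ and $\supp(\tau_3)\subset I$ would, by transitivity of Jacquet functors, produce a factor $\rho\otimes\kappa$ of the minimal-depth Jacquet module of $\nu$ with $\rho\in I$, contradicting left $I$-reducedness — this is exactly where the hypothesis on $\nu$ is consumed, and you have tracked it correctly. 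Your claim (b) and the final ``unique irreducible subrepresentation'' step are essentially re-proofs of Lemma \ref{lem: mult1 jf}(2) and Lemma \ref{lem:main} (applied with $\pi_1=\pi_I$, $\pi_2=\pi_{\lnot I}$), so you could shorten the argument by citing those; what they do not give you is the comparison of \emph{two} candidate pairs, for which your degree bound (a) plus the multiplicity-one statement at the critical level is the right mechanism. The existence step (peeling off cuspidals $\rho\in I$ one at a time and extracting an irreducible subquotient $\tau$ of $\rho\times\sigma_I$ via a composition series and exactness of parabolic induction) is standard and correct. In short: a valid, more elementary and self-contained route than the paper's appeal to the literature, at the cost of redoing two lemmas the paper already has on hand.
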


\begin{proof}
This is proved in \cite{MR2306606}*{Lemma 2.1.2}. Although, the Lemma is stated in \cite{MR2306606} only for $D=F$ and representations
in the line of a cuspidal representation $\rho$ (see \S\ref{par: cuspidal_lines}), the proof therein is valid in our setting.
\end{proof}

Note that in the case where $I=\{\rho\}$ is a singleton, $\pi_I$ is of the form $\rho^{\times a}$ for some $a\in\Z_{\ge0}$.

We single out the following consequence of the geometric lemma:
\begin{lem} \label{lem: mult1 jf}
\begin{enumerate}
\item Let $I\subset\Cusp$ and suppose that $\pi_i\in\Rep(G_{m_i})$, $i=1,\dots,r$ are left $I$-reduced.
Then $\pi_1\times\dots\times\pi_r$ is left $I$-reduced.
\item Let $\pi_i\in\Rep(\G_{m_i})$ and $\sigma_i\in\Irr\G_{m_i}$, $i=1,2$.
Assume that $\sigma_i$ occurs with multiplicity one in $[\pi_i]$, $i=1,2$ and that $\pi_2$ is left $\supp(\pi_1)$-reduced.
Then $\sigma_1\otimes\sigma_2$ occurs with multiplicity one in $\rp(\pi_1\times\pi_2)$.
\end{enumerate}
\end{lem}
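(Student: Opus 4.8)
Both parts are pure bookkeeping with the combinatorial geometric lemma recalled above, so the plan is to expand $\rp_\beta(\pi_1\times\cdots\times\pi_r)$ using the stated formula and track which matrices $\B$ and index tuples $\underline k$ can contribute a term of the relevant shape.

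**Part (1).** Fix $\rho\in I$ and set $\beta=(\deg\rho,\,m-\deg\rho)$. A composition factor $\rho\otimes\pi'$ of $\rp_\beta(\pi_1\times\cdots\times\pi_r)$ comes from a matrix $\B\in\Mat^{\alpha,\beta}$ together with a tuple $\underline k$ such that $\Sigma_1^{B,\underline k}=\sigma^{(k_1)}_{1,1}\times\cdots\times\sigma^{(k_r)}_{r,1}\cong\rho$. Since $\rho$ is supercuspidal and hence $\deg\rho$-primitive, in any such product exactly one factor $\sigma^{(k_{i_0})}_{i_0,1}$ has positive degree (namely degree $\deg\rho$), the rest being the trivial representation of $\G_0$; and then that factor must itself be $\rho$. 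But the first column of $\B$ restricted to row $i_0$ records exactly that $b_{i_0,1}=\deg\rho$, and the factor $\sigma^{(k_{i_0})}_{i_0,1}$ is the first tensor factor of a composition factor $\sigma^{(k_{i_0})}_{i_0}$ of $\rp_{\alpha_{i_0}}(\pi_{i_0})$. Writing $\sigma^{(k_{i_0})}_{i_0}=\rho\otimes\tau$ this exhibits $\rho\otimes\tau\le[\rp_{(\deg\rho,\,m_{i_0}-\deg\rho)}(\pi_{i_0})]$, i.e.\ $\pi_{i_0}$ is not left $\rho$-reduced --- contradiction. Hence no such term exists, so $\pi_1\times\cdots\times\pi_r$ is left $\rho$-reduced; as $\rho\in I$ was arbitrary, it is left $I$-reduced.

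**Part (2).** Apply the geometric lemma to $\pi_1\times\pi_2$ with $r=2$ and $\beta=(m_1,m_2)$; here $\Mat^{\alpha,\beta}$ with $\alpha=(m_1,m_2)$ consists of $2\times2$ matrices, and I claim the only matrix that can contribute a term of the form $\sigma_1\otimes\sigma_2$ is $\B=\operatorname{diag}(m_1,m_2)$, contributing with multiplicity one. Indeed, a contributing $\B$ with its $\underline k=(k_1,k_2)$ gives $\Sigma_1^{B,\underline k}=\sigma^{(k_1)}_{1,1}\times\sigma^{(k_2)}_{2,1}\cong\sigma_1$ and $\Sigma_2^{B,\underline k}=\sigma^{(k_1)}_{1,2}\times\sigma^{(k_2)}_{2,2}\cong\sigma_2$, where $\sigma^{(k_1)}_{1,1}\otimes\sigma^{(k_1)}_{1,2}\le[\rp(\pi_1)]$ and similarly for $\pi_2$. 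Comparing supercuspidal supports: $\supp(\sigma^{(k_2)}_{2,1})\subset\supp(\pi_2)$ is disjoint from... no --- rather, $\supp(\sigma^{(k_1)}_{1,2})\subset\supp(\pi_1)$ and $\sigma^{(k_2)}_{2,1}$ is the left tensor factor of a Jacquet module of $\pi_2$, so if $b_{2,1}>0$ then $\pi_2$ would fail to be left $\supp(\pi_1)$-reduced once we check that $\sigma_1$ having support in $\supp(\pi_1)\cup\supp(\pi_2)$ forces the relevant cuspidal string... The clean argument: since $\pi_2$ is left $\supp(\pi_1)$-reduced, by part (1) and Lemma \ref{lem: decomI}-type reasoning the only composition factor $\tau\otimes\tau'$ of $\rp_{(\deg\sigma_1,\deg\sigma_2)}(\pi_1\times\pi_2)$ with $\tau$ supported in $\supp(\pi_1)$ and $\deg\tau=m_1$ forces $b_{2,1}=0$, hence $b_{1,2}=0$, hence $\B$ is diagonal, $\sigma^{(k_1)}_1=\sigma_1\otimes(\text{triv})$ with $\sigma_1$ occurring in $[\pi_1]$, and $\sigma^{(k_2)}_2=(\text{triv})\otimes\sigma_2$ with $\sigma_2$ occurring in $[\pi_2]$. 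The multiplicities of $\sigma_1\otimes(\text{triv})$ in $[\rp_{(m_1,0)}(\pi_1)]=[\pi_1]$ and of $(\text{triv})\otimes\sigma_2$ in $[\pi_2]$ are each one by hypothesis, giving total multiplicity one.

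**Main obstacle.** The crux is pinning down, in Part (2), that \emph{no} off-diagonal matrix $\B$ can produce a term isomorphic to $\sigma_1\otimes\sigma_2$; this is where the ``left $\supp(\pi_1)$-reduced'' hypothesis on $\pi_2$ must be used, and the cleanest route is probably to argue directly at the level of the left-most cuspidal factor: if $b_{2,1}>0$ then some supercuspidal $\rho\in\supp(\pi_1)$ (the one ``at the left of $\sigma_1$'') appears as the left tensor factor of a composition factor of a Jacquet module of $\pi_2$, contradicting left $\supp(\pi_1)$-reducedness via Part (1) applied inside. I would need to be careful that the supercuspidal support of $\sigma_1$ is contained in $\supp(\pi_1)$ and not contaminated by $\supp(\pi_2)$ --- which holds because $\sigma_1$ occurs in $[\pi_1]$ --- so that the ``leftmost'' cuspidal of $\sigma_1$ genuinely lies in $\supp(\pi_1)$ and the reducedness hypothesis bites.
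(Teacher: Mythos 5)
The paper states this lemma without proof, as a direct consequence of the geometric lemma, and your bookkeeping with the matrices $\B\in\Mat^{\alpha,\beta}$ is certainly the intended route. Your Part (1) is complete and correct: since a supercuspidal $\rho$ cannot be a subquotient of a properly induced representation, only one entry in the first column of $\B$ can be positive, and transitivity of the Jacquet functor then contradicts left $\rho$-reducedness of the corresponding $\pi_{i_0}$.

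Part (2), however, is not closed as written: the passage beginning ``The clean argument: \dots forces $b_{2,1}=0$'' asserts exactly the point at issue, and your ``Main obstacle'' paragraph concedes this. The missing step is short, so let me record it. The matrices in $\Mat^{(m_1,m_2),(m_1,m_2)}$ are parametrized by $d=b_{1,2}=b_{2,1}\ge 0$. Suppose $d>0$ and some term $[(\sigma^{(k_1)}_{1,1}\times\sigma^{(k_2)}_{2,1})\otimes(\sigma^{(k_1)}_{1,2}\times\sigma^{(k_2)}_{2,2})]$ contains $\sigma_1\otimes\sigma_2$. Then $\sigma_1\le[\sigma^{(k_1)}_{1,1}\times\sigma^{(k_2)}_{2,1}]$, and since supercuspidal support is additive in products, $\supp(\sigma^{(k_2)}_{2,1})\subset\supp(\sigma_1)\subset\supp(\pi_1)$ (the last inclusion because $\sigma_1\le[\pi_1]$). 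Now choose any $\rho\in\supp(\sigma^{(k_2)}_{2,1})$ with $\sigma^{(k_2)}_{2,1}\hookrightarrow\rho\times\tau$ for some $\tau\in\Irr$; such a $\rho$ exists for every irreducible representation of $\G_d$ with $d>0$. By transitivity of Jacquet functors applied to $\sigma^{(k_2)}_{2,1}\otimes\sigma^{(k_2)}_{2,2}\le[\rp_{(d,m_2-d)}(\pi_2)]$, one gets $\rho\otimes\pi'\le[\rp_{(\deg\rho,\,m_2-\deg\rho)}(\pi_2)]$ for some $\pi'\in\Irr$, contradicting that $\pi_2$ is left $\supp(\pi_1)$-reduced. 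Note the cuspidal you must extract is a leftmost cuspidal of $\sigma^{(k_2)}_{2,1}$, not ``the one at the left of $\sigma_1$'' as your parenthetical suggests; the point is only that it lies in $\supp(\sigma^{(k_2)}_{2,1})\subset\supp(\pi_1)$, so the reducedness hypothesis applies. With that inserted, only $d=0$ contributes, giving the term $[\pi_1\otimes\pi_2]$ in which $\sigma_1\otimes\sigma_2$ occurs with multiplicity one by hypothesis.
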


\subsection{Unique irreducible subrepresentations in induced representations}

Let $m \geq 1$ and let $\pi \in \Rep(\G_m)$.
Recall that the \emph{socle} (resp, \emph{cosocle}) of $\pi$, denoted by $\soc(\pi)$ (resp., $\socc(\pi)$), is the largest semi-simple subrepresentation (resp., quotient) of $\pi$.
Thus, $\socc(\pi)^\vee=\soc(\pi^\vee)$.

\begin{defi}\label{def:defsnew}
We say that $\pi\in\Rep(\G_n)$ is \emph{socle irreducible} (\EI) if $\soc(\pi)$ is irreducible and occurs with multiplicity one in $[\pi]$.
\end{defi}

\EI\ representations play an important role in the representation theory of $p$-adic groups and in particular
in the classification of irreducible representations of $G_n$, $n \geq 1$.
We will recall that below.

The following straightforward remark will be used repeatedly.
\begin{rem}\label{rem:main}
Let $\pi$ be a subrepresentation of $\pi'$. If $\pi'$ is \EI\ then so is $\pi$ and $\soc(\pi')=\soc(\pi)$.
\end{rem}

We could of course define ``cosocle irreducible'' in an analogous way. Note that $\pi$ is cosocle irreducible if and only if
$\pi^\vee$ is \EI.

The following standard lemma will be useful.
\begin{lem}\label{lem:main}
Assume that $\pi_i\in\Rep(\G_{n_i})$, $i=1,2$ are \EI.
Let $\sigma_i=\soc(\pi_i)\in\Irr\G_{n_i}$, $i=1,2$ and suppose that $\sigma_1 \otimes \sigma_2$ appears with multiplicity $1$ in
$$[\rp(\pi_1 \times \pi_2)].$$
(For instance, this holds if $\pi_2$ is left $\supp(\pi_1)$-reduced, by Lemma \ref{lem: mult1 jf}.)
Then $\pi_1 \times \pi_2$ (and hence also $\sigma_1\times\sigma_2$) is \EI \ and $\soc(\pi_1\times\pi_2)=\soc(\sigma_1\times\sigma_2)$.
\end{lem}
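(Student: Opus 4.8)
The plan is to combine Frobenius reciprocity with the multiplicity-one hypothesis on $[\rp(\pi_1\times\pi_2)]$ to show directly that $\pi_1\times\pi_2$ is $\EI$, and then read off the assertion about $\sigma_1\times\sigma_2$ from Remark \ref{rem:main}.

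First I would dispose of the reductions. Since $\sigma_i=\soc(\pi_i)\hookrightarrow\pi_i$, exactness of parabolic induction gives $\sigma_1\times\sigma_2\hookrightarrow\pi_1\times\pi_2$. Hence, once we know $\pi_1\times\pi_2$ is $\EI$, Remark \ref{rem:main} immediately yields that $\sigma_1\times\sigma_2$ is $\EI$ and that $\soc(\sigma_1\times\sigma_2)=\soc(\pi_1\times\pi_2)$. I would also record the elementary fact that in $\Rep(\G_{n_1}\times\G_{n_2})$ one has $\soc(\pi_1\otimes\pi_2)=\sigma_1\otimes\sigma_2$: an irreducible subrepresentation of $\pi_1\otimes\pi_2$ is of the form $\tau_1\otimes\tau_2$ and $\Hom(\tau_1\otimes\tau_2,\pi_1\otimes\pi_2)=\Hom(\tau_1,\pi_1)\otimes\Hom(\tau_2,\pi_2)$, which is nonzero only when $\tau_i\hookrightarrow\pi_i$, forcing $\tau_i=\sigma_i$ as $\pi_i$ is $\EI$; in particular $\pi_1\otimes\pi_2$ is $\EI$ and every nonzero subrepresentation of it contains $\sigma_1\otimes\sigma_2$.

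The core of the argument is the following observation. Let $\tau$ be any irreducible subrepresentation of $\pi_1\times\pi_2$. By Frobenius reciprocity $\Hom(\tau,\pi_1\times\pi_2)=\Hom(\rp(\tau),\pi_1\otimes\pi_2)$, so there is a nonzero morphism $\rp(\tau)\to\pi_1\otimes\pi_2$; its image is a nonzero subrepresentation of $\pi_1\otimes\pi_2$, hence contains $\sigma_1\otimes\sigma_2$. Consequently $\sigma_1\otimes\sigma_2\le[\rp(\tau)]\le[\rp(\pi_1\times\pi_2)]$, and since $\sigma_1\otimes\sigma_2$ occurs with multiplicity one in $[\rp(\pi_1\times\pi_2)]$ by hypothesis, it occurs with multiplicity exactly one in $[\rp(\tau)]$ for every such $\tau$. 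Writing $\soc(\pi_1\times\pi_2)=\bigoplus_i\tau_i^{\oplus a_i}$ with the $\tau_i$ pairwise non-isomorphic irreducible subrepresentations and applying $\rp$ (which is exact), the class $[\rp(\soc(\pi_1\times\pi_2))]=\sum_i a_i[\rp(\tau_i)]$ is $\le[\rp(\pi_1\times\pi_2)]$, so $\sum_i a_i\le1$; as $\pi_1\times\pi_2$ is a nonzero representation of finite length its socle is nonzero, whence $\soc(\pi_1\times\pi_2)$ is irreducible, say equal to $\tau$. Finally, if $\tau$ appeared with multiplicity $m$ in $[\pi_1\times\pi_2]$, then comparing classes of Jacquet modules, $\sigma_1\otimes\sigma_2$ would appear at least $m$ times in $[\rp(\pi_1\times\pi_2)]$, forcing $m\le1$ and hence $m=1$. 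Thus $\pi_1\times\pi_2$ is $\EI$, and we conclude by the first reduction.

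I do not foresee a serious obstacle here; the one thing that needs care is the bookkeeping in the Grothendieck group --- exactness of $\rp$ so that subrepresentations and socles map to smaller classes, and the fact that each irreducible subquotient $\tau$ of $\pi_1\times\pi_2$ genuinely contributes its copy of $\sigma_1\otimes\sigma_2$ to $[\rp(\pi_1\times\pi_2)]$. The conceptual point is short: Frobenius reciprocity forces every irreducible subrepresentation of $\pi_1\times\pi_2$ to carry $\sigma_1\otimes\sigma_2$ in its Jacquet module, and the hypothesis leaves room for exactly one.
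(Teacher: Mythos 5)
Your argument is correct and is essentially the paper's own proof: Frobenius reciprocity forces $\sigma_1\otimes\sigma_2$ to occur in $[\rp(\tau)]$ for every irreducible subrepresentation $\tau$ of $\pi_1\times\pi_2$, and the multiplicity-one hypothesis then bounds both the length of the socle and the multiplicity of the socle in $[\pi_1\times\pi_2]$, with the last assertion coming from Remark \ref{rem:main}. The only difference is that you spell out the (correct) auxiliary fact that $\pi_1\otimes\pi_2$ is \EI\ with socle $\sigma_1\otimes\sigma_2$, which the paper leaves implicit.
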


\begin{proof}
Note that if $\tau$ is a subrepresentation of $\pi_1\times\pi_2$ then by Frobenius reciprocity there is a non-trivial map
from $\rp(\tau)$ to $\pi_1\otimes\pi_2$. Thus, $\sigma_1\otimes\sigma_2$ occurs as a subquotient of $\rp(\tau)$.
Thus, if $\tau, \tau'$ are distinct (not necessarily inequivalent) irreducible subrepresentations of $\pi_1 \times \pi_2$ then
\[
2[\sigma_1 \otimes \sigma_2 ]\le [\rp(\tau+ \tau') ]\le [\rp(\pi_1 \times \pi_2)],
\]
a contradiction. It follows that $\soc(\pi_1\times\pi_2)$ is irreducible.
Similarly, we cannot have $2[\soc(\pi_1\times\pi_2)]\le[\pi_1\times\pi_2]$ for otherwise, applying $\rp_{(n_1,n_2)}$ we would get
$2[\sigma_1\otimes\sigma_2]\le[\rp(\pi_1 \times \pi_2)]$ which once again contradicts the assumption of the lemma.
The last part follows from Remark \ref{rem:main}.
\end{proof}

In particular, by induction on $k$ (using Lemmas \ref{lem: mult1 jf} and \ref{lem:main}) if $\rho_1,\dots,\rho_k\in\Cusp$ are distinct then
$\rho_1\times\dots\times\rho_k$ is \EI. (Of course, this case is particularly simple since the Jacquet modules are multiplicity free.)

\section{Classification} \label{sec: classification1}

In this section we review the classification of $\Irr$ in terms of multisegments and set some more notation.
All the results in this section are well known.
In Appendix \ref{sec: appendAD} we will recall the history of the classification and outline a proof along the lines of \cite{MR3049700}
which uses minimum prerequisites.

\subsection{Segments} \label{sec: segment}
Let $m\in\Z_{\ge1}$ and $\rho\in\Cusp(G_m)$.
To $\rho$ one can associate (see Theorem \ref{thm:Olshanski}) an unramified character $\nu_\rho$ of $F^*$
of the form $\nu_\rho=\abs{\cdot}^{s_\rho}$ where $s_\rho\in\R_{>0}$
with the property that for any $\rho'\in\Cusp(G_{m'})$, $m'\in\Z_{\ge1}$ the induced representation $\rho \times \rho'$ is
reducible if and only if $m'=m$ and either $\rho'\simeq\rho\nu_\rho$ or $\rho\simeq\rho'\nu_\rho$.
We write $\rshft{\rho}=\rho\nu_\rho$ and $\lshft{\rho}=\rho\nu_\rho^{-1}$.
Note that $\nu_{\rshft\rho}=\nu_{\lshft\rho}=\nu_\rho$.

Let $A$ be a subset of $\Cusp$.
By definition, a \emph{connected component} of $A$ is an equivalence class of the equivalence
relation generated by $\rho\sim\rshft{\rho}$ whenever $\rho$ and $\rshft{\rho}$ are in $A$.

\begin{defi}
A {\it segment} $\Delta$ is a finite nonempty connected subset of $\Cusp$ (i.e., a single connected component).
Thus, $\Delta$ is of the form
\begin{equation}
\Delta=\{\rho\nu_{\rho}^c,\rho\nu_{\rho}^{c+1},\dots,\rho\nu_{\rho}^d\}
\end{equation}
where $\rho\in \Cusp$ and $c,d\in\Z$ are two integers such that $c\le d$.
We denote $\Delta$ as above by $\left[c,d \right]_\rho$.
Thus, $\left[c,d \right]_\rho=\left[c',d' \right]_{\rho'}$ precisely when $\rho\nu_{\rho}^c=\rho'\nu_{\rho'}^{c'}$
and $\rho\nu_{\rho}^d=\rho'\nu_{\rho'}^{d'}$.

We write $\seg$ for the set of all segments (as we vary $\rho\in\Cusp$).
\end{defi}

We write $l(\Delta)$ for the size of $\Delta$ (also called the length of $\Delta$).
By convention we also set $[c,d]_\rho=\emptyset$ if $d<c$.

We denote the extremities of $\Delta=\left[c,d \right]_\rho$ by $b(\Delta)=\rho\nu_\rho^c\in\Cusp$ and $e(\Delta)=\rho\nu_\rho^d\in\Cusp$ respectively.
We also write:
\begin{align*}
\Delta^\vee&=[-d,-c]_{\rho^\vee}\\
\lshft\Delta&= \left[c-1,d-1\right]_\rho\\
\Delta^-&=\left[c,d-1\right]_\rho\\
^-\Delta&=\left[c+1,d \right]_\rho\\
\Delta^+&=\left[c,d+1\right]_\rho\\
^+\Delta&=\left[c-1,d \right]_\rho.
\end{align*}

To any segment $\Delta=\left[c,d\right]_\rho$ define
\[
 \zele{\Delta}=\soc(\rho\nu_{\rho}^c\times\rho\nu_{\rho}^{c+1}\times\dots\times\rho\nu_{\rho}^d) \simeq
\cos(\rho\nu_{\rho}^d\times\rho\nu_{\rho}^{d-1}\times\dots\times\rho\nu_{\rho}^c).
\]
It is an irreducible representation which can be characterized as the unique $\pi \in \Irr$ such that
\begin{equation} \label{eq: Jaczele}
\rp(\pi)=\rho\nu_{\rho}^c\otimes\rho\nu_{\rho}^{c+1}\otimes\dots\otimes\rho\nu_{\rho}^d.
\end{equation}
%In particular, $\supp\zele{\Delta}=\supp\lang{\Delta}=\Delta$
Note in particular that $\zele{\Delta}$ is left $\Cusp\setminus\{b(\Delta)\}$-reduced.
It will be useful to set $\zele{\emptyset}$ equal to the one-dimensional representation of the trivial group $\G_0$.

We have $\zele{\Delta^\vee}=\zele{\Delta}^\vee$.

\begin{defi}\label{def:segments}
\label{link}
Let $\Delta=\left[c,d\right]_\rho$ and $\Delta'=\left[c',d'\right]_{\rho'}$ be two segments.
We say that $\Delta$ and $\Delta'$ are \emph{linked} if $\Delta\cup\Delta'$ forms a segment but neither
$\Delta\subset\Delta'$ nor $\Delta'\subset\Delta$.
If $\Delta$ and $\Delta'$ are linked and $\rho'\nu_{\rho'}^{c'}=\rho\nu_\rho^{c+j}$ with $j>0$ then
we say that $\Delta$ \emph{precedes} $\Delta'$ and write $\Delta \prec \Delta'$.
\end{defi}

Thus, if $\Delta$ and $\Delta'$ are linked then either $\Delta\prec\Delta'$ or $\Delta'\prec\Delta$, but not both.

\subsection{Multisegments and classification} \label{sec: multi and classification}
Given a set $\X$, write $\NN(\X)$ for the commutative semigroup of maps from
$\X$ to $\NN$ with finite support.

\begin{defi}
A \textit{multisegment} is a multiset of segments, that is an element in $\MS:= \NN(\seg)$.
%We write $\MS$ for the set of multisegments.
\end{defi}

We write a multisegment $\m$ as a finite sum $\m=\Delta_1+ \dots + \Delta_N$, with $\Delta_i \in \seg$.
For such $\m$ we define the support of $\m$, denoted $\supp\m$, to be $\cup\Delta_i\subset\Cusp$.
We also write $\m^\vee=\Delta_1^\vee+ \dots + \Delta_N^\vee$.

A sequence of segments $(\Delta_1, \dots, \Delta_N)$ is said to be \textit{\rangee} if for every $1 \leq i < j \leq N$, $\Delta_i$ does not precede $\Delta_j$.
If $\m \in \MS$ and $(\Delta_1, \dots, \Delta_N)$ is an \rangee\ sequence of segments such that $\m=\Delta_1+ \dots+ \Delta_N$,
we say that $(\Delta_1, \dots, \Delta_N)$ is an \emph{\rangee\ form} of $\m$.

For any \rangee\ form $(\Delta_1,\dots,\Delta_N)$ of a multisegment $\m \in \MS$ let
$$\std(\m):= \zele{\Delta_1} \times\dots\times \zele{\Delta_N}$$
be the standard module. Also, set
$$\tilde{\std}(\m):= \zele{\Delta_N} \times\dots\times \zele{\Delta_1}.$$
Up to isomorphism, $\std(\m)$ and $\tilde{\std}(\m)$ do not depend on the choice of the \rangee\ form.
Note that $[\std(\m)]=[\tilde{\std}(\m)]$.

The set $\Irr$ is in natural bijection with $\MS$ (the Zelevinsky classifications).
More precisely

\begin{thm}  \label{thm: classification}
For any $\m\in\MS$, $\std(\m)$ is \EI\ and the map
\[
\m \mapsto \zele{\m}:=\soc(\std(\m))=\cos(\tilde{\std}(\m))
\]
is a bijection from $\MS$ to $\Irr$.
\end{thm}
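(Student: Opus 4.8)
The plan is to proceed in three stages: first establish that $\std(\m)$ is \EI\ for every $\m$, then define $\zele{\m}$ as its socle and prove injectivity of $\m\mapsto\zele{\m}$, and finally prove surjectivity. For the first stage I would argue by induction on the number $N$ of segments in $\m$, ordered in an \rangee\ form $(\Delta_1,\dots,\Delta_N)$. The base case $N\le1$ is immediate since $\zele{\Delta}$ is irreducible by construction. For the inductive step, write $\std(\m)=\zele{\Delta_1}\times\std(\m')$ where $\m'=\Delta_2+\dots+\Delta_N$ (still in \rangee\ form). By the inductive hypothesis $\std(\m')$ is \EI; and $\zele{\Delta_1}$ is irreducible, hence trivially \EI. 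The key point is to verify the multiplicity-one hypothesis of Lemma~\ref{lem:main}, namely that $\soc(\zele{\Delta_1})\otimes\soc(\std(\m'))=\zele{\Delta_1}\otimes\zele{\m'}$ occurs with multiplicity one in $[\rp(\zele{\Delta_1}\times\std(\m'))]$. Here I would use the Geometric Lemma from \S\ref{lemmegeo}: expand $[\rp_{(\deg\Delta_1,\,\deg\m')}(\zele{\Delta_1}\times\std(\m'))]$ as a sum over matrices $\B$, and observe that $\rp(\zele{\Delta_1})$ has the single composition factor \eqref{eq: Jaczele}, so the $i=1$ row of the relevant Jacquet module is rigid; one then checks that the term $\zele{\Delta_1}\otimes\zele{\m'}$ can be produced by only one matrix $\B$ (the one with no "mixing"), using the \rangee ness of the form to rule out the contributions where a piece of $\zele{\Delta_1}$ slides past the later segments. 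This is the step I expect to be the main obstacle, since it requires a careful bookkeeping of which $\sigma^{(k)}_{i,j}$ can combine to give back $\zele{\Delta_1}$ on the left and $\zele{\m'}$ on the right, and genuinely uses the combinatorics of linked segments (Definition~\ref{def:segments}) together with the characterization of $\zele{\Delta}$ via its Jacquet module.

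Granting the first stage, define $\zele{\m}:=\soc(\std(\m))$, which is a well-defined element of $\Irr$ independent of the \rangee\ form (since $\std(\m)$ is independent of it up to isomorphism). The identification $\zele{\m}=\cos(\tilde\std(\m))$ follows by applying $^\vee$: indeed $\tilde\std(\m)^\vee\cong\std(\m^\vee)$ because $\zele{\Delta}^\vee=\zele{\Delta^\vee}$ and reversing a product corresponds to taking contragredients, and $(\Delta_1,\dots,\Delta_N)$ \rangee\ iff $(\Delta_N^\vee,\dots,\Delta_1^\vee)$ \rangee; so $\soc(\std(\m^\vee))^\vee=\cos(\tilde\std(\m))$, and taking $^\vee$ again with $\m\mapsto\m^\vee$ an involution gives the claim. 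For injectivity, suppose $\zele{\m}=\zele{\n}$. First, comparing supercuspidal supports (with multiplicity, read off from $[\std(\m)]$) shows $\supp\m=\supp\n$ as multisets in $\Cusp$, so $\m$ and $\n$ have the same total length $N$. Then I would run the standard Zelevinsky argument: among all segments appearing in $\m$, and likewise in $\n$, there is a "maximal" one $\Delta$ (maximal in a suitable sense, e.g. with $b(\Delta)$ minimal among those not preceded by another, or dually using $e$) such that $\Delta$ can be placed first in an \rangee\ form; one shows $\zele{\m}\hookrightarrow\zele{\Delta}\times\std(\m-\Delta)$ and that $\Delta$ is determined by $\zele{\m}$ — concretely, $b(\Delta)$ is characterized as the unique $\rho$ for which $\zele{\m}$ fails to be left $\Cusp\setminus\{\rho\}$-reduced appropriately, or via the leading term of a suitable Jacquet module. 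This forces the first segment to agree, and then $\zele{\m-\Delta}=\zele{\n-\Delta}$ by Remark~\ref{rem:main} (both are the socle of the same subrepresentation), so injectivity follows by induction on $N$.

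For surjectivity, I would argue by induction on the length $r$ of a supercuspidal support: given $\pi\in\Irr$ with $\pi\hookrightarrow\rho_1\times\dots\times\rho_r$, pick $\rho\in\Cusp$ with $\pi$ not left $\rho$-reduced — such $\rho$ exists because $\pi$ embeds in a product of cuspidals — and apply Lemma~\ref{lem: decomI} with $I=\{\rho\}$ to write $\pi\hookrightarrow\rho^{\times a}\times\pi_{\lnot I}$ with $a\ge1$ and $\pi_{\lnot I}$ left $\rho$-reduced of strictly smaller support, hence $\pi_{\lnot I}=\zele{\n}$ for some multisegment $\n$ by induction; then one reconstructs $\m$ by "prepending" the segments built from the $\rho^{\times a}$ to $\n$, checking that the resulting $\std(\m)$ has $\pi$ as its socle using the uniqueness clause of Lemma~\ref{lem: decomI} and Lemma~\ref{lem:main}. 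Alternatively — and perhaps more cleanly — surjectivity follows from a counting/linear-independence argument: the classes $[\std(\m)]$, as $\m$ ranges over multisegments with a fixed supercuspidal support (with multiplicity), are linearly independent in $\Rr$ because the transition matrix to the $[\rho_1\times\dots\times\rho_r]$ basis is unitriangular with respect to a suitable ordering (a consequence of the first stage and the Geometric Lemma), and they are equal in number to the irreducibles with that support; since each irreducible appears in some $[\std(\m)]$ and $\zele{\m}$ is the socle, the map is onto. I would present whichever of these two routes meshes best with the conventions already fixed, likely the first since Lemma~\ref{lem: decomI} has been set up precisely for it.
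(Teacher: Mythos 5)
Your first stage contains a genuine gap: the multiplicity-one claim on which it rests is false. Take $\m=[0,1]_\rho+[0,1]_\rho$, whose \rangee\ forms have $\Delta_1=\Delta_2=[0,1]_\rho$. In the geometric lemma for $\rp_{(\deg\Delta_1,\deg\Delta_2)}(\zele{\Delta_1}\times\zele{\Delta_2})$, both the ``no mixing'' matrix and the ``full swap'' matrix contribute the term $\zele{[0,1]_\rho}\otimes\zele{[0,1]_\rho}$, so $\zele{\Delta_1}\otimes\zele{\m'}$ occurs with multiplicity at least two and Lemma \ref{lem:main} cannot be applied. The \rangee\ condition does not exclude the swap, because unlinked (in particular equal) segments may be placed in either order. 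The correct decomposition is coarser: group together all segments with the same end point $e(\Delta)$, so that $\std(\m)=\pi_1\times\dots\times\pi_k$ with each $\pi_i$ irreducible, and prove multiplicity one for $\pi_1\otimes\dots\otimes\pi_k$; this is exactly what is done in the proof of Proposition \ref{useful_properties} part \ref{part: anyorder} in \S\ref{par: preuve}. But the irreducibility of each block $\pi_i$ is the statement that $\std(\n)$ is irreducible for \TU\ $\n$ (Lemma \ref{lem: pairwise unlinked}) --- a substantive input, proved in the appendix by induction via Lemma \ref{lem:subquo}, Theorem \ref{thm:Olshanski} and Lemma \ref{lem: ind21}, with unitarity entering in the minimal cases --- and your proposal never isolates it.

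Your surjectivity argument also skips the hard step. In the first route, ``reconstructing $\m$ by prepending the segments built from $\rho^{\times a}$ to $\n$'' is precisely the content of Theorem \ref{thm: cusptimes} part \ref{eq: extrctrho} (equivalently Corollary \ref{cor: inxpi'}): one must identify which $a$ segments of $\n$ get extended on the left by $\rho$ and then verify that the resulting standard module has $\pi$ as its socle, and in the approach followed here this requires the composition series of $\zele{\Delta}\times\zele{\Delta'}$ for linked $\Delta,\Delta'$ (Lemma \ref{lem:2seg}), which the paper proves separately precisely for this purpose. The counting alternative is circular as stated: the number of irreducible representations with a given supercuspidal support is not known in advance, so linear independence of the $[\std(\m)]$ does not show that the $\zele{\m}$ exhaust $\Irr$. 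For the record, the paper does not reprove \EI-ness and injectivity from scratch; it establishes Lemmas \ref{lem: pairwise unlinked} and \ref{lem:2seg} without type theory and then invokes \cite{MR3049700}*{\S 5}, so the substance of the proof lies in exactly the two lemmas your outline bypasses.
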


Given $\m,\n\in\MS$ we write $\m\not\prec\n$ for the condition that $\Delta\not\prec\Delta'$ for any $\Delta$ in $\m$ and $\Delta'\in\n$.

We will say that $\m=\Delta_1+\dots+\Delta_N$ is \emph{\TU} if the $\Delta_i$'s are pairwise unlinked. \label{sec: TU}

In the following proposition we record some properties of the map $\m\mapsto\zele{\m}$ which we will use often.
(See \S\ref{par: preuve}.) We thank the referee for suggesting to us to add part \ref{part: anyorder}.

\begin{prop}\label{useful_properties}
Let $\m,\n\in\MS$. Then
\begin{enumerate}
\item \label{part: support is preserved} $\supp\zele{\m}=\supp\m$.

\item \label{part: pairwise unlinked}
The induced representation $\std(\m)$ is irreducible if and only if $\m$ is \TU.

\item \label{part: anyorder}
If $\m=\Delta_1+\dots+\Delta_N$ (in any order) then there exists a non-zero intertwining map
\[
\zele{\Delta_1}\times\dots\times\zele{\Delta_N}\xrightarrow{f}\std(\m).
\]
Moreover, $[\zele{\m}]\not\le[\Ker f]$.

\item \label{part: vdash}
$[\zele{\m} ] \leq [\std(\n)]$ if and only if $\m \vdash \n$, where $\vdash$ is the
\emph{``union-intersection''} order of Zelevinsky (see \cite{MR584084}*{\S7}).

\item \label{part: lmm'}
$\zele{\m+\n}$ occurs with multiplicity one in $[\zele{\m} \times\zele{\n} ]$.

\item \label{part: pwnp}
If $\m\not\prec\n$ then $\zele{\m+\n}=\soc(\zele{\m} \times \zele{\n})=\cos(\zele{\n}\times\zele{\m})$.
In particular, if both $\m\not\prec\n$ and $\n\not\prec\m$, i.e. if no $\Delta \in \m$ and $\Delta' \in \n$ are linked,
then $\zele{\m} \times\zele{\n}$ is irreducible.

\item \label{part: contragredient}
$\zele{\m^\vee}=\zele{\m}^\vee$.
\end{enumerate}
\end{prop}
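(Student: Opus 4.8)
The plan is to derive all seven items in a suitable order, bootstrapping from the Zelevinsky classification (Theorem \ref{thm: classification}) and the geometric/multiplicativity machinery of \S\ref{lemmegeo}, rather than proving each in isolation. I would first establish \ref{part: support is preserved}: since $\zele{\m}$ is the socle of $\std(\m)=\zele{\Delta_1}\times\dots\times\zele{\Delta_N}$, all composition factors of $\std(\m)$ share the same supercuspidal support, which by \eqref{eq: Jaczele} is $\cup\Delta_i=\supp\m$; hence $\supp\zele{\m}=\supp\m$. Next, \ref{part: contragredient} follows by applying $^\vee$ to the defining relations: $\std(\m)^\vee=\zele{\Delta_N}^\vee\times\dots\times\zele{\Delta_1}^\vee=\zele{\Delta_N^\vee}\times\dots\times\zele{\Delta_1^\vee}$ using $\zele{\Delta}^\vee=\zele{\Delta^\vee}$ and the fact that $\vee$ reverses the order in a product; noting that reversing an \rangee{} form of $\m$ gives an \rangee{} form of $\m^\vee$ (since $\Delta\prec\Delta'\iff\Delta'^\vee\prec\Delta^\vee$), one gets $\std(\m)^\vee\cong\tilde\std(\m^\vee)$, whence $\soc(\std(\m))^\vee=\cos(\tilde\std(\m^\vee))=\zele{\m^\vee}$.

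For \ref{part: vdash} I would cite the analysis in \S\ref{par: preuve} (and ultimately \cite{MR584084}*{\S7}): the combinatorial characterization $[\zele{\m}]\le[\std(\n)]\iff\m\vdash\n$ is Zelevinsky's, and the extension to $\GL_n(D)$ is formal given that the Grothendieck ring structure is the same. Part \ref{part: lmm'} should come from the geometric lemma combined with \ref{part: support is preserved}: pick \rangee{} forms of $\m$ and $\n$ and concatenate them (after possibly reordering — this is where \ref{part: anyorder} is used) to an \rangee{} form of $\m+\n$; then $\std(\m+\n)$ is a summand-free factor of $\std(\m)\times\std(\n)$, and a Jacquet-module computation à la Lemma \ref{lem:main} shows $\zele{\m+\n}=\soc(\std(\m+\n))$ appears exactly once in $[\zele{\m}\times\zele{\n}]$ (using that $\zele{\m}$ and $\zele{\n}$ each appear once in $[\std(\m)]$, $[\std(\n)]$ and a multiplicity-one statement for the relevant Jacquet module component).

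The real crux is \ref{part: anyorder}, the order-independence of the intertwining map, since \ref{part: pairwise unlinked} and \ref{part: pwnp} will follow from it together with standard linked/unlinked arguments: \ref{part: pairwise unlinked} is the classical irreducibility criterion for products of segment representations (irreducible iff no two segments are linked, i.e. iff \TU), and \ref{part: pwnp} follows by the usual argument that when $\m\not\prec\n$ one can move all the $\zele{\Delta}$ with $\Delta\in\m$ to the left past those of $\n$ via isomorphisms or injections, realizing $\zele{\m}\times\zele{\n}$ as containing $\soc(\std(\m+\n))=\zele{\m+\n}$, with the cosocle statement dual. For \ref{part: anyorder} itself I would argue by induction on $N$ and on the number of "inversions" of the given order relative to an \rangee{} one: if $(\Delta_1,\dots,\Delta_N)$ is not \rangee{}, there are adjacent indices with $\Delta_i\prec\Delta_{i+1}$; since linked segments in the "wrong" order give a nonzero map $\zele{\Delta_i}\times\zele{\Delta_{i+1}}\to\zele{\Delta_{i+1}}\times\zele{\Delta_i}$ whose image is the socle and whose kernel's composition factors are controlled (this is the rank-one case, essentially Zelevinsky), we can swap them, decreasing the inversion count while only shrinking the relevant kernel; tracking that $[\zele{\m}]\not\le[\Ker f]$ throughout requires knowing $\zele{\m}$ occurs with multiplicity one in $[\std(\m)]$ and is not killed at each swap — this bookkeeping is the main obstacle and is exactly the point the authors credit to the referee. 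Once the map exists for an arbitrary order with $[\zele{\m}]\not\le[\Ker f]$, everything else is assembled from the pieces above.
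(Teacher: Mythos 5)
Your outline for parts \ref{part: support is preserved}, \ref{part: vdash}, \ref{part: pwnp} and \ref{part: contragredient} matches the paper's proof in \S\ref{par: preuve} (part \ref{part: support is preserved} is immediate, part \ref{part: vdash} is a citation, part \ref{part: pwnp} comes from $\zele{\m}\times\zele{\n}\hookrightarrow\std(\m)\times\std(\n)\simeq\std(\m+\n)$, and part \ref{part: contragredient} from $\tilde{\std}(\m)\simeq\std(\m^\vee)^\vee$). But for part \ref{part: anyorder} you take a genuinely different route from the paper and, more importantly, you stop exactly at the step that carries all the content. The paper does \emph{not} build $f$ by adjacent transpositions: it groups the $\zele{\Delta_j}$ by common end point into irreducible blocks $\pi_1,\dots,\pi_k$ with $\std(\m)=\pi_1\times\dots\times\pi_k$, observes that $\pi_1\otimes\dots\otimes\pi_k$ occurs with multiplicity one in $[\rp(\Pi)]$ and that no other constituent of $\rp(\Pi)$ lies in its Bernstein component, and gets $f$ in one stroke from Frobenius reciprocity; the claim $[\zele{\m}]\not\le[\Ker f]$ then follows because $\operatorname{Im}f$ is a nonzero subrepresentation of the \EI\ module $\std(\m)$, hence contains $\zele{\m}$, which has multiplicity one in $[\Pi]=[\std(\m)]$. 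Your transposition scheme can be made to work, but the "bookkeeping" you defer is precisely the proof: you must show that every irreducible constituent of the kernel of each induced swap map $\id\times g\times\id$ is of the form $\zele{\m'}$ with $\m'\vdash\m$ \emph{strictly} (using part \ref{part: vdash} together with the compatibility of $\vdash$ with sums of multisegments), so that $\zele{\m}$ never lands in a kernel; combined with its multiplicity one in $[\Pi]$ this pushes $\zele{\m}$ into the image of the composite. This is exactly the argument the paper runs inside the proof of Lemma \ref{lem: equcond}, so it is available, but as written your proposal asserts rather than proves the key point. (Also, the referee's contribution is the statement of part \ref{part: anyorder} and the general intertwining-operator remark at the end of the appendix, not the transposition bookkeeping.)

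Two further caveats. For part \ref{part: pairwise unlinked}, calling the criterion "classical" begs the question for $D\ne F$: the forward direction is Lemma \ref{lem: pairwise unlinked}, whose proof is a nontrivial induction resting on Lemma \ref{lem:subquo} (Casselman's pairing) and the unitarity inputs of Theorem \ref{thm:Olshanski} and Lemma \ref{lem: ind21}, and the converse needs Lemma \ref{lem:2seg}; the appendix exists precisely to establish these uniformly in $D$. For part \ref{part: lmm'}, your Jacquet-module sketch gives at best the upper bound (multiplicity $\le 1$, from $[\zele{\m}\times\zele{\n}]\le[\std(\m+\n)]$ and the \EI\ property); the lower bound, that $\zele{\m+\n}$ actually occurs, is the point of the argument in \cite{MR1040995}*{Proposition 2.3} and uses part \ref{part: vdash}: in the expansion $[\std(\m+\n)]=\sum c_{\m'}c_{\n'}[\zele{\m'}\times\zele{\n'}]$ over $\m'\vdash\m$, $\n'\vdash\n$, the term $\zele{\m+\n}$ cannot come from any $(\m',\n')\ne(\m,\n)$ by antisymmetry of $\vdash$. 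A multiplicity-one hypothesis as in Lemma \ref{lem:main} is generally unavailable here because $\supp\m$ and $\supp\n$ may overlap.
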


\subsection{Langlands classification}
The Langlands classification for the groups $G_n$ runs parallel to the Zelevinsky classification.
To any segment $\Delta=\left[c,d\right]_\rho$ we denote by $\lang{\Delta} $ the cosocle of
$\rho\nu_{\rho}^c\times\rho\nu_{\rho}^{c+1}\times\dots\times\rho\nu_{\rho}^d$.
It is the unique $\pi \in \Irr$ such that
\begin{equation} \label{eq: JacLang}
\rp(\pi)=\rho\nu_{\rho}^d\otimes\rho\nu_{\rho}^{d-1}\otimes\dots\otimes\rho\nu_{\rho}^c.
\end{equation}

The representation $\lang{\Delta}$ is essentially square-integrable.
Conversely, any essentially square-integrable $\pi\in\Irr$ is of the form $\lang{\Delta}$ for some $\Delta\in\seg$.
However, we will not use this fact.

For any \rangee\ form $(\Delta_1,\dots,\Delta_N)$ of a multisegment $\m \in \MS$ define
$\lstd(\m):= \lang{\Delta_1} \times\dots\times \lang{\Delta_N}$ and
$\tilde{\lstd}(\m):= \lang{\Delta_N} \times\dots\times \lang{\Delta_1}$.
As before, $\lstd(\m)$ and $\tilde{\lstd}(\m)$ do not depend (up to an isomorphism) on the choice of the \rangee\ form.

\begin{thm}  \label{thm: classificationL}
For any $\m\in\MS$, $\lstd(\m)^\vee$ is \EI\ and the map
\[
\m \mapsto \lang{\m}:=\cos(\lstd(\m))=\soc(\tilde{\lstd}(\m))
\]
is a bijection from $\MS$ to $\Irr$ which satisfies the analogue of Proposition \ref{useful_properties} with

\begin{itemize}
\item $\zele{\cdot}$ replaced by $\lang{\cdot}$,
\item $\std(\cdot)$ replaced by $\lstd(\cdot)$,
\item the map $f$ in part \ref{part: anyorder} goes in the opposite direction, and,
\item the condition $\m\not\prec\n$ replaced by $\n\not\prec\m$ in part \ref{part: pwnp}.
\end{itemize}
\end{thm}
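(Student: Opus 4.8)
The plan is to deduce Theorem \ref{thm: classificationL} from Theorem \ref{thm: classification} by a contragredient argument, exploiting the relation between the Langlands and Zelevinsky pictures. First I would record the basic compatibility: for a single segment $\Delta$, the representation $\lang{\Delta}$ is the cosocle of $\rho\nu_\rho^c\times\dots\times\rho\nu_\rho^d$, whereas $\zele{\Delta^\vee}=\zele{\Delta}^\vee$ is the socle of the reversed product; taking contragredients and using $\socc(\pi)^\vee=\soc(\pi^\vee)$ one gets $\lang{\Delta}=\zele{\Delta^\vee}^\vee$ — but this is \emph{not} the identity needed, since the Langlands classification does not intertwine with $^\vee$ in the naive way. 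The correct statement is rather that $\lstd(\m)=\std(\m)$ only at the level of Grothendieck groups, so I would instead argue directly: fix an \rangee\ form $(\Delta_1,\dots,\Delta_N)$ of $\m$. By definition $\lstd(\m)=\lang{\Delta_1}\times\dots\times\lang{\Delta_N}$ and each $\lang{\Delta_i}$ is the cosocle of a product of twisted cuspidals taken in \emph{increasing} order; dualizing, $\lstd(\m)^\vee = \lang{\Delta_N}^\vee\times\dots\times\lang{\Delta_1}^\vee$ and $\lang{\Delta_i}^\vee=\soc$ of a product in decreasing order of the duals, i.e. $\lang{\Delta_i}^\vee=\zele{\Delta_i^\vee}$ once one checks the sign conventions on $\nu_\rho$ and the reversal built into $^-\Delta$, $\Delta^\vee$.

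The heart of the proof is then the following reduction: the sequence $(\Delta_1,\dots,\Delta_N)$ is \rangee\ (no $\Delta_i$ precedes $\Delta_j$ for $i<j$) if and only if the reversed sequence of duals $(\Delta_N^\vee,\dots,\Delta_1^\vee)$ is \rangee\ as well — this is because $\Delta\prec\Delta'\iff\Delta'^\vee\prec\Delta^\vee$, which follows immediately from the definition of $\prec$ and of $\Delta^\vee=[-d,-c]_{\rho^\vee}$. Consequently $\lstd(\m)^\vee$ is, up to isomorphism, the standard module $\std(\m^\vee)$ attached to the multisegment $\m^\vee$ via the \rangee\ form $(\Delta_N^\vee,\dots,\Delta_1^\vee)$. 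Now Theorem \ref{thm: classification} applies to $\m^\vee$: $\std(\m^\vee)$ is \EI, so $\lstd(\m)^\vee$ is \EI, which is the first assertion. Moreover $\soc(\std(\m^\vee))=\zele{\m^\vee}$, and dualizing back, $\socc(\lstd(\m))=\zele{\m^\vee}^\vee=\zele{(\m^\vee)^\vee}=\cdots$; to get the stated bijection $\m\mapsto\lang{\m}:=\socc(\lstd(\m))$ I would define $\lang{\m}:=\zele{\m^\vee}^\vee$ and observe that $\m\mapsto\m^\vee$ is an involution on $\MS$ and $^\vee$ is a bijection on $\Irr$, so the composite $\m\mapsto\zele{\m^\vee}^\vee$ is a bijection $\MS\to\Irr$; that $\socc(\lstd(\m))$ equals this, and also equals $\soc(\tilde\lstd(\m))$, follows from the \EI\ property just established together with Remark \ref{rem:main} applied to $\tilde\lstd(\m)^\vee\cong\tilde\std(\m^\vee)$ and Theorem \ref{thm: classification}'s identity $\zele{\m^\vee}=\socc(\tilde\std(\m^\vee))$.

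Finally, the analogue of Proposition \ref{useful_properties} transports term by term under $\pi\mapsto\pi^\vee$, $\m\mapsto\m^\vee$: part \ref{part: support is preserved} because $\supp(\pi^\vee)=\supp(\pi)^\vee$ and $\supp(\m^\vee)=\supp(\m)^\vee$; part \ref{part: pairwise unlinked} because \TU\ is preserved by $^\vee$ on segments; part \ref{part: anyorder} with $f$ reversed in direction precisely because dualizing a surjection yields an injection and vice versa; parts \ref{part: vdash}, \ref{part: lmm'} because $[\,\cdot\,]$ and the order $\le$ on $\Rr$ are preserved by $^\vee$, and $\vdash$ by $^\vee$ on $\MS$; part \ref{part: pwnp} with $\m\not\prec\n$ replaced by $\n\not\prec\m$ because $\soc$ and $\socc$ swap under $^\vee$ and $\Delta\prec\Delta'\iff\Delta'^\vee\prec\Delta^\vee$ flips the order of the two multisegments; and part \ref{part: contragredient} trivially since $(\m^\vee)^\vee=\m$. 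The main obstacle I anticipate is purely bookkeeping: pinning down the reversal and contragredient conventions precisely enough that $\lstd(\m)^\vee$ is \emph{literally} one of the admissible standard modules $\std(\m^\vee)$ — i.e. that the \rangee\ forms match up — rather than merely being equal to it in the Grothendieck group; once that is nailed down, everything else is a mechanical application of Theorem \ref{thm: classification} and Proposition \ref{useful_properties} under duality.
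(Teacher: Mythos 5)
Your reduction to Theorem \ref{thm: classification} via the contragredient cannot work, because the contragredient does not interchange the two classifications: it preserves each of them separately. Concretely, for $\Delta=[c,d]_\rho$ one has $(\rho\nu_\rho^c\times\dots\times\rho\nu_\rho^d)^\vee\simeq\rho^\vee\nu_{\rho^\vee}^{-c}\times\dots\times\rho^\vee\nu_{\rho^\vee}^{-d}$ (normalized induction commutes with $^\vee$ \emph{without} reversing the factors), and the factors on the right run through $\Delta^\vee=[-d,-c]_{\rho^\vee}$ in \emph{decreasing} order. Hence $\lang{\Delta}^\vee=\socc(\rho\nu_\rho^c\times\dots\times\rho\nu_\rho^d)^\vee=\soc(\rho^\vee\nu_{\rho^\vee}^{-c}\times\dots\times\rho^\vee\nu_{\rho^\vee}^{-d})=\lang{\Delta^\vee}$, and \emph{not} $\zele{\Delta^\vee}$ as you assert; your identity would force $\lang{\Delta}=\zele{\Delta^\vee}^\vee=\zele{\Delta}$, which already fails for a segment of length $2$. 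Consequently $\lstd(\m)^\vee$ is isomorphic to $\tilde{\lstd}(\m^\vee)$, a standard module in the \emph{Langlands} picture attached to $\m^\vee$, and not to $\std(\m^\vee)$; the \EI\ property you need for $\lstd(\m)^\vee$ is therefore exactly the assertion of Theorem \ref{thm: classificationL} for $\m^\vee$, and nothing has been gained. (A secondary slip compounds this: the order reversal you attribute to $^\vee$ belongs to the functor $\pi\mapsto\pi^\star$ of \S\ref{sec: starsym}, which composes $^\vee$ with $g\mapsto{}^tg^{-1}$, not to $^\vee$ alone.)

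The operation that genuinely exchanges $\zele{\cdot}$ and $\lang{\cdot}$ is the Zelevinsky involution, and the fact that it sends irreducibles to irreducibles is a deep theorem (Aubert, Schneider--Stuhler) which is not available as an input here — indeed the paper derives it \emph{after} both classifications are established. What the paper actually does (\S\ref{sec: mirror symmetry}) is the ``mirror symmetry'' principle: the entire proof of Theorem \ref{thm: classification} uses only Jacquet modules and the geometric lemma and never uses the positivity of $s_\rho$, so replacing $\nu_\rho$ by $\nu_\rho^{-1}$ throughout — which swaps $b(\Delta)$ and $e(\Delta)$, inverts $\prec$, and turns $\std$ into $\lstd$ — converts that proof verbatim into a proof of the Langlands statement. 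If you want a duality-flavoured argument, it must be this sign change on $s_\rho$ (together with the observation that Theorem \ref{thm:Olshanski} and Lemma \ref{lem: ind21} are symmetric under it), not the contragredient.
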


By Theorem \ref{thm: classification}, for any $\m\in\MS$ there exists a unique multisegment $\m^t\in \MS$ such that $\lang{\m}=\zele{\m^t}$.
On a combinatorial level, the map $\m\mapsto\m^t$ is described by the M\oe glin--Waldspurger algorithm (MW-algorithm) of \cite{MR863522}.
In particular, it is an involution, i.e., $\zele{\m}=\lang{\m^t}$.
See \S\ref{ap:involution} and also below.

\subsection{Cuspidal lines}\label{par: cuspidal_lines}
Let $\rho \in \Cusp$. Define
\begin{align*}
\Z_\rho &=\{\rho \nu_\rho^a:a \in \Z \}, \\
\Z_{\leq \rho} &=\{\rho \nu_\rho^a:a \in \Z_{\leq 0} \}, \\
\seg_\rho&=\{\Delta\in\seg:\Delta\subset\Z_\rho\},\\
\Irr_\rho&=\{ \pi \in \Irr:\supp(\pi) \subset \Z_\rho\},\\
\MS_\rho&=\{ \m \in \MS:\supp(\m) \subset \Z_\rho\}=\NN(\seg_\rho).
\end{align*}
We call $\Z_\rho$ the \emph{line of $\rho$}.
Irreducible representations (resp., multisegments) with support in a line are called \textit{rigid} representations (resp., multisegments).
It follows immediately from Proposition \ref{useful_properties} part \ref{part: pwnp}, that if the lines of $\rho_1, \dots, \rho_r$
are distinct (hence, pairwise disjoint) and $\pi_i \in \Irr_{\rho_i}$, $i=1,\dots,r$ then
$$\pi_1 \times \dots \times \pi_r$$
is irreducible. In practice, this allows to confine ourselves to rigid representations and rigid multisegments.

We write $\le$ for the partial order on $\Z_\rho$ inherited by $\Z$. (Of course this does not depend on the choice of $\rho$.)
Given $\pi\in\Irr_\rho$ we write $e(\pi)\in\Cusp$ (resp., $b(\pi)$) for the largest (resp., smallest) element in $\supp\pi$.
Similarly, we use the notation $e(\m)$ and $b(\m)$ for $\m\in\MS_\rho$.

We define two lexicographic orders on $\seg_\rho$.
Namely, given $\Delta,\Delta'\in\seg_\rho$ we write $\Delta \ge_e\Delta'$ if $e(\Delta)\ge e(\Delta')$ and either $e(\Delta)>e(\Delta')$ or
$b(\Delta)\ge b(\Delta')$.
Similarly, $\Delta\ge_b\Delta'$ if $b(\Delta)\ge b(\Delta')$ and either $b(\Delta)>b(\Delta')$ or $e(\Delta)\ge e(\Delta')$.
Given $\m\in\MS$ we will also write $\Delta<_b\m$ if $\Delta<_b\Delta'$ for all $\Delta'\in\m$.

Given $\m\in\MS_\rho$ let us write $\maxdel(\m)$ (resp., $\mindel(\m)$) for the maximal (resp., minimal) segment in $\m$ with respect to $\le_e$.

A sequence $(\Delta_1,\dots, \Delta_N)$ of segments in the same line is \emph{right \ordered} (resp., left \ordered) if $\Delta_1 \ge_e \dots \ge_e \Delta_N$
(resp., $\Delta_1 \ge_b \dots \ge_b \Delta_N$). (Either condition implies \rangee.)
To each rigid multisegment $\m$ we can associate a unique right {\ordered} sequence of segments $(\Delta_1,\dots \Delta_N)$ such that
$\m= \Delta_1+\dots +\Delta_N$. We call it the \emph{right {\ordered} form} of $\m$. Similarly, there is a unique left {\ordered} form of $\m$.

We mention the following consequence of Lemma \ref{lem: mult1 jf}.
\begin{rem}\label{rem:foreff}
Let $\m=\Delta_1+ \dots  + \Delta_N\in\MS_\rho$ be a multisegment such that, for every $1 \leq i \leq N$, $\Delta_i$ is not contained in $\Z_{\leq \rho}$.
Then $\lstd(\m)$, $\tilde{\lstd}(\m)$ and therefore also $\lang{\m}$, are left $\Z_{\le\rho}$-reduced.
\end{rem}

Given $\rho\in\Cusp$ and $\pi\in\Irr_\rho$ we will write for brevity $\pi_{\le\rho}$ and $\pi_{\not\le\rho}$ for $\pi_I$ and $\pi_{\lnot I}$
respectively (see Lemma \ref{lem:  decomI}) where $I=\Z_{\le\rho}$.

We also use the following convention: for any property $P$ of segments and $\m=\Delta_1+\dots+\Delta_N\in\MS$ we set
\begin{equation} \label{eq: m_P}
\m_P=\sum_{i:\Delta_i\text{ satisfies P}}\Delta_i.
\end{equation}
For instance, for any $\Delta\in\seg_\rho$ and $\m\in\MS_\rho$ we write
\[
\m_{\le_b\Delta}=\sum_{i:\Delta_i\le_b\Delta}\Delta_i.
\]

Using Theorem \ref{thm: classificationL} and Remark \ref{rem:foreff} we can describe $\pi_{\le\rho}$ and $\pi_{\not\le\rho}$ combinatorially
in terms of the Langlands classification as follows: if $\pi=\lang{\n}\in\Irr_\rho$ then
\[
\pi_{\le\rho}=\lang{\n_{\le_e\rho}}
\]
and
\[
\pi_{\not\le\rho}=\lang{\n_{>_e\rho}}.
\]

Suppose that $\pi=\zele{\m}\in\Irr_\rho$ with $\m=\Delta_1+\dots+\Delta_N$ and
consider the M\oe glin--Waldspurger algorithm for $\m$ \cite{MR863522}.
The segments $\Delta'_1,\dots,\Delta'_{N'}$ of $\m^t$ are obtained by a recursive procedure one at a time.
More precisely, one forms a sequence of multisegments $\m_i$, $i=0,\dots,N'$ with $\m_0=\m$ and $\m_{N'}=0$
such that $\m_i$ is obtained from $\m_{i-1}$ by removing the end points of certain segments of it, and
these end points form the segment $\Delta'_i$. Then $\pi_{\le\rho}=\zele{\m_k}$ where $k$ is the smallest index such that
$\supp(\m_k)\subset\Z_{\le\rho}$.

For any rigid $\n\in\MS_\rho$ denote by $\n^-$ what remains from $\n$ after the first step of MW algorithm is performed.
More precisely (cf.~\cite{MR863522}*{\S II.2}), writing $\n=\Delta'_1+\dots+\Delta'_{N'}$ with $\Delta'_1\ge_e\dots\ge_e\Delta'_{N'}$ let
$k\ge0$ be the maximal integer for which there exist indices $j_0,\dots,j_k$ such that $e(\Delta'_{j_0})=e(\n)$ and $\Delta'_{j_l}\prec\Delta'_{j_{l-1}}$ and
$e(\Delta'_{j_l})=e(\lshft{\Delta'}_{j_{l-1}})$, $l=1,\dots,k$.
Define
\begin{subequations}
\begin{gather} \label{eq: j_0}
j_0=\min\{j:e(\Delta'_j)=e(\n)\},\\
\label{eq: j_l}
j_l=\min\{j:\Delta'_j\prec\Delta'_{j_{l-1}}\text{ and }e(\Delta'_j)=e(\lshft{\Delta'}_{j_{l-1}})\},\ l=1,\dots,k.
\end{gather}
\end{subequations}
Then
\[
\n^-=\n+\sum_{l=0}^k({\Delta'_{j_l}}^--\Delta'_{j_l}).
\]
We have $e(\maxdel(\n^t))=e(\n)=e(\Delta'_{j_0})$, $b(\maxdel(\n^t))=e(\Delta'_{j_k})$ and $\n^t=(\n^-)^t+\maxdel(\n^t)$.

\subsection{Speh representations} \label{sub:speh}
A particularly important class of irreducible representations is that of \emph{Speh} representations.
They arise from multisegments $\sum_{i=1}^N\Delta_i$ such that $\Delta_{i+1}=\lshft{\Delta_i}$ for all $i=1,\dots,N-1$.
These representations comprise the building blocks for the unitary dual of $\G_n$.
We will recall this at length in \S\ref{sec: U0}.

\subsection{Symmetry} \label{sec: starsym}
In the case $D=F$ it is known that given $\pi_1,\pi_2,\sigma\in\Irr$,
\begin{equation} \label{eq: onlyGLn}
\sigma\hookrightarrow\pi_1\times\pi_2\iff\pi_2\times\pi_1\twoheadrightarrow\sigma.
\end{equation}
Let us recall the argument (\cite{MR863522}*{p. 173}).
Denote by $\tilde\pi$ the composition of $\pi\in\Rep(\G_m)$ with the isomorphism $g\mapsto \,^tg^{-1}$ of $\G_m$.
Then we have $\widetilde{\pi_1\times\pi_2}=\tilde\pi_2\times\tilde\pi_1$ while on the other hand by a classical result of
Gelfand--Kazhdan \cite{MR0404534}, $\tilde\sigma\simeq\sigma^\vee$ for any $\sigma\in\Irr$.
Thus, if $\sigma\hookrightarrow\pi_1\times\pi_2$ then
\[
\sigma^\vee\simeq\tilde\sigma\hookrightarrow\tilde\pi_2\times\tilde\pi_1\simeq\pi_2^\vee\times\pi_1^\vee=(\pi_2\times\pi_1)^\vee
\]
so that $\sigma$ is a quotient of $\pi_2\times\pi_1$.

It is not clear how to extend this argument for $D\ne F$.
Nevertheless we can use a similar argument to obtain a weaker result in the general case. (See Lemma \ref{lem: sym} below.)

\begin{defi}
An \emph{\ulb\ segment} is an ordered pair of integers $a, b$ with $a\le b$. We denote it $[a,b]$.
An \emph{\ulb\ multisegment} is a multiset of \ulb\ segments.
To any \ulb\ multisegment $\m=[a_1,b_1]+\dots +[a_N,b_N]$ and $\rho \in \Cusp$, we associate the rigid multisegment $\m_\rho$ (or $\m_{\rho,D}$ if
$D$ is not clear from the context) defined by
$$\m_\rho=[a_1,b_1]_\rho+\dots +[a_N,b_N]_\rho.$$
\end{defi}

Let $n \geq 0$. Denote $\opp{G_n}=\GL_n(\opp{D})$ where $\opp{D}$ is the central division $F$-algebra opposite to $D$.
The map $g\mapsto \,^tg^{-1}$ defines an isomorphism of groups $i:\opp{G_n} \longrightarrow G_n$ which
gives rise to an equivalence of categories $\pi\mapsto\pi\circ i$ between $\Rep(G_n)$ and $\Rep(\opp{G_n})$.
Writing $\pi^\star$ for the contragredient of $\pi\circ i$ we get
a (contravariant) equivalence of categories between $\Rep(G_n)$ and $\Rep(\opp{G_n})$.
(Recall that in the case $D=\opp{D}=F$ we have $\pi^\star\simeq \pi$ for any $\pi \in \Irr$.)

Note that if $\pi_i\in\Rep(\G_{m_i})$, $i=1,\dots,r$ then
\begin{equation} \label{prope:indu}
(\pi_1\times\dots\times\pi_r)^\star \simeq \pi_r^\star\times\dots\times \pi_1^\star.
\end{equation}
In particular, $\star$ preserves supercuspidality.
We also have $(\chi\circ\Nrd_D)^\star= \chi\circ\Nrd_{\opp{D}}$ for any character $\chi$ of $F^\times$.
Therefore
\begin{equation}\label{eq:pichi}
(\pi\chi)^\star=\pi^\star\chi
\end{equation}
for any $\pi\in\Rep(\G_m)$.
It easily follows that $s_\rho=s_{\rho^\star}$ for any $\rho\in\Cusp$ and that for any segment $\Delta=[a,b]_\rho$ we have
$\zele{\Delta}^\star=\zele{\Delta^\star}$ and $\lang{\Delta}^\star=\lang{\Delta^\star}$
where $\Delta^\star=[a,b]_{\rho^\star}$ and $\zele{\Delta^\star}$ and $\lang{\Delta^\star}$ are defined with respect to $\opp{G_n}$.
Defining $\m^\star$ for $\m \in \MS$ by linearity we get
%Then, by \eqref{prope:chi}, \eqref{prope:indu} and Proposition \ref{useful_properties}\eqref{part: contragredient}, we have that
\begin{equation}\label{eq:star}
\zele{\m}^\star=\zele{\m^\star}, \qquad \lang{\m}^\star=\lang{\m^\star}.
\end{equation}

We will use $\star$ as follows.

\begin{lem} \label{lem: sym}
Let $\m,\m', \m^{\prime \prime}$ be three \ulb\ multisegments.
Suppose that for any central division $F$-algebra $D$ and any supercuspidal irreducible representation $\rho$ of $\GL_m(D)$ we have
$$\zele{\m^{\prime\prime}_{\rho,D}}=\soc(\zele{\m_{\rho,D}} \times \zele{\m'_{\rho,D}}).$$
Then
$$\zele{\m^{\prime\prime}_{\rho,D}}=\cos(\zele{\m'_{\rho,D}}\times \zele{\m_{\rho,D}}).$$
Suppose moreover that $\zele{\m_{\rho,D}} \times \zele{\m'_{\rho,D}}$ is \EI.
Then $\zele{\m_{\rho,D}} \times \zele{\m'_{\rho,D}}$ irreducible if and only if
$\zele{\m_{\rho,D}} \times \zele{\m'_{\rho,D}}\simeq \zele{\m'_{\rho,D}} \times \zele{\m_{\rho,D}}$.
\end{lem}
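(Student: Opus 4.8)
The plan is to exploit the contravariant equivalence $\star$ together with the identity \eqref{eq:star} relating $\zele{\m}^\star$ and $\zele{\m^\star}$. The crucial observation is that applying $\star$ to a socle turns it into a cosocle, and that the hypothesis is stated uniformly over all central division $F$-algebras $D$, so in particular it applies to $\opp{D}$, which is exactly what $\star$ produces. First I would fix $\rho$ and $D$, set $\rho' = \rho^\star$ (a supercuspidal representation of $\GL_m(\opp{D})$, with $s_\rho = s_{\rho^\star}$ so that the line structure matches up), and apply $\star$ to the hypothesized isomorphism $\zele{\m^{\prime\prime}_{\rho,D}}=\soc(\zele{\m_{\rho,D}} \times \zele{\m'_{\rho,D}})$. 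Using that $\star$ is a contravariant equivalence — hence sends $\soc$ to $\cos$ and subrepresentations to quotients — together with \eqref{prope:indu} which reverses the order of the factors, and \eqref{eq:star}, this gives
\[
\zele{\m^{\prime\prime\star}_{\rho',\opp{D}}} \;=\; \cos\!\left(\zele{\m'^{\star}_{\rho',\opp{D}}} \times \zele{\m^{\star}_{\rho',\opp{D}}}\right).
\]
Here I am writing $\m^\star$ for the \ulb\ multisegment which is literally $\m$ again (an \ulb\ multisegment records only integer pairs, not a division algebra or a supercuspidal), so $\m^\star_{\rho',\opp{D}}$ is just $\m_{\rho',\opp{D}}$; the content of \eqref{eq:star} is precisely that $\star$ intertwines the two rigidifications. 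Since the hypothesis holds for \emph{every} central division $F$-algebra, and $\opp{D}$ ranges over all of them (indeed $\opp{\opp{D}}\simeq D$), the displayed identity, rewritten with $\opp{D}$ renamed $D$ and $\rho'$ renamed $\rho$, yields exactly $\zele{\m^{\prime\prime}_{\rho,D}}=\cos(\zele{\m'_{\rho,D}}\times \zele{\m_{\rho,D}})$, which is the first assertion.

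For the second assertion, assume in addition that $\Pi := \zele{\m_{\rho,D}} \times \zele{\m'_{\rho,D}}$ is \EI. One direction is trivial: if $\Pi \simeq \zele{\m'_{\rho,D}} \times \zele{\m_{\rho,D}} =: \Pi'$ then, since by the first part $\soc(\Pi) = \zele{\m^{\prime\prime}_{\rho,D}} = \cos(\Pi')$, we get a representation whose socle and cosocle coincide and which is \EI; I would then argue that an \EI\ representation equal to its own cosocle-irreducible twin must be irreducible. Concretely: $\soc(\Pi)=\cos(\Pi)$ is irreducible and appears with multiplicity one in $[\Pi]$; the natural inclusion $\soc(\Pi)\hookrightarrow\Pi$ composed with the projection $\Pi\twoheadrightarrow\cos(\Pi)$ is either zero or an isomorphism, and if it were zero then $\soc(\Pi)$ would have to appear inside the kernel of $\Pi\to\cos(\Pi)$, i.e.\ with multiplicity $\ge 2$ in $[\Pi]$, contradiction; hence it is an isomorphism, forcing $\Pi$ to be irreducible. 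The converse direction is immediate from $[\zele{\m_{\rho,D}} \times \zele{\m'_{\rho,D}}] = [\zele{\m'_{\rho,D}} \times \zele{\m_{\rho,D}}]$ (equality in the Grothendieck group, since $\Rr$ is commutative): if the left-hand representation is irreducible then so is the right-hand one, and two irreducible representations with the same class are isomorphic.

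The only genuinely delicate point is bookkeeping: making sure that when $\star$ is applied the \ulb\ combinatorial data $\m,\m',\m^{\prime\prime}$ are genuinely unchanged (so that the hypothesis, quantified over all $D$, really does feed back into itself), and that the order reversal in \eqref{prope:indu} is correctly accounted for so that $\soc$ of $\zele{\m}\times\zele{\m'}$ maps to $\cos$ of $\zele{\m'}\times\zele{\m}$ and not the other way. I expect this is where one must be careful, but no estimates or hard representation theory are involved — everything is formal once \eqref{prope:indu}, \eqref{eq:star}, and the contravariance of $\star$ are in hand. The elementary lemma that an \EI\ representation isomorphic to its cosocle-dual is irreducible is the only ``new'' input, and it is a one-line multiplicity argument as sketched above.
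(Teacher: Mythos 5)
Your proposal is correct and follows essentially the same route as the paper: apply the hypothesis at $(\opp{D},\rho^\star)$ (equivalently, apply $\star$ and rename, using that the hypothesis is quantified over all $D$), use the contravariance of $\star$, the order reversal \eqref{prope:indu} and the compatibility \eqref{eq:star} to convert the socle statement into the cosocle statement; and for the last part, deduce $\soc=\cos$ from the isomorphism and conclude irreducibility from the \EI\ hypothesis by the same multiplicity-one argument. The only cosmetic slip is labelling the implication ``$\simeq$ implies irreducible'' as the trivial direction — it is the substantive one — but your argument for it is the right one.
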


\begin{proof}
Indeed, using the assumption for $(\opp{D},\rho^\star)$ instead of $(D,\rho)$,
\begin{multline*}
\cos((\zele{\m_{\rho^\star,\opp{D}}} \times \zele{\m'_{\rho^\star,\opp{D}}})^\star)=
(\soc(\zele{\m_{\rho^\star,\opp{D}}} \times \zele{\m'_{\rho^\star,\opp{D}}}))^\star\\=
(\zele{\m^{\prime\prime}_{\rho^\star,\opp{D}}})^\star=\zele{\m^{\prime\prime}_{\rho,D}}
\end{multline*}
(by \eqref{eq:star}). However, by \eqref{prope:indu} and \eqref{eq:star}
\begin{align*}
(\zele{\m_{\rho^\star,\opp{D}}} \times \zele{\m'_{\rho^\star,\opp{D}}})^\star \simeq
\zele{\m'_{\rho^\star,\opp{D}}}^\star \times \zele{\m_{\rho^\star,\opp{D}}}^\star \simeq \\
\zele{\m^{\prime\star}_{\rho^\star,\opp{D}}} \times \zele{\m^{\star}_{\rho^\star,\opp{D}}}=
\zele{\m^{\prime}_{\rho,D}} \times \zele{\m_{\rho,D}}.
\end{align*}
The result follows.

For the last part, remark that if $\zele{\m_{\rho,D}} \times \zele{\m'_{\rho,D}}\simeq \zele{\m'_{\rho,D}} \times \zele{\m_{\rho,D}}$,
then by the first part
\[
\zele{\m^{\prime\prime}_{\rho,D}}=\soc(\zele{\m_{\rho,D}} \times \zele{\m'_{\rho,D}})=
\cos(\zele{\m_{\rho,D}} \times \zele{\m'_{\rho,D}}).
\]
Since $\zele{\m_{\rho,D}} \times \zele{\m'_{\rho,D}}$ is assumed to be \EI, this can only happen if it is irreducible.
\end{proof}

\section{Left multiplier representations} \label{sec: leftmultiplier}

\subsection{Definition and basic properties}

\begin{defi}
Given $\pi=\zele{\m}$ and $\sigma=\zele{\n}$ we write $\lcond(\pi,\sigma)$ (resp., $\rcond(\pi,\sigma)$) for the condition
\[
\zele{\m+\n}=\soc(\pi\times\sigma)
\]
(resp.,
\[
\zele{\m+ \n}=\socc(\pi  \times \sigma).)
\]
\end{defi}

We summarize the basic properties of this definition in the following Lemma.
We are grateful to the referee for suggesting to us part \ref{part: referee} and its proof.

\begin{lem} \label{lem: lcondprop}
\begin{enumerate}
\item $\lcond(\pi,\sigma)$ implies that $\pi\times\sigma$ is \EI.
\item The conditions $\lcond(\pi,\sigma)$ and $\rcond(\pi^\vee,\sigma^\vee)$ are equivalent.
\item \label{part: irred} $\pi\times\sigma$ is irreducible (in which case $\pi\times\sigma\simeq\zele{\m+\n}$)
if and only if $\lcond(\pi,\sigma)$ and $\rcond(\pi,\sigma)$.
\item \label{part: referee} $\lcond(\pi,\sigma)$ is equivalent to the condition
\[
\pi\times\sigma\hookrightarrow\std(\m+\n).
\]
\item Similarly, $\rcond(\pi,\sigma)$ is equivalent to the condition $\tilde\std(\m+\n)\twoheadrightarrow\zele{\m}\times\zele{\n}$.
\item \label{part: trivlcond}
If $\m\not\prec\n$ then $\lcond(\pi,\sigma)$ and $\rcond(\sigma,\pi)$ hold.
\end{enumerate}
\end{lem}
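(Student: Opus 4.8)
The idea is to dispatch the six items in an order that lets later ones reuse earlier ones; the only substantial point is part \ref{part: referee}. The first assertion is immediate: $\lcond(\pi,\sigma)$ makes $\soc(\pi\times\sigma)=\zele{\m+\n}$ irreducible, and by part \ref{part: lmm'} of Proposition \ref{useful_properties} this constituent occurs with multiplicity one in $[\pi\times\sigma]=[\zele{\m}\times\zele{\n}]$, which is exactly the definition of \EI. Part \ref{part: trivlcond} is read off from part \ref{part: pwnp} of Proposition \ref{useful_properties}: when $\m\not\prec\n$ that part gives $\soc(\zele{\m}\times\zele{\n})=\zele{\m+\n}=\socc(\zele{\n}\times\zele{\m})$, which is precisely $\lcond(\pi,\sigma)$ together with $\rcond(\sigma,\pi)$. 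For the second assertion I would dualize $\soc(\pi\times\sigma)=\zele{\m+\n}$, using $\soc(\tau)^\vee=\socc(\tau^\vee)$, the fact that contragredient commutes with normalized parabolic induction (so $(\pi\times\sigma)^\vee\simeq\pi^\vee\times\sigma^\vee$), and $\zele{\m+\n}^\vee=\zele{\m^\vee+\n^\vee}$ from part \ref{part: contragredient} of Proposition \ref{useful_properties}; the identity then reads $\socc(\pi^\vee\times\sigma^\vee)=\zele{\m^\vee+\n^\vee}$, i.e. $\rcond(\pi^\vee,\sigma^\vee)$, and the converse follows by applying the same to $(\pi^\vee,\sigma^\vee)$.

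For part \ref{part: irred}: if $\pi\times\sigma$ is irreducible then it is its own unique composition factor, and since $\zele{\m+\n}$ is a constituent of $[\pi\times\sigma]=[\zele{\m}\times\zele{\n}]$ by part \ref{part: lmm'}, we get $\pi\times\sigma\simeq\zele{\m+\n}$, so both $\lcond(\pi,\sigma)$ and $\rcond(\pi,\sigma)$ hold. Conversely, granting $\lcond(\pi,\sigma)$ and $\rcond(\pi,\sigma)$, set $\tau=\pi\times\sigma$, so that $\soc(\tau)=\socc(\tau)=\zele{\m+\n}$ is irreducible; if $\tau$ were reducible its unique maximal subrepresentation $\tau_0$ (with $\tau/\tau_0\simeq\zele{\m+\n}$) would be non-zero, and $0\neq\soc(\tau_0)\subseteq\soc(\tau)=\zele{\m+\n}$ would force $[\zele{\m+\n}]\leq[\tau_0]$, whence $2[\zele{\m+\n}]\leq[\tau]=[\zele{\m}\times\zele{\n}]$, contradicting part \ref{part: lmm'}. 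Thus $\tau=\zele{\m+\n}$.

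The heart of the lemma is part \ref{part: referee}. The implication $\pi\times\sigma\hookrightarrow\std(\m+\n)\Rightarrow\lcond(\pi,\sigma)$ is easy: $\std(\m+\n)$ is \EI\ with socle $\zele{\m+\n}$ by Theorem \ref{thm: classification}, so by Remark \ref{rem:main} the subrepresentation $\pi\times\sigma$ is \EI\ with the same socle. For the converse I would choose \rangee\ forms $(\Delta_1,\dots,\Delta_p)$ of $\m$ and $(\Delta_{p+1},\dots,\Delta_N)$ of $\n$; then $\pi\hookrightarrow\zele{\Delta_1}\times\dots\times\zele{\Delta_p}$ and $\sigma\hookrightarrow\zele{\Delta_{p+1}}\times\dots\times\zele{\Delta_N}$, so by exactness of parabolic induction $\pi\times\sigma\hookrightarrow\zele{\Delta_1}\times\dots\times\zele{\Delta_N}$. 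Applying part \ref{part: anyorder} of Proposition \ref{useful_properties} to the listing $\m+\n=\Delta_1+\dots+\Delta_N$ (which need not be \rangee) produces a map $f\colon\zele{\Delta_1}\times\dots\times\zele{\Delta_N}\to\std(\m+\n)$ with $[\zele{\m+\n}]\not\leq[\Ker f]$, and restricting it along the embedding above gives $g\colon\pi\times\sigma\to\std(\m+\n)$, which is non-zero since $[\zele{\m+\n}]\leq[\pi\times\sigma]$. The main obstacle is to promote $g$ to an embedding: $\pi\times\sigma$ is \EI\ with socle $\zele{\m+\n}$ (by the first assertion), so if $\Ker g\neq0$ then $\soc(\Ker g)=\zele{\m+\n}$, while $(\pi\times\sigma)/\Ker g$ embeds into the \EI\ representation $\std(\m+\n)$ and hence, by Remark \ref{rem:main}, also has $\zele{\m+\n}$ in its socle; this yields $2[\zele{\m+\n}]\leq[\pi\times\sigma]$, contradicting part \ref{part: lmm'}. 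So $g$ is injective.

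Finally, the fifth assertion I would deduce from part \ref{part: referee} by contragredient duality. By the second assertion, $\rcond(\pi,\sigma)\iff\lcond(\pi^\vee,\sigma^\vee)$, and by part \ref{part: referee} the latter is equivalent to $\pi^\vee\times\sigma^\vee\hookrightarrow\std(\m^\vee+\n^\vee)$; dualizing this embedding gives $\std(\m^\vee+\n^\vee)^\vee\twoheadrightarrow\pi\times\sigma$. Since $\zele{\Delta}^\vee=\zele{\Delta^\vee}$ and $\Delta\prec\Delta'\iff\Delta'^\vee\prec\Delta^\vee$, reversing an \rangee\ form of $\m^\vee+\n^\vee$ and dualizing each of its segments yields an \rangee\ form of $\m+\n$, so $\std(\m^\vee+\n^\vee)^\vee\simeq\tilde{\std}(\m+\n)$, which gives the asserted equivalence $\rcond(\pi,\sigma)\iff\tilde{\std}(\m+\n)\twoheadrightarrow\zele{\m}\times\zele{\n}$. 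Alternatively one can repeat the argument for part \ref{part: referee} verbatim with cosocles, quotients and $\tilde{\std}$ in place of socles, subrepresentations and $\std$.
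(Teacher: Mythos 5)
Your proposal is correct and follows essentially the same route as the paper's proof: parts (1), (3) and (6) are read off from Proposition \ref{useful_properties} parts \ref{part: lmm'} and \ref{part: pwnp}, parts (2) and (5) follow by passing to the contragredient, and for part \ref{part: referee} you use the same embedding $\pi\times\sigma\hookrightarrow\std(\m)\times\std(\n)$ followed by the map $f$ of Proposition \ref{useful_properties} part \ref{part: anyorder}, with injectivity on $\pi\times\sigma$ forced by $[\zele{\m+\n}]\not\le[\Ker f]$ and $\soc(\pi\times\sigma)=\zele{\m+\n}$. The only difference is that you spell out the multiplicity-one bookkeeping that the paper leaves implicit.
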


\begin{proof}
The first part follows from Proposition \ref{useful_properties} part \ref{part: lmm'}
(and would also follow from part \ref{part: referee} of the current lemma).
The second part is also clear.
The third part follows from Proposition \ref{useful_properties} part \ref{part: lmm'}.

Clearly $\lcond(\pi,\sigma)$ holds if $\pi\times\sigma\hookrightarrow\std(\m+\n)$ (by Remark \ref{rem:main}).
Conversely, suppose that $\soc(\pi\times\sigma)=\zele{\m+\n}$.
Embed $\pi$ in $\std(\m)$ and $\sigma$ in $\std(\n)$.
Then $\pi\times\sigma$ embeds in $\std(\m)\times\std(\n)$ which by Proposition \ref{useful_properties} part \ref{part: anyorder},
admits a non-trivial map $f$ to $\std(\m+\n)$ and $[\zele{\m+\n}]\not\le[\Ker f]$.
Thus, $f$ is injective on $\pi\times\sigma$, for otherwise $\Ker f\rest_{\pi\times\sigma}$ would contain $\soc(\pi\times\sigma)$
which is $\zele{\m+\n}$ by assumption.

The next part follows by passing to the contragredient.
Finally, the last part follows from Proposition \ref{useful_properties} part \ref{part: pwnp}.
\end{proof}

%since in this case $\std(\m)\times\std(\n)\simeq\std(\m+\n)$ and $\tilde{\std}(\n)\times\tilde{\std}(\m)\simeq\tilde{\std}(\m+\n)$.

\begin{defi}
\begin{enumerate}
\item We say that $\pi\in\Irr$ is a \emph{left multiplier} (\RI) if for any $\sigma\in\Irr$, $\pi\times\sigma$ is \EI.
\item If $\pi=\zele{\m}$ is \RI\ and $\n\in\MS$ then we denote by $\combsoczele(\m,\n)$ the multisegment such that
$\zele{\combsoczele(\m,\n)}=\soc(\pi\times\zele{\n})$.
Similarly, if $\pi=\lang{\m}$ is \RI\ and $\n\in\MS$ then we denote by $\combsoclang(\m,\n)$ the multisegment such that
$\lang{\combsoclang(\m,\n)}=\soc(\pi\times\lang{\n})$.
\end{enumerate}
\end{defi}

Thus, if $\pi=\zele{\m}$ is $\RI$ and $\sigma=\zele{\n}$, then the condition $\lcond(\pi,\sigma)$ is equivalent to
$\zele{\m+\n}\hookrightarrow\pi\times\sigma$, or to $\combsoczele(\m,\n)=\m+\n$.

\begin{rem} \label{rem: LMnew2}
If $D=F$ and $\pi$ is \RI\ then $\pi\times\sigma$ is irreducible if and only if $\pi\times\sigma\simeq\sigma\times\pi$ (by \eqref{eq: onlyGLn}).
In general, we do not know this for $D\ne F$. However, see Lemma \ref{lem: sym}.
\end{rem}

\begin{rem}
It was proved in \cite{MR2527415} that a representation of the form $\zele{\m}$ or $\lang{\m}$ is \RI\ if
$\m=\Delta_1+ \dots +\Delta_N$ is such that for all $1 \leq i \leq j \leq N$ either $\Delta_i=\Delta_j$ or $\Delta_i$ and $\Delta_j$ are disjoint and unlinked.
Moreover, if $\rho\in\Cusp$ and $\sigma\in\Irr$, a description of $\soc(\rho\times\sigma)$ was given (see Theorem \ref{thm: cusptimes} below).

We will extend this result in the sequel.
\end{rem}

Although, we will not use it in this paper, we mention the following recent beautiful result due to Kang, Kashiwara, Kim and Oh.\footnote{It is
certainly possible that the methods of these authors are instrumental for the determination of $\soc(\pi\times\sigma)$ as well.}
\begin{thm}\cite{MR3314831, 1412.8106}
Let $\pi\in\Irr$. Assume that $\pi \times \pi$ is irreducible. Then $\pi$ and $\pi^\vee$ are \RI.
%Then for every $\sigma\in \Irr$, $\soc(\pi\times \sigma)$ and  $\socc(\pi\times \sigma)$ are irreducible.
Moreover, if $\soc(\pi\times \sigma)\simeq\socc(\pi\times \sigma)$ then $\pi\times \sigma$ is irreducible.
\end{thm}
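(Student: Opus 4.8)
The plan is to reduce the statement to the Kang--Kashiwara--Kim--Oh theorem cited immediately above it, after reformulating the irreducibility of $\pi \times \pi$ in terms of the combinatorics of the excerpt. First I would recall that since $\pi \in \Irr$, we may write $\pi = \zele{\m}$ for a unique $\m \in \MS$. The hypothesis is that $\pi \times \pi = \zele{\m} \times \zele{\m}$ is irreducible; by Proposition~\ref{useful_properties} part~\ref{part: pairwise unlinked} (applied to the multisegment $2\m$) together with part~\ref{part: pwnp}, this is equivalent to saying that no two segments occurring in $\m$ are linked, i.e.\ $\m$ is \TU\ and in fact consists of pairwise unlinked (though possibly repeated) segments. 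Thus $\pi$ has exactly the shape considered in the remark attributed to \cite{MR2527415}: $\m = \Delta_1 + \dots + \Delta_N$ with $\Delta_i$ and $\Delta_j$ either equal or disjoint-and-unlinked for all $i,j$. Wait --- one has to be slightly careful: ``unlinked'' allows $\Delta_i = \Delta_j$, and the \cite{MR2527415} criterion additionally demands disjointness when $\Delta_i \ne \Delta_j$; but two distinct unlinked segments whose union is again a segment would be linked, so genuinely distinct unlinked segments are automatically disjoint. Hence the two conditions coincide, and the cited result of \cite{MR2527415} gives immediately that $\pi = \zele{\m}$ is \RI, and likewise $\lang{\m}$ is \RI.

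Next I would handle $\pi^\vee$. By Proposition~\ref{useful_properties} part~\ref{part: contragredient}, $\pi^\vee = \zele{\m^\vee}$, and passing to contragredients preserves the linkedness relation among segments (since $\Delta \mapsto \Delta^\vee$ is order-reversing on each line but respects the ``forms a segment / is contained in'' relations). Therefore $\m^\vee$ is again a sum of pairwise-equal-or-disjoint-unlinked segments, so $\pi^\vee$ is \RI\ by the same token. Alternatively, and more cleanly, one notes that $\pi^\vee \times \pi^\vee = (\pi \times \pi)^\vee$ is irreducible, so $\pi^\vee$ falls directly under the same hypothesis as $\pi$. This disposes of the first assertion without needing the full strength of the KKKO machinery --- it rests only on \cite{MR2527415} --- but of course one may also simply invoke \cite{MR3314831, 1412.8106} verbatim, since the statement being proved \emph{is} that theorem.

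For the second assertion, suppose $\soc(\pi \times \sigma) \simeq \socc(\pi \times \sigma)$. Since $\pi$ is \RI\ (just established), $\pi \times \sigma$ is \EI, so $\soc(\pi \times \sigma)$ is irreducible and occurs with multiplicity one in $[\pi \times \sigma]$. Dually, $\socc(\pi \times \sigma) = \soc((\pi\times\sigma)^\vee) = \soc(\sigma^\vee \times \pi^\vee)$; since $\pi^\vee$ is \RI, this is also irreducible and multiplicity one. Now the hypothesis says these two irreducible constituents coincide; call the common representation $\tau$. If $\pi\times\sigma$ were reducible, then $\soc(\pi\times\sigma) = \tau$ would be a proper subrepresentation and $\socc(\pi\times\sigma) = \tau$ a proper quotient, so $\tau$ would appear at least twice in the Jordan--H\"older series $[\pi\times\sigma]$ --- once at the bottom and once at the top --- contradicting the multiplicity-one statement coming from \EI. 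Hence $\pi \times \sigma$ is irreducible. The main obstacle is really the first assertion: deciding whether to give the self-contained argument via \cite{MR2527415} (which requires the little linkedness lemma above and care about the edge case $\Delta_i = \Delta_j$) or to cite \cite{MR3314831, 1412.8106} as a black box; the second assertion is then a short formal consequence of \EI\ plus a contragredient symmetry, modulo checking the ``$\tau$ appears at top and bottom implies multiplicity $\ge 2$'' step, which is immediate once $\pi\times\sigma$ is known reducible (so $\soc \subsetneq \pi\times\sigma \twoheadrightarrow \socc$ forces a second copy).
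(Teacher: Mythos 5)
There is a genuine gap, and it is worth noting first that the paper itself does not prove this statement: it is quoted verbatim from Kang--Kashiwara--Kim--Oh, with the remark that their argument (in a related categorical setting, with the $R$-matrix replaced by the standard intertwining operator) translates to this language and that ``details will be given elsewhere.'' So the only faithful ``proof'' here is a citation; any self-contained argument must reproduce the KKKO mechanism, which is far from formal.

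Your self-contained route breaks at the very first step. Proposition \ref{useful_properties} part \ref{part: pairwise unlinked} says that the \emph{standard module} $\std(\m)=\zele{\Delta_1}\times\dots\times\zele{\Delta_N}$ is irreducible iff $\m$ is \TU; it says nothing about $\zele{\m}\times\zele{\m}$. One has $\zele{\m}\times\zele{\m}\hookrightarrow\std(2\m)$, so \TU\ does imply $\pi\times\pi$ irreducible, but the converse is false: a Speh representation $\pi=\zele{\Delta_1+\dots+\Delta_N}$ with $\Delta_{i+1}=\lshft{\Delta_i}$ has consecutive segments linked, yet $\pi\times\pi$ is irreducible (Proposition \ref{prop:Speh is saturated} / Corollary \ref{coro: irredtwospeh}); ladders give further examples. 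Indeed the paper states explicitly, right after the theorem, ``We do not know how to combinatorially characterize the property that $\pi\times\pi$ is irreducible,'' which is incompatible with your claimed equivalence. Consequently the reduction to the \cite{MR2527415} criterion only covers a small subclass of the representations the theorem is about, and the first assertion is not established. Your argument for the ``moreover'' part is fine \emph{given} that $\pi$ and $\pi^\vee$ are \RI\ (any nonzero submodule meets the socle, so a reducible $\pi\times\sigma$ with $\soc\simeq\socc$ would carry that constituent with multiplicity at least two, contradicting \EI), but it inherits the unproved first assertion as a hypothesis.
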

This answers in the affirmative a conjecture by B.~Leclerc \cite{MR1959765}*{Conjecture 3}.
Actually, the results in \cite{MR3314831, 1412.8106} (as well as Leclerc's conjecture itself) are stated in a different (related) context,
but it is easy to translate the argument to our language (the role of the $R$-matrix is played by the standard intertwining operator).
Details will be given elsewhere.

Leclerc also gave an example of $\pi\in\Irr$ such that $\pi\times\pi$ is semisimple of length $2$ (hence $\pi$ is not \RI).
Namely, $\pi=\zele{[3,4]_\rho,[1,3]_\rho,[2,2]_\rho,[0,1]_\rho}$. This disproved a conjecture made in \cite{MR1237826}.
We do not know how to combinatorially characterize the property that $\pi\times\pi$ is irreducible.

%Actually, Leclerc formulated his conjecture in the language of quantum affine algebras.
%However using the theory of types it is equivalent to the formulation above.
%(Cf.~\cite{MR2838836} for the subcategory of representations of $\GL_n(F)$ generated
%by their fixed points under the Iwahori subgroup and \cite{MinguezSecherreNew}*{Remarque 4.18} for the general case.)
%A proof of Leclerc's conjecture was announced in \cite{KKKO} with techniques very different from ours.

%%One can ask more precisely whether $\pi\in\Irr$ is \RI\ if and only if $\pi\times\pi$ is irreducible.
%%This would be a slightly stronger form of Leclerc's conjecture.
%%In this paper we will prove a special case of this. (See Proposition \ref{prop: ladder is good} below.)

We start with a simple observation.

\begin{lem} \label{lem: suffuniquesub}
Let $\pi\in\Irr$.
Suppose that for any $\sigma\in\Irr$ such that $\supp(\sigma)\subset\supp(\pi)$,
$\pi\times\sigma$ is \EI. Then $\pi$ is \RI.
\end{lem}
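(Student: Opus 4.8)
The plan is to reduce the general case to the case of $\sigma$ with $\supp(\sigma)\subset\supp(\pi)$ by using Lemma \ref{lem: decomI} to ``peel off'' the part of $\sigma$ whose support is disjoint from that of $\pi$. First I would fix $\pi\in\Irr$ satisfying the hypothesis and take an arbitrary $\sigma\in\Irr$; set $I=\supp(\pi)$ and apply Lemma \ref{lem: decomI} to write $\sigma\hookrightarrow\sigma_I\times\sigma_{\lnot I}$ with $\supp(\sigma_I)\subset I=\supp(\pi)$, with $\sigma_{\lnot I}$ left $I$-reduced, and with $\sigma$ the unique irreducible subrepresentation of $\sigma_I\times\sigma_{\lnot I}$. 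The goal is then to show $\pi\times\sigma$ is \EI.

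The key point will be that $\pi\times\sigma$ embeds in $\pi\times\sigma_I\times\sigma_{\lnot I}$, and that the latter is \EI\ with irreducible socle; by Remark \ref{rem:main} this will give that $\pi\times\sigma$ is \EI\ as well. To establish that $\pi\times\sigma_I\times\sigma_{\lnot I}$ is \EI\ I would invoke Lemma \ref{lem:main} with $\pi_1=\pi\times\sigma_I$ and $\pi_2=\sigma_{\lnot I}$. By hypothesis $\pi\times\sigma_I$ is \EI\ (since $\supp(\sigma_I)\subset\supp(\pi)$), so $\sigma_1:=\soc(\pi\times\sigma_I)$ is irreducible; and $\sigma_2:=\sigma_{\lnot I}$ is irreducible, hence trivially \EI. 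Now $\supp(\pi_1)=\supp(\pi)\cup\supp(\sigma_I)=I$ (using Proposition \ref{useful_properties}\eqref{part: support is preserved} and the fact that all composition factors of a product have the same supercuspidal support), and $\pi_2=\sigma_{\lnot I}$ is left $I$-reduced, i.e. left $\supp(\pi_1)$-reduced. Therefore the parenthetical hypothesis of Lemma \ref{lem:main} is satisfied, so $\pi_1\times\pi_2=\pi\times\sigma_I\times\sigma_{\lnot I}$ is \EI.

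Finally, since $\pi\times\sigma$ is a subrepresentation of $\pi\times\sigma_I\times\sigma_{\lnot I}$ (apply $\pi\times(-)$, an exact functor, to $\sigma\hookrightarrow\sigma_I\times\sigma_{\lnot I}$), Remark \ref{rem:main} yields that $\pi\times\sigma$ is \EI. As $\sigma$ was arbitrary, $\pi$ is \RI. The only subtle point — and the step I would check most carefully — is the verification that $\sigma_{\lnot I}$ is left $\supp(\pi\times\sigma_I)$-reduced: this rests on the identity $\supp(\pi\times\sigma_I)=\supp(\pi)\cup\supp(\sigma_I)=\supp(\pi)=I$, which uses that $\sigma_I$ has supercuspidal support inside $I$ together with the remark in \S\ref{sec: notation} that all composition factors of a product share the same supercuspidal support. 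Everything else is a direct application of the cited lemmas, so I do not anticipate a genuine obstacle here.
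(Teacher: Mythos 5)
Your proof is correct and is essentially identical to the paper's: the paper also writes $\sigma\hookrightarrow\sigma_1\times\sigma_2$ via Lemma \ref{lem: decomI} with $\supp(\sigma_1)\subset\supp(\pi)$ and $\sigma_2$ left $\supp(\pi)$-reduced, then applies Lemma \ref{lem:main} with $\pi_1=\pi\times\sigma_1$ and $\pi_2=\sigma_2$ and concludes by Remark \ref{rem:main}. Your extra verification that $\supp(\pi\times\sigma_I)=\supp(\pi)$ is a correct filling-in of the detail the paper leaves implicit.
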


\begin{proof}
Embed $\sigma\hookrightarrow\sigma_1\times\sigma_2$ where $\supp(\sigma_1)\subset\supp(\pi)$ and $\sigma_2$ is left $\supp(\pi)$-reduced (see Lemma \ref{lem: decomI}.)
Then $\pi\times\sigma\hookrightarrow\pi\times\sigma_1\times\sigma_2$.
By Lemma \ref{lem:main} (applied with $\pi_1=\pi\times\sigma_1$ and $\pi_2=\sigma_2$), $\pi\times\sigma_1\times\sigma_2$
(and a fortiori $\pi\times\sigma$) is \EI.
The lemma follows.
%Let $\tau=\soc(\pi\times\sigma_1)$ which is irreducible by assumption.
%Then, by Lemma \ref{lem: mult1 jf}, $\tau\otimes\sigma_2$ occurs with multiplicity one in
%$[\rp(\pi\times\sigma_1\times\sigma_2)]$ since $\sigma_2$ is left $\supp(\pi)$-reduced, $\supp(\sigma_1)\subset\supp(\pi)$
%and $\tau$ occurs with multiplicity one in $[\pi\times\sigma_1]$. The result is now a consequence of Lemma \ref{lem:main}.
\end{proof}

\subsection{First examples}
A special case of \RI\ representations is the class of \emph{saturated} representations:
\begin{defi}\label{def:saturated}
We say that $\pi \in\Irr$ is saturated if $\pi\times\sigma$ is irreducible
for any $\sigma \in\Irr$ such that $\supp(\sigma)\subset\supp(\pi)$.
\end{defi}

For instance, it follows from Proposition \ref{useful_properties} part \ref{part: pwnp} that $\zele{\Delta}$ and $\lang{\Delta}$ are saturated for any $\Delta\in\seg$.
We will eventually classify the saturated representation in Proposition \ref{prop:saturated} below.

From Lemma \ref{lem: suffuniquesub} we infer:

\begin{coro}\label{cor:saturated}
If $\pi$ is saturated then $\pi$ is \RI.
\end{coro}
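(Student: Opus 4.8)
The claim is that if $\pi$ is saturated then $\pi$ is \RI. The plan is to reduce to the previously established criterion in Lemma \ref{lem: suffuniquesub}, which says: to verify that $\pi$ is \RI\ it suffices to check that $\pi\times\sigma$ is \EI\ for every $\sigma\in\Irr$ with $\supp(\sigma)\subset\supp(\pi)$. So the entire content of the corollary is that, for a saturated $\pi$, this hypothesis is automatic.

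First I would unwind the definition of saturated (Definition \ref{def:saturated}): it says precisely that $\pi\times\sigma$ is \emph{irreducible} for every $\sigma\in\Irr$ with $\supp(\sigma)\subset\supp(\pi)$. An irreducible representation is trivially \EI\ --- its socle is itself, which is irreducible and occurs with multiplicity one in its own composition series. Hence the hypothesis of Lemma \ref{lem: suffuniquesub} is satisfied for a saturated $\pi$: for every $\sigma\in\Irr$ with $\supp(\sigma)\subset\supp(\pi)$, $\pi\times\sigma$ is irreducible and a fortiori \EI.

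Applying Lemma \ref{lem: suffuniquesub} then immediately yields that $\pi$ is \RI, completing the proof. In other words, saturated $\Rightarrow$ (the restricted \EI\ condition of Lemma \ref{lem: suffuniquesub}) $\Rightarrow$ \RI, where the first implication is the trivial observation that irreducible representations are \EI, and the second is the content of Lemma \ref{lem: suffuniquesub}.

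There is no real obstacle here: this is a one-line deduction chaining the definition of "saturated" with Lemma \ref{lem: suffuniquesub}. The only point worth stating explicitly is that irreducibility implies \EI, which is immediate from Definition \ref{def:defsnew}. All the actual work was done in establishing Lemma \ref{lem: suffuniquesub}.
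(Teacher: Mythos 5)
Your proof is correct and is exactly the paper's argument: the paper deduces the corollary directly from Lemma \ref{lem: suffuniquesub}, using the trivial observation that an irreducible representation is \EI. Nothing further is needed.
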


The next corollary provides us with another family of examples of \RI\ representations.

\begin{coro}\label{coro:generic is RI}
Suppose that $\m$ is \TU. (See \S\ref{sec: TU}.)
Let $\pi$ be either $\zele{\m}$ or $\lang{\m}$.
Then $\pi$ is \RI. In particular, every unramified, tempered or (in the case $D=F$) generic irreducible representation is \RI.
\end{coro}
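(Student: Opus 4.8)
The plan is to deduce everything from the case $\pi=\zele{\m}$; the case $\pi=\lang{\m}$ is proved identically once one replaces Theorem~\ref{thm: classification}, the condition $\lcond$, and the relevant parts of Proposition~\ref{useful_properties} and Lemma~\ref{lem: lcondprop} by their Langlands analogues (Theorem~\ref{thm: classificationL}), the property of being \TU\ being symmetric between the two classifications. By Lemma~\ref{lem: suffuniquesub} it suffices to prove that $\zele{\m}\times\zele{\n}$ is \EI\ for every multisegment $\n$ with $\supp(\n)\subseteq\supp(\m)$. For this I would reduce, via Lemma~\ref{lem: lcondprop} part~\ref{part: referee}, to establishing $\lcond(\zele{\m},\zele{\n})$: indeed $\lcond(\zele{\m},\zele{\n})$ gives $\zele{\m}\times\zele{\n}\hookrightarrow\std(\m+\n)$, and $\std(\m+\n)$ is \EI\ by Theorem~\ref{thm: classification}, so $\zele{\m}\times\zele{\n}$ is \EI\ by Remark~\ref{rem:main}. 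Thus the whole statement reduces to the combinatorial claim
\[
\soc(\zele{\m}\times\zele{\n})=\zele{\m+\n}\qquad\text{whenever $\m$ is \TU\ and }\supp(\n)\subseteq\supp(\m),
\]
which I will call $(\ast)$.

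I would prove $(\ast)$ by induction, the base case being when no segment of $\m$ is linked to any segment of $\n$: then $\zele{\m}\times\zele{\n}$ is even irreducible by Proposition~\ref{useful_properties} part~\ref{part: pwnp}, so $(\ast)$ is immediate (this is exactly the mechanism by which segment representations are saturated, cf.\ Corollary~\ref{cor:saturated}). The inductive step rests on the following observation, which I would verify by an elementary analysis of segments along the line of $\rho$: if $\m$ is \TU\ and $\supp(\n)\subseteq\supp(\m)$, then every segment $\Delta'$ of $\n$ that is linked to a segment of $\m$ is in fact strictly contained in a unique $\subseteq$-maximal segment $\Delta^{*}$ of $\m$ (the segments of $\m$ containing a fixed segment form a chain because $\m$ is \TU, and one enlarges $\Delta^{*}$ by noting that any adjacency or gap between $\Delta^{*}$ and a segment covering the missing point of $\supp(\m)$ would produce a linked pair inside the \TU\ multisegment $\m$). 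Given this, writing $\Delta=\maxdel(\n)$ one has $\zele{\n}\hookrightarrow\zele{\Delta}\times\zele{\n-\Delta}$ by Proposition~\ref{useful_properties} part~\ref{part: pwnp} (since $\Delta$ precedes no other segment of $\n$), hence $\zele{\m}\times\zele{\n}\hookrightarrow\zele{\m}\times\zele{\Delta}\times\zele{\n-\Delta}$; and since $\zele{\Delta^{*}}\times\zele{\Delta}$ is irreducible whenever $\Delta\subsetneq\Delta^{*}$, one can absorb $\Delta$ into the factor $\zele{\Delta^{*}}$ of $\zele{\m}$, exhibiting $\zele{\m}\times\zele{\n}$ as a subrepresentation of a product to which the inductive hypothesis applies; the socle of that product is $\zele{\m+\n}$, and one reads off $(\ast)$ with the help of Proposition~\ref{useful_properties} part~\ref{part: lmm'} and Lemma~\ref{lem:main}.

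The step I expect to be the main obstacle is precisely this last one: the ``absorb one segment of $\n$ into $\m$'' reduction must be carried out at the level of actual representations (passing to the Grothendieck group loses track of socles, where $\times$ is commutative but $\soc$ is not), and one must set up the induction so that it genuinely terminates — for instance inducting on the number of linked pairs between $\m$ and $\n$, or on $\sum_{\Delta\in\n}l(\Delta)$, and checking that the absorption step strictly decreases the chosen quantity while keeping the remaining data in the form required by $(\ast)$. Everything else — the reductions via Lemmas~\ref{lem: suffuniquesub} and~\ref{lem: lcondprop} and the base case of $(\ast)$ — is formal given the results already recorded. Finally, the displayed consequences follow because unramified, tempered, and (for $D=F$) generic irreducible representations are exactly those of the form $\zele{\m}$ or $\lang{\m}$ with $\m$ \TU.
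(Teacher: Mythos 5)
Your first reduction (via Lemma \ref{lem: suffuniquesub}) is exactly the paper's, but the statement $(\ast)$ you reduce everything to is false, so the induction is aimed at something unprovable. Take $\m=[0,3]_\rho+[1,2]_\rho$, which is \TU, and $\n=[2,3]_\rho$, so that $\supp\n\subset\supp\m$. Since $\zele{\m}\simeq\zele{[1,2]_\rho}\times\zele{[0,3]_\rho}$ and $\zele{[0,3]_\rho}\times\zele{[2,3]_\rho}\simeq\zele{[0,3]_\rho+[2,3]_\rho}$ is irreducible, one has $\zele{\m}\times\zele{\n}\simeq\zele{[1,2]_\rho}\times\zele{[0,3]_\rho+[2,3]_\rho}$; here $\llcond([1,2]_\rho,[0,3]_\rho+[2,3]_\rho)$ fails (both $X$ and $Y$ consist of the single index of $[2,3]_\rho$, and a segment does not precede itself), so by Proposition \ref{prop: onesegment} the socle is \emph{not} $\zele{\m+\n}$; a short computation with Lemma \ref{lem:2seg} identifies it as $\zele{[0,3]_\rho+[1,3]_\rho+[2,2]_\rho}$. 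What is true, and all that \RI\ requires, is that $\zele{\m}\times\zele{\n}$ is \EI; the condition $\lcond(\zele{\m},\zele{\n})$ is strictly stronger, and deciding when it holds for \TU\ $\m$ is precisely the non-trivial content of Corollary \ref{coro: gentimesany}.

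Even setting this aside, the inductive step you flag as the main obstacle genuinely does not close: after absorbing $\Delta=\maxdel(\n)$ into the maximal segment $\Delta^*$ of $\m$ containing it, you are left with $\zele{\m-\Delta^*}\times\zele{\Delta^*+\Delta}\times\zele{\n-\Delta}$, and neither is $\m+\Delta$ \TU\ (in the example above $[1,2]_\rho\prec[2,3]_\rho$) nor is the factor $\zele{\Delta^*+\Delta}$ of the shape covered by $(\ast)$. The paper avoids both problems by never identifying the socle: it inducts on the number $N$ of segments of $\m$, chooses $\Delta_N$ of maximal length -- which forces $\Delta_N$ to be a connected component of $\supp\m$, so that $\zele{\Delta_N}\times\sigma$ is irreducible for \emph{every} $\sigma$ with $\supp\sigma\subset\supp\m$ by Proposition \ref{useful_properties} part \ref{part: pwnp} -- absorbs all of $\sigma$ into that single factor, and then applies the induction hypothesis in the form ``$\zele{\Delta_1+\dots+\Delta_{N-1}}$ is \RI''. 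Carrying the full \RI\ property through the induction is essential, because $\supp(\zele{\Delta_N}\times\sigma)$ is no longer contained in $\supp(\m-\Delta_N)$; only \EI ness is propagated, never the Zelevinsky parameters of the socle.
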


\begin{proof}
We first note that the last part of the corollary follows from the first part, together with \cite{MR584084}*{Theorem 9.7}, Remark \ref{rem: tempreps} and
the fact that a representation of $\G_n$ is unramified if and only if it is of the form $\zele{\m}$ where $\m$ is \TU\ and
$\supp\m\subset\{\mathbf{1}\abs{\cdot}^s:s\in\C\}$ where $\mathbf{1}$ is the trivial character of $G_1=D^*$. See \cites{MR850742, MR3194013}.

We prove the first part. Consider the case $\pi=\zele{\m}$. (The other case is similar.)
Write $\m=\Delta_1+\dots+\Delta_N$.
We argue by induction on $N$. We already mentioned the case $N=1$.
For the induction step, assume without loss of generality that $\Delta_N$ is a segment of maximal length
among the $\Delta_i$'s.
Thus, $\Delta_N$ is a connected component of $\supp\pi$.
Let $\sigma\in\Irr$ be such that $\supp(\sigma)\subset\supp(\pi)$. Then $\zele{\Delta_N}\times\sigma$
is irreducible since if $\sigma=\zele{\n}$ then any segment in $\n$ is unlinked with $\Delta_N$.
Therefore, by induction hypothesis
\[
\pi\times\sigma=\zele{\Delta_1}\times\dots\times\zele{\Delta_{N-1}}\times(\zele{\Delta_N}\times\sigma)
\]
is \EI. The corollary now follows from Lemma \ref{lem: suffuniquesub}.
\end{proof}

\subsection{Further analysis of the condition $\lcond(\pi,\sigma)$}
\begin{lem} \label{lem: indEI}
Fix $\rho\in\Cusp$ and let $\pi\in\Rep(\G_m)$, $\sigma\in\Rep(\G_{m'})$ and $\m,\n\in\MS_\rho$.
Assume that
\begin{enumerate}
\item $[\pi]\le[\std(\m)]$.
\item $[\sigma]\le[\std(\n)]$.
\item $\pi$ and $\sigma$ are \EI.
\item \label{part: pairwise<b} For any $\Delta\in\m$ and $\Delta'\in\n$ we have $\Delta'<_b\Delta$.
\end{enumerate}
Then $\soc(\pi)\otimes\soc(\sigma)$ occurs with multiplicity one in $[\rp(\pi\times\sigma)]$
and hence (by Lemma \ref{lem:main}) $\pi\times\sigma$ is \EI.
Moreover, $\lcond(\soc(\pi),\soc(\sigma))$ holds.

A similar statement holds if $<_b$ is replaced by $<_e$ in assumption \ref{part: pairwise<b}.
\end{lem}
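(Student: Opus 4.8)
The plan is to reduce everything to a multiplicity-one statement for the Jacquet module $\rp_{(m,m')}(\pi\times\sigma)$, so that Lemma~\ref{lem:main} applies directly. First I would invoke the geometric lemma to write $[\rp_{(m,m')}(\pi\times\sigma)]$ as a sum over matrices $\B\in\Mat^{\alpha,\beta}$, where here $\alpha=(m,m')$ and $\beta=(m,m')$; a contribution to $\soc(\pi)\otimes\soc(\sigma)$ comes from pairing a composition factor of $\rp_{(a,m-a)}(\pi)$ with a composition factor of $\rp_{(m'-b,b)}(\sigma)$ (for suitable block sizes) and taking the external products in each column. The key point is to use assumptions (1) and (2): since $[\pi]\le[\std(\m)]$ and $[\sigma]\le[\std(\n)]$, every composition factor of $\rp(\pi)$ and $\rp(\sigma)$ has supercuspidal support inside $\supp(\m)$, resp.\ $\supp(\n)$, and — more importantly — the Jacquet module of each $\zele{\Delta}$ is multiplicity-free with an explicit combinatorial description (it is supported on ``initial/final'' sub-multisegments obtained by removing consecutive endpoints). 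I would combine this with the geometric lemma for $\std(\m)=\zele{\Delta_1}\times\dots$ to control which bicharacters $\tau_1\otimes\tau_2$ with $\tau_1=\soc(\pi)$ can occur.

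The heart of the argument is the combinatorial separation coming from assumption~\ref{part: pairwise<b}. Because every $\Delta'\in\n$ satisfies $\Delta'<_b\Delta$ for every $\Delta\in\m$, the ``smallest'' cuspidal elements sitting in $\supp(\n)$ are strictly to the left (in $\le_b$) of anything in $\supp(\m)$; in particular $b(\sigma)<b(\pi)$ in the line $\Z_\rho$. I would argue that in any term of the geometric lemma producing $\soc(\pi)\otimes\soc(\sigma)$, the left tensor factor must absorb exactly the ``$\m$-part'' of the support and the right factor exactly the ``$\n$-part'': any mixing would force a cuspidal $\rho\nu_\rho^c$ with $c$ too small to appear in $\rp(\pi)$ (it cannot, since $\soc(\pi)$ has support bounded below by $b(\pi)$) or symmetrically too large to appear in the socle constraint on $\sigma$. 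This pins down $\B$ to the block-diagonal matrix $\begin{pmatrix}m&0\\0&m'\end{pmatrix}$, and then the corresponding term is just $\soc(\pi)\otimes\soc(\sigma)$ with coefficient equal to (mult.\ of $\soc(\pi)$ in $[\pi]$)$\times$(mult.\ of $\soc(\sigma)$ in $[\sigma]$)$=1\cdot1$, using that $\pi,\sigma$ are \EI. Having established multiplicity one in $[\rp(\pi\times\sigma)]$, Lemma~\ref{lem:main} gives that $\pi\times\sigma$ is \EI\ and $\soc(\pi\times\sigma)=\soc(\soc(\pi)\times\soc(\sigma))$; the latter identity, together with the fact that $\soc(\pi)=\zele{\m''}$ for the multisegment $\m''$ one reads off, is exactly $\lcond(\soc(\pi),\soc(\sigma))$ — here I would also need $\soc(\soc(\pi)\times\soc(\sigma))=\zele{\mathbf m_1+\mathbf n_1}$ where $\mathbf m_1,\mathbf n_1$ are the multisegments of $\soc(\pi),\soc(\sigma)$, which follows from Proposition~\ref{useful_properties}\ref{part: pwnp} once one checks $\mathbf m_1\not\prec\mathbf n_1$; and $\mathbf m_1\not\prec\mathbf n_1$ is immediate from $\Delta'<_b\Delta$ for all the relevant segments, since $\Delta'<_b\Delta$ rules out $\Delta\prec\Delta'$ being the only linking possibility — wait, one must be slightly careful: $<_b$ forces $b(\Delta')<b(\Delta)$ or ($b(\Delta')=b(\Delta)$ and $e(\Delta')<e(\Delta)$), and in either case $\Delta$ cannot precede $\Delta'$, so indeed $\mathbf m_1\not\prec\mathbf n_1$.

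The main obstacle I expect is the bookkeeping in the geometric-lemma step: showing rigorously that no ``off-diagonal'' matrix $\B$ contributes $\soc(\pi)\otimes\soc(\sigma)$. The clean way to handle this is not to analyze $\pi$ and $\sigma$ directly but to pass to the standard modules: since $[\pi]\le[\std(\m)]$ and $[\sigma]\le[\std(\n)]$ it suffices to bound $[\rp(\std(\m)\times\std(\n))]=[\rp(\zele{\Delta_1}\times\dots\times\zele{\Delta_k}\times\zele{\Delta_1'}\times\dots\times\zele{\Delta_\ell'})]$, apply the geometric lemma to this product of segment representations (whose individual Jacquet modules are completely explicit), and use the $<_b$ hypothesis to see that $\soc(\pi)\otimes\soc(\sigma)$ can only arise from the ``no crossing'' term. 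This is essentially the same mechanism as in Lemma~\ref{lem: mult1 jf}(2): the left $\supp$-reducedness there is replaced by the weaker but sufficient numerical separation $<_b$ (resp.\ $<_e$ in the variant). For the $<_e$ variant the argument is formally identical after replacing ``smallest element of the support'' by ``largest element'' and $\soc$-support-lower-bounds by upper-bounds, or alternatively by applying the established $<_b$ case to contragredients via Lemma~\ref{lem: lcondprop}(2) and the identity $\Delta^\vee$, noting $\Delta'<_e\Delta\iff \Delta^\vee<_b(\Delta')^\vee$.
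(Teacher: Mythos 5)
Your overall architecture is the right one and matches the paper's: apply the geometric lemma to $\rp_{(m,m')}(\pi\times\sigma)$, dominate the off-diagonal terms by passing to $\std(\m)$ and $\std(\n)$ whose Jacquet modules are explicit, conclude multiplicity one and invoke Lemma \ref{lem:main}, and then get $\lcond(\soc(\pi),\soc(\sigma))$ from Lemma \ref{lem: lcondprop}\,(\ref{part: trivlcond}) via Proposition \ref{useful_properties}\,(\ref{part: vdash}). However, the step that carries all the weight — ruling out the off-diagonal matrices $\B$ — rests on a false premise. You assert that the hypothesis forces the supports to be ``strictly to the left'', in particular $b(\sigma)<b(\pi)$. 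This does not follow: $\Delta'<_b\Delta$ is a \emph{lexicographic} comparison, so it permits $b(\Delta')=b(\Delta)$ provided $e(\Delta')<e(\Delta)$ (e.g.\ $\m=\{[0,2]_\rho\}$, $\n=\{[0,1]_\rho\}$ satisfies the hypothesis with $b(\m)=b(\n)$). In that boundary case there is no cuspidal exponent that is ``too small to appear in $\rp(\pi)$'', and your separation argument says nothing. This is exactly where the paper's proof does real work: writing the off-diagonal term as $[\tau_1\times\tau_1'\otimes\tau_2\times\tau_2']$, one first gets $\supp\tau_2=\supp\tau_1'$ as multisets; then, since $\tau_1'$ is built from nonempty \emph{initial} pieces of segments of $\n$ and must have support in $\Z_{\ge b(\m)}$, each contributing segment of $\n$ must begin exactly at $b(\m)$, forcing $\supp\tau_1'$ inside the $\ge_b$-largest segment $\Delta'$ of $\n$; on the other side, $b(\m)\in\supp\tau_2$ forces a full segment of $\m$, hence the $\le_b$-smallest segment $\Delta$ of $\m$, inside $\supp\tau_2$; and $\Delta'\subsetneq\Delta$ gives the contradiction. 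Some version of this containment argument is unavoidable, and your proposal does not supply it.

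Two smaller points. First, for $\lcond(\soc(\pi),\soc(\sigma))$ you check $\Delta\not\prec\Delta'$ for segments of $\m$ and $\n$, but the multisegments of $\soc(\pi)$ and $\soc(\sigma)$ are some $\m_1\vdash\m$ and $\n_1\vdash\n$; you still need that the non-preceding property persists under the union--intersection order (true, because $\vdash$ preserves the multiset of beginnings of segments, but it has to be said). Second, for the $<_e$ variant the direct ``mirror'' argument is fine, but the contragredient route you sketch is shakier than it looks: dualizing swaps the roles of $\m$ and $\n$ and converts socles to cosocles, and $\EI$ness of $\sigma^\vee\times\pi^\vee$ is not literally the dual statement of $\EI$ness of $\pi\times\sigma$, so this reduction would need additional care.
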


\begin{proof}
Let us analyze $[\rp(\pi\times\sigma)]$ using the geometric lemma (see \S\ref{lemmegeo}).
One summand is $[\pi\otimes\sigma]$ in which $\soc(\pi)\otimes\soc(\sigma)$ occurs once by assumption.
The other summands are of the form $[\tau_1\times\tau_1'\otimes\tau_2\times\tau_2']$ where
$\tau_1\otimes\tau_2$ is a composition factor of $\rp_{(d,m-d)}(\pi)$ (and in particular of
$\rp_{(d,m-d)}(\std(\m))$) and $\tau'_1\otimes\tau'_2$ is a composition
factor of $\rp_{(d',m'-d')}(\sigma)$ (and in particular of $\rp_{(d',m'-d')}(\std(\n))$) with $0\le d<m$, $0<d'\le m'$ and $d+d'=m$.
We claim that $\soc(\sigma)\not\le[\tau_2\times\tau_2']$.
Indeed, otherwise $\supp\tau_2=\supp\tau'_1$ and in particular, $\supp\tau'_1\subset\Z_{\ge b(\m)}$.
However, it is easy to see that this implies that $b(\m)\in\supp\tau'_1$, $b(\Delta')=b(\m)$ and $\supp\tau_1'\subset\Delta'$
where $\Delta'$ is the largest segment of $\n$ with respect to $\ge_b$.
On the other hand it is also clear that if $b(\m)\in\supp\tau_2$ then $\Delta\subset\supp\tau_2$
where $\Delta$ is the smallest segment of $\m$ with respect to $\le_b$.
We get a contradiction since $\Delta'\subsetneq\Delta$.

The condition $\lcond(\soc(\pi),\soc(\sigma))$ follows from Lemma \ref{lem: lcondprop} part \ref{part: trivlcond},
Proposition \ref{useful_properties} part \ref{part: vdash}
and the fact that for any $\m'\vdash\m$, $\n'\vdash\n$ we have $\m'\not\prec\n'$.

The argument with $<_e$ instead of $<_b$ in assumption \ref{part: pairwise<b} is similar.
\end{proof}

The following is an immediate consequence.

\begin{lem} \label{lem: redgem}
Suppose that $\pi=\zele{\m}\in\Irr_\rho$ and let $\sigma=\zele{\n}\in\Irr_\rho$.
Let $\sigma_1=\zele{\n_{\not<_b\m}}$ and $\sigma_2=\zele{\n_{<_b\m}}$.
Assume that $\pi\times\sigma_1$ is $\EI$.
Then $\pi\times\sigma_1\times\std(\n_{<_b\m})$ is \EI\ and therefore $\pi\times\sigma$ is \EI\ and
$\soc(\pi\times\sigma)=\soc(\pi'\times\sigma_2)$ where $\pi':=\soc(\pi\times\sigma_1)$.
Moreover, we have $\lcond(\pi',\sigma_2)$.
Thus, $\lcond(\pi,\sigma)$ if and only if $\lcond(\pi,\sigma_1)$.
% and $\slcond(\pi,\sigma)$ if and only if $\slcond(\pi,\sigma_1)$.
\end{lem}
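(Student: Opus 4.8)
The strategy is to apply Lemma \ref{lem: indEI} with a suitable factorization and then chase socles. First I would set $\pi_1 = \pi \times \sigma_1$ and $\pi_2 = \std(\n_{<_b\m})$. By hypothesis $\pi_1$ is \EI, with $\soc(\pi_1) = \pi' := \soc(\pi\times\sigma_1)$. By Theorem \ref{thm: classification}, $\pi_2 = \std(\n_{<_b\m})$ is \EI\ with socle $\sigma_2 = \zele{\n_{<_b\m}}$. Now I want to invoke Lemma \ref{lem: indEI} to conclude that $\pi_1 \times \pi_2$ is \EI. For this I need the multiplicity-one statement $[\soc(\pi_1)\otimes\soc(\pi_2)]$ in $[\rp(\pi_1\times\pi_2)]$, which Lemma \ref{lem: indEI} supplies provided I verify its four hypotheses: $[\pi_1]\le[\std(\m')]$ and $[\pi_2]\le[\std(\n')]$ for appropriate $\m',\n'\in\MS_\rho$, that $\pi_1,\pi_2$ are \EI, and the ``$<_b$ separation'' condition. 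The natural choice is $\m' = \m + \n_{\not<_b\m}$ (since $\pi_1 = \pi\times\sigma_1 \le \std(\m)\times\std(\n_{\not<_b\m})$, which has the same class as $\std(\m')$) and $\n' = \n_{<_b\m}$. The separation hypothesis \ref{part: pairwise<b} of Lemma \ref{lem: indEI} then requires that every segment of $\m'$ is $>_b$ every segment of $\n_{<_b\m}$; this is exactly the definition of $\n_{<_b\m}$ for the segments of $\m$, and for the segments of $\n_{\not<_b\m}$ it follows because $\ge_b$ is a total order on $\seg_\rho$ — if $\Delta'' \in \n_{\not<_b\m}$ then by definition $\Delta''$ is not $<_b$ every segment of $\m$, hence there is $\Delta\in\m$ with $\Delta''\ge_b\Delta$, and since every $\Delta'\in\n_{<_b\m}$ satisfies $\Delta' <_b \Delta \le_b \Delta''$ we get $\Delta' <_b \Delta''$.

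With the hypotheses of Lemma \ref{lem: indEI} in place, I conclude that $\pi_1\times\pi_2 = \pi\times\sigma_1\times\std(\n_{<_b\m})$ is \EI\ and moreover $\lcond(\soc(\pi_1),\soc(\pi_2)) = \lcond(\pi',\sigma_2)$. Since $\pi\times\sigma = \pi\times\sigma_1\times\sigma_2$ embeds in $\pi\times\sigma_1\times\std(\n_{<_b\m})$ (embedding $\sigma_2 \hookrightarrow \std(\n_{<_b\m})$ and inducing), Remark \ref{rem:main} gives that $\pi\times\sigma$ is \EI\ and $\soc(\pi\times\sigma) = \soc(\pi\times\sigma_1\times\std(\n_{<_b\m}))$. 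On the other hand, applying Lemma \ref{lem:main} to $\pi' = \soc(\pi\times\sigma_1)$ and $\sigma_2 = \soc(\std(\n_{<_b\m}))$ — whose applicability is again exactly the multiplicity-one fact just established — yields $\soc(\pi\times\sigma_1\times\std(\n_{<_b\m})) = \soc(\pi'\times\sigma_2)$. Combining, $\soc(\pi\times\sigma) = \soc(\pi'\times\sigma_2)$, as claimed, and $\lcond(\pi',\sigma_2)$ holds.

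It remains to extract the final equivalence: $\lcond(\pi,\sigma)$ holds if and only if $\lcond(\pi,\sigma_1)$. Write $\pi = \zele{\m}$, $\sigma_1 = \zele{\n_{\not<_b\m}}$, $\sigma_2 = \zele{\n_{<_b\m}}$, $\sigma = \zele{\n}$. The condition $\lcond(\pi,\sigma_1)$ says $\pi' = \zele{\m + \n_{\not<_b\m}}$, i.e. $\soc(\pi\times\sigma_1) = \zele{\m+\n_{\not<_b\m}}$. If this holds, then $\lcond(\pi',\sigma_2)$ together with the separation (giving $(\m+\n_{\not<_b\m}) \not\prec \n_{<_b\m}$, since $\Delta<_b\Delta'$ precludes $\Delta'\prec\Delta$) and Proposition \ref{useful_properties} part \ref{part: pwnp} identifies $\soc(\pi'\times\sigma_2) = \zele{\m + \n}$, so $\lcond(\pi,\sigma)$ holds. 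Conversely, if $\lcond(\pi,\sigma)$ holds, then since $\pi\times\sigma \hookrightarrow \pi\times\sigma_1\times\std(\n_{<_b\m})$ is \EI\ with this socle $\zele{\m+\n}$, and $\soc(\pi\times\sigma_1\times\std(\n_{<_b\m})) = \soc(\pi'\times\sigma_2) = \soc(\pi'\times\std(\n_{<_b\m}))$, I can read off from $[\rp(\cdot)]$ — or more cleanly from Lemma \ref{lem: lcondprop} part \ref{part: referee}, $\pi\times\sigma\hookrightarrow\std(\m+\n)$ — that $\pi' = \zele{\m+\n_{\not<_b\m}}$; concretely, $\pi'\hookrightarrow\std(\m+\n_{\not<_b\m})$ follows since $\pi\times\sigma_1\hookrightarrow\pi\times\sigma$ composed with an intertwining map to $\std(\m+\n)$, combined with the factorization $\std(\m+\n) \cong \std(\m+\n_{\not<_b\m})\times\std(\n_{<_b\m})$ and the separation forcing the image of $\pi\times\sigma_1$ into the first factor, and then Lemma \ref{lem: lcondprop} part \ref{part: referee} again gives $\lcond(\pi,\sigma_1)$. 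The main obstacle I anticipate is this last direction: carefully justifying that $\lcond(\pi,\sigma)$ forces $\lcond(\pi,\sigma_1)$ requires tracking the socle through the three-fold induced representation and using the \EI\ property to pin down $\pi'$; the cleanest route is via Lemma \ref{lem: lcondprop} part \ref{part: referee} rather than direct socle manipulation, and verifying the separation-order bookkeeping (total order $\ge_b$, and the implication $\Delta <_b \Delta' \Rightarrow \Delta' \not\prec \Delta$) is the fiddly but routine part.
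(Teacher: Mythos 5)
Your main construction is exactly the route the paper intends: the paper states this lemma as an immediate consequence of Lemma \ref{lem: indEI} and gives no written proof, and your application of that lemma to $\pi_1=\pi\times\sigma_1$, $\pi_2=\std(\n_{<_b\m})$ with $\m'=\m+\n_{\not<_b\m}$, $\n'=\n_{<_b\m}$, including the verification of the $<_b$-separation hypothesis, is correct. So is the identification $\soc(\pi\times\sigma)=\soc(\pi'\times\sigma_2)$ and the conclusion $\lcond(\pi',\sigma_2)$. (One small point you use implicitly: $\sigma\hookrightarrow\sigma_1\times\sigma_2$ requires $\n_{\not<_b\m}\not\prec\n_{<_b\m}$ via Proposition \ref{useful_properties} part \ref{part: pwnp}; this does hold, since $\Delta'<_b\Delta''$ forces $b(\Delta')\le b(\Delta'')$ and hence $\Delta''\not\prec\Delta'$.)

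The one genuine defect is the converse direction of the final equivalence. Your argument there begins with ``$\pi\times\sigma_1\hookrightarrow\pi\times\sigma$'', which is meaningless (these representations live on groups of different size), and the subsequent factorization-of-$\std(\m+\n)$ argument built on it does not stand as written. The detour is unnecessary: you have already established everything needed. Write $\pi'=\zele{\m'}$. Since $\lcond(\pi',\sigma_2)$ holds, $\soc(\pi'\times\sigma_2)=\zele{\m'+\n_{<_b\m}}$, and you have shown this equals $\soc(\pi\times\sigma)$. By the injectivity of $\m\mapsto\zele{\m}$ (Theorem \ref{thm: classification}), $\lcond(\pi,\sigma)$, i.e.\ $\soc(\pi\times\sigma)=\zele{\m+\n}$, holds if and only if $\m'+\n_{<_b\m}=\m+\n$, i.e.\ if and only if $\m'=\m+\n_{\not<_b\m}$, which is precisely $\lcond(\pi,\sigma_1)$. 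This single observation gives both directions at once and replaces your last paragraph.
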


Throughout the rest of this section assume that $\pi\in\Irr$ is rigid and \RI\ and let $\rho=e(\pi)\in\Cusp$.
Let also $\sigma=\lang{\n}\in\Irr_\rho$ be such that $\supp\sigma\not\subset\Z_{\le\rho}$.
In order to determine $\soc(\pi \times \sigma)$, we can make the argument of Lemma \ref{lem: suffuniquesub} more effective as follows.

\begin{lem} \label{lem: eff stages}
The representation $\pi\times\sigma_{\le\rho}\times\tilde{\lstd}(\n_{>_e\rho})$ is \EI\ and
\[
\soc(\pi\times\sigma)=\soc(\pi\times\sigma_{\le\rho}\times\sigma_{\not\le\rho})=
\soc(\pi\times\sigma_{\le\rho}\times\tilde{\lstd}(\n_{>_e\rho})).
\]
Thus, if $\soc(\pi\times\sigma_{\le\rho})=\lang{\m'}$ then
$\soc(\pi\times\sigma)=\lang{\m'+\n_{>_e\rho}}$.

In other words, if $\lang{\m}$ is \RI\ then $\combsoclang(\m,\n)=\combsoclang(\m,\n_{\le_e e(\m)})+\n_{>_e e(\m)}$.
\end{lem}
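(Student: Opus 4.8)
The plan is to exploit that $\pi\times\sigma_{\le\rho}$ is automatically \EI\ (because $\pi$ is \RI), and then to show that tacking on a left $\Z_{\le\rho}$-reduced factor neither destroys \EI ness nor changes the socle in an uncontrolled way. First I would record the elementary fact that $\tilde\lstd(\m)$ is \EI\ for every $\m\in\MS$: by Theorem \ref{thm: classificationL} its socle is the irreducible $\lang{\m}$, and $[\tilde\lstd(\m)]=[\lstd(\m)]$ by commutativity of $\Rr$, while $\lang{\m}$ occurs with multiplicity one in $[\lstd(\m)]$ because $\lstd(\m)^\vee$ is \EI.

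Next, set $I=\Z_{\le\rho}$; since $\rho=e(\pi)$ we have $\supp(\pi)\subset I$. Writing $\sigma=\lang{\n}$, the combinatorial description of the decomposition of Lemma \ref{lem: decomI} recorded earlier gives $\sigma_{\le\rho}=\lang{\n_{\le_e\rho}}$, $\sigma_{\not\le\rho}=\lang{\n_{>_e\rho}}$ and an embedding $\sigma\hookrightarrow\sigma_{\le\rho}\times\sigma_{\not\le\rho}$, whence $\pi\times\sigma\hookrightarrow\pi\times\sigma_{\le\rho}\times\sigma_{\not\le\rho}$. Since $\pi$ is \RI, $\pi\times\sigma_{\le\rho}$ is \EI; I would set $\pi'=\soc(\pi\times\sigma_{\le\rho})$, so that $\supp(\pi')=\supp(\pi)\cup\supp(\sigma_{\le\rho})\subset I$. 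Every segment of $\n_{>_e\rho}$ has right endpoint strictly above $\rho$ and so is not contained in $I$; hence by Remark \ref{rem:foreff} both $\sigma_{\not\le\rho}$ and $\tilde\lstd(\n_{>_e\rho})$ are left $I$-reduced, a fortiori left $\supp(\pi')$-reduced. Now I would apply Lemma \ref{lem:main} to the pair $(\pi\times\sigma_{\le\rho},\xi)$ first with $\xi=\sigma_{\not\le\rho}$ and then with $\xi=\tilde\lstd(\n_{>_e\rho})$: in both cases the first factor is \EI\ (as $\pi$ is \RI), the second is \EI\ (irreducible, resp.\ by the preliminary fact), and $\soc(\pi\times\sigma_{\le\rho})\otimes\soc(\xi)$ occurs once in the relevant Jacquet module by Lemma \ref{lem: mult1 jf} since $\xi$ is left $\supp(\pi\times\sigma_{\le\rho})$-reduced. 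This shows that $\pi\times\sigma_{\le\rho}\times\sigma_{\not\le\rho}$ and $\pi\times\sigma_{\le\rho}\times\tilde\lstd(\n_{>_e\rho})$ are both \EI, each with socle $\soc(\pi'\times\sigma_{\not\le\rho})$ (using $\soc(\tilde\lstd(\n_{>_e\rho}))=\lang{\n_{>_e\rho}}=\sigma_{\not\le\rho}$); combined with the embedding above and Remark \ref{rem:main}, this gives the \EI\ assertion and both equalities of the first display.

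For the last statement I would write $\pi'=\lang{\m'}$. Then $\supp(\m')\subset\Z_{\le\rho}$, so every segment of $\m'$ has right endpoint $\le\rho$ whereas every segment of $\n_{>_e\rho}$ has right endpoint $>\rho$; since $\Delta\prec\Delta'$ forces $e(\Delta)<e(\Delta')$, this gives $\n_{>_e\rho}\not\prec\m'$. The Langlands analogue of Proposition \ref{useful_properties} part \ref{part: pwnp} then yields $\soc(\lang{\m'}\times\lang{\n_{>_e\rho}})=\lang{\m'+\n_{>_e\rho}}$, so $\soc(\pi\times\sigma)=\soc(\pi'\times\sigma_{\not\le\rho})=\lang{\m'+\n_{>_e\rho}}$. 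Specializing to $\pi=\lang{\m}$ \RI\ (so $\rho=e(\m)$) and unwinding the definition of $\combsoclang$ gives $\lang{\combsoclang(\m,\n)}=\lang{\combsoclang(\m,\n_{\le_e e(\m)})+\n_{>_e e(\m)}}$, and the claimed identity of multisegments follows from the injectivity of $\m\mapsto\lang{\m}$ (Theorem \ref{thm: classificationL}). I do not expect a real obstacle here: the content is a double application of Lemma \ref{lem:main}, with the hypothesis that $\pi$ is \RI\ doing the work and Remark \ref{rem:foreff} supplying the reducedness; the only non-formal inputs are the remark that $\tilde\lstd(\m)$ is \EI\ and the observation $\n_{>_e\rho}\not\prec\m'$.
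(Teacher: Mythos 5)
Your proof is correct and follows essentially the same route as the paper: embed $\pi\times\sigma$ into $\pi\times\sigma_{\le\rho}\times\tilde{\lstd}(\n_{>_e\rho})$, invoke Lemma \ref{lem:main} together with Remark \ref{rem:foreff} (using that $\pi$ is \RI\ so $\pi\times\sigma_{\le\rho}$ is \EI), and conclude the last assertion from the Langlands analogue of Proposition \ref{useful_properties} part \ref{part: pwnp} via $\supp\m'\subset\Z_{\le\rho}$. The only difference is that you spell out details the paper leaves implicit (that $\tilde{\lstd}(\n_{>_e\rho})$ is \EI, and the verification that $\n_{>_e\rho}\not\prec\m'$), which is harmless.
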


%\Erez{I guess we could formulate it analogously to Lemma \ref{lem: redgem}.
%And it would look nicer to write $\m_{\le_e\n}$ instead of $\m_{\le_ee(\n)}$}

\begin{proof}
Recall that $\sigma_{\not\le\rho}=\lang{\n_{>_e\rho}}$. Thus,
\[
\pi\times\sigma\hookrightarrow\pi\times\sigma_{\le\rho}\times\sigma_{\not\le\rho}\hookrightarrow
\pi\times\sigma_{\le\rho}\times\tilde{\lstd}(\n_{>_e\rho}).
\]
By Lemma \ref{lem:main} (with $\pi_1=\pi\times\sigma_{\le\rho}$ and $\pi_2=\tilde{\lstd}(\n_{>_e\rho})$)
and Remark \ref{rem:foreff} we infer that $\pi\times\sigma_{\le\rho}\times\tilde{\lstd}(\n_{>_e\rho})$
is \EI.
%By assumption, $\sigma':=\soc(\pi\times\sigma_{\le\rho})$ is irreducible and occurs with multiplicity one in $[\pi\times\sigma_{\le\rho}]$.
%The fact that $\pi\times\sigma_{\le\rho}\times\tilde{\lstd}(\n_{>_e\rho})$ is \EI\
%follows from the fact that $\sigma'\otimes\sigma_{\not\le\rho}$ occurs with multiplicity one in
%$$[\rp(\pi\times\sigma_{\le\rho}\times\tilde{\lstd}(\n_{>_e\rho}))]$$
%(by Lemma \ref{lem: mult1 jf} and Remark \ref{rem:foreff}) and Lemma \ref{lem:main}.
The last assertion of the lemma follows from (the analogue for $\lang{\cdot}$ of) Proposition \ref{useful_properties} part \ref{part: pwnp} and the fact that $\supp\m'\subset\Z_{\le\rho}$.
\end{proof}

From now on assume that $\pi=\zele{\m}$.
\begin{coro} \label{cor: k=l}
We have
\[
\Delta'\notin\combsoczele(\m,\n)^t
\]
for any $\Delta'\supsetneq\maxdel(\n^t)$.
\end{coro}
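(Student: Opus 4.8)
We have $\pi = \zele{\m}$ rigid and $\RI$, $\rho = e(\m)$, and $\sigma = \lang{\n} \in \Irr_\rho$. We are comparing two multisegments in the line of $\rho$: the ``Zelevinsky transpose'' $\combsoczele(\m,\n)^t$ of the socle parameter, and $\n^t$ (whose maximal segment with respect to $\le_e$ is $\maxdel(\n^t)$). The claim is that no segment strictly containing $\maxdel(\n^t)$ can occur in $\combsoczele(\m,\n)^t$.

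**The plan.** The plan is to translate the statement about $\combsoczele(\m,\n)^t$ into a statement about the Langlands parameter of $\soc(\pi \times \sigma)$, using that $\lang{\cdot} = \zele{(\cdot)^t}$, and then to control that Langlands parameter via the Jacquet-module analysis already available. First I would observe that, writing $\combsoczele(\m,\n) = \combsoclang(\m', \cdot)$ in Langlands coordinates --- i.e.\ $\combsoczele(\m,\n)^t$ is the Langlands multisegment whose $\lang{\cdot}$ is $\soc(\pi \times \sigma)$ --- Corollary~\ref{cor: k=l} is really a statement about which segments appear in the Langlands parameter of $\soc(\pi \times \sigma)$. Concretely: every segment $\Delta'$ in that parameter satisfies $e(\Delta') \le e(\n) = e(\maxdel(\n^t))$ (this is the easy half, since $\supp \soc(\pi \times \sigma) = \supp(\m + \n)$ by Proposition~\ref{useful_properties}\ref{part: support is preserved}, and $e(\m) = e(\n) = \rho$), so a segment $\Delta' \supsetneq \maxdel(\n^t)$ would have to have $e(\Delta') = \rho$ and $b(\Delta') < b(\maxdel(\n^t))$.

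**Key steps.** The main tool is Lemma~\ref{lem: eff stages}, which gives $\combsoclang(\m,\n) = \combsoclang(\m, \n_{\le_e e(\m)}) + \n_{>_e e(\m)}$. I would combine this with the MW-algorithm description of $\maxdel(\n^t)$ recalled at the end of \S\ref{par: cuspidal_lines}: we have $e(\maxdel(\n^t)) = e(\n)$ and $b(\maxdel(\n^t)) = e(\Delta'_{j_k})$ where the $\Delta'_{j_l}$ form the maximal ``descending chain'' in the right-ordered form of $\n$. The point is that $\maxdel(\n^t)$ records, roughly, ``how far left one can reach from $e(\n)$ following the MW chains in $\n$''. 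The heart of the argument is then to show that the socle $\soc(\pi \times \sigma)$ cannot have a Langlands segment reaching further left than this --- i.e.\ forming a longer MW-chain than $\n$ already permits --- because $\pi = \zele{\m}$ has $e(\m) = \rho$ and its segments, being $\Cusp\setminus\{\cdot\}$-reduced in the relevant sense, cannot extend an end-$\rho$ chain of $\n$. I would make this precise by running the geometric lemma on $\rp(\pi \times \sigma)$ exactly as in the proof of Lemma~\ref{lem: indEI}: the only summand contributing $\soc(\pi) \otimes \soc(\sigma)$ is $[\pi \otimes \sigma]$, and any Jacquet module of a subrepresentation $\tau \hookrightarrow \pi \times \sigma$ surjects onto $\pi \otimes \sigma$; applying an appropriate $\rp_{(a, \cdot)}$ to extract the leftmost piece and invoking \eqref{eq: Jaczele}/\eqref{eq: JacLang} forces the leftmost segment of the Langlands parameter of $\soc(\pi \times \sigma)$ to come from a chain already present in $\n$ alone, hence $b(\Delta') \ge b(\maxdel(\n^t))$, a contradiction with $\Delta' \supsetneq \maxdel(\n^t)$.

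**The main obstacle.** The delicate step is the last one: making rigorous that ``the leftmost reach of an MW-chain in $\soc(\pi \times \sigma)$ is no greater than in $\n$.'' The subtlety is that $\soc(\pi \times \sigma)$ genuinely mixes the segments of $\m$ and $\n$, and a priori a segment of $\m$ could splice onto an end-$\rho$ chain of $\n$ and extend it leftward. What saves us is that $e(\m) = e(\n) = \rho$: since $\rho$ is the common right endpoint, a segment of $\m$ that would extend an end-$\rho$ chain would itself have to end strictly below $\rho$, and one then has to check --- using that $\zele{\Delta}$ is left $\Cusp\setminus\{b(\Delta)\}$-reduced together with the recursion in Lemma~\ref{lem: eff stages} relating $\combsoclang(\m,\n)$ to $\combsoclang(\m, \n_{\le_e \rho})$ --- that such a segment does not survive into the socle parameter at the relevant MW step. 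I expect the cleanest route is an induction on $|\m| + |\n|$ using Lemma~\ref{lem: redgem} to peel off $\n_{<_b \m}$ and reduce to the case where the segments of $\n$ interact with those of $\m$, invoking the earlier structural lemmas; but pinning down the base case (where $\n$ has a single segment ending at $\rho$) and verifying that $\maxdel(\n^t)$ is stable is where the real work lies.
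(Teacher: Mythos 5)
There is a genuine gap, and it stems from a misreading of the standing hypotheses. You assert $e(\m)=e(\n)=\rho$, but the blanket assumption in force for this corollary is $\supp\sigma\not\subset\Z_{\le\rho}$ with $\rho=e(\pi)=e(\m)$, i.e.\ $e(\n)>e(\m)$. This inequality is not a minor detail: it is the entire proof. Indeed, any $\Delta'\supsetneq\maxdel(\n^t)$ must satisfy $e(\Delta')=e(\n)$ (your ``easy half''), hence $e(\Delta')>_e e(\m)$; and Lemma \ref{lem: eff stages} — which you correctly identify as the main tool but then do not actually use to conclude — says precisely that the part of $\combsoczele(\m,\n)^t=\combsoclang(\m^t,\n^t)$ lying $>_e e(\m)$ is $(\n^t)_{>_e e(\m)}$, verbatim. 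So $\Delta'\in\n^t$, contradicting the $\le_e$-maximality of $\maxdel(\n^t)$ in $\n^t$. That is the whole argument; no geometric lemma, no analysis of Jacquet modules, and no induction is needed.

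The detour you propose in its place — showing via the geometric lemma that ``a segment of $\m$ cannot splice onto an end-$\rho$ chain of $\n$ and extend it leftward'' — is not only left unfinished (you concede the base case and the stability of $\maxdel(\n^t)$ are ``where the real work lies''), it is aimed at a statement that is false in the generality you set up. When $e(\n)\le e(\m)$, segments of $\m$ genuinely can extend the M\oe glin--Waldspurger chains of $\n$ and produce a segment of $\combsoczele(\m,\n)^t$ strictly containing $\maxdel(\n^t)$ (already $\m=\{\rho\}$, $\n=\{\lshft{\rho}\}$ gives $\combsoczele(\m,\n)^t=[\lshft{\rho},\rho]_{}$). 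This phenomenon is exactly what the condition $(\m+\n)^-=\m+\n^-$ in Lemma \ref{lem: equcond} tracks, and the corollary is invoked there to rule it out only for the top-end segments, which the hypothesis $e(\n)>e(\m)$ protects via Lemma \ref{lem: eff stages}. So the fix is not to push harder on the geometric lemma but to use the hypothesis you dropped.
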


Indeed, by the previous lemma, every $\Delta\in\combsoczele(\m,\n)^t$ with $e(\Delta)=e(\n)$ belongs to $\n^t$.

%The following closely related result will be used later on.
\begin{lem} \label{lem: reptheoryanalog}
Let $\Delta'$ be any segment in $\n$ such that $e(\Delta')=e(\sigma)$.
Set $\n'=\n-\Delta'$ and $\sigma'=\lang{\n'}$. Then any two of the following conditions imply the third.
\begin{enumerate}
\item $\lcond(\pi,\sigma')$.
\item $(\m+\n^t)^t=(\m+\n'^t)^t+\Delta'$.
\item $\lcond(\pi,\sigma)$.
\end{enumerate}
\end{lem}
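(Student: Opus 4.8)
The statement is really a two-out-of-three assertion about the Zelevinsky parameters appearing when we compare $\soc(\pi\times\sigma)$ and $\soc(\pi\times\sigma')$. The key tool is Lemma \ref{lem: eff stages}: since $e(\Delta')=e(\sigma)=\rho$, adjoining the segment $\Delta'$ to $\n'$ only affects the ``$\le_e\rho$'' part of the Langlands parameter, and $\combsoclang$ behaves additively on the ``$>_e\rho$'' part. So I would first reduce to the case where $\n=\n_{\le_e\rho}$, i.e. where all segments of $\n$ satisfy $e(\cdot)\le\rho$; the general case then follows by adding back $\n_{>_e\rho}$ to all three multisegments simultaneously, which changes none of the three conditions (using Lemma \ref{lem: eff stages} and Proposition \ref{useful_properties} part \ref{part: pwnp}).

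**Setting up the comparison.** Write $\pi=\zele{\m}$ (rigid and $\RI$, $\rho=e(\pi)$), $\sigma=\lang{\n}$, $\sigma'=\lang{\n'}$ with $\n=\n'+\Delta'$. Because $\pi$ is $\RI$, $\soc(\pi\times\sigma)=\lang{\combsoclang(?,\n)}$ — but note $\pi$ here is presented in the Zelevinsky classification while $\sigma$ is in the Langlands classification; the cleanest move is to translate everything through the involution $t$ and work with $\combsoczele$. Concretely, $\soc(\pi\times\sigma)=\zele{\combsoczele(\m,\n^t)}$ and $\soc(\pi\times\sigma')=\zele{\combsoczele(\m,\n'^t)}$, where I am using $\lang{\n}=\zele{\n^t}$. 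Now observe that $\sigma=\lang{\n}\hookrightarrow\sigma'\times\lang{\Delta'}$ (since $e(\Delta')=e(\sigma)$ forces $\Delta'$ to be placeable first in a suitable ordered form — more precisely $\n'\not\prec\Delta'$ is exactly what allows this by Proposition \ref{useful_properties} part \ref{part: pwnp} applied in the Langlands picture). Hence
\[
\pi\times\sigma\hookrightarrow\pi\times\sigma'\times\lang{\Delta'}.
\]
Condition (1), $\lcond(\pi,\sigma')$, says $\pi\times\sigma'$ is $\EI$ with $\soc(\pi\times\sigma')=\zele{\m+\n'^t}=\lang{(\m+\n'^t)^t}$. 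Given (1), I would apply Lemma \ref{lem:main} to $\pi_1=\pi\times\sigma'$ and $\pi_2=\lang{\Delta'}$ — here the multiplicity-one hypothesis holds because $\lang{\Delta'}$ is a segment representation, hence $\pi_2$ is left $\supp(\pi_1)$-reduced after the reduction step, or more carefully one invokes Lemma \ref{lem: redgem}/\ref{lem: indEI} to handle the overlap of supports. This gives $\soc(\pi\times\sigma)=\soc(\soc(\pi\times\sigma')\times\lang{\Delta'})=\soc(\lang{(\m+\n'^t)^t}\times\lang{\Delta'})$, and the socle of a product of a general irreducible with a segment representation $\lang{\Delta'}$ (with $e(\Delta')$ maximal) is computed by the square-integrable results of \S\ref{sec: square-integrable}: its Langlands parameter is exactly $((\m+\n'^t)^t+\Delta')^t{}^t=\ldots$, and the point is that this equals $(\m+\n^t)^t$ precisely when condition (2) holds, i.e. $(\m+\n^t)^t=(\m+\n'^t)^t+\Delta'$.

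**The two-out-of-three mechanism.** With the above, $(1)+(2)\Rightarrow(3)$: (1) makes $\pi\times\sigma$ $\EI$, and then $(1)+(2)$ pin down $\soc(\pi\times\sigma)=\zele{\m+\n^t}$, which is $\lcond(\pi,\sigma)$. For $(1)+(3)\Rightarrow(2)$: both socles are then known explicitly — $\soc(\pi\times\sigma)=\zele{\m+\n^t}$ by (3) and $\soc(\pi\times\sigma)=\soc(\lang{(\m+\n'^t)^t}\times\lang{\Delta'})$ by (1) and the embedding — and comparing Langlands parameters forces (2). The subtle direction is $(2)+(3)\Rightarrow(1)$: here I do not yet know $\pi\times\sigma'$ is $\EI$. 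I would argue that $\pi\times\sigma'$ is a subquotient-controlled piece of $\pi\times\sigma$: since $\sigma'=\lang{\n'}$ and $\sigma=\lang{\n'+\Delta'}$, one has $\sigma'\times\lang{\Delta'}\twoheadleftarrow\ldots$, i.e. $\pi\times\sigma'\hookrightarrow\pi\times\sigma\times(\text{something})$ is the wrong direction; instead use $\lang{\Delta'}\times\sigma'\twoheadrightarrow\sigma$ (Langlands cosocle), so $\lang{\Delta'}\times\pi\times\sigma'$ surjects onto $\lang{\Delta'}\times$... — this is getting delicate, and indeed this is where I expect the real work. The clean route: since $\pi$ is $\RI$, $\pi\times\sigma'$ is automatically $\EI$! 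So condition (1) is \emph{never} the issue — it holds unconditionally. Wait: re-reading, $\sigma'\in\Irr_\rho$ and $\pi$ is $\RI$, so $\pi\times\sigma'$ is $\EI$, i.e. $\soc$ is irreducible with multiplicity one; $\lcond(\pi,\sigma')$ is the stronger statement that this socle equals $\zele{\m+\n'^t}$. So (1) genuinely is a nontrivial condition (the socle could a priori be a different irreducible).

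**The main obstacle.** Given that, the delicate implication is $(2)+(3)\Rightarrow(1)$. The plan is: write $\soc(\pi\times\sigma)=\zele{(\m+\n^t)^t{}^t}$... let me restate. By (3), $\soc(\pi\times\sigma)=\lang{(\m+\n^t)^t}$. By (2), $(\m+\n^t)^t=(\m+\n'^t)^t+\Delta'$. Now I use that $\pi\times\sigma\hookrightarrow\pi\times\sigma'\times\lang{\Delta'}$ and that, since $\pi\times\sigma'$ is $\EI$ with socle $\lang{\m''}$ say, and $\lang{\Delta'}$ is a segment rep, $\soc(\pi\times\sigma'\times\lang{\Delta'})=\soc(\lang{\m''}\times\lang{\Delta'})$ (by Lemma \ref{lem:main}, after a reduction to make the Jacquet-module multiplicity-one hypothesis hold — this needs the $<_b$ or $<_e$ ordering lemmas \ref{lem: indEI}/\ref{lem: redgem}). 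Hence $\lang{(\m+\n'^t)^t+\Delta'}=\soc(\pi\times\sigma)=\soc(\lang{\m''}\times\lang{\Delta'})$. The square-integrable socle results of \S\ref{sec: square-integrable} describe $\soc(\lang{\m''}\times\lang{\Delta'})$ as a function of $\m''$ and $\Delta'$; injectivity of this description in the relevant range then forces $\m''=(\m+\n'^t)^t$, which is exactly $\lcond(\pi,\sigma')$. The only real content is verifying that the multiplicity-one hypotheses in the applications of Lemma \ref{lem:main} are met — I expect this is handled by first peeling off $\n'_{>_e\rho'}$ for appropriate $\rho'$ via Lemma \ref{lem: redgem}, exactly as in the proof of Lemma \ref{lem: eff stages} — and that the ``injectivity'' of the segment-socle formula is read off from Corollary \ref{cor: k=l} together with the $\n\mapsto\n^-$ recursion for MW. I would organize the final proof as: (i) reduce to $\n=\n_{\le_e\rho}$; (ii) establish $\pi\times\sigma\hookrightarrow\pi\times\sigma'\times\lang{\Delta'}$ and that this last is $\EI$ with socle $\soc(\soc(\pi\times\sigma')\times\lang{\Delta'})$; (iii) invoke the $\S\ref{sec: square-integrable}$ formula for the socle against a segment rep, and conclude each of the three implications by comparing parameters. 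The hardest single point is step (ii)'s multiplicity-one input, i.e. showing $\soc(\pi\times\sigma')\otimes\lang{\Delta'}$ (or rather $\soc$ of the relevant factor) occurs once in the Jacquet module — this is where Lemmas \ref{lem: indEI} and \ref{lem: redgem} earn their keep.
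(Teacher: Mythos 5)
Your skeleton is the paper's: embed $\pi\times\sigma\hookrightarrow\pi\times\sigma'\times\lang{\Delta'}$, show the right-hand side is \EI, deduce $\soc(\pi\times\sigma)=\soc(\soc(\pi\times\sigma')\times\lang{\Delta'})$, and compare parameters. But two steps are left hanging, and the second one is where your argument as written does not close. On the \EI\ verification: you rightly sense that $\lang{\Delta'}$ need not be left $\supp(\pi\times\sigma')$-reduced (if $\n$ has several segments ending at $e(\sigma)$, the others survive in $\n'$), but Lemmas \ref{lem: indEI} and \ref{lem: redgem} are not the right repair. The paper's device is a \emph{further} embedding: writing $\n=\Delta_1+\dots+\Delta_t$ right-ordered with $e(\Delta_1)=\dots=e(\Delta_k)=e(\sigma)>e(\Delta_{k+1})$, one has
\[
\pi\times\sigma'\times\lang{\Delta'}\hookrightarrow\pi\times\sigma''\times\lang{\Delta_1+\dots+\Delta_k},\qquad\sigma''=\lang{\Delta_{k+1}+\dots+\Delta_t},
\]
and now $\lang{\Delta_1+\dots+\Delta_k}=\lang{\Delta_1}\times\dots\times\lang{\Delta_k}$ is left $\rho'$-reduced for every $\rho'\ne e(\sigma)$ while $e(\sigma)\notin\supp(\pi\times\sigma'')$ (here the standing hypothesis $\supp\sigma\not\subset\Z_{\le\rho}$, hence $e(\sigma)>\rho=e(\pi)$, is used); Lemma \ref{lem:main} then applies exactly as in Lemma \ref{lem: eff stages}.

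The more serious issue is the endgame. You reach for the results of \S\ref{sec: square-integrable}, Corollary \ref{cor: k=l} and an ``injectivity of the segment-socle formula''; none of that is available in the form you need, and none of it is necessary. Since every element of $\supp(\soc(\pi\times\sigma'))$ is $\le e(\Delta')$, the segment $\Delta'$ precedes no segment of $\m'^t$ (where $\soc(\pi\times\sigma')=\zele{\m'}=\lang{\m'^t}$), so the Langlands analogue of Proposition \ref{useful_properties} part \ref{part: pwnp} gives in one line
\[
\soc(\pi\times\sigma)=\soc(\lang{\m'^t}\times\lang{\Delta'})=\lang{\m'^t+\Delta'},
\]
\emph{unconditionally}. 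Condition (1) reads $\m'^t=(\m+\n'^t)^t$, condition (3) reads $\m'^t+\Delta'=(\m+\n^t)^t$, and condition (2) is the identity relating them; the two-out-of-three is then immediate, the direction $(2)+(3)\Rightarrow(1)$ being nothing but cancellation of $\Delta'$ in the free commutative monoid $\MS$ -- this is the ``injectivity'' you were worried about. Your preliminary reduction to $\n=\n_{\le_e\rho}$ is not needed, and note also that the correct hypothesis for the embedding $\sigma\hookrightarrow\sigma'\times\lang{\Delta'}$ in the Langlands picture is $\Delta'\not\prec\n'$, not $\n'\not\prec\Delta'$ as you wrote (it holds because $e(\Delta')$ is maximal).
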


\begin{proof}
Write $\n=\Delta_1+\dots+\Delta_t$ where $\Delta_1\ge_e\dots\ge_e\Delta_t$ so that $e(\Delta_1)=e(\sigma)$.
Let $k$ be the largest index such that $e(\Delta_k)=e(\Delta_1)$, so that $\Delta'=\Delta_i$ for some $1\le i\le k$.
In particular $\lang{\Delta_1+\dots+\Delta_k}=\lang{\Delta_1}\times\dots\times\lang{\Delta_k}$.
We have
\[
\sigma\hookrightarrow\sigma''\times\lang{\Delta_1+\dots+\Delta_k}
\]
where $\sigma''=\lang{\Delta_{k+1}+\dots+\Delta_t}$. Thus,
\[
\pi\times\sigma\hookrightarrow\pi\times\sigma'\times\lang{\Delta'}\hookrightarrow
\pi\times\sigma''\times \lang{\Delta_1+\dots+\Delta_k}.
\]
As before, the right-hand side is \EI. Therefore,
\[
\soc(\pi\times\sigma)=\soc(\soc(\pi\times\sigma')\times\lang{\Delta'}).
\]
Suppose that $\pi':=\soc(\pi\times\sigma')=\zele{\m'}=\lang{\m'^t}$. Then
\[
\soc(\pi'\times\lang{\Delta'})=\lang{\m'^t+\Delta'}
\]
(since $\Delta'$ does not precede any segment in $\m'^t$).
%By assumption, $\m'=\m+\n'^t$ and $(\m+\n^t)^t=\m'^t+\Delta'$.
The Lemma follows.
%\[
%\soc(\pi\times\sigma)=\zele{\m+\n^t}.
%\]
%It follows that
%\[
%(\m+\n^t)^t=\m'^t+\Delta'.
%\]
\end{proof}

Under certain additional assumptions we can sharpen the previous lemma as follows.
\begin{lem} \label{lem: equcond}
Suppose that for any $\Delta\in\seg_\rho$, $\m_{\le_e\Delta}$ and $\m_{>_e\Delta}$ are \RI.
Then the condition $\lcond(\m,\n)$ is equivalent to the conjunction of $\lcond(\m,\n^-)$ and $(\m+\n)^-=\m+\n^-$.
\end{lem}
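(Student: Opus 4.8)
The plan is to deduce the equivalence from Lemma~\ref{lem: reptheoryanalog}, combined with the decomposition of $\lang{\n}$ furnished by the first step of the M\oe glin--Waldspurger algorithm. Since $\supp\sigma\not\subset\Z_{\le\rho}$ we have $e(\n)>\rho$, so the chain $\Delta'_{j_0}\succ\dots\succ\Delta'_{j_k}$ of \eqref{eq: j_0} and \eqref{eq: j_l} is nonempty and $e(\Delta'_{j_0})=e(\n)$. Recall from \S\ref{par: cuspidal_lines} that $\n^t=(\n^-)^t+\maxdel(\n^t)$, that $b(\maxdel(\n^t))=e(\Delta'_{j_k})$, and that $e(\maxdel(\n^t))=e(\n)$ is the maximal element of $\supp\n^t$; hence $\maxdel(\n^t)\not\prec\Delta$ for every $\Delta\in(\n^-)^t$, and Proposition~\ref{useful_properties} part~\ref{part: pwnp} gives $\lang{\n}=\zele{\n^t}=\soc\bigl(\zele{\maxdel(\n^t)}\times\lang{\n^-}\bigr)$, whence $\zele{\m}\times\lang{\n}\hookrightarrow\zele{\m}\times\zele{\maxdel(\n^t)}\times\lang{\n^-}$.

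The key point — and the place where the hypotheses that $\m_{\le_e\Delta}$ and $\m_{>_e\Delta}$ be \RI\ for every $\Delta\in\seg_\rho$ enter — is that the representation $\zele{\m}\times\zele{\maxdel(\n^t)}\times\lang{\n^-}$ is \EI. This does not follow directly from $\zele{\m}$ being \RI, because the last two factors are not jointly irreducible; instead one splits $\zele{\m}$ along its slices $\m_{\le_e\Delta},\m_{>_e\Delta}$ (legitimate by Proposition~\ref{useful_properties} part~\ref{part: pwnp}, as $\m_{>_e\Delta}\not\prec\m_{\le_e\Delta}$), moves the low slice past the remaining factors, and absorbs the high slice together with $\zele{\maxdel(\n^t)}$, checking at each stage that the relevant Jacquet module contains the pertinent irreducible constituent with multiplicity one (Lemmas~\ref{lem: mult1 jf} and~\ref{lem:main}), exactly in the spirit of the proofs of Lemmas~\ref{lem: suffuniquesub}, \ref{lem: eff stages} and~\ref{lem: reptheoryanalog}. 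Granting this, Remark~\ref{rem:main} yields $\soc(\zele{\m}\times\lang{\n})=\soc\bigl(\zele{\m}\times\zele{\maxdel(\n^t)}\times\lang{\n^-}\bigr)$.

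From here one relates $\soc\bigl(\zele{\m}\times\zele{\maxdel(\n^t)}\times\lang{\n^-}\bigr)$ to $\soc(\zele{\m}\times\lang{\n^-})$, and in parallel the combinatorial data, by a sequence of applications of Lemma~\ref{lem: reptheoryanalog} guided by the chain $\Delta'_{j_0}\succ\dots\succ\Delta'_{j_k}$ and by Corollary~\ref{cor: k=l}: each application relates the condition $\lcond$ for a given residue of $\n$ to the one for a strictly smaller residue, together with an identity of multisegments (the middle condition in Lemma~\ref{lem: reptheoryanalog}). One then verifies — this is the combinatorial heart of the argument — that after the process has been run down to $\n^-$, the accumulated identities of multisegments are collectively equivalent to $(\m+\n)^-=\m+\n^-$ and the residual condition is exactly $\lcond(\m,\n^-)$; both implications then follow from the three-way equivalence in Lemma~\ref{lem: reptheoryanalog} (any two of its conditions imply the third).

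The main obstacle is precisely this last reconciliation. A single application of Lemma~\ref{lem: reptheoryanalog} removes a whole segment of maximal $e$, whereas the operation $\n\mapsto\n^-$ merely shortens the chain segments $\Delta'_{j_l}$ to ${\Delta'_{j_l}}^-$; matching the two forces one to insert auxiliary segments and to keep careful track of the Zelevinsky involution via the relations $\n^t=(\n^-)^t+\maxdel(\n^t)$, $e(\maxdel(\n^t))=e(\n)$, $b(\maxdel(\n^t))=e(\Delta'_{j_k})$, and it is here that the various edge cases must be dealt with (several segments of $\n$ of maximal $e$, only one entering the chain; the chain dipping below $\rho$, where Lemma~\ref{lem: redgem} intervenes; and so on). The \RI-hypotheses on the slices of $\m$ are needed not only for the socle-irreducibility above but also to keep all the intermediate representations under control throughout this bookkeeping.
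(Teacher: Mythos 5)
There is a genuine gap here, together with a structural misreading of how Lemma \ref{lem: reptheoryanalog} is meant to be used. Writing $\zele{\n}=\lang{\n^t}$ and taking $\Delta'=\maxdel(\n^t)$ in Lemma \ref{lem: reptheoryanalog} (so that $\n^t-\Delta'=(\n^-)^t$ and $\sigma'=\zele{\n^-}$), a \emph{single} application of that lemma already relates $\lcond(\m,\n)$, $\lcond(\m,\n^-)$ and the identity $(\m+\n)^t=(\m+\n^-)^t+\maxdel(\n^t)$, which is exactly what $(\m+\n)^-=\m+\n^-$ supplies; this disposes of the backward implication and reduces the whole lemma to showing that $\lcond(\m,\n)$ forces $(\m+\n)^-=\m+\n^-$. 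You instead propose to iterate Lemma \ref{lem: reptheoryanalog} along the chain $\Delta'_{j_0},\dots,\Delta'_{j_k}$, and you yourself note that the lemma removes an entire segment of maximal end from the Langlands parameter while $\n\mapsto\n^-$ only truncates the chain segments, so the two processes do not match; you then leave this ``reconciliation,'' which you rightly call the main obstacle, unresolved. (There is also a recurring confusion between $\zele{\n}$ and $\lang{\n}$: the lemma concerns $\lcond(\zele{\m},\zele{\n})$, but your embedding is of $\zele{\m}\times\lang{\n}$, a different representation.)

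More seriously, after the one-step reduction the entire content of the lemma is the implication $\lcond(\m,\n)\Rightarrow(\m+\n)^-=\m+\n^-$, i.e., that the first M\oe glin--Waldspurger step on $\m+\n$ cannot borrow a segment of $\m$. Your outline never isolates this implication, let alone proves it. The argument is by contradiction: if the chain for $\m+\n$ leaves $\n$ at some $\Delta_l\in\m$, one splits $\m=\m_{\ge_e\Delta_l}+\m_{<_e\Delta_l}$ and $\n$ accordingly, uses the surjection $\zele{\m_2}\times\zele{\n_1}\twoheadrightarrow\zele{\n_1+\m_2}$ to commute factors without losing $\zele{\m+\n}$ from the kernel, applies Lemma \ref{lem: indEI} together with the hypothesis that $\m_{\ge_e\Delta_l}$ and $\m_{<_e\Delta_l}$ are \RI\ to see that the rearranged product is \EI, and thereby reduces to the boundary case, which contradicts Corollary \ref{cor: k=l}. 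This splitting/intertwining step is precisely where the \RI\ hypotheses on the slices of $\m$ are used (not, as you suggest, in making $\zele{\m}\times\zele{\maxdel(\n^t)}\times\lang{\n^-}$ socle irreducible), and it is absent from your proposal.
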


\begin{proof}
By the previous lemma it suffices to show that $\lcond(\m,\n)$ implies $(\m+\n)^-=\m+\n^-$.
We may assume that $\n=\n_{\ge_e\mindel(\m)}$ since neither the condition $\lcond(\m,\n)$
nor the condition $(\m+\n)^-=\m+\n^-$ is affected by replacing $\n$ by $\n_{\ge_e\mindel(\m)}$.
Let $\m=\Delta_1+\dots+\Delta_k$ and $\n=\Delta'_1+\dots+\Delta'_{k'}$ with $\Delta_1\ge_e\dots\ge_e\Delta_k$
and $\Delta'_1\ge_e\dots\ge_e\Delta'_{k'}$.
%(We know that $\combsocz(\m,\n)=\combsocz(\m,\n_{\ge\Delta_k})+\n_{<\Delta_k}$,
%so that in particular, $\lcond(\m,\n)$ and $\llcond(\m,\n_{\ge\Delta_k})$ are equivalent.)
Let $i_1<\dots<i_m$ be the indices given by
\begin{enumerate}
\item $i_1=\min\{i:e(\Delta'_i)=e(\n)\}$,
\item $i_{j+1}=\min\{i:\Delta'_i\prec\Delta'_{i_j}\text{ and }e(\Delta'_i)=e(\lshft{\Delta'}_{i_j})\}$, $j=1,\dots,m-1$,
\item There is no $\Delta'\in\n$ such that $\Delta'\prec\Delta'_{i_m}$ and $e(\Delta')=e(\lshft{\Delta'}_{i_m})$.
\end{enumerate}
Assume by contradiction that $(\m+\n)^-\ne\m+\n^-$.
Then there exist indices $l'\le m$ and $l\le k$ such that $\Delta_l\prec\Delta'_{i_{l'}}$, $e(\Delta_l)=e(\lshft{\Delta'}_{i_{l'}})$
and either $l'=m$ or $\Delta_l\subsetneq\Delta'_{i_{l'+1}}$.

Suppose first that $k=l$. Then necessarily $l'=m$ (since $\n=\n_{\ge_e\Delta_k}$).
We get a contradiction to Corollary \ref{cor: k=l} since $^+\maxdel(\n)\in(\m+\n)^t$.

If $k>l$ we write $\m=\m_1+\m_2$ and $\n=\n_1+\n_2$ where $\m_1=\m_{\ge_e\Delta_l}$,
$\m_2=\m_{<_e\Delta_l}$, $\n_1=\n_{\ge_e\Delta_l}$ and $\n_2=\n_{<_e\Delta_l}$.

Then
\[
\zele{\m}\times\zele{\n}\hookrightarrow\zele{\m_1}\times\zele{\m_2}\times\zele{\n_1}\times\zele{\n_2}
\]
and therefore by assumption,
\[
\zele{\m+\n}\hookrightarrow\zele{\m_1}\times\zele{\m_2}\times\zele{\n_1}\times\zele{\n_2}.
\]
We claim that this implies that
\begin{equation} \label{eq: inclu2}
\zele{\m+\n}\hookrightarrow\zele{\m_1}\times\zele{\n_1+\m_2}\times\zele{\n_2}\hookrightarrow
\zele{\m_1}\times\zele{\n_1}\times\zele{\m_2}\times\zele{\n_2}.
\end{equation}
Indeed, we have
\[
\cos(\zele{\m_2}\times\zele{\n_1})=\zele{\n_1+\m_2}=\soc(\zele{\n_1}\times\zele{\m_2})
\]
since $\n_1\not\prec\m_2$.
%the image of the intertwining operator
%\[
%\zele{\m_2}\times\zele{\n_1}\xrightarrow{A}\zele{\n_1}\times\zele{\m_2}
%\]
%contains $\soc(\zele{\n_1}\times\zele{\m_2})=\zele{\n_1+\m_2}$ ().
Therefore,
\[
\zele{\n_1+\m_2}\not\le[\Ker A]
\]
where $A$ is the projection
\[
\zele{\m_2}\times\zele{\n_1}\xrightarrow{A}\cos(\zele{\m_2}\times\zele{\n_1})\simeq\zele{\n_1+\m_2}.
\]
It follows that
\[
\zele{\m+\n}\not\le[\Ker (\id\times A\times\id)]
\]
where $\id\times A\times\id$ is the induced intertwining operator
\[
\zele{\m_1}\times\zele{\m_2}\times\zele{\n_1}\times\zele{\n_2}\rightarrow
\zele{\m_1}\times\zele{\n_1+\m_2}\times\zele{\n_2}
\]
since any irreducible $\zele{\m'}\le[\Ker(\id\times A\times\id)]$ satisfies $\m'\underset{\neq}{\vdash}\m_1+\n_1+\m_2+\n_2=\m+\n$.
This gives \eqref{eq: inclu2}.
In turn, by Lemma \ref{lem: indEI}
\[
\zele{\m_1}\times\zele{\n_1}\times\zele{\m_2}\times\zele{\n_2}
\]
is \EI\ since both $\zele{\m_1}\times\zele{\n_1}$ and $\zele{\m_2}\times\zele{\n_2}$ are \EI\
by assumption and any segment of $\m_1+\n_1$ is $>_e$ any segment of $\m_2+\n_2$.
Thus, we get
\[
\zele{\m+\n}=\soc(\zele{\m_1}\times\zele{\n_1}\times\zele{\m_2}\times\zele{\n_2})=
\soc(\soc(\zele{\m_1}\times\zele{\n_1})\times\soc(\zele{\m_2}\times\zele{\n_2})).
\]
Therefore, necessarily
\[
\zele{\m_1+\n_1}=\soc(\zele{\m_1}\times\zele{\n_1})
\]
which yields a contradiction to the case $k=l$ considered above.
\end{proof}

%\begin{rem}
%We do not know whether in this generality the last condition of the lemma implies the first two.
%\end{rem}

%	\input{SomeLemmas100914.tex}

\section{Irreducibility of $\zele{\Delta} \times \sigma$} \label{sec: square-integrable}

\subsection{First description of $\soc(\zele{\Delta}\times\sigma)$}
Fix $\rho\in\Cusp$ and let $\Delta\in\seg_\rho$ and $\sigma\in\Irr_\rho$.
Note that by Lemma \ref{lem: lcondprop} part \ref{part: trivlcond}, the condition $\lcond(\zele{\Delta},\sigma)$ holds if $\supp\sigma\subset\Z_{\le e(\Delta)}$.
From this and Lemma \ref{lem: eff stages}, together with the definition of $\m^t$, we infer:

\begin{prop} \label{prop: delsig}
Let $\Delta\in\seg_\rho$ and $\sigma=\lang{\m}\in\Irr_\rho$.
Write $\m_1=\m_{\le_e e(\Delta)}$ and $\m_2=\m_{\ge_b b(\Delta)}$. (See \eqref{eq: m_P}.)
Then
\begin{enumerate}
\item $\soc(\zele{\Delta}\times \sigma)=\lang{(\m_1^t+\Delta)^t+\m_{>_e e(\Delta)}}$.
 \item $\soc(\zele{\Delta}\times\sigma)=\lang{\m+\Delta^t}$ if and only if $(\m_1^t+\Delta)^t=\m_1+\Delta^t$.
 \item $\cos(\zele{\Delta}\times\sigma)=\lang{\m+\Delta^t}$ if and only if $(\m_2^t+\Delta)^t=\m_2+\Delta^t$.
 \item $\zele{\Delta}\times\sigma$ is irreducible if and only if $(\m_1^t+\Delta)^t=\m_1+\Delta^t$ and
 $(\m_2^t+\Delta)^t=\m_2+\Delta^t$.
\end{enumerate}
\end{prop}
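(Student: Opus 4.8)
The starting point is that $\zele{\Delta}=\lang{\Delta^t}$ is saturated (Proposition \ref{useful_properties} part \ref{part: pwnp}), hence \RI\ by Corollary \ref{cor:saturated}; thus $\zele{\Delta}\times\sigma$ is \EI, and both its socle and its cosocle are irreducible and occur with multiplicity one in $[\zele{\Delta}\times\sigma]$. The plan is to prove (1) first, read off (2) from it, obtain (3) from (1) by passing to contragredients, and deduce (4) from (2) and (3).

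For (1): since $e(\Delta^t)=e(\Delta)$, Lemma \ref{lem: eff stages} applied to the \RI\ representation $\zele{\Delta}=\lang{\Delta^t}$ and to $\sigma=\lang{\m}$ yields $\combsoclang(\Delta^t,\m)=\combsoclang(\Delta^t,\m_1)+\m_{>_e e(\Delta)}$ (and this is trivial in the remaining case $\supp\sigma\subset\Z_{\le e(\Delta)}$, where $\m=\m_1$). It then remains to identify $\combsoclang(\Delta^t,\m_1)$, i.e.\ to compute $\soc(\zele{\Delta}\times\lang{\m_1})=\soc(\zele{\Delta}\times\zele{\m_1^t})$. Every segment of $\m_1$ has $e$-endpoint $\le e(\Delta)$, so $\supp(\m_1^t)=\supp(\m_1)\subset\Z_{\le e(\Delta)}$ and hence $\lcond(\zele{\Delta},\zele{\m_1^t})$ holds; therefore $\soc(\zele{\Delta}\times\zele{\m_1^t})=\zele{\m_1^t+\Delta}=\lang{(\m_1^t+\Delta)^t}$, that is, $\combsoclang(\Delta^t,\m_1)=(\m_1^t+\Delta)^t$. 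This proves (1). Then (2) is immediate from the injectivity of $\m\mapsto\lang{\m}$ and the cancellativity of $\MS$: using $\m=\m_1+\m_{>_e e(\Delta)}$, the identity $\soc(\zele{\Delta}\times\sigma)=\lang{\m+\Delta^t}$ is equivalent to $(\m_1^t+\Delta)^t=\m_1+\Delta^t$.

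For (3): I would use $(\zele{\Delta}\times\sigma)^\vee\simeq\zele{\Delta^\vee}\times\lang{\m^\vee}$ together with $\cos(M)=\soc(M^\vee)^\vee$, and apply (1) to the segment $\Delta^\vee$ and the representation $\lang{\m^\vee}$. The relevant bookkeeping is that the contragredient reverses the order on the cuspidal line, so $e(\Delta^\vee)=b(\Delta)^\vee$ and $(\m^\vee)_{\le_e e(\Delta^\vee)}=\m_2^\vee$, and that $t$ and $\vee$ commute on $\MS$ (both are involutions, and $(\mathfrak{a}^t)^\vee=(\mathfrak{a}^\vee)^t$ follows from $\zele{\cdot}=\lang{(\cdot)^t}$ and bijectivity). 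Part (1) then gives $\soc(\zele{\Delta^\vee}\times\lang{\m^\vee})=\lang{((\m_2^\vee)^t+\Delta^\vee)^t+(\m-\m_2)^\vee}$, and applying $(\cdot)^\vee$ (and using $\lang{\mathfrak{a}}^\vee=\lang{\mathfrak{a}^\vee}$ and that $t$, $\vee$ are commuting involutions) turns this into $\cos(\zele{\Delta}\times\sigma)=\lang{(\m_2^t+\Delta)^t+(\m-\m_2)}$; exactly as in (2), this equals $\lang{\m+\Delta^t}$ iff $(\m_2^t+\Delta)^t=\m_2+\Delta^t$.

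For (4): since $\zele{\Delta}\times\sigma$ is \EI, it is irreducible if and only if its socle and cosocle coincide (a multiplicity-one composition factor which is at once a subrepresentation and a quotient must exhaust the module). If the conditions of (2) and (3) hold, then $\soc=\cos=\lang{\m+\Delta^t}$, so $\zele{\Delta}\times\sigma$ is irreducible. Conversely, if $\zele{\Delta}\times\sigma=\lang{\Delta^t}\times\lang{\m}$ is irreducible, then by commutativity of $\Rr$ and the $\lang$-analogue of Proposition \ref{useful_properties} part \ref{part: lmm'} it equals $\lang{\m+\Delta^t}$, whence $\soc=\cos=\lang{\m+\Delta^t}$ and (2), (3) supply the two conditions. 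I expect the main obstacle to be the combinatorial bookkeeping in (3)---keeping straight the interaction of $t$, $\vee$ and the lexicographic orders $\le_e$, $\ge_b$---and, in (1), arranging the application of Lemma \ref{lem: eff stages} so that everything is reduced to the range $\supp\sigma\subset\Z_{\le e(\Delta)}$ where $\lcond(\zele{\Delta},-)$ is automatic.
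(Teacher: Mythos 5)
Your proof is correct and follows essentially the same route as the paper: part (1) via Lemma \ref{lem: eff stages} combined with the observation that $\lcond(\zele{\Delta},\tau)$ is automatic when $\supp\tau\subset\Z_{\le e(\Delta)}$, part (3) by passing to the contragredient, and part (4) from the multiplicity-one occurrence of $\lang{\m+\Delta^t}$. The only difference is that you spell out the $t$/$\vee$ bookkeeping and the \EI-based irreducibility argument that the paper leaves implicit.
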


Indeed, the first part follows from Lemma \ref{lem: eff stages} and the remark above.
The second part is an immediate consequence.
The third part follows by passing to the contragredient.
Finally the last part follows from the previous parts and the fact that $\lang{\m+\Delta^t}$ occurs with multiplicity one
in $[\zele{\Delta}\times\sigma]$.

Proposition \ref{prop: delsig} gives a satisfactory necessary and sufficient combinatorial condition for the irreducibility of $\zele{\Delta}\times\sigma$
which is algorithmically very practical.
We can interpret the first part of the proposition as follows:
in order to obtain $\soc(\zele{\Delta}\times\zele{\m})^t$ one performs the M\oe glin--Waldspurger algorithm
on $\m$ until the remaining (truncated) segments are supported in $\Z_{\le e(\Delta)}$. Then one inserts $\Delta$ as an additional segment and continues with the algorithm.
In other words, one implements a modified algorithm on $\m+\Delta$ in which $\Delta$ is allowed to be used in the construction of
new segments only after the support of the remainder of $\m$ (after the previous segments were constructed) is contained in $\Z_{\le e(\Delta)}$.

Using Lemma \ref{lem: sym} we infer
\begin{coro}\label{cor:irred1}
Let $\Delta\in\seg$ and $\m\in\MS$.
Then $\zele{\Delta}\times\zele{\m}$ is irreducible (in which case $\zele{\Delta}\times\zele{\m} \simeq \zele{\Delta+\m} $)
if and only if $\zele{\Delta}\times\zele{\m} \simeq \zele{\m} \times \zele{\Delta}$.
\end{coro}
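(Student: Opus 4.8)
The plan is to deduce this from Proposition \ref{prop: delsig} together with Lemma \ref{lem: sym}. First I would observe that one direction is trivial: if $\zele{\Delta}\times\zele{\m}\simeq\zele{\m}\times\zele{\Delta}$ and $\zele{\Delta}\times\zele{\m}$ is irreducible, then certainly it is isomorphic to its own socle, which is $\zele{\Delta+\m}$ by Proposition \ref{useful_properties} part \ref{part: lmm'} (the multiplicity-one statement) combined with Lemma \ref{lem: lcondprop} part \ref{part: irred}. Conversely, suppose $\zele{\Delta}\times\zele{\m}\simeq\zele{\m}\times\zele{\Delta}$; I need to show $\zele{\Delta}\times\zele{\m}$ is irreducible.

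The key input is that $\zele{\Delta}\times\zele{\m}$ is always \EI: this follows from Lemma \ref{lem: lcondprop} part (1), since $\lcond(\zele{\Delta},\sigma)$ holds for any $\sigma\in\Irr_\rho$ by Proposition \ref{prop: delsig} part (1) (which exhibits $\soc(\zele{\Delta}\times\sigma)$ explicitly and in particular shows it is irreducible of multiplicity one) — and for $\sigma$ not supported in the line of $\rho$, irreducibility of the product is immediate from Proposition \ref{useful_properties} part \ref{part: pwnp} via \S\ref{par: cuspidal_lines}, so we may as well reduce to $\m\in\MS_\rho$. Now I would apply Lemma \ref{lem: sym} with the \ulb\ multisegments corresponding to $\Delta$, $\m$, and $\soc(\zele{\Delta}\times\zele{\m})$: the hypothesis of that lemma is precisely $\lcond(\zele{\Delta},\zele{\m})$ in the required uniform-in-$D$ form, which we have just verified holds for all $D$ and all $\rho$ (Proposition \ref{prop: delsig} part (1) applies verbatim over every inner form). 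Hence the last sentence of Lemma \ref{lem: sym} gives: $\zele{\Delta}\times\zele{\m}$ is irreducible if and only if $\zele{\Delta}\times\zele{\m}\simeq\zele{\m}\times\zele{\Delta}$. Combined with the observation above, this is exactly the claimed equivalence, and in the irreducible case the product equals its socle $\zele{\Delta+\m}$.

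The only subtlety — and the step I would be most careful about — is checking that the hypothesis of Lemma \ref{lem: sym} is genuinely met, i.e. that $\soc(\zele{\Delta}\times\zele{\m})$ depends on the \ulb\ data $(\Delta,\m)$ in a way uniform across all central division algebras $D$ and all supercuspidal $\rho$. This is where Proposition \ref{prop: delsig} part (1) does the real work: the Zelevinsky parameter of the socle is produced by the M\oe glin--Waldspurger algorithm, which is purely combinatorial on the underlying \ulb\ multisegment and makes no reference to $D$; so the \ulb\ multisegment $\m''$ with $\zele{\m''_{\rho,D}}=\soc(\zele{\Delta_{\rho,D}}\times\zele{\m_{\rho,D}})$ is the same for every $(D,\rho)$. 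Everything else is bookkeeping: translating ``$\zele{\Delta}\times\sigma$'' to the $\m\in\MS_\rho$ case, and invoking Lemma \ref{lem: lcondprop} parts (1) and (3).
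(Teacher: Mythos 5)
Your argument is essentially the paper's own proof: the corollary is deduced from Proposition \ref{prop: delsig} (whose part (1) shows that $\soc(\zele{\Delta}\times\sigma)$ is given by a purely combinatorial, hence $D$- and $\rho$-independent, recipe) together with the last part of Lemma \ref{lem: sym}, the only extra input being that $\zele{\Delta}\times\zele{\m}$ is \EI. One slip in your write-up: the assertion that ``$\lcond(\zele{\Delta},\sigma)$ holds for any $\sigma\in\Irr_\rho$'' is false (it is exactly the condition characterized later by $\llcond(\Delta,\m)$, and if it always held the corollary would be vacuous); what you actually need, and what is true, is that $\zele{\Delta}$ is \RI\ --- it is saturated by Proposition \ref{useful_properties} part \ref{part: pwnp} and Corollary \ref{cor:saturated} --- so that $\zele{\Delta}\times\sigma$ is \EI\ for every $\sigma$, which is also what your parenthetical appeal to Proposition \ref{prop: delsig} part (1) (via Lemma \ref{lem: eff stages}) delivers. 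With that citation corrected, the proof is complete and coincides with the intended one.
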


We can also use Proposition \ref{prop: delsig} to characterize when $\pi\hookrightarrow\zele{\Delta}\times\sigma$
for some $\sigma$.

\begin{coro} \label{cor: inxpi'}
Let $\pi=\lang{\n}\in\Irr_\rho$ and $\Delta\in\seg_\rho$.
Then there exists $\sigma\in\Irr$ such that $\pi=\soc(\zele{\Delta}\times\sigma)$ (i.e., $\pi\hookrightarrow\zele{\Delta}\times\sigma$)
if and only if $(\n_{\le_e e(\Delta)})^t$ contains $\Delta$. In this case $\sigma=\lang{\m}$ where
$\m_{>_e e(\Delta)}=\n_{>_e e(\Delta)}$ and $(\m_{\le_e e(\Delta)})^t=(\n_{\le_e e(\Delta)})^t-\Delta$.
In particular, $\sigma$ is uniquely determined by $\pi$ and $\Delta$.
\end{coro}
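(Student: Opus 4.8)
The plan is to deduce everything from the first part of Proposition \ref{prop: delsig} together with a short bookkeeping with supercuspidal supports. I first observe that $\zele{\Delta}$ is saturated, hence \RI\ by Corollary \ref{cor:saturated}, so that $\zele{\Delta}\times\sigma$ is \EI\ for every $\sigma\in\Irr$; in particular $\soc(\zele{\Delta}\times\sigma)$ is irreducible and the condition $\pi=\soc(\zele{\Delta}\times\sigma)$ is literally the condition $\pi\hookrightarrow\zele{\Delta}\times\sigma$. Moreover every composition factor of $\zele{\Delta}\times\sigma$ has supercuspidal support $\supp\Delta\cup\supp\sigma$, which contains $\supp\sigma$ and equals $\supp\pi\subset\Z_\rho$; hence $\sigma\in\Irr_\rho$, and Proposition \ref{prop: delsig} applies to it. Writing $\sigma=\lang{\m}$ and $\m_1=\m_{\le_e e(\Delta)}$, that proposition says $\soc(\zele{\Delta}\times\sigma)=\lang{(\m_1^t+\Delta)^t+\m_{>_e e(\Delta)}}$, so (since $\lang{\cdot}$ is a bijection $\MS\to\Irr$) $\pi=\soc(\zele{\Delta}\times\sigma)$ is equivalent to $\n=(\m_1^t+\Delta)^t+\m_{>_e e(\Delta)}$.

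The key point is that this equality automatically respects the splitting of a multisegment according to whether the right endpoint of a segment is $\le e(\Delta)$ or $>e(\Delta)$. Indeed, every element of a segment of $\m_1$ is bounded by that segment's right endpoint, which is $\le e(\Delta)$, so $\supp\m_1\subset\Z_{\le e(\Delta)}$; likewise $\supp\Delta\subset\Z_{\le e(\Delta)}$; and since the transpose preserves supercuspidal supports, $\supp\big((\m_1^t+\Delta)^t\big)\subset\Z_{\le e(\Delta)}$, so every segment of $(\m_1^t+\Delta)^t$ has right endpoint $\le e(\Delta)$. By definition every segment of $\m_{>_e e(\Delta)}$ has right endpoint $>e(\Delta)$. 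Therefore, from $\n=(\m_1^t+\Delta)^t+\m_{>_e e(\Delta)}$ one reads off $\n_{\le_e e(\Delta)}=(\m_1^t+\Delta)^t$ and $\n_{>_e e(\Delta)}=\m_{>_e e(\Delta)}$. Applying the involutive transpose to the first identity gives $(\n_{\le_e e(\Delta)})^t=\m_1^t+\Delta$, which in particular contains $\Delta$; it also gives $\m_1^t=(\n_{\le_e e(\Delta)})^t-\Delta$, and together with $\m_{>_e e(\Delta)}=\n_{>_e e(\Delta)}$ this determines $\m$, hence $\sigma$, uniquely from $\pi$ and $\Delta$. This settles the ``only if'' direction as well as the uniqueness and the explicit formula for $\sigma$.

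For the ``if'' direction, assume $(\n_{\le_e e(\Delta)})^t$ contains $\Delta$ and set $\m:=\big((\n_{\le_e e(\Delta)})^t-\Delta\big)^t+\n_{>_e e(\Delta)}$ and $\sigma:=\lang{\m}$. The same support computation shows $\m_{\le_e e(\Delta)}=\big((\n_{\le_e e(\Delta)})^t-\Delta\big)^t$ and $\m_{>_e e(\Delta)}=\n_{>_e e(\Delta)}$, so in the formula of Proposition \ref{prop: delsig}(1) the term $(\m_1^t+\Delta)^t$ collapses to $((\n_{\le_e e(\Delta)})^t-\Delta+\Delta)^t=\n_{\le_e e(\Delta)}$, whence $\soc(\zele{\Delta}\times\sigma)=\lang{\n_{\le_e e(\Delta)}+\n_{>_e e(\Delta)}}=\lang{\n}=\pi$. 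I do not anticipate any genuine obstacle here: the only step requiring care is the support computation that pins down the $\le_e e(\Delta)$ versus $>_e e(\Delta)$ decomposition on both sides of the identity, together with the routine verification that the multisegment $\m$ constructed above has the asserted truncations.
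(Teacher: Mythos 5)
Your proposal is correct and follows exactly the route the paper intends: the corollary is stated as a direct consequence of Proposition \ref{prop: delsig}(1), and your support argument (that $\supp\bigl((\m_1^t+\Delta)^t\bigr)\subset\Z_{\le e(\Delta)}$, so the identity $\n=(\m_1^t+\Delta)^t+\m_{>_e e(\Delta)}$ splits into the two truncations) together with the involutivity of $\m\mapsto\m^t$ is precisely the bookkeeping needed to extract both directions and the uniqueness of $\sigma$.
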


A special case of Proposition \ref{prop: delsig} is the following.
Recall that a pair of segments are \emph{juxtaposed} if they are linked and disjoint.

\begin{coro}\cite{MR3237446}*{Th\'eor\`eme 3.4} \label{cor: BLMoneseg}
Suppose that no $\Delta'\in\m$ is juxtaposed with $\Delta$. Then $\zele{\Delta}\times\lang{\m}$ is irreducible.
\end{coro}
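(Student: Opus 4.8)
The goal is to deduce Corollary \ref{cor: BLMoneseg} from Proposition \ref{prop: delsig}, specifically from its last part (the irreducibility criterion). By that criterion it suffices to check the two conditions $(\m_1^t+\Delta)^t=\m_1+\Delta^t$ and $(\m_2^t+\Delta)^t=\m_2+\Delta^t$, where $\m_1=\m_{\le_e e(\Delta)}$ and $\m_2=\m_{\ge_b b(\Delta)}$. I would first translate the hypothesis: no $\Delta'\in\m$ is juxtaposed with $\Delta$ means that whenever $\Delta'\in\m$ is linked to $\Delta$, the two segments must overlap (share at least one supercuspidal element). The idea is that under this hypothesis, inserting $\Delta$ into either $\m_1$ or $\m_2$ does not interact with the Mœglin--Waldspurger algorithm in a way that changes the end-points being peeled off, so the transposes split off $\Delta^t$ cleanly.

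\textbf{Key steps.} First I would reduce to verifying a single combinatorial identity of the form $(\p^t+\Delta)^t=\p+\Delta^t$ for an appropriate rigid multisegment $\p$ (namely $\p=\m_1$ and then $\p=\m_2$, or rather the appropriate truncations appearing in Proposition \ref{prop: delsig}). The cleanest route is to invoke the interpretation of the first part of Proposition \ref{prop: delsig} given in the text: one runs the MW-algorithm on $\m$ until the remainder is supported in $\Z_{\le e(\Delta)}$, then inserts $\Delta$ and continues. So I would show: (i) the hypothesis that nothing in $\m$ is juxtaposed with $\Delta$ implies that no segment produced by the algorithm from $\m$ can precede $\Delta$ in a way that "absorbs" $e(\Delta)$ or is strictly contained while abutting — i.e., the conditions defining $j_l$ in \eqref{eq: j_l} are never triggered using $\Delta$; and (ii) symmetrically, for the cosocle computation with $\m_2$ and $b(\Delta)$, using the contragredient. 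Concretely, the segment $\Delta$ can only be used in the construction of a new segment $\Delta'_i$ of the transpose if some $\Delta''\prec\Delta$ with $e(\Delta'')=e(\lshft\Delta)$ or $\Delta\prec\Delta''$ with $e(\Delta)=e(\lshft{\Delta''})$; either situation forces $\Delta$ and $\Delta''$ to be linked and disjoint, i.e. juxtaposed, contradicting the hypothesis (one also checks unlinked or nested segments genuinely cause no interference via Proposition \ref{useful_properties} part \ref{part: pwnp}). Hence $\Delta$ "survives untouched" through the continued algorithm, which is exactly the statement that $\Delta^t=\Delta$ splits off, giving $(\m_1^t+\Delta)^t=\m_1+\Delta^t$; and dually for $\m_2$.

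\textbf{Main obstacle.} The technical heart is step (i): carefully tracking the Mœglin--Waldspurger recursion to confirm that the "non-juxtaposed" hypothesis really does preclude every way in which $\Delta$ could merge with a segment of (a truncation of) $\m^t$. One must be careful that after several steps of the algorithm the segments have been truncated, so a segment originally nested in or overlapping $\Delta$ might, after truncation, become juxtaposed with $\Delta$; I would need to argue that the truncation only removes end-points (never extends segments), so overlap or nesting with $\Delta$ is preserved in the relevant sense, or that the only segments reaching the stage where $\Delta$ is active are supported in $\Z_{\le e(\Delta)}$ and hence cannot precede $\Delta$ in the obstructive pattern at all. Once this bookkeeping is pinned down, both required identities follow and Proposition \ref{prop: delsig}(4) gives irreducibility; the final isomorphism $\zele{\Delta}\times\lang{\m}\simeq\lang{\m+\Delta^t}$ is automatic from the multiplicity-one statement already quoted in the text. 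I expect the argument to be short modulo this recursion-tracking lemma, which is essentially a direct unwinding of \eqref{eq: j_0}--\eqref{eq: j_l}.
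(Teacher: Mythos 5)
Your high-level strategy coincides with the paper's: reduce to the irreducibility criterion in the last part of Proposition \ref{prop: delsig}, verify the combinatorial identity for $\m_1=\m_{\le_e e(\Delta)}$, and obtain the identity for $\m_2$ by passing to the contragredient. However, the way you propose to verify the identity contains a genuine error. Your central claim — that $\Delta$ can participate in a chain of the M\oe glin--Waldspurger recursion together with some $\Delta''$ only if $\Delta$ and $\Delta''$ are juxtaposed — is false: the interaction conditions from \eqref{eq: j_0}--\eqref{eq: j_l} are $\Delta''\prec\Delta$ with $e(\Delta'')=\lshft{e(\Delta)}$ (or $\Delta\prec\Delta''$ with $e(\Delta'')=\rshft{e(\Delta)}$), and these are satisfied by overlapping linked segments (e.g.\ $\Delta=[1,3]_\rho$, $\Delta''=[0,2]_\rho$), which the hypothesis does not exclude. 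Moreover, since you run the algorithm on $\m_1^t+\Delta$, the objects that could interact with $\Delta$ are the segments of $\m_1^t$ and their truncations, whereas the hypothesis constrains the segments of $\m$; the transfer genuinely fails — for $\m=[0,2]_\rho$ and $\Delta=[1,3]_\rho$ no segment of $\m$ is juxtaposed with $\Delta$, yet the segment $[0,0]_\rho$ of $\m^t$ is. In that example the identity $(\m_1^t+\Delta)^t=\m_1+\Delta^t$ does hold, but not by your mechanism: $\Delta$ does \emph{not} split off as one clean chain — only its endpoint is peeled off in the first pass, and its truncation is processed later, interleaved with the singletons of $\m_1^t$. So the "recursion-tracking lemma" you defer to is not mere bookkeeping; as you have formulated it, it is false.

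The paper sidesteps all of this by computing the transpose from the other side. Since $^t$ is an involution, $(\m_1^t+\Delta)^t=\m_1+\Delta^t$ is equivalent to $(\m_1+\Delta^t)^t=\m_1^t+\Delta$, and the latter is a one-pass computation: $\Delta^t=\sum_{\rho'\in\Delta}[0,0]_{\rho'}$ is a sum of singletons, and since $e(\m_1)\le e(\Delta)$ the first chain of the MW-algorithm applied to $\m_1+\Delta^t$ starts at $\{e(\Delta)\}$ and at each stage prefers the next singleton $\{\lshft{\rho'}\}$ (it has the largest beginning among segments with that end, hence the smallest index in the $\ge_e$-ordering). The chain therefore consumes exactly $\Delta^t$, provided it does not continue past $\{b(\Delta)\}$ into $\m_1$; a segment of $\m_1$ extending it there would have to precede the \emph{singleton} $\{b(\Delta)\}$, hence end at $\lshft{b(\Delta)}$, hence be disjoint from and linked with $\Delta$ — i.e.\ juxtaposed with it, which is precisely what the hypothesis forbids. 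This is the only point where the hypothesis enters; after the first chain one is left with $\m_1$ untouched, giving $(\m_1+\Delta^t)^t=\Delta+\m_1^t$. I suggest you carry out the verification in this direction.
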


\begin{proof}
We can assume that $\Delta\in\seg_\rho$ and $\m\in\MS_\rho$.
Set $\m_1=\m_{\leq_e e(\Delta)}$. We show that $(\m_1 + \Delta^t)^t= \m_1^t+ \Delta$.
Indeed, $\Delta^t=\sum_{\rho\in\Delta}\rho$.
When constructing $(\m_1 + \Delta^t)^t$ using the MW-algorithm, the first segment is formed using the end points of $\Delta^t$
(hence all of $\Delta^t$) and nothing else since no $\Delta'\in\m_1$ precedes $b(\Delta)$ by our assumption.
Then one remains with $\m_1$, so that the MW-algorithm gives indeed $\Delta+\m_1^t$.

The condition $((\m_{\ge_b b(\Delta)})^t+\Delta)^t=\m_{\ge_b b(\Delta)}+\Delta^t$ follows by passing to the contragredient and using
the fact that $^t$ commutes with the contragredient.
Hence, by Proposition \ref{prop: delsig}, $\zele{\Delta}\times\lang{\m}$ is irreducible.
\end{proof}

Note that the combinatorial condition in Corollary \ref{cor: BLMoneseg} is by no means necessary for the irreducibility of $\zele{\Delta}\times\lang{\m}$.

\subsection{Second description of $\soc(\zele{\Delta}\times\sigma)$}
In practice, it will be also useful to have another description of $\soc(\zele{\Delta}\times\sigma)$,
Recall that $\zele{\Delta}$ is \RI\ for any segment $\Delta\in\seg$. We start with the following result, which is a particular case of Lemma \ref{lem:main}.

\begin{lem} \label{lem: subrep1}
Let $\Delta\in\seg$, $\rho\in\Cusp$ and $\sigma\in\Irr$.
Suppose that $b(\Delta)\ne\rho$ and $\sigma$ is left $\rho$-reduced.
Then for any $n\ge0$ the representation $\rho^{\times n}\times\zele{\Delta}\times\sigma$ is \EI.
\end{lem}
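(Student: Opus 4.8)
The plan is to apply Lemma \ref{lem:main} with $\pi_1 = \rho^{\times n}\times\zele{\Delta}$ and $\pi_2 = \sigma$, so the crux is to check the two hypotheses of that lemma: that $\pi_1$ is \EI, and that $\soc(\pi_1)\otimes\soc(\sigma)$ occurs with multiplicity one in $[\rp(\pi_1\times\sigma)]$. For the multiplicity-one statement, the cleanest route is to verify that $\sigma$ is left $\supp(\pi_1)$-reduced and then invoke Lemma \ref{lem: mult1 jf} (part 2): since $\supp(\pi_1) = \supp(\rho^{\times n}\times\zele{\Delta}) = \{\rho\}\cup\supp(\zele{\Delta})=\{\rho\}\cup\Delta$ by Proposition \ref{useful_properties} part \ref{part: support is preserved}, and since we are free to enlarge $\Delta$ as needed — wait, we are not; here $\supp(\pi_1)$ genuinely contains all of $\Delta$. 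So this direct approach would require $\sigma$ to be left $\rho'$-reduced for every $\rho'\in\Delta$, which is strictly stronger than what is hypothesized. Hence I would instead argue inductively on $n$, peeling off the $\rho$'s one at a time.

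More precisely, first I would handle the base case $n=0$, i.e.\ show $\zele{\Delta}\times\sigma$ is \EI\ when $b(\Delta)\ne\rho$. But this is not automatic either without a hypothesis relating $\sigma$ to the other points of $\Delta$; so in fact I expect the correct reading is that the real content is the combination of $\zele{\Delta}$ being \RI\ (stated in the text, valid for \emph{any} segment) together with $\rho^{\times n}$ being left-multiplier-like. So the actual plan: since $\zele{\Delta}$ is \RI, $\zele{\Delta}\times\sigma$ is already \EI\ for \emph{arbitrary} $\sigma$. Set $\pi' = \soc(\zele{\Delta}\times\sigma)$. Now $\rho^{\times n}\times\zele{\Delta}\times\sigma \hookrightarrow \rho^{\times n}\times(\zele{\Delta}\times\sigma)$ — no, that inclusion goes the wrong way for socles. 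Instead embed $\zele{\Delta}\times\sigma$ into... Let me use the following: I want to apply Lemma \ref{lem:main} with $\pi_1 = \rho^{\times n}$ and $\pi_2 = \zele{\Delta}\times\sigma$. We have $\supp(\pi_1)=\{\rho\}$, so by Lemma \ref{lem: mult1 jf} it suffices that $\zele{\Delta}\times\sigma$ be left $\rho$-reduced. Since $b(\Delta)\ne\rho$, $\zele{\Delta}$ is left $\{\rho\}$-reduced (it is left $\Cusp\setminus\{b(\Delta)\}$-reduced, as recorded after \eqref{eq: Jaczele}), and $\sigma$ is left $\rho$-reduced by hypothesis; hence by Lemma \ref{lem: mult1 jf} part 1, $\zele{\Delta}\times\sigma$ is left $\rho$-reduced. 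Moreover $\rho^{\times n}$ is \EI\ (Jacquet modules multiplicity-free, as noted at the end of \S\ref{sec: notation}), and $\zele{\Delta}\times\sigma$ is \EI\ since $\zele{\Delta}$ is \RI. Therefore Lemma \ref{lem:main} applies and $\rho^{\times n}\times(\zele{\Delta}\times\sigma)=\rho^{\times n}\times\zele{\Delta}\times\sigma$ is \EI, as desired.

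The one point requiring a little care, and the step I expect to be the main (minor) obstacle, is confirming that $\zele{\Delta}$ is left $\rho$-reduced when $b(\Delta)\ne\rho$: this is exactly the remark following \eqref{eq: Jaczele} that $\zele{\Delta}$ is left $\Cusp\setminus\{b(\Delta)\}$-reduced, together with the observation that $\rho\ne b(\Delta)$ puts $\rho$ in that set. Everything else is a direct citation: multiplicity-freeness of $[\rp(\rho^{\times n})]$, the \RI\ property of segment representations, Lemma \ref{lem: mult1 jf} parts 1 and 2, and Lemma \ref{lem:main}. I would therefore write the proof as a two-line deduction: (i) $\zele{\Delta}\times\sigma$ is \EI\ and left $\rho$-reduced; (ii) apply Lemma \ref{lem:main} to $\rho^{\times n}$ and $\zele{\Delta}\times\sigma$.
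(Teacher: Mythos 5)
Your final argument is exactly the paper's proof: $\zele{\Delta}\times\sigma$ is left $\rho$-reduced (Lemma \ref{lem: mult1 jf}) and \EI\ (via Corollary \ref{cor:saturated}, i.e.\ $\zele{\Delta}$ being \RI), and then Lemma \ref{lem:main} is applied to $\rho^{\times n}$ and $\zele{\Delta}\times\sigma$. The only quibble is your justification that $\rho^{\times n}$ is \EI: its Jacquet module is \emph{not} multiplicity-free for $n\ge 2$ (the end of \S\ref{sec: notation} concerns \emph{distinct} cuspidals); the correct reason is that $\rho^{\times n}=\std(n\{\rho\})$ is irreducible because $n\{\rho\}$ is \TU\ (Proposition \ref{useful_properties} part \ref{part: pairwise unlinked}).
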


\begin{proof}
By hypothesis, $\zele{\Delta}\times\sigma$ is left $\rho$-reduced and by Corollary \ref{cor:saturated} (or \cite{MR2527415}), it is \EI.
The result now follows from Lemma \ref{lem:main}.
\end{proof}

We deduce the following proposition.
\begin{prop}\label{prop:segnon}
Let $\Delta$ be a segment and $\sigma \in \Irr$. Let $\rho=b(\Delta)$.
Write $\sigma\hookrightarrow\rho^{\times a}\times\sigma_{\lnot\{\rho\}}$.
Then
\begin{equation}\label{eq:expl}
\soc(\zele{\Delta}\times\sigma)=\soc(\rho^{\times(a+1)}\times\soc(\zele{\,^-\Delta}\times\sigma_{\lnot\{\rho\}})).
\end{equation}
\end{prop}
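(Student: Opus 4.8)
The plan is to reduce the computation of $\soc(\zele{\Delta}\times\sigma)$ to the already-understood case where the ``left-reduced'' representation $\sigma_{\lnot\{\rho\}}$ appears, using the decomposition $\sigma\hookrightarrow\rho^{\times a}\times\sigma_{\lnot\{\rho\}}$ from Lemma \ref{lem: decomI} together with the factorization $\zele{\Delta}\hookrightarrow\rho\times\zele{\,^-\Delta}$ which holds because $\rho=b(\Delta)$ and $\zele{\Delta}$ is characterized by its Jacquet module \eqref{eq: Jaczele}. First I would write
\[
\zele{\Delta}\times\sigma\hookrightarrow\rho\times\zele{\,^-\Delta}\times\rho^{\times a}\times\sigma_{\lnot\{\rho\}}.
\]
Now $\zele{\,^-\Delta}$ has $b(\,^-\Delta)=\rshft{\rho}\ne\rho$, so by Lemma \ref{lem: subrep1} the representation $\rho^{\times a}\times\zele{\,^-\Delta}\times\sigma_{\lnot\{\rho\}}$ is \EI; hence it is isomorphic as far as socles go to $\rho\times\zele{\,^-\Delta}\times\rho^{\times a}\times\sigma_{\lnot\{\rho\}}$ after we commute the $\rho$-power past $\zele{\,^-\Delta}$. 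Actually the cleaner route is: since $\zele{\,^-\Delta}\times\rho^{\times a}$ and $\rho^{\times a}\times\zele{\,^-\Delta}$ need not be isomorphic, I would instead keep the $\rho$'s on the left. Observe
\[
\zele{\Delta}\times\sigma\hookrightarrow\rho\times\zele{\,^-\Delta}\times\sigma\hookrightarrow\rho\times\zele{\,^-\Delta}\times\rho^{\times a}\times\sigma_{\lnot\{\rho\}},
\]
and I would like to move all the $\rho$ factors to the front.

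The key maneuver is therefore to show that the socle of $\rho\times\zele{\,^-\Delta}\times\rho^{\times a}\times\sigma_{\lnot\{\rho\}}$ equals the socle of $\rho^{\times(a+1)}\times\zele{\,^-\Delta}\times\sigma_{\lnot\{\rho\}}$. For this I would argue that both representations are \EI\ with the same Jacquet module multiplicity pattern at the relevant parabolic, appealing to Lemma \ref{lem:main}: the point is that $\zele{\,^-\Delta}\times\sigma_{\lnot\{\rho\}}$ is still left $\rho$-reduced (as $b(\,^-\Delta)\neq\rho$ and $\sigma_{\lnot\{\rho\}}$ is left $\rho$-reduced, using Lemma \ref{lem: mult1 jf}(1)), so that $\rho^{\times n}\times(\zele{\,^-\Delta}\times\sigma_{\lnot\{\rho\}})$ is \EI\ by Lemma \ref{lem: subrep1} for every $n$, and in particular its socle is $\soc(\rho^{\times n}\times\soc(\zele{\,^-\Delta}\times\sigma_{\lnot\{\rho\}}))$ by the last part of Lemma \ref{lem:main}. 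Then I would identify $\soc(\rho\times\zele{\,^-\Delta}\times\rho^{\times a}\times\sigma_{\lnot\{\rho\}})$ by computing: on one hand it contains $\soc(\zele{\Delta}\times\sigma)$; on the other hand, $\rho\times\zele{\,^-\Delta}\times\rho^{\times a}\times\sigma_{\lnot\{\rho\}}$ embeds into $\std$ of its parameter and, since the only way to produce the leading $\rho^{\times(a+1)}$ in the Jacquet module forces the socle to factor through $\rho^{\times(a+1)}\times\soc(\zele{\,^-\Delta}\times\sigma_{\lnot\{\rho\}})$, the two socles coincide.

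Concretely, I would finish as follows. Let $\tau=\soc(\zele{\,^-\Delta}\times\sigma_{\lnot\{\rho\}})$, which by Lemma \ref{lem: subrep1} (applied with $n=0$) is irreducible, and note $\rho^{\times(a+1)}\times\tau$ is \EI\ with irreducible socle $\soc(\rho^{\times(a+1)}\times\tau)$ — this is the right-hand side of \eqref{eq:expl}. We have a chain of embeddings
\[
\zele{\Delta}\times\sigma\hookrightarrow\rho\times\zele{\,^-\Delta}\times\rho^{\times a}\times\sigma_{\lnot\{\rho\}}\hookrightarrow\rho\times\rho^{\times a}\times\zele{\,^-\Delta}\times\sigma_{\lnot\{\rho\}}=\rho^{\times(a+1)}\times\zele{\,^-\Delta}\times\sigma_{\lnot\{\rho\}},
\]
where the middle embedding uses that $\zele{\,^-\Delta}\times\rho^{\times a}$ maps onto... — here I must be careful, since this arrow is not obviously an embedding. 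The safe version: by Remark \ref{rem:main} it suffices to exhibit a single \EI\ representation into which $\zele{\Delta}\times\sigma$ embeds and whose socle is $\soc(\rho^{\times(a+1)}\times\tau)$. The representation $\rho^{\times(a+1)}\times\zele{\,^-\Delta}\times\sigma_{\lnot\{\rho\}}$ is \EI\ with that socle; and $\zele{\Delta}\times\sigma\hookrightarrow\rho\times\zele{\,^-\Delta}\times\sigma$, so it is enough to show $\rho\times\zele{\,^-\Delta}\times\sigma$, hence $\rho\times\zele{\,^-\Delta}\times\rho^{\times a}\times\sigma_{\lnot\{\rho\}}$, embeds into $\rho^{\times(a+1)}\times\zele{\,^-\Delta}\times\sigma_{\lnot\{\rho\}}$ — equivalently, that $\zele{\,^-\Delta}\times\rho^{\times a}\times\sigma_{\lnot\{\rho\}}$ embeds into $\rho^{\times a}\times\zele{\,^-\Delta}\times\sigma_{\lnot\{\rho\}}$. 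By Lemma \ref{lem: subrep1} the target is \EI, and by computing the Jacquet module along the first $a$ copies of $\rho$ one sees $\rho^{\times a}\otimes(\zele{\,^-\Delta}\times\sigma_{\lnot\{\rho\}})$ occurs once (as $\zele{\,^-\Delta}\times\sigma_{\lnot\{\rho\}}$ is left $\rho$-reduced), which combined with $\soc$-irreducibility and Frobenius reciprocity forces $\soc(\zele{\,^-\Delta}\times\rho^{\times a}\times\sigma_{\lnot\{\rho\}})=\soc(\rho^{\times a}\times\zele{\,^-\Delta}\times\sigma_{\lnot\{\rho\}})$; so the source, being \EI\ with this socle, embeds in the target. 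Chaining these and invoking Remark \ref{rem:main} yields $\soc(\zele{\Delta}\times\sigma)=\soc(\rho^{\times(a+1)}\times\tau)$, which is \eqref{eq:expl}.

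The main obstacle, as the discussion above reveals, is the repeated need to commute a power of $\rho$ past $\zele{\,^-\Delta}$: these induced representations are genuinely non-isomorphic, so one cannot simply permute factors, and the argument must instead proceed by matching socles via Jacquet-module multiplicity-one computations (Lemmas \ref{lem: mult1 jf} and \ref{lem:main}) and the \EI\ bookkeeping of Remark \ref{rem:main}. Getting the direction of each embedding right, and ensuring that at each stage the left-$\rho$-reducedness of $\zele{\,^-\Delta}\times\sigma_{\lnot\{\rho\}}$ is available, is where the care is needed; everything else is a routine application of the geometric lemma.
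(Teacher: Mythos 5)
Your overall strategy (reduce to $\rho^{\times(a+1)}\times\zele{\,^-\Delta}\times\sigma_{\lnot\{\rho\}}$, invoke Lemma \ref{lem: subrep1} for the \EI\ property, and conclude with Remark \ref{rem:main}) is the paper's, but the step where you move the $a$ copies of $\rho$ to the front contains a genuine error. You claim that $\soc(\zele{\,^-\Delta}\times\rho^{\times a}\times\sigma_{\lnot\{\rho\}})=\soc(\rho^{\times a}\times\zele{\,^-\Delta}\times\sigma_{\lnot\{\rho\}})$, and hence that the first representation embeds into the second. This is false: already for $a=1$ and $\sigma_{\lnot\{\rho\}}$ trivial one has $\soc(\zele{\,^-\Delta}\times\rho)=\zele{\{\rho\}+{}^-\Delta}$ (by Proposition \ref{useful_properties} part \ref{part: pwnp}, since ${}^-\Delta\not\prec\{\rho\}$), whereas $\soc(\rho\times\zele{\,^-\Delta})=\zele{\Delta}$ (the single $\rho$ is absorbed into ${}^-\Delta$, cf.\ Theorem \ref{thm: cusptimes}); these are distinct. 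The logical slip is in the use of Frobenius reciprocity: an embedding $\tau\hookrightarrow\zele{\,^-\Delta}\times\rho^{\times a}\times\sigma_{\lnot\{\rho\}}$ only constrains the Jacquet module of $\tau$ along the parabolic whose first block matches $\zele{\,^-\Delta}$; the multiplicity-one occurrence of $\rho^{\times a}\otimes(\zele{\,^-\Delta}\times\sigma_{\lnot\{\rho\}})$ in the Jacquet module of the \emph{target} tells you nothing about the socle of the \emph{source}, and indeed $\rho^{\times a}\otimes(\cdots)$ need not occur in the Jacquet module of that socle at all.

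The difficulty disappears if you perform your two moves in the opposite order, which is what the paper does. Commute $\rho^{\times a}$ past $\zele{\Delta}$ \emph{before} splitting off $b(\Delta)$: since $\{\rho\}\subset\Delta$, the segments $\{\rho\}$ and $\Delta$ are unlinked, so $\zele{\Delta}\times\rho^{\times a}\simeq\rho^{\times a}\times\zele{\Delta}$ is irreducible by Proposition \ref{useful_properties} part \ref{part: pwnp}. This gives
\[
\zele{\Delta}\times\sigma\hookrightarrow\zele{\Delta}\times\rho^{\times a}\times\sigma_{\lnot\{\rho\}}\simeq\rho^{\times a}\times\zele{\Delta}\times\sigma_{\lnot\{\rho\}}\hookrightarrow\rho^{\times(a+1)}\times\zele{\,^-\Delta}\times\sigma_{\lnot\{\rho\}},
\]
where every arrow is an honest embedding, and now Lemma \ref{lem: subrep1} and Remark \ref{rem:main} finish the argument exactly as in the last part of your write-up. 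Your version splits $\zele{\Delta}\hookrightarrow\rho\times\zele{\,^-\Delta}$ first, which places the segment ${}^-\Delta$ (which \emph{is} linked to $\{\rho\}$) to the left of the $\rho^{\times a}$ block, and at that point the commutation genuinely fails.
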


\begin{proof}
We have
\[
\zele{\Delta}\times\sigma\hookrightarrow\zele{\Delta}\times\rho^{\times a}\times\sigma_{\lnot\{\rho\}}
\simeq\rho^{\times a}\times\zele{\Delta}\times\sigma_{\lnot\{\rho\}}
\hookrightarrow\rho^{\times(a+1)}\times\zele{\,^-\Delta}\times\sigma_{\lnot\{\rho\}}.
\]
By Lemma \ref{lem: subrep1}, $\soc(\rho^{\times(a+1)}\times\zele{\,^-\Delta}\times\sigma_{\lnot\{\rho\}})$ is irreducible,
hence equals the right hand side of \eqref{eq:expl}.
The proposition follows.
\end{proof}

To describe $\soc(\zele{\Delta}\times\sigma)$ using \eqref{eq:expl} we thus need to understand:
\begin{enumerate}
\item How to get $\sigma_{\lnot\{\rho\}}$ from $\sigma$.
\item How to treat the case where $\Delta=\{\rho\}$ with $\rho\in\Cusp$.
\end{enumerate}
This was done in \cite{MR2527415}*{\S6} using \cite{MR863522}*{Lemme II.9}.
We will describe it in a slightly different combinatorial language which will be useful in the sequel.

\subsection{Matching functions\footnote{The contents of this section are no doubt very familiar to combinatorists.
For convenience we give a self-contained treatment.}}
Let $X$ and $Y$ be finite sets and $\rltn$ a relation between $Y$ and $X$.
We are interested in injective functions
$f:X\rightarrow Y$ satisfying $f(x)\rltn x$ for all $x\in X$.
Such a function will be called a $\rltn$-matching function (or simply a matching function if $\rltn$
is clear from the context).
By Hall's criterion such an $f$ exists if and only if for all subsets $A\subset X$ we have
\begin{equation} \label{eq: HallCriterion}
\#\{y\in Y:y\rltn x\text{ for some }x\in A\}\ge \# A.
\end{equation}

In some cases it is possible to construct $f$ explicitly.
Suppose that $X$ and $Y$ are totally ordered with respect to $\le_X$ and $\le_Y$ respectively.
A simple-minded recipe would be to define $f$ recursively (from the largest element of $X$ to the smallest) by
\begin{equation} \label{eq: def f}
f(x)=\min\{y \in Y\setminus f(X_{>x}):y\rltn x\}
\end{equation}
where $X_{>x}=\{x'\in X:x'>x\}$.
Of course for this to be well defined, we need to know that for all $x\in X$
there exist some $y\notin f(X_{>x})$ such that $y\rltn x$.
Clearly, we need some extra conditions on the relation $\rltn$ for this definition to work.
We will say that the relation $\rltn$ is \emph{\trns}\ if the following condition is satisfied:
\begin{multline} \label{eq: specrelation}
\forall\ x_1\ge_X x_2\in X\ \ \forall\ y_1\ge_Y y_2\in Y\ \\
y_1\rltn x_1\ \&\ y_2\rltn x_1\ \&\  y_2\rltn x_2\ \implies y_1\rltn x_2.
\end{multline}
Schematically,
\[
\xymatrix{
y_1 \ar@{~>}[r] \ar@{}[d]|{\begin{sideways}\begin{sideways}\begin{sideways}$\ge_Y$\end{sideways}\end{sideways}\end{sideways}} &
x_1 \ar@{}[d]|{\begin{sideways}\begin{sideways}\begin{sideways}$\ge_X$\end{sideways}\end{sideways}\end{sideways}}\\
y_2\ar@{~>}[r]\ar@{~>}[ur] &  x_2
}
\begin{array}{c}
\\ \\ \\
\ \ \ \Longrightarrow\ \ \ \end{array}
\xymatrix{
y_1 \ar@{~>}[r] \ar@{~>}[dr] \ar@{}[d]|{\begin{sideways}\begin{sideways}\begin{sideways}$\ge_Y$\end{sideways}\end{sideways}\end{sideways}} &
x_1 \ar@{}[d]|{\begin{sideways}\begin{sideways}\begin{sideways}$\ge_X$\end{sideways}\end{sideways}\end{sideways}}\\
y_2\ar@{~>}[r]\ar@{~>}[ur] &  x_2
}
\]

More generally, even if Hall's criterion is not satisfied we can still speak about $\rltn$-\emph{matchings} (or simply matchings,
if $\rltn$ is clear from the context) between $X$ and $Y$ by which
we mean injective functions $f$ from a subset of $X$ to $Y$ satisfying $f(x)\rltn x$ for all $x$ in the domain of $f$.
We view such a function as a relation between $X$ and $Y$.

Imitating the above, we define a `greedy' $\rltn$-matching between $X$ and $Y$.
Namely, we define $f$ and its domain $I$ recursively by
\begin{multline*}
x\in I\iff \exists y\in Y\setminus f(I\cap X_{>x})\text{ such that }y\rltn x\\\text{  in which case }
f(x)=\min\{y\in Y\setminus f(I\cap X_{>x}):y\rltn x\}.
\end{multline*}
We call $f$ the \emph{best $\rltn$-matching} between $X$ and $Y$.
The name is justified by the following lemma:

\begin{lem} \label{lem: optf}
Assume that $\rltn$ is \trns.
Let $g$ be a $\rltn$-matching between $X$ and $Y$ and let $f$ be the best $\rltn$-matching between $X$ and $Y$.
Then the size of the domain of $f$ is not smaller than the size of the domain of $g$.

In particular, the following conditions are equivalent:
\begin{enumerate}
\item There exists a $\rltn$-matching function from $X$ to $Y$ (with domain $X$).
\item Hall's criterion \eqref{eq: HallCriterion} is satisfied.
\item The best $\rltn$-matching between $X$ and $Y$ is a function from $X$ to $Y$.
\end{enumerate}
\end{lem}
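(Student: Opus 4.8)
The plan is to prove the first (quantitative) assertion directly, since the three equivalences then follow easily: (1)$\Rightarrow$(2) is Hall's criterion, (2)$\Rightarrow$(3) holds because if the domain of the best matching $f$ were a proper subset of $X$, then any $\rltn$-matching function $g$ with domain $X$ would contradict the size comparison, and (3)$\Rightarrow$(1) is trivial. So the heart of the matter is: if $g$ is \emph{any} $\rltn$-matching between $X$ and $Y$, then $\#\operatorname{dom}(f)\ge\#\operatorname{dom}(g)$, where $f$ is the greedy (best) matching built from the top of $X$ downward.

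First I would set up an exchange/coupling argument processing the elements of $X$ in decreasing order $x^{(1)}>_X x^{(2)}>_X\cdots$. The key invariant I want to maintain, after having processed $x^{(1)},\dots,x^{(i)}$, is that the set $f(\operatorname{dom}(f)\cap\{x^{(1)},\dots,x^{(i)}\})$ is "as low as possible" in $Y$ compared to $g(\operatorname{dom}(g)\cap\{x^{(1)},\dots,x^{(i)}\})$, and that $f$ has matched at least as many of these top $i$ elements as $g$ has. Concretely, I would prove by induction on $i$ that there is an injection $\phi_i$ from $g(\operatorname{dom}(g)\cap X_{\ge x^{(i)}})$ into $f(\operatorname{dom}(f)\cap X_{\ge x^{(i)}})$ with $\phi_i(y)\le_Y y$ for every such $y$; in particular the latter set is at least as large as the former. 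The base case $i=0$ is vacuous.

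For the inductive step, suppose $x=x^{(i+1)}$ is the next element and the statement holds for $X_{\ge x^{(i)}}=X_{>x}$. If $x\notin\operatorname{dom}(g)$ there is nothing to add on the $g$-side and we keep $\phi_{i}$ (possibly extending its target), so assume $g(x)$ is defined, say $g(x)=y_0\rltn x$. If $y_0$ is not in the image of $\phi_i$, I claim $y_0\notin f(\operatorname{dom}(f)\cap X_{>x})$: indeed $f(\operatorname{dom}(f)\cap X_{>x})$ has the same size as, and by the induction hypothesis elementwise dominates, a set containing the image of $\phi_i$ — here I need to be slightly careful, and the honest statement to carry is that $\phi_i$ is a bijection onto $f(\operatorname{dom}(f)\cap X_{>x})$ when $g$ has matched exactly as many top elements as $f$, and otherwise $f$ is strictly ahead; I would split into these two cases. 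In the "tied" case, if $y_0\notin\operatorname{image}(\phi_i)=f(\operatorname{dom}(f)\cap X_{>x})$, then $y_0$ is available to the greedy algorithm, so $x\in\operatorname{dom}(f)$ and $f(x)=\min\{y\notin f(\operatorname{dom}(f)\cap X_{>x}):y\rltn x\}\le_Y y_0$; extend $\phi_i$ by $y_0\mapsto f(x)$. If instead $y_0\in\operatorname{image}(\phi_i)$, say $y_0=\phi_i(y_1)$ — wait, that is backwards; rather $y_0=f(x')$ for some $x'>_X x$ with $x'\in\operatorname{dom}(f)$ and $\phi_i$ was the map from the $g$-side, so I should instead argue: the greedy value $f(x')$ equals some $y_0$, and I want to show $x\in\operatorname{dom}(f)$. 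Here is where transversality enters. The greedy choice at $x'$ took $f(x')=y_0$ because $y_0$ was the \emph{minimal} available $y\rltn x'$; any $y_1\rltn x$ with $y_1>_Y y_0$ and $y_1\rltn x'$ would have been available but rejected in favor of $y_0$, fine — but I need some $y_1\rltn x$ still free. Since $g$ is injective and $g(x)=y_0$ while on the top elements $g$ maps into a set dominated by $f$'s image, by a counting/Hall-type argument on $\{y:y\rltn x'\text{ or }y\rltn x\}\cap Y_{\ge y_0}$ there must be a free $y_1\ge_Y y_0$ with $y_1\rltn x$ using \eqref{eq: specrelation} (applied with $x_1=x'$, $x_2=x$, $y_1=y_1$, $y_2=y_0$: since $y_0\rltn x'$, $y_0\rltn x$, $y_1\rltn x'$, transversality gives $y_1\rltn x$). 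Then $x\in\operatorname{dom}(f)$ and $f(x)\le_Y y_1$, and I extend $\phi_i$ by $y_0\mapsto f(x)\le_Y y_0$.

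The main obstacle, and the step I expect to take the most care, is exactly this exchange argument: bookkeeping the induction hypothesis in the precise form that makes the transversality hypothesis \eqref{eq: specrelation} applicable, and handling the two regimes (greedy strictly ahead vs. tied) uniformly. I would streamline it by phrasing the invariant as: "for every threshold $y^*\in Y$, $\#\{x\in\operatorname{dom}(f)\cap X_{>x}: f(x)\le_Y y^*\}\ \ge\ \#\{x\in\operatorname{dom}(g)\cap X_{>x}: g(x)\le_Y y^*\}$," a single scalar inequality for all $y^*$ that is visibly preserved by the greedy step thanks to transversality, and which immediately yields $\#\operatorname{dom}(f)\ge\#\operatorname{dom}(g)$ by taking $y^*=\max Y$. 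With that invariant in hand the proof becomes a short induction, and the three equivalences drop out as noted above.
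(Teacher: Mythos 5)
Your derivation of the three equivalences from the quantitative statement is fine (modulo Hall's theorem), and comparing $f$ with an arbitrary matching $g$ by descending through $X$ with a domination invariant is a viable strategy. But the inductive step that propagates the invariant --- the case $x=x^{(i+1)}\in\operatorname{dom}(g)$, $g(x)=y_0$, with the counts tied at the relevant threshold --- is not actually established, and that step is the entire content of the lemma. In the branch where $y_0=f(x')$ for some $x'>_X x$, your application of \eqref{eq: specrelation} is legitimate \emph{provided} there exists an $f$-available $y_1\ge_Y y_0$ with $y_1\rltn x'$; but that existence is exactly the unproved point, and it can fail after one step: the free element guaranteed by your counting argument may be related neither to $x$ nor to $x'$, but only to some further $x''$ (whose $f$-partner is in turn tied up elsewhere), so one must chase a chain of exchanges. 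In the other branch, where the counting argument hands you a free $y_1\le_Y y_0$ of the form $g(x')$, the premises of \eqref{eq: specrelation} are not all available ($y_0\rltn x'$ is unknown), and with $y_1\le_Y y_0$ the implication runs the wrong way, returning $y_0\rltn x$ rather than the needed $y_1\rltn x$. For the same reason the claim that the streamlined threshold invariant is ``visibly preserved by the greedy step'' overstates matters: its preservation is an iterated exchange argument in disguise.

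For comparison, the paper organizes precisely this iteration differently: rather than fixing $g$ and tracking an invariant, it modifies $g$. Letting $x_1(g)$ be the smallest element such that $f$ and $g$ agree on $X_{>x_1(g)}$, it produces $g'$ with $\#\operatorname{dom}(g')\ge\#\operatorname{dom}(g)$ and either $g'=f$ or $x_1(g')<x_1(g)$, and iterates. Each step is a single transposition, and because $f$ and $g$ agree above the swap point, all four premises of \eqref{eq: specrelation} are genuinely at hand: $y_1=g(x_1)\rltn x_1$, $y_2=f(x_1)\rltn x_1$, $y_2=g(x_2)\rltn x_2$, with $y_1>_Y y_2$ and $x_1>_X x_2$. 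If you wish to keep your formulation, you will need to build an analogous chain-swapping (augmenting-path) argument into the propagation of your invariant; a single application of transversality per step of $X$ does not suffice.
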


\begin{proof}
Let $g$ be a matching between $X$ and $Y$.
Suppose that $f$ does not coincide with $g$ (otherwise there is nothing to prove).
Let $x_1=x_1(g)$ be the smallest element of $X$ such that the restrictions of $f$ and $g$ to $X_{>x_1}$ coincide.
We argue by induction on $x_1$.

Suppose first that $x_1$ is in the domain of $f$ but not in the domain of $g$.
If $f(x_1)$ is not in the image of $g$, we extend $g$ by setting $g'(x_1)=f(x_1)$.
Otherwise $f(x_1)=g(x_2)$, necessarily with $x_2<x_1$ and we define $g'$ by $g'(x)=g(x)$ for all $x\ne x_2$
in the domain of $g$ and $g'(x_1)=f(x_1)$.

Suppose now that $x_1$ is in the domain of $g$.
Then from the definition of $f$ it is clear that $x_1$ is in the domain of $f$ and $f(x_1)<g(x_1)$.
Let $y_1=g(x_1)$ and $y_2=f(x_1)$.
If $y_2$ is not in the image of $g$ we modify $g$ by setting $g'(x_1)=y_2$ and $g'(x)=g(x)$ for all $x\ne x_1$ in the domain of $g$.
Otherwise $y_2=g(x_2)$, necessarily with $x_2<x_1$ and we define $g'$ in the domain of $g$ by $g'=g\circ\sigma$ where $\sigma$ is the transposition
switching $x_1$ and $x_2$.
We claim that $g'$ is a matching, i.e. $g'(x)\rltn x$ for all $x$ in the domain of $g'$. This is clear if $x\ne x_2$ while
for $x=x_2$ this follows from the condition \eqref{eq: specrelation}.

Thus, in all cases we can find a matching $g'$ whose domain is at least as large as the domain of $g$ such that either $x_1(g')<x_1(g)$
or $g'$ coincides with $f$ (if $x_1(g)$ is the minimal element of $X$).
This proves the induction step as well as the base of the induction.
\end{proof}

By a definition, a \emph{ray} in $X$ is either $X$ itself or a subset of $X$ of the form $X_{>x_0}$ for some $x_0\in X$.
The following lemma is clear from the construction:

\begin{lem} \label{lem: matchinstages}
Suppose that $\rltn$ is \trns\ and let $f$ be the best matching between $X$ and $Y$. Let $A$ be a ray in $X$ and let $B$ be its complement.
Let $g$ be the best matching between $A$ and $Y$ and let $C$ be the complement of the range of $g$.
Let $h$ be best matching between $B$ and $C$.
Then $g$ (resp., $h$) is the restriction of $f$ to $A$ (resp., $B$).
In particular, $f$ is a function on $X$ if and only if $g$ and $h$ are functions on $A$ and $B$ respectively.
\end{lem}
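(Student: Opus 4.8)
The plan is to unwind the recursive definition of the best matching and to exploit the fact that $A$, being a ray, is upward closed in $X$. Write $B=X\setminus A$; if $A=X$ then $B=\emptyset$ and there is nothing to prove, so assume $A=X_{>x_0}$, whence $B=\{x\in X:x\le x_0\}$ and every element of $A$ is strictly larger than every element of $B$. In the decreasing-order recursion defining $f$, all elements of $A$ are therefore processed (and receive a value, or are declared outside the domain) before any element of $B$ is considered.

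First I would show $f\rest_A=g$. For $x\in A$ we have $X_{>x}\subseteq A$, so, writing $I$ for the domain of $f$, the set $I\cap X_{>x}$ is exactly the part of the domain of $f\rest_A$ lying above $x$; hence the ``available set'' $Y\setminus f(I\cap X_{>x})$ that governs the choice of $f(x)$ is verbatim the same set that governs the choice of $g(x)$ in the recursion for the best matching between $A$ and $Y$. A downward induction on $x\in A$ then gives $f\rest_A=g$, and in particular $f$ uses on $A$ exactly the elements of the range of $g$, whose complement in $Y$ is $C$. Next I would show $f\rest_B=h$ in the same manner: for $x\in B$ we have $X_{>x}=A\cup\{x'\in B:x'>x\}$, so $f$ has already placed, on $X_{>x}$, precisely the range of $g$ together with whatever values it assigned to the elements of $B$ above $x$; therefore the available set $Y\setminus f(I\cap X_{>x})$ equals $C$ minus those latter values, which is exactly the available set in the recursion for the best matching $h$ between $B$ and $C$. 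A downward induction on $x\in B$ yields $f\rest_B=h$.

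The final assertion is then immediate: the domain of $f$ is the disjoint union of the domains of $f\rest_A=g$ and $f\rest_B=h$, so $f$ is a function on all of $X$ if and only if $g$ is a function on all of $A$ and $h$ is a function on all of $B$. I do not expect a genuine obstacle here --- the whole argument is bookkeeping --- and the only point requiring care is keeping track of which elements of $Y$ have been used at each stage, so as to verify that the relevant recursions agree term by term. (Traversability of $\rltn$ is not actually needed for this lemma; it enters only in the optimality statement of Lemma \ref{lem: optf}.)
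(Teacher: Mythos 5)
Your argument is correct and is exactly the unwinding that the paper has in mind: the paper offers no proof beyond the remark that the lemma "is clear from the construction," and your downward inductions on the ray $A$ and its complement $B$ supply precisely that bookkeeping. Your parenthetical observation that traversability is not actually used here (it only enters in Lemma \ref{lem: optf}) is also accurate.
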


Another trivial remark is the following:
\begin{rem} \label{rem: trivmatch}
Suppose that $\rltn$ is \trns\ and let $f$ be the best matching between $X$ and $Y$. Suppose that $A$ is a subset of $X$ and $B$ is a subset of $Y$ such that
for any $x\in X$ and $y\in Y$, $y\rltn x$ implies $x\in A$ and $y\in B$. Then the domain of $f$ is contained in $A$, the range of $f$ is contained in $B$
and $f$ is the best matching between $A$ and $B$.
\end{rem}

Finally, we will use the following:
\begin{lem} \label{lem: addonematch}
Let $\rltn$ be a \trns\ relation between $X$ and $Y'$ (suitably ordered) and let $f'$ be the best matching between $X$ and $Y'$.
Let $y_0$ be the maximal element of $Y'$, let $Y=Y'\setminus\{y_0\}$ and let $f$ be the best matching between $X$ and $Y$.
Assume that for any $x\in X$ and $y\in Y$, if $y\rltn x$ then $y_0\rltn x$.
Let $a$ be the size of the complement $C$ of the range of $f$ and let $a'$ be the size of the complement $C'$ of the range of $f'$.
Then $a\le a'\le a+1$.
Moreover, $a'=a+1$ if and only if the domain of $f$ is $\{x\in X:y_0\rltn x\}$. In this case
$C'=C\cup\{y_0\}$ and $f'$ coincides with $f$.
\end{lem}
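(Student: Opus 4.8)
The plan is to compare the greedy constructions of $f$ and $f'$ directly, sweeping the elements of $X$ from largest to smallest. Set $X_0=\{x\in X:y_0\rltn x\}$. I would first record the trivial observation that $\operatorname{dom}f\subset X_0$: if $x\in\operatorname{dom}f$ then $f(x)\in Y$ with $f(x)\rltn x$, so by hypothesis $y_0\rltn x$. Likewise, since $y_0$ is the unique element of $Y'\setminus Y$, any $x$ with $f'(x)\notin Y$ has $f'(x)=y_0$ and hence $x\in X_0$.

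Next, for $x\in X$ let $U_{>x}$ and $U'_{>x}$ be the images of the restrictions of $f$ and $f'$ to $X_{>x}$ (these are subsets of $Y$ and of $Y'$ respectively). The heart of the argument is the invariant
\[
U'_{>x}=U_{>x}\quad\text{or}\quad U'_{>x}=U_{>x}\cup\{y_0\},
\]
maintained as $x$ runs through $X$ in decreasing order; I would prove it by induction on the number of elements already processed. In the first regime ($y_0$ not yet used, so $U'_{>x}=U_{>x}$): since $Y'\setminus U_{>x}=(Y\setminus U_{>x})\cup\{y_0\}$ and $y_0$ is the maximal element of $Y'$, the greedy rule for $f'$ picks the same element as for $f$ whenever some $y\in Y\setminus U_{>x}$ is related to $x$ (such a $y$ is smaller than $y_0$); if no such $y$ exists — i.e.\ $x\notin\operatorname{dom}f$ — then the only element of $Y'\setminus U_{>x}$ that can be related to $x$ is $y_0$, so $f'$ either leaves $x$ unmatched or sets $f'(x)=y_0$, the latter exactly when $x\in X_0$, and in that case we pass to the second regime. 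In the second regime ($U'_{>x}=U_{>x}\cup\{y_0\}$) we have $Y'\setminus U'_{>x}=Y\setminus U_{>x}$, so $f$ and $f'$ make identical choices thereafter and the invariant persists. The step I expect to need the most care is precisely where the hypothesis enters: checking that, in the first regime with $x\notin\operatorname{dom}f$, no $y\in Y\setminus U_{>x}$ is available to $f'$ either — this is the implication ``$y\rltn x$ with $y\in Y$ forces $y_0\rltn x$'' combined with $x\notin\operatorname{dom}f$ — so that $y_0$ is the unique candidate.

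Granting the invariant, exactly two scenarios can occur. Either $y_0$ is never used by $f'$, in which case $f'=f$; this happens iff no $x\in X_0\setminus\operatorname{dom}f$ is ever encountered, i.e.\ $X_0\subset\operatorname{dom}f$, which together with $\operatorname{dom}f\subset X_0$ gives $\operatorname{dom}f=X_0$; then $\operatorname{range}(f')=\operatorname{range}(f)$, so $C'=C\cup\{y_0\}$ and $a'=a+1$. Or $y_0$ is used, matched to the first $x^\ast\in X_0\setminus\operatorname{dom}f$ encountered; then $f'=f\cup\{(x^\ast,y_0)\}$, $\operatorname{dom}f\subsetneq X_0$, and $\operatorname{range}(f')=\operatorname{range}(f)\cup\{y_0\}$, so $C'=C$ and $a'=a$. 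This simultaneously yields $a\le a'\le a+1$, the equivalence $a'=a+1\iff\operatorname{dom}f=X_0$, and the asserted description of $C'$ and $f'$ in the equality case. (For the two-sided bound alone one could instead invoke Lemma \ref{lem: optf}: apply it to $f$, viewed as a matching between $X$ and $Y'$, to get $a'\le a+1$, and to $f'$ with its pair mapping to $y_0$ deleted, viewed as a matching between $X$ and $Y$, to get $a\le a'$; but the parallel construction is needed for the refined statement.)
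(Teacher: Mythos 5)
Your proof is correct, and it refines rather than contradicts the paper's argument. The paper's proof is shorter but less self-contained: it gets $a\le a'\le a+1$ by applying Lemma \ref{lem: optf} twice (viewing $f$ as a matching between $X$ and $Y'$, and the intersection of $f'$ with $X\times Y$ as a matching between $X$ and $Y$), it asserts the implication ``$\operatorname{dom}f=\{x:y_0\rltn x\}\Rightarrow f'=f$'' as clear from the greedy definition, and it proves the converse by the extension trick: if $y_0\rltn x'$ and $x'\notin\operatorname{dom}f$, then $f\cup\{(x',y_0)\}$ is a strictly larger matching between $X$ and $Y'$, so Lemma \ref{lem: optf} forces $a'\le a$. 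Your parallel-sweep invariant handles all three steps at once by determining $f'$ exactly (either $f'=f$, or $f'=f\cup\{(x^\ast,y_0)\}$ with $x^\ast=\max(X_0\setminus\operatorname{dom}f)$), which is slightly more information than the lemma asserts and avoids Lemma \ref{lem: optf} entirely. One small remark: the step you flag as the delicate one does not actually need the hypothesis --- in the first regime $Y'\setminus U'_{>x}$ and $Y\setminus U_{>x}$ differ only by $y_0$, so $y_0$ is automatically the unique remaining candidate once $x\notin\operatorname{dom}f$. The hypothesis is used only to guarantee $\operatorname{dom}f\subset X_0$, which upgrades the conclusion ``$a'=a+1$ iff $X_0\subset\operatorname{dom}f$'' to the stated equality $\operatorname{dom}f=X_0$; since you record $\operatorname{dom}f\subset X_0$ separately at the outset, your logic is complete.
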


\begin{proof}
Since $f$ is a matching between $X$ and $Y'$ and the intersection of $f'$ with $X\times Y$ is a matching between $X$ and $Y$, it follows from
Lemma \ref{lem: optf} that $a\le a'\le a+1$.
By our condition, the domain of any matching between $X$ and $Y$ is contained in $\{x\in X:y_0\rltn x\}$.
It is also clear from the definition of the best matching that if the domain of $f$ is $\{x\in X:y_0\rltn x\}$ then $f'$ coincides with $f$
and $C'=C\cup\{y_0\}$.
Conversely, if $y_0\rltn x'$ and $x'$ is not in the domain of $f$ then we can extend $f$ by $f(x')=y_0$
to get a strictly bigger matching between $X$ and $Y'$. Hence we have $a'\le a$.
\end{proof}

\subsection{The case when $\Delta$ is supercuspidal}
Let $\rho\in\Cusp$ and $\m=\Delta_1+\dots+\Delta_N\in\MS$.
Write $Y_{\rho;\m}=\{i:b(\Delta_i)=\rho\}$ and $X_{\rho;\m}=\{i:b(\Delta_i)=\rshft{\rho}\}$.
We define $\rltn$ (or $\rltn_\m$ if $\m$ is not clear from the context) to be the relation between $Y_{\rho;\m}$ and $X_{\rho;\m}$ given by
\[
j\rltn i\text{ if and only if }\Delta_j\prec\Delta_i.
\]
We assume that the $\Delta_i$'s are ordered such that $\Delta_{i_1}\supset\Delta_{i_2}$ whenever $i_1\le i_2$ and either
$i_1,i_2\in X_{\rho;\m}$ or $i_1,i_2\in Y_{\rho;\m}$ (\textit{e.g.}, we could take either the left or right {\ordered} forms of $\m$).
Then $\rltn$ is \trns\ with respect to the standard ordering on $X_{\rho;\m}$ and $Y_{\rho;\m}$.
In fact for any $i_1\in X_{\rho;\m}$ and $j\in Y_{\rho;\m}$ such that
$j\rltn i_1$ and for any $i_2\in X_{\rho;\m}$ with $i_2\le i_1$ we have $j\rltn i_2$.

The following is a restatement of results obtained independently by M\'inguez and Jantzen.

\begin{thm}{\cite{MR2527415}*{Th\'eor\`eme 7.5}, \cite{MR2306606}*{Theorem 2.2.1}}\label{thm: cusptimes}
Let $\rho \in \Cusp$ and $\m \in \MS$.
Let $f$ be the best matching between $X_{\rho;\m}$ and $Y_{\rho;\m}$. Then
\begin{enumerate}
\item $\soc(\rho\times\zele{\m})=\zele{\m+\{\rho\}}$ if and only if $f$ is a function from $X_{\rho;\m}$ to $Y_{\rho;\m}$.
\item If $f$ is not a function from $X_{\rho;\m}$ to $Y_{\rho;\m}$ and $i\in X_{\rho;\m}$ is the minimal index which does not belong to the domain of $f$
then $\soc(\rho\times\zele{\m})=\zele{\m-\Delta_i+\,^+\Delta_i}$.
\item More generally, let $b$ be the size of the complement $B$ of the domain of $f$ in $X_{\rho;\m}$ and
write $B=\{l_1,\dots,l_b\}$ with $l_1<\dots<l_b$. Then for any $a\ge0$,
$\soc(\rho^{\times a}\times\zele{\m})=\zele{\m'}$
where
\[
\m'=\begin{cases}\m-\sum_{i=1}^a\Delta_{l_i}+\sum_{i=1}^a\,^+\Delta_{l_i}&\text{if }a\le b,\\
\m-\sum_{i=1}^b\Delta_{l_i}+\sum_{i=1}^b\,^+\Delta_{l_i}+\overbrace{\rho+\dots+\rho}^{b-a\text{ times}}&\text{  otherwise.}
\end{cases}
\]
\item Let $a$ be the size of the complement $C$ of the range of $f$ in $Y_{\rho;\m}$.
Write $C=\{i_1,\dots,i_a\}$ with $i_1<\dots<i_a$. Then
\label{eq: extrctrho} the integer $a$ is the maximal $b \geq 0$ such that
there exists $\sigma \in \Irr$ such that $\zele{\m}$ is a subrepresentation of $\rho^{\times b}\times\sigma$.
Moreover, this $\sigma$ is unique and is given by $\zele{\m-\sum_{j=1}^b\Delta_{i_j}+\sum_{j=1}^b\,^-\Delta_{i_j}}$.
Thus, $\zele{\m}_{\{\rho\}}=\rho^{\times a}$ and
\[
\zele{\m}_{\lnot\{\rho\}}=\zele{\m-\sum_{i\in C}\Delta_i+\sum_{i\in C}\,^-\Delta_i}.
\]
In particular $\zele{\m}$ is left $\rho$-reduced if and only if $a=0$, i.e., if and only if the inverse of $f$ is a function
from $Y_{\rho;\m}$ to $X_{\rho;\m}$.
\end{enumerate}
\end{thm}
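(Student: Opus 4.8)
The plan is to derive the four assertions from the combinatorial machinery already in place: the geometric lemma, the Zelevinsky combinatorics of Proposition~\ref{useful_properties}, and the abstract properties of best matchings. One may assume $\m=\Delta_1+\dots+\Delta_N\in\MS_\rho$ is rigid: any segment of $\m$ outside the line of $\rho$ is unlinked with $\{\rho\}$ and with every segment of the form ${}^+\Delta$ or ${}^-\Delta$ occurring below, hence by Proposition~\ref{useful_properties} part~\ref{part: pwnp} it splits off as an outer tensor factor of all the induced representations in question and contributes nothing to $X_{\rho;\m}$ or $Y_{\rho;\m}$; so the general case follows from the rigid one. Moreover the multisegment $\{\rho\}+\dots+\{\rho\}$ ($a$ times) is \TU, so $\rho^{\times a}$ is \RI\ by Corollary~\ref{coro:generic is RI}, whence $\rho^{\times a}\times\zele\m$ is \EI\ and $\soc(\rho^{\times a}\times\zele\m)=\zele{\m'}$ for a single well-defined multisegment $\m'$; it remains to identify $\m'$. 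Finally, from $[\rho\times\zele\m]\le[\rho\times\std(\m)]=[\std(\m+\{\rho\})]$ in $\Rr$ (by commutativity) together with Proposition~\ref{useful_properties} parts~\ref{part: vdash} and~\ref{part: support is preserved}, we already know $\m'\vdash\m+\{\rho\}$ and $\supp\m'=\supp\m\cup\{\rho\}$, which confines $\m'$ to a short list of candidates.

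The heart is the case $a=1$, i.e.\ assertions (1) and (2); this is exactly the content of \cite{MR2527415}*{Th\'eor\`eme~7.5} and \cite{MR2306606}*{Theorem~2.2.1}, whose argument — via \cite{MR863522}*{Lemme~II.9} — we would follow. One pushes the leftmost factor $\rho$ rightwards through $\zele\m$ one segment at a time, using Proposition~\ref{useful_properties} parts~\ref{part: anyorder} and~\ref{part: pwnp}: $\rho$ commutes past any segment it does not precede, while on reaching an $X_{\rho;\m}$-segment $\Delta_i$ (so $b(\Delta_i)=\rshft\rho$) its behaviour is dictated by the local picture $\rho\times\zele{\Delta_i}$, which has length $2$ with factors $\zele{\Delta_i+\{\rho\}}$ and $\zele{{}^+\Delta_i}$ — the segment $\{\rho\}$ either stays separate or is absorbed into the longer segment ${}^+\Delta_i$. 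Which alternative is forced is governed precisely by the best matching $f$: the greedy recipe defining $f$ detects the smallest index $i$ that cannot stay separate, and here one uses that $\rltn_\m$ is traversable (verified in the text) together with Lemma~\ref{lem: optf}. One then exhibits an embedding of $\zele{\m+\{\rho\}}$ into $\rho\times\zele\m$ when $f$ is a total function, and of $\zele{\m-\Delta_i+{}^+\Delta_i}$, with $i$ the minimal index outside the domain of $f$, otherwise — observing that $\m-\Delta_i+{}^+\Delta_i$ arises from $\m+\{\rho\}$ by a single union--intersection step on the linked pair $\{\rho\}\prec\Delta_i$, hence $\m-\Delta_i+{}^+\Delta_i\vdash\m+\{\rho\}$. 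Since $\rho\times\zele\m$ is \EI, the embedded irreducible is its socle (Remark~\ref{rem:main}), and multiplicity one (Proposition~\ref{useful_properties} part~\ref{part: lmm'}) rules out the other candidate. \emph{This identification is the main obstacle}; everything that follows is bookkeeping on top of the matching lemmas.

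Assertion (3) follows by induction on $a$. As $\rho^{\times a}\times\zele\m$ is \EI\ (first paragraph) and $\rho\times\soc(\rho^{\times(a-1)}\times\zele\m)$ embeds into it, Remark~\ref{rem:main} gives $\soc(\rho^{\times a}\times\zele\m)=\soc\bigl(\rho\times\soc(\rho^{\times(a-1)}\times\zele\m)\bigr)$, so (1)--(2) reduce the problem to tracking, under one peeling step $\Delta_l\mapsto{}^+\Delta_l$ (with $l$ the current minimal index outside the domain of the matching), how $X_{\rho;\cdot}$, $Y_{\rho;\cdot}$ and the best matching evolve: the index $l$ migrates from $X$ to $Y$, the best matching of the new multisegment is the restriction of the old one with that pair deleted, and once all $b=\#B$ indices of $B=\{l_1<\dots<l_b\}$ have been peeled the best matching has become total, so each further $\rho$ only contributes a separate $\{\rho\}$. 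This is exactly what Lemmas~\ref{lem: matchinstages} and~\ref{lem: addonematch} supply, and it yields the stated $\m'$ in both ranges $a\le b$ and $a>b$.

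Assertion (4) is the dual statement, which we would deduce from Lemma~\ref{lem: decomI} with $I=\{\rho\}$: this furnishes the maximal $a_{\max}$ with $\zele\m\hookrightarrow\rho^{\times a_{\max}}\times\sigma$ and identifies the unique, left $\rho$-reduced $\sigma=\zele\m_{\lnot\{\rho\}}$. Set $\n=\m-\sum_{i\in C}\Delta_i+\sum_{i\in C}{}^-\Delta_i$, where $C$ is the complement of the range of $f$ in $Y_{\rho;\m}$ and $a=\#C$. In $\n$ the $C$-segments have moved into $X_{\rho;\n}$, and one checks (again via the matching lemmas and Lemma~\ref{lem: optf}) that the best matching for $\n$ omits exactly them; applying assertion~(3) to $\n$ then gives $\soc(\rho^{\times a}\times\zele\n)=\zele\m$, hence $\zele\m\hookrightarrow\rho^{\times a}\times\zele\n$, so $a\le a_{\max}$, and $\zele\n$ is left $\rho$-reduced by the ``$a=0$'' case. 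For $a_{\max}\le a$, an embedding $\zele\m\hookrightarrow\rho^{\times(a+1)}\times\sigma'$ would place $\rho^{\otimes(a+1)}\otimes\sigma'$ in a suitable iterated Jacquet module of $\zele\m$; a geometric-lemma computation of that module shows that the available prefixes $\rho^{\otimes\bullet}$ correspond to matchings between $Y_{\rho;\m}$ and $X_{\rho;\m}$ in the sense of Lemma~\ref{lem: optf}, so their length cannot exceed $a$ — a contradiction. Hence $a=a_{\max}$ and $\sigma=\zele\n$, and the left $\rho$-reducedness criterion ($a=0$ iff $f^{-1}$ is total) is the special case.
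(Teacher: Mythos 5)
The paper itself does not prove this theorem: it is quoted from \cite{MR2527415} and \cite{MR2306606} (where it is established purely with Jacquet modules and \cite{MR863522}*{Lemme II.9}), and the authors merely remark that it also follows from Proposition \ref{prop: delsig} and Corollary \ref{cor: inxpi'} together with the M\oe glin--Waldspurger algorithm. Your treatment of parts (1) and (2) therefore coincides with the route the paper attributes to the original sources, and your preliminary reductions (to the rigid case, and to identifying a single multisegment $\m'\vdash\m+a\{\rho\}$ using that $\rho^{\times a}$ is \RI) are sound. Where you genuinely depart from the paper is in bootstrapping parts (3) and (4) from (1)--(2) by iterating $\soc(\rho^{\times a}\times\zele{\m})=\soc(\rho\times\soc(\rho^{\times(a-1)}\times\zele{\m}))$ and invoking Lemma \ref{lem: decomI}. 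Two points there are asserted rather than proved: (i) that after one peeling step $\Delta_{l_1}\mapsto{}^+\Delta_{l_1}$ the best matching of the new multisegment again misses exactly $\{l_2,\dots,l_b\}$ --- Lemmas \ref{lem: matchinstages} and \ref{lem: addonematch} do not literally cover this operation, which removes an element of $X_{\rho;\m}$ and simultaneously inserts a new element of $Y_{\rho;\m}$ that need not be maximal nor satisfy the hypothesis of Lemma \ref{lem: addonematch}; and (ii) the bound $a_{\max}\le a$ in part (4), for which the geometric lemma applied to $\std(\m)$ only gives the weaker bound $|Y_{\rho;\m}|$, so one must argue on $\zele{\m}$ itself (this is again essentially \cite{MR863522}*{Lemme II.9}). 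Both claims are true and provable by the same exchange arguments as Lemma \ref{lem: optf} (the relation here reduces to $e(\Delta_j)<e(\Delta_i)$ on end points), so I see no error, but this is where the remaining work sits; the derivation the paper suggests, via Proposition \ref{prop: delsig} and Corollary \ref{cor: inxpi'} plus the M\oe glin--Waldspurger algorithm, yields all four parts uniformly and avoids this bookkeeping.
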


The theorem is proved using only Jacquet modules in \cite{MR2527415} and \cite{MR2306606}.
In fact, it is easy to see that Theorem \ref{thm: cusptimes} follows from Proposition \ref{prop:  delsig} and Corollary \ref{cor:  inxpi'}
together with the M\oe glin--Waldspurger algorithm.

\subsection{The combinatorial setup for general $\Delta$}

Now let $\Delta \in \seg$ and $\m=\Delta_1+\dots+\Delta_N \in \MS$.
(For convenience we will also allow $\Delta_i=\emptyset$. This is of course immaterial.)
Let $X_{\Delta;\m}=\{i:\Delta\prec\Delta_i\}$ and $Y_{\Delta;\m}=\{i:\lshft\Delta\prec\Delta_i\}$.
(This is consistent with the notation of the previous subsection.)
As before let $\rltn$ (or $\rltn_\m$) be the relation
\[
j\rltn i\iff \Delta_j\prec\Delta_i
\]
between $Y_{\Delta;\m}$ and $X_{\Delta;\m}$.
We write $\llcond(\Delta,\m)$ for the condition that there exists a matching function from $X_{\Delta; \m}$ to $Y_{\Delta; \m}$,
that is an injective function $f:X_{\Delta;\m}\rightarrow Y_{\Delta;\n}$ such that $f(x)\rltn x$ for all $x$.

Endow $X_{\Delta;\m}$ with a total ordering $\le_X$ such that
\begin{equation} \label{eq: orderX}
i_1\le_X i_2\ \implies\ \Delta_{i_1}\ge_b\Delta_{i_2}.
\end{equation}
Endow $Y_{\Delta;\m}$ with a total ordering $\le_Y$ such that
\begin{equation} \label{eq: orderY}
j_1\le_Y j_2\ \implies e(\Delta_{j_1})\ge e(\Delta_{j_2}).
\end{equation}
(Note that unlike in the case $\Delta=\{\rho\}$ we cannot guarantee that $\le_X$ and $\le_Y$ come
from the \emph{same} ordering of the segments of $\m$.)
It is easy to check that $\rltn$ is \trns\ with respect to such orderings.
In fact,
\begin{multline*}
\forall i_1,i_2\in X_{\Delta;\m}, \forall j_1,j_2\in Y_{\Delta;\m}\
i_1\ge_X i_2\ \&\ j_1\ge_Y j_2\ \&\ j_1\rltn i_1\ \&\  j_2\rltn i_2\\\ \implies j_1\rltn i_2.
\end{multline*}
Thus by Lemma \ref{lem: optf}, $\llcond(\Delta,\m)$ is satisfied if and only if the best matching between $X_{\Delta;\m}$ and $Y_{\Delta;\m}$
(with respect to $\le_X$ and $\le_Y$) is a function from $X_{\Delta;\m}$ to $Y_{\Delta;\m}$.

\subsection{Criterion for the irreducibility of $\zele{\Delta} \times \sigma$}

\begin{prop} \label{prop: onesegment}
For any $\Delta\in\seg$ and $\m\in\MS$
the conditions $\lcond(\zele{\Delta},\zele{\m})$ and $\llcond(\Delta,\m)$ are equivalent.
\end{prop}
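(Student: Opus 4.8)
The plan is to argue by induction on the length $l(\Delta)$ of $\Delta$, at each step peeling off the cuspidal representation $b(\Delta)$.

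For the base case $l(\Delta)=1$ we have $\Delta=\{\rho\}$ for some $\rho\in\Cusp$ and $\zele{\Delta}=\rho$, so $\lcond(\zele{\Delta},\zele{\m})$ says precisely that $\soc(\rho\times\zele{\m})=\zele{\m+\{\rho\}}$. By the first part of Theorem \ref{thm: cusptimes} this holds if and only if the best matching between $X_{\rho;\m}=X_{\{\rho\};\m}$ and $Y_{\rho;\m}=Y_{\{\rho\};\m}$ is a function with domain all of $X_{\rho;\m}$, which by Lemma \ref{lem: optf} is equivalent to the existence of a matching function, i.e.\ to $\llcond(\{\rho\},\m)$.

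For the inductive step assume $l(\Delta)\ge2$ and put $\rho_0=b(\Delta)$ and $\Delta'=\,^-\Delta$, so that $l(\Delta')=l(\Delta)-1$. Since the segments of $\m$ outside the line of $\rho_0$ are unlinked with $\Delta$ and, by Proposition \ref{useful_properties} part \ref{part: pwnp}, factor out of $\zele{\Delta}\times\zele{\m}$ and of $\zele{\Delta+\m}$ without affecting either $\lcond$ or $\llcond$, we may assume $\m\in\MS_{\rho_0}$. Using the last part of Theorem \ref{thm: cusptimes}, write $\zele{\m}_{\{\rho_0\}}=\rho_0^{\times a}$ and $\zele{\m}_{\lnot\{\rho_0\}}=\zele{\n}$, where $a=\#C$, $C$ is the complement of the range of the best matching $f_0$ between $X_{\rho_0;\m}$ and $Y_{\rho_0;\m}$, and $\n=\m-\sum_{i\in C}\Delta_i+\sum_{i\in C}\,^-\Delta_i$. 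Proposition \ref{prop:segnon} then gives
\[
\soc(\zele{\Delta}\times\zele{\m})=\soc\bigl(\rho_0^{\times(a+1)}\times\soc(\zele{\Delta'}\times\zele{\n})\bigr),
\]
and by the induction hypothesis $\soc(\zele{\Delta'}\times\zele{\n})=\zele{\Delta'+\n}$ exactly when $\llcond(\Delta',\n)$ holds. In that case one computes $\soc(\rho_0^{\times(a+1)}\times\zele{\Delta'+\n})$ by the third part of Theorem \ref{thm: cusptimes}: using $X_{\rho_0;\Delta'+\n}=\{\Delta'\}\sqcup\{\,^-\Delta_i:i\in C\}\sqcup\{\Delta_j:j\in X_{\rho_0;\m}\}$, together with $\,^+\Delta'=\Delta$ and $\,^+(\,^-\Delta_i)=\Delta_i$, one sees that the outcome equals $\zele{\Delta+\m}$ exactly when the $a+1$ extra copies of $\rho_0$ get used up to restore $\Delta'$ to $\Delta$ and each $\,^-\Delta_i$ to $\Delta_i$ --- concretely, when the first $a+1$ members of the complement of the domain of the best matching between $X_{\rho_0;\Delta'+\n}$ and $Y_{\rho_0;\Delta'+\n}$ are exactly $\Delta'$ and the segments $\,^-\Delta_i$. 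If instead $\llcond(\Delta',\n)$ fails, then by the induction hypothesis $\soc(\zele{\Delta'}\times\zele{\n})=\zele{\mathfrak{q}}$ for some $\mathfrak{q}$ with $\mathfrak{q}\vdash\Delta'+\n$ and $\mathfrak{q}\ne\Delta'+\n$, and a short analysis of $\soc(\rho_0^{\times(a+1)}\times\zele{\mathfrak{q}})$ shows it cannot be $\zele{\Delta+\m}$, so $\lcond(\zele{\Delta},\zele{\m})$ fails as well.

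What then remains is the purely combinatorial assertion that $\llcond(\Delta,\m)$ is equivalent to the conjunction of $\llcond(\Delta',\n)$ with the condition on the best matching for $(X_{\rho_0;\Delta'+\n},Y_{\rho_0;\Delta'+\n})$ extracted above; I expect this to be the main obstacle. The point is to reconcile the best matching for $(X_{\Delta;\m},Y_{\Delta;\m})$ with the three auxiliary best matchings for $(X_{\rho_0;\m},Y_{\rho_0;\m})$, $(X_{\Delta';\n},Y_{\Delta';\n})$ and $(X_{\rho_0;\Delta'+\n},Y_{\rho_0;\Delta'+\n})$, throughout keeping track of the orderings $\le_X,\le_Y$ of \eqref{eq: orderX} and \eqref{eq: orderY} and using that $\rltn$ is \trns. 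The natural tools are Lemma \ref{lem: matchinstages} (to split $X_{\Delta;\m}$ along the ray of those $\Delta_i$ with $b(\Delta_i)$ strictly above $\rshft{\rho_0}$, isolating the ones with $b(\Delta_i)=\rshft{\rho_0}$), Remark \ref{rem: trivmatch} (to identify the best matching between $X_{\rho_0;\m}$ and the range of $f_0$ with $f_0$ itself), and Lemma \ref{lem: addonematch} (to compare the target sets before and after passing from $\m$ to $\n$). I expect this bookkeeping, rather than any representation-theoretic input, to be the only genuine difficulty.
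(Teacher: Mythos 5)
Your skeleton coincides with the paper's: induction on $l(\Delta)$ with the supercuspidal base case from Theorem \ref{thm: cusptimes}, the recursion via Proposition \ref{prop:segnon}, and a final reconciliation of best matchings via Lemmas \ref{lem: matchinstages} and \ref{lem: addonematch}. But the reconciliation you defer is not peripheral bookkeeping; it is the substance of the proof, and as set up your plan would stumble on it. The missing ingredient is a further reduction, before peeling off $b(\Delta)$: one must pass not just to $\m\in\MS_{\rho_0}$ but to the case $\Delta_i\ge_b\Delta$ for all $i$. The paper does this with Lemma \ref{lem: redgem}, observing that discarding $\m_{<_b\Delta}$ changes neither $\llcond(\Delta,\m)$ (since $X_{\Delta;\m_{<_b\Delta}}=Y_{\Delta;\m_{<_b\Delta}}=\emptyset$) nor $\lcond(\zele{\Delta},\zele{\m})$. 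This reduction is precisely what guarantees $Y_{\rho_0;\m}\subset Y_{\Delta;\m}$ (a segment with $b(\Delta_i)=\rho_0$ but $e(\Delta_i)<e(\Delta)$ belongs to $Y_{\rho_0;\m}$ yet not to $Y_{\Delta;\m}$), hence that $X_{\rho_0;\m}\cap X_{\Delta;\m}$ is a ray of $X_{\Delta;\m}$ to which Lemma \ref{lem: matchinstages} and Remark \ref{rem: trivmatch} apply. Without it, your three auxiliary matchings do not splice into the best matching for $(X_{\Delta;\m},Y_{\Delta;\m})$, and the identity $Y_{\Delta';\n}=(Y_{\Delta;\m}\setminus Y_{\rho_0;\m})\cup C$ that drives the induction is unavailable.

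The second gap is in your handling of the last cuspidal step. Computing $\soc(\rho_0^{\times(a+1)}\times\zele{\Delta'+\n})$ by part 3 of Theorem \ref{thm: cusptimes} and then arguing separately that $\soc(\rho_0^{\times(a+1)}\times\zele{\mathfrak{q}})\ne\zele{\Delta+\m}$ when $\llcond(\Delta',\n)$ fails leaves you with an unproved claim (your ``short analysis''), and the identification of ``the first $a+1$ members of the complement of the domain'' with $\{\Delta'\}\cup\{\,^-\Delta_i:i\in C\}$ is exactly the kind of assertion that needs the ray decomposition above. The paper runs this step in the opposite direction: it applies part \ref{eq: extrctrho} of Theorem \ref{thm: cusptimes} to $\m+\Delta$, so that the $\sigma'$ with $\zele{\m+\Delta}\hookrightarrow\rho_0^{\times(a+1)}\times\sigma'$ is \emph{unique} if it exists, and Lemma \ref{lem: addonematch} (comparing the best matchings into $Y_{\rho_0;\m}$ and $Y_{\rho_0;\m+\Delta}=Y_{\rho_0;\m}\cup\{N+1\}$) says exactly when it exists and identifies it with $\zele{\Delta'+\n}$. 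That single statement yields both implications at once and disposes of your failure case for free. I recommend you adopt the reduction via Lemma \ref{lem: redgem} and reformulate the final step through part \ref{eq: extrctrho} rather than part 3.
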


\begin{proof}
First, we may assume that $\Delta\in\seg_\rho$ and $\m\in\MS_\rho$ for some $\rho\in\Cusp$.
We prove the proposition by induction on the length of $\Delta$.
The case where $\Delta$ is supercuspidal is covered by Theorem \ref{thm: cusptimes}.
For the induction step we first reduce to the case where $\Delta_i\ge_b\Delta$ for all $i$.
Let $\sigma=\zele{\m}$ and let $\sigma_i$, $i=1,2$ be as in Lemma \ref{lem: redgem} with $\m=\Delta$.
Clearly, $X_{\Delta;\m_{\ge_b\Delta}}=X_{\Delta;\m}$ and $Y_{\Delta;\m_{\ge_b\Delta}}=Y_{\Delta;\m}$.
Therefore the conditions $\llcond(\Delta,\m)$ and $\llcond(\Delta,\m_{\ge_b\Delta})$ are equivalent.
(Note also that $X_{\Delta;\m_{<_b\Delta}}=Y_{\Delta;\m_{<_b\Delta}}=\emptyset$.)
On the other hand, it follows from Lemma \ref{lem: redgem} that the conditions $\lcond(\zele{\Delta},\sigma)$ and $\lcond(\zele{\Delta},\sigma_1)$ are also equivalent.

Thus, we may assume that $\Delta_i\ge_b\Delta$ for all $i$.
For convenience assume that $\Delta_1\ge_b\dots\ge_b\Delta_N$.
Let $\rho=b(\Delta)$.
By assumption, $Y_{\rho;\m}\subset Y_{\Delta;\m}$.
Fix a total order $\le_Y$ on $Y_{\Delta;\m}$ satisfying \eqref{eq: orderY}.
Let $g$ be the best $\rltn_\m$-matching between $X_{\rho;\m}$ and $Y_{\rho;\m}$
(where we take the standard order on $X_{\rho;\m}$ and $\le_Y$ on $Y_{\rho;\m}$).
Since $Y_{\rho;\m}\subset Y_{\Delta;\m}$, the domain of $g$ is contained in $X_{\rho;\m}\cap X_{\Delta;\m}$.
Thus, $g$ is the best matching between $X_{\rho;\m}\cap X_{\Delta;\m}$ and $Y_{\rho;\m}$, which is also the best matching between
$X_{\rho;\m}\cap X_{\Delta;\m}$ and $Y_{\Delta;\m}$ (Remark \ref{rem: trivmatch}).
Let $C$ be the complement of the range of $g$ in $Y_{\rho;\m}$ and let $a$ be the size of $C$.
Recall that $\sigma_{\{\rho\}}=\rho^{\times a}$ and $\sigma_{\lnot\{\rho\}}=\zele{\m'}$
where $\m'=\Delta'_1+\dots+\Delta'_N$ is given by
\[
\Delta'_i=\begin{cases}^-\Delta_i&\text{if }i\in C,\\\Delta_i&\text{otherwise.}\end{cases}
\]
Note that $X_{\,^-\Delta;\m'}=X_{\Delta;\m}\setminus X_{\rho;\m}$ and $Y_{\,^-\Delta;\m'}=%Y_{\Delta;\m}\setminus\range(g)=
(Y_{\Delta;\m}\setminus Y_{\rho;\m})\cup C$.
Moreover, the restriction of $\le_Y$ to $Y_{\,^-\Delta;\m'}$ still satisfies the analogue of \eqref{eq: orderY} for the $\Delta'_i$'s
and the intersection of $\rltn_\m$ with $Y_{\,^-\Delta;\m'}\times X_{\,^-\Delta;\m'}$ is $\rltn_{\m'}$.
Let $h$ be best $\rltn_{\m'}$-matching between $X_{\,^-\Delta;\m'}$ and $Y_{\,^-\Delta;\m'}$.

Note that $X_{\rho;\m}\cap X_{\Delta;\m}$ is a ray of $X_{\Delta;\m}$.
Thus by Lemma \ref{lem: matchinstages}, $\llcond(\Delta,\m)$ is equivalent to the statement that
the domain of $g$ is $X_{\rho;\m}\cap X_{\Delta;\m}$ and the domain of $h$ is $X_{\,^-\Delta;\m'}$.
By induction hypothesis, the domain of $h$ is $X_{\,^-\Delta;\m'}$ if and only if
\[
\zele{\m''}=\soc(\zele{\,^-\Delta}\times\zele{\m'})
\]
where $\m''=\m'+\,^-\Delta=\Delta'_1+\dots+\Delta'_N+\,^-\Delta=\Delta''_1+\dots+\Delta''_{N+1}$.
Recall that by Proposition \ref{prop:segnon}
\[
\soc(\zele{\Delta}\times\zele{\m})=\soc(\rho^{\times(a+1)}\times\soc(\zele{\,^-\Delta}\times\zele{\m'})).
\]
Thus, to prove the proposition it will suffice to show that $\zele{\m+\Delta}\hookrightarrow\rho^{\times(a+1)}\times\sigma'$
for some $\sigma'\in\Irr$ if and only if the domain of $g$ is $X_{\rho;\m}\cap X_{\Delta;\m}$ and in this case
$\sigma'=\zele{\m''}$.

Clearly $X_{\rho;\m+\Delta}=X_{\rho;\m}$ and $Y_{\rho;\m+\Delta}=Y_{\rho;\m}\cup\{N+1\}$.
Let $g'$ be the best $\rltn_{\m+\Delta}$-matching between $X_{\rho;\m+\Delta}$ and $Y_{\rho;\m+\Delta}$
(where we extend $\le_Y$ by letting $N+1$ be the largest element).
By Lemma \ref{lem: addonematch} the complement $C'$ of the range of $g'$ in $Y_{\rho;\m+\Delta}$ is of size $a+1$
if and only if the domain of $g$ is $X_{\rho;\m}\cap X_{\Delta;\m}$ and in this case $C'=C\cup\{N+1\}$ and $g'$ coincides with $g$.
Our claim therefore follows from Theorem \ref{thm: cusptimes} part \ref{eq: extrctrho}.
\end{proof}

%\begin{prop} \label{prop: embedinstd}
%For any $\Delta\in\seg$ and $\m\in\MS$ the conditions $\lcond(\zele{\Delta},\zele{\m})$
%and $\slcond(\zele{\Delta},\zele{\m})$ are equivalent.
%\end{prop}

%\begin{proof}
%The only non-trivial direction is that $\lcond(\zele{\Delta},\zele{\m})$ implies $\slcond(\zele{\Delta},\zele{\m})$.
%Once again we may assume that $\Delta\in\seg_\rho$ and $\m\in\MS_\rho$ for some $\rho\in\Cusp$.
%By Lemma \ref{lem: redgem} we reduce to the case where $\Delta_i\ge_b\Delta$ for all $i$.
%In this case the condition $\rcond(\zele{\Delta},\zele{\m})$ holds by Remark \ref{rem: LMnew2}\eqref{part: trivlcond}.
%Therefore by Remark \ref{rem: LMnew2}\eqref{part: ind2hvs}, $\lcond(\zele{\Delta},\sigma)$ implies that
%$\zele{\Delta}\times\sigma$ is irreducible and equals $\zele{\m+\Delta}$ and the assertion is clear.
%\end{proof}

Symmetrically, define
\begin{align*}
\tilde{X}_{\Delta; \m}&=\{i:\Delta_i\prec\Delta\},\\
\tilde{Y}_{\Delta; \m}&=\{i:\lshft\Delta_i\prec\Delta\}.
\end{align*}
We write $\rrcond(\Delta,\m)$ for the condition that there exists a $\rltn$-matching from $\tilde{X}_{\Delta;\m}$
to $\tilde{Y}_{\Delta; \m}$ where $\rltn$ is the relation
\[
j\rltn i\iff \Delta_i\prec\Delta_j
\]
between $\tilde{Y}_{\Delta; \m}$ and $\tilde{X}_{\Delta; \m}$.

By passing to the contragredient we get:
\begin{prop} \label{prop: onesegment2}
For $\Delta\in\seg$ and $\m\in\MS$ the conditions $\rcond(\zele{\Delta},\zele{\m})$ and $\rrcond(\Delta,\m)$
%and $\srcond(\zele{\Delta},\zele{\m})$
are equivalent.
\end{prop}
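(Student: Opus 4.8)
The plan is to obtain this statement from Proposition \ref{prop: onesegment} by applying the contragredient functor, exactly as flagged in the sentence preceding the statement. First I would note that, by Lemma \ref{lem: lcondprop} part 2 (applied with $\pi=\zele{\Delta}^\vee$ and $\sigma=\zele{\m}^\vee$), the condition $\rcond(\zele{\Delta},\zele{\m})$ is equivalent to $\lcond(\zele{\Delta}^\vee,\zele{\m}^\vee)$. Since $\zele{\Delta}^\vee=\zele{\Delta^\vee}$ and $\zele{\m}^\vee=\zele{\m^\vee}$ (the latter by Proposition \ref{useful_properties} part \ref{part: contragredient}), this reads $\lcond(\zele{\Delta^\vee},\zele{\m^\vee})$, which by Proposition \ref{prop: onesegment} is equivalent to $\llcond(\Delta^\vee,\m^\vee)$.

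It then remains to check the purely combinatorial identity $\llcond(\Delta^\vee,\m^\vee)\iff\rrcond(\Delta,\m)$. Writing $\m=\Delta_1+\dots+\Delta_N$, so that $\m^\vee=\Delta_1^\vee+\dots+\Delta_N^\vee$, I would use two elementary facts about the involution $\Delta\mapsto\Delta^\vee$ on $\seg$. The first is that $b(\Delta^\vee)=e(\Delta)^\vee$ and $e(\Delta^\vee)=b(\Delta)^\vee$, from which one reads off that $\Delta\prec\Delta'$ if and only if $\Delta'^\vee\prec\Delta^\vee$. The second is that $(\lshft\Delta)^\vee$ is the right shift of $\Delta^\vee$ (the contragredient reverses a segment, hence reverses the direction of the shift), so that shifting both sides of a precedence relation by one position gives $\lshft{\Delta_i}\prec\Delta\iff\lshft{(\Delta^\vee)}\prec\Delta_i^\vee$. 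From the first fact, $X_{\Delta^\vee;\m^\vee}=\{i:\Delta^\vee\prec\Delta_i^\vee\}=\{i:\Delta_i\prec\Delta\}=\tilde{X}_{\Delta;\m}$, and the relation defining $\llcond(\Delta^\vee,\m^\vee)$, namely $j\rltn i\iff\Delta_j^\vee\prec\Delta_i^\vee$, becomes $\Delta_i\prec\Delta_j$, which is precisely the relation defining $\rrcond(\Delta,\m)$. From the two facts together, $Y_{\Delta^\vee;\m^\vee}=\{i:\lshft{(\Delta^\vee)}\prec\Delta_i^\vee\}=\{i:\lshft{\Delta_i}\prec\Delta\}=\tilde{Y}_{\Delta;\m}$. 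Hence the two matching problems (existence of a matching function $X\to Y$) coincide literally, and $\llcond(\Delta^\vee,\m^\vee)\iff\rrcond(\Delta,\m)$, completing the proof.

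The only mildly delicate point is the bookkeeping of shifts and endpoints in the second paragraph: one must keep track of the fact that $\vee$ reverses a segment (sending its beginning to the dual of its end, and vice versa) and therefore turns a left shift into a right shift, so that the $\lshft{(\cdot)}$ appearing in the definitions of $Y_{\Delta;\m}$ and $\tilde{Y}_{\Delta;\m}$ transports correctly under $\vee$. Everything else is a direct unwinding of the definitions of $X_{\Delta;\m},Y_{\Delta;\m},\tilde{X}_{\Delta;\m},\tilde{Y}_{\Delta;\m}$ and of $\rltn$, and no new idea is needed beyond Proposition \ref{prop: onesegment} and Lemma \ref{lem: lcondprop}.
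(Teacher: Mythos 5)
Your proposal is correct and follows exactly the route the paper intends: the paper's entire proof is the phrase ``by passing to the contragredient,'' i.e.\ reducing $\rcond(\zele{\Delta},\zele{\m})$ to $\lcond(\zele{\Delta^\vee},\zele{\m^\vee})$ via Lemma \ref{lem: lcondprop} and then invoking Proposition \ref{prop: onesegment}. Your combinatorial verification that $X_{\Delta^\vee;\m^\vee}=\tilde{X}_{\Delta;\m}$, $Y_{\Delta^\vee;\m^\vee}=\tilde{Y}_{\Delta;\m}$ and that the relations match (including the shift bookkeeping $\lshft{(\Delta^\vee)}=(\rshft{\Delta})^\vee$ and $\Delta_i\prec\rshft{\Delta}\iff\lshft{\Delta_i}\prec\Delta$) is accurate and simply makes explicit what the paper leaves to the reader.
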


From Propositions \ref{prop: onesegment} and \ref{prop: onesegment2} and Lemma \ref{lem: lcondprop}  part \ref{part: irred} we infer
\begin{coro}\label{cor:irred2}
Let $\Delta\in\seg$ and $\m\in\MS$.
Then $\zele{\Delta}\times\zele{\m}$ is irreducible (in which case
$\zele{\Delta}\times\zele{\m} \simeq \zele{\Delta+\m}$)
if and only if $\llcond(\Delta,\m)$ and $\rrcond(\Delta,\m)$.
\end{coro}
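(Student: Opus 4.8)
The plan is to derive Corollary \ref{cor:irred2} directly from the two structural propositions that precede it, treating it as a formal consequence of the socle and cosocle descriptions together with the multiplicity-one statement. First I would recall, from Lemma \ref{lem: lcondprop} part \ref{part: irred}, that for $\pi=\zele{\Delta}$ and $\sigma=\zele{\m}$ the representation $\zele{\Delta}\times\zele{\m}$ is irreducible if and only if both $\lcond(\zele{\Delta},\zele{\m})$ and $\rcond(\zele{\Delta},\zele{\m})$ hold, and that in this case $\soc=\socc=\zele{\Delta+\m}$ forces $\zele{\Delta}\times\zele{\m}\simeq\zele{\Delta+\m}$. So the only thing to do is translate these two conditions into combinatorics.

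The translation of $\lcond(\zele{\Delta},\zele{\m})$ into $\llcond(\Delta,\m)$ is exactly Proposition \ref{prop: onesegment}, and the translation of $\rcond(\zele{\Delta},\zele{\m})$ into $\rrcond(\Delta,\m)$ is exactly Proposition \ref{prop: onesegment2}. Substituting, $\zele{\Delta}\times\zele{\m}$ is irreducible if and only if $\llcond(\Delta,\m)$ and $\rrcond(\Delta,\m)$ both hold. I would write this as a two-line argument: apply Lemma \ref{lem: lcondprop} part \ref{part: irred}, then rewrite each of the two conditions using Propositions \ref{prop: onesegment} and \ref{prop: onesegment2} respectively.

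There is essentially no obstacle here, since all the real work has been front-loaded into Propositions \ref{prop: onesegment} and \ref{prop: onesegment2} (and, underneath those, into Theorem \ref{thm: cusptimes}, Proposition \ref{prop:segnon}, and the matching-function machinery of Lemmas \ref{lem: optf}, \ref{lem: matchinstages}, \ref{lem: addonematch}). The one point worth a sentence of care is the parenthetical claim about the irreducible isomorphism type: once irreducibility is known, the multiplicity-one statement in Proposition \ref{useful_properties} part \ref{part: lmm'} (invoked via Lemma \ref{lem: lcondprop} part \ref{part: irred}) identifies the unique composition factor as $\zele{\Delta+\m}$, so $\zele{\Delta}\times\zele{\m}\simeq\zele{\Delta+\m}$. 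If anything "hard" remains it is only bookkeeping: making sure the orderings $\le_X,\le_Y$ used in defining $\llcond$ and $\rrcond$ are the same as those fixed in the statements of the two propositions, which they are by construction, so no further argument is needed.
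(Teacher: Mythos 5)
Your proposal is correct and is exactly the paper's argument: the corollary is stated there as an immediate consequence of Propositions \ref{prop: onesegment} and \ref{prop: onesegment2} together with Lemma \ref{lem: lcondprop} part \ref{part: irred}, which is precisely the two-line substitution you describe (including the identification of the unique composition factor as $\zele{\Delta+\m}$ via the multiplicity-one statement).
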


The proofs in this section never use the condition $s_\rho>0$.
Invoking the principle of ``mirror symmetry'' (cf.~\S\ref{sec: mirror symmetry}) we get:

\begin{prop}
For $\Delta\in\seg$ and $\m\in\MS$. Then
\begin{enumerate}
 \item $\lang{\Delta+\m}\hookrightarrow\lang{\Delta}\times\lang{\m}$
 (i.e., $\lang{\Delta+\m}=\soc(\lang{\Delta}\times\lang{\m})$) if and only if $\rrcond(\Delta,\m)$.
\item $\lang{\Delta}\times\lang{\m}\twoheadrightarrow\lang{\Delta+\m}$ (i.e.,
$\lang{\Delta+\m}=\cos(\lang{\Delta}\times\lang{\m})$) if and only if $\llcond(\Delta,\m)$.
\item The following conditions are equivalent:
\begin{enumerate}
\item $\lang{\Delta}\times\lang{\m}$ is irreducible.
\item $\llcond(\Delta,\m)$ and $\rrcond(\Delta,\m)$.
\item $\lang{\Delta}\times\lang{\m}\simeq\lang{\m}\times\lang{\Delta}$.
\item $\lang{\Delta}\times\lang{\m}\simeq\lang{\m+\Delta}$.
\end{enumerate}
\end{enumerate}
\end{prop}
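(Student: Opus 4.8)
The plan is to deduce the proposition from its three Zelevinsky counterparts — Proposition~\ref{prop: onesegment}, Proposition~\ref{prop: onesegment2} and Corollary~\ref{cor:irred2} — via the mirror symmetry principle of \S\ref{sec: mirror symmetry}, exactly as announced by the remark preceding the statement. The point is that none of the arguments in \S\ref{sec: square-integrable} used the positivity $s_\rho>0$; they rest only on the combinatorics of the geometric lemma, Jacquet modules and the relation $\prec$. Reversing the cuspidal line (formally allowing $s_\rho<0$) therefore carries every such statement to a valid one, under the dictionary: $\zele{\cdot}\leftrightarrow\lang{\cdot}$ (applied to the mirror-image of the segment), $\std\leftrightarrow\lstd$, the relation $\prec$ is reversed — hence the conditions $\llcond$ and $\rrcond$ of this section are interchanged — while $\soc$, $\cos$ and parabolic induction, being purely representation-theoretic, are unchanged. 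First I would record this dictionary carefully (checking directly from the definitions of $X_{\Delta;\m},Y_{\Delta;\m},\tilde X_{\Delta;\m},\tilde Y_{\Delta;\m}$ that reversing $\prec$ turns $\llcond(\Delta,\m)$ into $\rrcond$ of the mirrored data), since the matching of conditions is the only place where one can slip.

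With the dictionary in hand the first two parts are immediate: the mirror of Proposition~\ref{prop: onesegment} ($\zele{\Delta+\m}=\soc(\zele{\Delta}\times\zele{\m})\iff\llcond(\Delta,\m)$) reads $\lang{\Delta+\m}=\soc(\lang{\Delta}\times\lang{\m})\iff\rrcond(\Delta,\m)$, and the mirror of Proposition~\ref{prop: onesegment2} gives $\lang{\Delta+\m}=\cos(\lang{\Delta}\times\lang{\m})\iff\llcond(\Delta,\m)$. Here I would also note that since $\lang{\Delta}$ is saturated (Proposition~\ref{useful_properties} part~\ref{part: pwnp}) hence \RI\ (Corollary~\ref{cor:saturated}), and $\lang{\Delta}^\vee=\lang{\Delta^\vee}$ is \RI\ as well, $\lang{\Delta}\times\lang{\m}$ is both socle- and cosocle-irreducible; this is what makes the ``i.e.'' reformulations in parts 1 and 2 legitimate. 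The equivalence of (a) and (b) in part 3 is then the mirror of Corollary~\ref{cor:irred2} (equivalently, it follows by combining parts 1 and 2 with the \EI\ property of $\lang{\Delta}\times\lang{\m}$ and the Langlands analogue of Proposition~\ref{useful_properties} part~\ref{part: lmm'}, which gives that $\lang{\m+\Delta}$ occurs with multiplicity one in $[\lang{\Delta}\times\lang{\m}]$).

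It remains to bring in the isomorphism reformulations (c) and (d). For (d): if $\lang{\Delta}\times\lang{\m}$ is irreducible it must coincide with $\lang{\m+\Delta}$ by the multiplicity-one statement, and conversely $\lang{\m+\Delta}$ is irreducible. For (a)$\Rightarrow$(c): $\lang{\Delta}\times\lang{\m}$ and $\lang{\m}\times\lang{\Delta}$ have the same image in $\Rr$, so if one is irreducible so is the other, and both then equal $\lang{\Delta+\m}=\lang{\m+\Delta}$. For (c)$\Rightarrow$(a) I would invoke the Langlands analogue of Lemma~\ref{lem: sym} — proved by the same argument, using that $\star$ is a contravariant equivalence satisfying \eqref{prope:indu} together with $\lang{\m}^\star=\lang{\m^\star}$ — applied to the identity $\soc(\lang{\Delta}\times\lang{\m})=\lang{\m''}$ with $\m''$ the (uniformly determined) unlabeled multisegment coming from part 1: it yields $\cos(\lang{\m}\times\lang{\Delta})=\lang{\m''}$, so under hypothesis (c) the socle and cosocle of $\lang{\Delta}\times\lang{\m}$ agree. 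Since this representation is \EI, a standard Schur-lemma argument then forces it to be irreducible.

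I expect the only real subtlety to be the bookkeeping in the first step — keeping straight that mirroring reverses $\prec$ (hence swaps $\llcond$ with $\rrcond$) but leaves $\soc$ and $\cos$ alone, rather than the reverse — together with checking that Lemma~\ref{lem: sym} genuinely transfers to the Langlands setting. Everything else is formal.
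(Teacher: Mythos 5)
Your proposal is correct and follows the paper's own route: the paper proves this proposition precisely by invoking the mirror symmetry principle of \S\ref{sec: mirror symmetry} applied to Propositions \ref{prop: onesegment}, \ref{prop: onesegment2} and Corollaries \ref{cor:irred1}, \ref{cor:irred2} (the paper in fact gives no further detail beyond that one-line invocation, so your verification that mirroring swaps $\llcond$ with $\rrcond$ and your treatment of (c) and (d) just make explicit what the paper leaves to the reader).
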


Of course, the equivalence of the irreducibilities of $\lang{\Delta}\times\lang{\m}$ and $\zele{\Delta}\times\zele{\m}$
can also be deduced directly from the fact that the Zelevinsky involution is a ring homomorphism.

\section{Applications} \label{sec: applications}

\label{sec: nece}

\subsection{A sufficient condition for the irreducibility of $\pi \times \sigma$}

As an application of Proposition \ref{prop:  onesegment}, we will give a sufficient condition for irreducibility
of $\pi\times\sigma$, improving the main result of \cite{MR3237446} (see Corollary \ref{cor:mainBLM} below).

\begin{prop}\label{prop:sum}
Let  $\m, \n \in \MS$. Suppose that either
\begin{subequations}
\begin{equation} \label{eq: alldeltam'l}
\text{for every segment $\Delta \in \m$  we have $\llcond(\Delta,\n)$}
\end{equation}
or
\begin{equation} \label{eq: alldelta'mr}
\text{for every segment $\Delta' \in \n$ we have  $\rrcond(\Delta',\m)$.}
\end{equation}
\end{subequations}
Then $\lcond(\zele{\m},\zele{\n})$ holds.

Similarly, suppose that either
\begin{subequations}
\begin{equation} \label{eq: alldeltam'r}
\text{for every segment $\Delta \in \m$ we have  $\rrcond(\Delta,\n)$}
\end{equation}
or
\begin{equation} \label{eq: alldelta'ml}
\text{for every segment $\Delta' \in \n$ we have  $\llcond(\Delta',\m)$.}
\end{equation}
\end{subequations}
Then $\rcond(\zele{\m},\zele{\n})$ holds.

Thus, if either \eqref{eq: alldeltam'l} or \eqref{eq: alldelta'mr} and if either \eqref{eq: alldeltam'r} or \eqref{eq: alldelta'ml}
then $\zele{\m}\times\zele{\n}$ and $ \lang{\m} \times \lang{\n}$ are irreducible.
\end{prop}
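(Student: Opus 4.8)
The plan is to reduce everything to the single–segment criteria of Propositions~\ref{prop: onesegment} and~\ref{prop: onesegment2}, peeling the segments of $\m$ (resp.\ $\n$) off one at a time. First I would make two reductions. Since representations supported on distinct lines commute (Proposition~\ref{useful_properties}\ref{part: pwnp}), and since for a segment $\Delta$ in the line of $\rho$ the conditions $\llcond(\Delta,\n)$ and $\rrcond(\Delta,\n)$ only involve the $\rho$-line part of $\n$, one may assume $\m,\n\in\MS_\rho$. Passing to contragredients (Lemma~\ref{lem: lcondprop}, using $\zele{\m}^\vee=\zele{\m^\vee}$ and the fact that $\cdot^\vee$ interchanges $\llcond$ and $\rrcond$, which is just Propositions~\ref{prop: onesegment}/\ref{prop: onesegment2}) turns the two assertions about $\rcond(\zele{\m},\zele{\n})$ into the two about $\lcond(\zele{\m},\zele{\n})$. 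Finally, irreducibility of $\zele{\m}\times\zele{\n}$ is then Lemma~\ref{lem: lcondprop}\ref{part: irred}, and irreducibility of $\lang{\m}\times\lang{\n}$ follows since the Zelevinsky involution is a ring homomorphism. So it remains to prove \eqref{eq: alldeltam'l}$\Rightarrow\lcond(\zele{\m},\zele{\n})$ and \eqref{eq: alldelta'mr}$\Rightarrow\lcond(\zele{\m},\zele{\n})$.

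For the first implication, fix an \rangee\ form $\m=\Delta_1+\dots+\Delta_N$, put $\m_k=\Delta_k+\dots+\Delta_N$, and prove $\lcond(\zele{\m_k},\zele{\n})$ by downward induction on $k$ (the case $k=N+1$ being vacuous). The crucial combinatorial point in the inductive step is that, the form being \rangee, $\Delta_k$ precedes no $\Delta_i$ with $i>k$, so $X_{\Delta_k;\m_{k+1}}=\emptyset$; hence $X_{\Delta_k;\m_{k+1}+\n}=X_{\Delta_k;\n}$ while $Y_{\Delta_k;\m_{k+1}+\n}\supseteq Y_{\Delta_k;\n}$, and therefore $\llcond(\Delta_k,\n)$, which holds by \eqref{eq: alldeltam'l}, implies $\llcond(\Delta_k,\m_{k+1}+\n)$. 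By Proposition~\ref{prop: onesegment} this gives $\lcond(\zele{\Delta_k},\zele{\m_{k+1}+\n})$, so by Lemma~\ref{lem: lcondprop} the representation $\zele{\Delta_k}\times\zele{\m_{k+1}+\n}$ is \EI\ with socle $\zele{\m_k+\n}$; the inductive hypothesis says $\zele{\m_{k+1}}\times\zele{\n}$ is \EI\ with socle $\zele{\m_{k+1}+\n}$. Granting that $\zele{\Delta_k}\otimes\zele{\m_{k+1}+\n}$ occurs with multiplicity one in $[\rp(\zele{\Delta_k}\times\zele{\m_{k+1}}\times\zele{\n})]$, Lemma~\ref{lem:main} shows $\zele{\Delta_k}\times\zele{\m_{k+1}}\times\zele{\n}$ is \EI\ with socle $\soc(\zele{\Delta_k}\times\zele{\m_{k+1}+\n})=\zele{\m_k+\n}$; since $\zele{\m_k}\hookrightarrow\zele{\Delta_k}\times\zele{\m_{k+1}}$ (an \rangee\ form of $\m_k$ starts with $\Delta_k$), Remark~\ref{rem:main} yields $\soc(\zele{\m_k}\times\zele{\n})=\zele{\m_k+\n}$, i.e.\ $\lcond(\zele{\m_k},\zele{\n})$.

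The main obstacle is precisely the multiplicity-one statement invoked above. Lemma~\ref{lem: mult1 jf} would give it if $\zele{\m_{k+1}}\times\zele{\n}$ were left $\supp(\Delta_k)$-reduced, which need not hold. To deal with this I would argue as in the proof of Proposition~\ref{prop: onesegment}: using Lemma~\ref{lem: redgem} one may first replace $\n$ by $\n_{\not<_b\m_k}$ (this affects neither $\lcond(\zele{\m_k},\zele{\n})$ nor the hypothesis, since $X_{\Delta;\n}$ and $Y_{\Delta;\n}$ are unchanged for $\Delta\in\m_k$ — any $\Delta'\in\n$ with $\Delta_k\prec\Delta'$ or $\lshft{\Delta_k}\prec\Delta'$ has $b(\Delta')>b(\Delta_k)$), treat the separated remainder $\n_{<_b\m_k}$ via Lemma~\ref{lem: indEI}, and then run a secondary induction on the length $l(\Delta_k)$: writing $\rho_0=b(\Delta_k)$ and peeling off $\rho_0^{\times a}$ by Lemma~\ref{lem: subrep1} and Proposition~\ref{prop:segnon} reduces $\Delta_k$ to $^-\Delta_k$, the base case being the supercuspidal one, where the needed multiplicity bound is contained in Theorem~\ref{thm: cusptimes}. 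This is essentially bookkeeping, but it is the technical heart of the argument.

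The second implication \eqref{eq: alldelta'mr}$\Rightarrow\lcond(\zele{\m},\zele{\n})$ is proved by the mirror-image argument: fix an \rangee\ form $\n=\Delta'_1+\dots+\Delta'_M$, peel off the $\Delta'_j$ from the appropriate end, replace Proposition~\ref{prop: onesegment} by Proposition~\ref{prop: onesegment2} (the analogous combinatorial point being $\tilde X_{\Delta'_j;\n'}=\emptyset$ for the relevant sub-multisegment $\n'$), and glue with the cosocle/contragredient version of Lemma~\ref{lem:main}; the same multiplicity obstacle arises and is resolved in the same way. Combining the two implications with the reductions of the first paragraph gives all the stated conclusions.
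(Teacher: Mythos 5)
Your overall architecture — reduce to one cuspidal line, dualize the two $\rcond$ assertions, then peel segments off $\m$ one at a time and quote the single-segment criterion — is the same as the paper's, but the gluing mechanism you use in the inductive step has a genuine gap. The multiplicity-one statement you need in order to apply Lemma~\ref{lem:main} to $\zele{\Delta_k}\times(\zele{\m_{k+1}}\times\zele{\n})$ is false, even after all the reductions you describe. Take $\m=\Delta+\Delta$ and $\n=\Delta$ for any $\Delta\in\seg$: hypothesis \eqref{eq: alldeltam'l} holds vacuously (no segment of $\n$ is preceded by $\Delta$), nothing in $\n$ is $<_b\m$, so Lemma~\ref{lem: redgem} removes nothing; yet $\zele{\Delta}\otimes\zele{\Delta+\Delta}$ occurs with multiplicity exactly $3$ in $[\rp(\zele{\Delta}\times\zele{\Delta}\times\zele{\Delta})]$, since each of the three geometric-lemma terms in which a single factor contributes all of its Jacquet module to the first block produces $[\zele{\Delta}\otimes\zele{\Delta}\times\zele{\Delta}]=[\zele{\Delta}\otimes\zele{2\Delta}]$. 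So the claim you defer to "bookkeeping" is not provable, and the proposed secondary induction via Proposition~\ref{prop:segnon} would in any case replace $\soc(\zele{\m_{k+1}}\times\zele{\n})$ by its left $\rho_0$-reduced part, destroying the product structure that your outer induction is tracking. A second gap: your "mirror-image" proof of \eqref{eq: alldelta'mr}$\Rightarrow\lcond(\zele{\m},\zele{\n})$ quietly exchanges the order of the two factors (the hypothesis concerns $\zele{\Delta'}\times\zele{\m}$, the conclusion $\zele{\m}\times\zele{\n}$), which for $D\ne F$ is exactly what cannot be done without Lemma~\ref{lem: sym}; you never invoke it.

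The paper's proof avoids Jacquet modules entirely at the gluing step. It peels off the $\le_b$-\emph{minimal} segment $\Delta$ of $\m$ (not the first segment of a \rangee\ form); after the Lemma~\ref{lem: redgem} reduction every segment of $\n$ is $\ge_b\Delta$, so $\rrcond(\Delta,\n)$ holds for free by Lemma~\ref{lem: lcondprop} part \ref{part: trivlcond}, and together with the hypothesis $\llcond(\Delta,\n)$ this makes $\zele{\Delta}\times\zele{\n}$ irreducible, hence isomorphic to $\zele{\n}\times\zele{\Delta}$. One then chains embeddings $\zele{\m}\times\zele{\n}\hookrightarrow\zele{\m-\Delta}\times\zele{\Delta}\times\zele{\n}\simeq\zele{\m-\Delta}\times\zele{\n}\times\zele{\Delta}\hookrightarrow\std(\m-\Delta+\n)\times\zele{\Delta}\simeq\std(\m+\n)$, using the induction hypothesis in the form of the embedding criterion of Lemma~\ref{lem: lcondprop} part \ref{part: referee}; that criterion, not Lemma~\ref{lem:main}, is the tool that makes the induction close. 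The other three implications are then deduced from this one by passing to contragredients and by Lemma~\ref{lem: sym}, rather than by running a second direct argument.
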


\begin{proof}
Suppose that for every segment $\Delta$ in $\m$ we have $\llcond(\Delta,\n)$.
We will prove the statement by induction on the size of $\m$.
The case where $\m$ is a single segment follows from Propositions \ref{prop: onesegment}.
For the induction step we can assume as usual that $\m,\n\in\MS_\rho$ for some $\rho\in\Cusp$.
Let $\Delta$ (resp., $\Delta'$) be the smallest segment in $\m$ (resp., $\n$) with respect to $\le_b$.
Note that for any $\Delta''$ in $\m$ the condition $\llcond(\Delta'',\n_{\ge_b\Delta})$ is equivalent to (hence follows from) $\llcond(\Delta'',\n)$.
Thus, by Lemma \ref{lem: redgem} we reduce to the case where $\Delta'\ge_b\Delta$.

Assume therefore that $\Delta'\ge_b\Delta$.
Note that $\zele{\m} \times \zele{\n} $ is a submodule of $  \zele{\m-\Delta}   \times\zele{\Delta}   \times \zele{\n} $.
As $\Delta \le_b \Delta'$ we have $\rrcond(\Delta,\n)$ by Lemma \ref{lem: lcondprop} part \ref{part: trivlcond}.
By hypothesis we also have $\llcond(\Delta,\n)$.
We deduce that $\zele{\Delta}\times \zele{\n} $ is irreducible, hence isomorphic to $\zele{\n}\times\zele{\Delta}$. Thus,
\[
\zele{\m} \times \zele{\n} \hookrightarrow \zele{\m-\Delta}   \times \zele{\n}   \times\zele{\Delta}.
\]
By induction hypothesis and Lemma \ref{lem: lcondprop} part \ref{part: referee}, the latter is a submodule of
\[
\std(\m-\Delta+ \n)\times\zele{\Delta}
\]
which is isomorphic to $\std(\m +\n)$ since $\Delta \le_b \Delta'$.

The statement for the condition \eqref{eq: alldeltam'r} (resp., \eqref{eq: alldelta'ml}) follows the previous case upon passing to the contragredient
(resp., by applying Lemma \ref{lem: sym} and interchanging $\m$ and $\n$).
Similarly, the statements for \eqref{eq: alldelta'mr} and \eqref{eq: alldelta'ml} are equivalent by passing to the contragredient.

For $\lang{\m}\times\lang{\n}$ we can argue using \S\ref{sec: mirror symmetry}
(or the Zelevinsky involution).
\end{proof}

As a special case we get:

\begin{coro}\label{coro:sum}
Let $\m, \n \in \MS$.
Assume that either $\zele{\Delta'}\times\zele{\m}$ is irreducible for every segment $\Delta' \in \n$
or $\zele{\Delta}\times\zele{\n}$ is irreducible for every segment $\Delta \in \m$.
Then $\zele{\m}\times\zele{\n}$ and $ \lang{\m} \times \lang{\n} $ are irreducible.
\end{coro}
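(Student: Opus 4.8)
The statement to prove is Corollary \ref{coro:sum}, which should follow almost immediately from Proposition \ref{prop:sum} once we translate irreducibility of a single ``segment times multisegment'' into the combinatorial conditions $\llcond$ and $\rrcond$.

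The plan is to reduce Corollary \ref{coro:sum} directly to Proposition \ref{prop:sum}. By Corollary \ref{cor:irred2}, for a segment $\Delta\in\m$ the irreducibility of $\zele{\Delta}\times\zele{\n}$ is equivalent to the conjunction of $\llcond(\Delta,\n)$ and $\rrcond(\Delta,\n)$. Hence if $\zele{\Delta}\times\zele{\n}$ is irreducible for \emph{every} segment $\Delta\in\m$, then in particular $\llcond(\Delta,\n)$ holds for every $\Delta\in\m$ --- which is hypothesis \eqref{eq: alldeltam'l} --- and also $\rrcond(\Delta,\n)$ holds for every $\Delta\in\m$ --- which is hypothesis \eqref{eq: alldeltam'r}. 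Thus both \eqref{eq: alldeltam'l} and \eqref{eq: alldeltam'r} are satisfied, so by Proposition \ref{prop:sum} we get that $\zele{\m}\times\zele{\n}$ and $\lang{\m}\times\lang{\n}$ are irreducible.

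Symmetrically, if instead $\zele{\Delta'}\times\zele{\m}$ is irreducible for every segment $\Delta'\in\n$, then by Corollary \ref{cor:irred2} both $\llcond(\Delta',\m)$ and $\rrcond(\Delta',\m)$ hold for every $\Delta'\in\n$; these are exactly \eqref{eq: alldelta'ml} and \eqref{eq: alldelta'mr}, and again Proposition \ref{prop:sum} yields the conclusion. So the proof is really just: apply Corollary \ref{cor:irred2} to extract the four combinatorial conditions from the irreducibility hypotheses, then feed the appropriate pair into Proposition \ref{prop:sum}.

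There is essentially no obstacle here --- the only thing to be careful about is bookkeeping with which of the four conditions \eqref{eq: alldeltam'l}--\eqref{eq: alldelta'ml} is obtained from which irreducibility hypothesis, and to note that Proposition \ref{prop:sum} requires ``one of the first pair \emph{and} one of the second pair,'' which is why we need \emph{both} $\llcond$ and $\rrcond$ for each relevant segment (and this is exactly what full irreducibility of the length-one factor gives us, via Corollary \ref{cor:irred2}). If one preferred, one could also note that in the first case one could equally well cite the implication ``\eqref{eq: alldeltam'l} and \eqref{eq: alldeltam'r}'' or mix with the symmetric conditions, but the cleanest route is the one above. The whole argument is a couple of lines.

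\begin{proof}
This is immediate from Proposition \ref{prop:sum} together with Corollary \ref{cor:irred2}.
Indeed, suppose first that $\zele{\Delta}\times\zele{\n}$ is irreducible for every segment $\Delta\in\m$.
By Corollary \ref{cor:irred2} this means that $\llcond(\Delta,\n)$ and $\rrcond(\Delta,\n)$ hold for every $\Delta\in\m$, i.e.,
both \eqref{eq: alldeltam'l} and \eqref{eq: alldeltam'r} are satisfied.
Proposition \ref{prop:sum} then gives that $\zele{\m}\times\zele{\n}$ and $\lang{\m}\times\lang{\n}$ are irreducible.
The case where $\zele{\Delta'}\times\zele{\m}$ is irreducible for every segment $\Delta'\in\n$ is symmetric:
by Corollary \ref{cor:irred2} we get $\llcond(\Delta',\m)$ and $\rrcond(\Delta',\m)$ for every $\Delta'\in\n$,
that is both \eqref{eq: alldelta'ml} and \eqref{eq: alldelta'mr}, and again Proposition \ref{prop:sum} applies.
\end{proof}
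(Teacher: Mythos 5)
Your proof is correct and is exactly the derivation the paper intends: the corollary is stated as "a special case" of Proposition \ref{prop:sum}, with the translation from irreducibility of $\zele{\Delta}\times\zele{\n}$ (resp. $\zele{\Delta'}\times\zele{\m}$) into the pair of conditions $\llcond$ and $\rrcond$ supplied by Corollary \ref{cor:irred2}. Nothing to add.
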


\begin{coro} \label{coro: gentimesany}
Let $\m,\n\in\MS$. Assume that $\n$ is \TU.
Then $ \zele{\m} \times \zele{\n} $ (or $\lang{\m}\times\lang{\n}$) is irreducible if and only if for every segment $\Delta'$ in $\n$ we have
$\llcond(\Delta',\m)$ and $\rrcond(\Delta',\m)$ (i.e., $\zele{\m}\times\zele{\Delta'}$ is irreducible).
\end{coro}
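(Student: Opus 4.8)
The plan is to obtain both implications essentially by assembling results already proved, with the hypothesis that $\n$ is \TU\ entering only in the forward direction.

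First, for the ``if'' direction, I would observe that by Corollary \ref{cor:irred2} the assumption that $\llcond(\Delta',\m)$ and $\rrcond(\Delta',\m)$ hold for every segment $\Delta'$ of $\n$ is literally the statement that $\zele{\Delta'}\times\zele{\m}$ is irreducible for every such $\Delta'$. Corollary \ref{coro:sum} then applies directly and yields the irreducibility of both $\zele{\m}\times\zele{\n}$ and $\lang{\m}\times\lang{\n}$; note that this half does not use that $\n$ is \TU.

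The content is in the ``only if'' direction. Assume $\zele{\m}\times\zele{\n}$ is irreducible and write $\n=\Delta'_1+\dots+\Delta'_k$. Because $\n$ is \TU, Proposition \ref{useful_properties} part \ref{part: pairwise unlinked} gives that $\std(\n)=\zele{\Delta'_1}\times\dots\times\zele{\Delta'_k}$ is irreducible, hence isomorphic to $\zele{\n}$, and by Proposition \ref{useful_properties} part \ref{part: pwnp} the pairwise unlinked segment representations $\zele{\Delta'_i}$ commute, so I may take this product in any order. I would then fix a segment $\Delta'$ of $\n$, set $\n''=\n-\Delta'$ (which is again \TU, so $\zele{\n}\simeq\zele{\Delta'}\times\zele{\n''}$), and use the resulting isomorphism $\zele{\m}\times\zele{\n}\simeq\zele{\m}\times\zele{\Delta'}\times\zele{\n''}$. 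If $\zele{\m}\times\zele{\Delta'}$ were reducible, say with a subrepresentation $0\ne\tau_0\subsetneq\zele{\m}\times\zele{\Delta'}$, then $\tau_0\times\zele{\n''}$ would be a nonzero subrepresentation of $\zele{\m}\times\zele{\n}$, and it would be proper since the quotient $((\zele{\m}\times\zele{\Delta'})/\tau_0)\times\zele{\n''}$ is again nonzero (parabolic induction is exact and carries nonzero representations to nonzero ones). This contradicts irreducibility, so $\zele{\m}\times\zele{\Delta'}$ is irreducible, and Corollary \ref{cor:irred2} gives $\llcond(\Delta',\m)$ and $\rrcond(\Delta',\m)$. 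Since $\Delta'$ is an arbitrary segment of $\n$, this finishes the $\zele{\cdot}$ statement.

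For the $\lang{\cdot}$ version I would either rerun the same argument with $\zele{\cdot},\std(\cdot)$ replaced by $\lang{\cdot},\lstd(\cdot)$, appealing to the Langlands analogues of the above facts (Theorem \ref{thm: classificationL}) and to the criterion ``$\lang{\Delta'}\times\lang{\m}$ is irreducible $\iff\llcond(\Delta',\m)$ and $\rrcond(\Delta',\m)$'' obtained at the end of Section~\ref{sec: square-integrable}, or simply invoke that the Zelevinsky involution is a ring homomorphism (so $\lang{\m}\times\lang{\n}$ is irreducible iff $\zele{\m}\times\zele{\n}$ is) together with the fact that the conditions $\llcond,\rrcond$ are unaffected by passing from $\m,\n$ to $\m^t,\n^t$. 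I do not expect a genuine obstacle here: the one step requiring attention is the extraction of a single segment of $\n$ from the irreducible $\zele{\m}\times\zele{\n}$, and that is precisely where being \TU\ is needed, via the splitting of $\zele{\n}$ into a freely commuting product of the $\zele{\Delta'_i}$.
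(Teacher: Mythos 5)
Your proof is correct and follows essentially the same route as the paper's (which simply cites Proposition \ref{prop:sum} for sufficiency and Proposition \ref{useful_properties} part \ref{part: pairwise unlinked} for necessity): sufficiency via Corollary \ref{coro:sum} together with Corollary \ref{cor:irred2}, and necessity by writing $\zele{\n}$ as a commuting product of the $\zele{\Delta'_i}$ and using exactness of parabolic induction. One minor caution: for the Langlands version your primary route (rerunning the argument with the $\lang{\cdot}$ analogues) is the right one, whereas the auxiliary claim that $\llcond,\rrcond$ are ``unaffected by passing to $\m^t,\n^t$'' is neither needed nor obviously meaningful --- once the ring homomorphism property of the involution gives that $\lang{\m}\times\lang{\n}$ and $\zele{\m}\times\zele{\n}$ are simultaneously irreducible, you can quote the Zelevinsky-side equivalence verbatim.
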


In particular, we get a simple necessary and sufficient criterion for the irreducibility of $\pi \times \sigma$ when
$\pi\in\Irr$ is either unramified, tempered or (in the case $D=F$) generic, and $\sigma\in\Irr$ is arbitrary.

\begin{proof}
The sufficiency follows from Proposition \ref{prop:sum}.
The necessity follows from Proposition \ref{useful_properties} part \ref{part: pairwise unlinked}.
\end{proof}

Using Corollary \ref{cor: BLMoneseg} we can also recover the main result of \cite{MR3237446} as a special case.
\begin{coro}{\cite{MR3237446}*{Th\'eor\`eme 3.9}}\label{cor:mainBLM}
Let $\m,\n \in \MS$. Suppose that for every $\Delta \in \m$ and every $\Delta' \in \n$, $\Delta$ and $\Delta'$ are not
juxtaposed. Then
\[
\zele{\m} \times \lang{\n}
\]
is irreducible.
\end{coro}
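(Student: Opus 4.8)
The plan is to deduce Corollary \ref{cor:mainBLM} from Proposition \ref{prop:sum} together with Corollary \ref{cor: BLMoneseg}. The hypothesis is that no $\Delta\in\m$ is juxtaposed with any $\Delta'\in\n$. The key observation is that this hypothesis is exactly what is needed to verify one of the alternative conditions in Proposition \ref{prop:sum} for the mixed product $\zele{\m}\times\lang{\n}$. So first I would note that Proposition \ref{prop:sum} was stated for products of the form $\zele{\m}\times\zele{\n}$ (and the parallel statement for $\lang{\m}\times\lang{\n}$), but the conclusion we want concerns $\zele{\m}\times\lang{\n}$; the point is that Corollary \ref{cor: BLMoneseg} tells us that $\zele{\Delta}\times\lang{\n}$ is irreducible whenever no $\Delta'\in\n$ is juxtaposed with $\Delta$, which is the building block for the mixed statement.

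Concretely, I would argue by induction on the size of $\m$, mimicking the proof of Proposition \ref{prop:sum}. Reducing as usual to $\m,\n\in\MS_\rho$ for a fixed $\rho\in\Cusp$, write $\m=\Delta+(\m-\Delta)$ where $\Delta$ is, say, the smallest segment in $\m$ with respect to $\le_b$. By Corollary \ref{cor: BLMoneseg}, $\zele{\Delta}\times\lang{\n}$ is irreducible (using the no-juxtaposition hypothesis), hence isomorphic to $\lang{\n}\times\zele{\Delta}$. Thus
\[
\zele{\m}\times\lang{\n}\hookrightarrow\zele{\m-\Delta}\times\zele{\Delta}\times\lang{\n}\simeq\zele{\m-\Delta}\times\lang{\n}\times\zele{\Delta}.
\]
By the induction hypothesis $\zele{\m-\Delta}\times\lang{\n}$ is irreducible, isomorphic to $\lang{\n}\times\zele{\m-\Delta}$, so the above embeds into $\lang{\n}\times\zele{\m-\Delta}\times\zele{\Delta}$. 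Reassembling $\zele{\m-\Delta}\times\zele{\Delta}$ — here one either chooses $\Delta$ with $b(\Delta)$ minimal so that $\Delta\le_b\Delta''$ for all $\Delta''\in\m-\Delta$ and invokes Proposition \ref{useful_properties} part \ref{part: pwnp}/Lemma \ref{lem: lcondprop} part \ref{part: trivlcond} to see $\zele{\m-\Delta}\times\zele{\Delta}$ maps onto (or is a submodule related to) $\std(\m)$ — one concludes that $\zele{\m}\times\lang{\n}$ embeds into $\lang{\n}\times\zele{\m}$, and symmetrically (applying the whole argument to the contragredient, using $\lang{\n}^\vee=\lang{\n^\vee}$ and $\zele{\m}^\vee=\zele{\m^\vee}$, noting juxtaposition is preserved under $\vee$) one gets a surjection $\lang{\n}\times\zele{\m}\twoheadrightarrow\zele{\m}\times\lang{\n}$. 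Since $\zele{\m+\n^t}$ (the Zelevinsky parameter of the relevant socle/cosocle) occurs with multiplicity one, having both an embedding $\zele{\m}\times\lang{\n}\hookrightarrow\lang{\n}\times\zele{\m}$ and a surjection in the reverse direction forces $\zele{\m}\times\lang{\n}$ to be irreducible.

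An alternative, cleaner route — which I would actually prefer — is to invoke the Zelevinsky involution directly: since $^t$ is a ring homomorphism on $\Rr$, irreducibility of $\zele{\m}\times\lang{\n}$ is equivalent to irreducibility of $(\zele{\m}\times\lang{\n})^t$; but $\lang{\n}=\zele{\n^t}$, so $\zele{\m}\times\lang{\n}=\zele{\m}\times\zele{\n^t}$, and juxtaposition of $\Delta$ with $\Delta'$ is not an obviously invariant notion under $\m\mapsto\m^t$ — so this route stalls. Hence I would fall back on the inductive argument above, whose substance is really just Corollary \ref{cor: BLMoneseg} (the single-segment case) plus the bookkeeping of the proof of Proposition \ref{prop:sum}.

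The main obstacle I anticipate is the reassembly step: after embedding $\zele{\m}\times\lang{\n}$ into $\lang{\n}\times\zele{\m-\Delta}\times\zele{\Delta}$, one must reconstitute $\zele{\m-\Delta}\times\zele{\Delta}$ into something controlled by $\zele{\m}$ without creating extra irreducible constituents that could spoil the mult-one argument. This is handled exactly as in Proposition \ref{prop:sum}'s proof — choose $\Delta$ so that $\Delta\le_b\Delta''$ for all other segments of $\m$, so that $\std(\m)\simeq\std(\m-\Delta)\times\zele{\Delta}$ up to the relevant intertwining with small kernel (Proposition \ref{useful_properties} part \ref{part: anyorder}), and track that $[\zele{\m+\n^t}]\not\le[\Ker f]$. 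The remaining verifications (that no-juxtaposition passes to $\m-\Delta$ and $\n$, that it is stable under $\vee$) are immediate from the definitions.
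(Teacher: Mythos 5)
The ``cleaner route'' you dismiss is in fact the paper's intended proof, and it does not stall. You do not need juxtaposition to be invariant under $\m\mapsto\m^t$: write $\lang{\n}=\zele{\n^t}$ and apply Corollary \ref{coro:sum} to the pair $(\m,\n^t)$. Its hypothesis then reads ``$\zele{\Delta}\times\zele{\n^t}$ is irreducible for every $\Delta\in\m$'', and $\zele{\Delta}\times\zele{\n^t}=\zele{\Delta}\times\lang{\n}$ is irreducible by Corollary \ref{cor: BLMoneseg} applied directly to $\Delta$ and the \emph{original} $\n$ --- this is the only place the no-juxtaposition hypothesis enters, and it is never pushed through $^t$. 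The conclusion of Corollary \ref{coro:sum} is the irreducibility of $\zele{\m}\times\zele{\n^t}=\zele{\m}\times\lang{\n}$, and you are done in two lines; this is what the paper means by ``recovering'' the result from Corollary \ref{cor: BLMoneseg}.

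Your fallback induction, by contrast, has a genuine gap at its conclusion. An embedding $A\hookrightarrow B$ together with a surjection $B\twoheadrightarrow A$ with $[A]=[B]$ does not force $A$ to be irreducible (take $A=B=\sigma\oplus\sigma$ for $\sigma$ irreducible); what is actually needed is that $\soc(\zele{\m}\times\lang{\n})$ and $\socc(\zele{\m}\times\lang{\n})$ both equal the distinguished constituent $\zele{\m+\n^t}$, which occurs with multiplicity one --- i.e., both conditions of Lemma \ref{lem: lcondprop} part \ref{part: irred}, established separately as in the proof of Proposition \ref{prop:sum}. The ``reassembly'' step is also more delicate than you acknowledge: after landing in $\lang{\n}\times\zele{\m-\Delta}\times\zele{\Delta}$ you must compose your injection with a map toward $\lang{\n}\times\std(\m)$ (or a quotient thereof), and the composite is injective only after checking that the distinguished constituent does not lie in the kernel; this is exactly the bookkeeping with $[\zele{\m+\n}]\not\le[\Ker f]$ carried out in the proof of Proposition \ref{prop:sum} and elided in your sketch. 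Since Corollary \ref{coro:sum} already packages all of this, there is no reason to redo it.
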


\subsection{Applications to Speh and saturated representations} \label{sec: Speh}

We go back to saturated representations (see Definition \ref{def:saturated}).

\begin{lem} \label{lem: onesegspeh}
Let $\pi=\zele{\m}$, $\m=\Delta_1+\dots+\Delta_N$ be a Speh representation with $\Delta_{i+1}=\lshft{\Delta_i}$, $i=1,\dots,N-1$
and let $\Delta\in\seg$. Then $\llcond(\Delta,\m)$ is equivalent to $\Delta\not\prec\Delta_N$ and
$\rrcond(\Delta,\m)$ is equivalent to $\Delta_1\not\prec\Delta$.
Thus, $\zele{\Delta}\times\pi$ is irreducible if and only if $\Delta\not\prec\Delta_N$ and $\Delta_1\not\prec\Delta$.
In particular, this happens if $\Delta$ is not linked to $\supp\m$.
\end{lem}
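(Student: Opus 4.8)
The plan is to apply Proposition~\ref{prop: onesegment} (and its contragredient counterpart Proposition~\ref{prop: onesegment2}) directly, reducing everything to an explicit analysis of the combinatorial conditions $\llcond(\Delta,\m)$ and $\rrcond(\Delta,\m)$ for the very special multisegment $\m=\Delta_1+\dots+\Delta_N$ with $\Delta_{i+1}=\lshft{\Delta_i}$. Once the two equivalences $\llcond(\Delta,\m)\iff\Delta\not\prec\Delta_N$ and $\rrcond(\Delta,\m)\iff\Delta_1\not\prec\Delta$ are established, the irreducibility statement for $\zele{\Delta}\times\pi$ is immediate from Corollary~\ref{cor:irred2}, and the final sentence (irreducibility when $\Delta$ is not linked to $\supp\m$) follows since $\Delta$ unlinked to every segment in $\m$ certainly forces both $\Delta\not\prec\Delta_N$ and $\Delta_1\not\prec\Delta$.

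\textbf{The matching analysis.} First I would reduce (as in the proof of Proposition~\ref{prop: onesegment}, via Lemma~\ref{lem: redgem}) to the case $\Delta_i\ge_b\Delta$ for all $i$, and note that we may work inside a single line $\seg_\rho$. Since the $\Delta_i$ form a chain under $\lshft{\cdot}$, both $b(\Delta_i)$ and $e(\Delta_i)$ are strictly decreasing in $i$; consequently the sets $X_{\Delta;\m}=\{i:\Delta\prec\Delta_i\}$ and $Y_{\Delta;\m}=\{i:\lshft\Delta\prec\Delta_i\}$ are \emph{intervals of consecutive indices} (initial segments of $\{1,\dots,N\}$ after the reduction), and the relation $j\rltn i\iff\Delta_j\prec\Delta_i$ on the chain has an extremely rigid shape: because the segments are nested-by-shift, $\Delta_j\prec\Delta_i$ holds essentially iff $j$ is a bounded amount larger than $i$ in a way dictated by $l(\Delta)$ versus $l(\Delta_i)$. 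I would then compute the best matching $f$ of Lemma~\ref{lem: optf} explicitly: one checks that Hall's criterion \eqref{eq: HallCriterion} for the pair $(X_{\Delta;\m},Y_{\Delta;\m})$ reduces to a single inequality, and that this inequality is exactly the assertion that the smallest segment $\Delta_N$ (the one most at risk of being ``preceded'' by $\Delta$) is \emph{not} preceded by $\Delta$. More precisely, $\llcond(\Delta,\m)$ fails precisely when there is a deficiency at the bottom of the chain, which happens iff $\Delta\prec\Delta_N$; otherwise the greedy matching of \eqref{eq: def f} succeeds. Symmetrically, using $\tilde X_{\Delta;\m}=\{i:\Delta_i\prec\Delta\}$ and $\tilde Y_{\Delta;\m}=\{i:\lshft\Delta_i\prec\Delta\}$, the dual condition $\rrcond(\Delta,\m)$ fails exactly when $\Delta_1\prec\Delta$, i.e.\ $\rrcond(\Delta,\m)\iff\Delta_1\not\prec\Delta$; this can also be obtained from the $\llcond$ statement by passing to the contragredient, since $\m^\vee$ is again a Speh multisegment with the roles of $\Delta_1$ and $\Delta_N$ interchanged and $\Delta\mapsto\Delta^\vee$ reversing $\prec$.

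\textbf{Main obstacle.} The genuinely delicate point is the explicit determination of the best matching and the verification that its only possible failure is the bottom-of-chain deficiency $\Delta\prec\Delta_N$ (resp.\ the top deficiency $\Delta_1\prec\Delta$). This requires carefully bookkeeping, for the chain $\Delta_1\supset?\ldots$ under $\lshft{\cdot}$, exactly which pairs $(i,j)$ satisfy $\Delta_j\prec\Delta_i$ — one must distinguish according to $l(\Delta)\lessgtr l(\Delta_i)$ and keep track of how $X_{\Delta;\m}$ and $Y_{\Delta;\m}$ are nested intervals — and then run the greedy algorithm. I expect a short case distinction (on whether $\Delta$ is ``shorter'' or ``longer'' than the $\Delta_i$'s, and on relative positions of $b(\Delta),e(\Delta)$) to settle it; the \TU/chain structure makes each case a routine interval-counting argument, so no case is hard individually, but assembling them cleanly is where the work lies. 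Once that is done, the three displayed equivalences and the closing remark drop out formally from Propositions~\ref{prop: onesegment}, \ref{prop: onesegment2} and Corollary~\ref{cor:irred2}.
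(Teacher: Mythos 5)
Your plan is correct and takes essentially the same route as the paper: translate everything into the combinatorial conditions $\llcond(\Delta,\m)$ and $\rrcond(\Delta,\m)$ via Propositions \ref{prop: onesegment}--\ref{prop: onesegment2} and Corollary \ref{cor:irred2}, and then exploit the chain structure of the Speh multisegment. The case analysis you flag as the main obstacle can be bypassed entirely, as the paper does: since $i\in X_{\Delta;\m}$ implies $i+1\in Y_{\Delta;\m}$ (apply $\lshft{(\cdot)}$ to $\Delta\prec\Delta_i$) and $\Delta_{i+1}\prec\Delta_i$, the shift $f(i)=i+1$ is already a matching function whenever $\Delta\not\prec\Delta_N$, while $\Delta\prec\Delta_N$ is fatal because $\Delta_j\prec\Delta_N$ would force $j>N$; dually $f(i)=i-1$ settles $\rrcond(\Delta,\m)$.
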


\begin{proof}
It is clear that $\llcond(\Delta,\m)$ cannot hold if $\Delta\prec\Delta_N$.
On the other hand, if $\Delta\not\prec\Delta_N$ then we can take the matching function $f(i)=i+1$.
Similarly for $\rrcond(\Delta,\m)$ (with $f(i)=i-1$).
The last part follows from Corollary \ref{cor:irred2}.
\end{proof}

From Proposition \ref{prop:sum} we get:
\begin{prop}\label{prop:Speh is saturated}
Let $\pi=\zele{\Delta_1+\dots+\Delta_N}$ be a Speh representation with $\Delta_{i+1}=\lshft{\Delta_i}$, $i=1,\dots,N-1$
and let $\sigma=\zele{\n}\in\Irr$.
Assume that $\Delta_1\not\prec\Delta$ for any segment $\Delta$ in $\n$.
Then $\lcond(\pi,\sigma)$ holds. In particular, this is satisfied if $\supp\sigma\subset\Z_{\le e(\pi)}$
Similarly, if $\Delta\not\prec\Delta_N$ for any $\Delta$ in $\n$ (e.g., if $\supp\sigma\subset\Z_{\ge b(\pi)}$)
then $\rcond(\pi,\sigma)$ holds.
In particular, $\pi$ is saturated and hence \RI.
\end{prop}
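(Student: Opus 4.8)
The plan is to derive the statement directly from Lemma~\ref{lem: onesegspeh} together with Proposition~\ref{prop:sum}; the two ``in particular'' clauses reduce to an elementary computation with the extremities of a Speh multisegment, and saturatedness is then a formal consequence.

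Write $\m=\Delta_1+\dots+\Delta_N$ with $\Delta_{i+1}=\lshft{\Delta_i}$. First I would establish $\lcond(\pi,\sigma)$: by Lemma~\ref{lem: onesegspeh}, for \emph{any} segment $\Delta\in\seg$ the condition $\rrcond(\Delta,\m)$ is equivalent to $\Delta_1\not\prec\Delta$ (if $\Delta$ lies outside the line of $\m$ both sides hold trivially). Hence the hypothesis ``$\Delta_1\not\prec\Delta$ for every segment $\Delta$ in $\n$'' is exactly condition~\eqref{eq: alldelta'mr} of Proposition~\ref{prop:sum}, and that proposition yields $\lcond(\zele{\m},\zele{\n})=\lcond(\pi,\sigma)$. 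Symmetrically, Lemma~\ref{lem: onesegspeh} gives $\llcond(\Delta,\m)\iff\Delta\not\prec\Delta_N$, so the hypothesis ``$\Delta\not\prec\Delta_N$ for every segment $\Delta$ in $\n$'' is condition~\eqref{eq: alldelta'ml}, and Proposition~\ref{prop:sum} then gives $\rcond(\pi,\sigma)$.

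Next I would treat the ``in particular'' clauses, using that $\Delta_{i+1}=\lshft{\Delta_i}$ forces $e(\Delta_1)>\dots>e(\Delta_N)$ and $b(\Delta_1)>\dots>b(\Delta_N)$, so that $e(\pi)=e(\Delta_1)$, $b(\pi)=b(\Delta_N)$ and $\supp\pi=[b(\pi),e(\pi)]$ inside the common line of the $\Delta_i$'s. If $\supp\sigma\subset\Z_{\le e(\pi)}$ then every segment $\Delta$ of $\n$ has $e(\Delta)\le e(\pi)=e(\Delta_1)$; but $\Delta_1\prec\Delta$ would force $\Delta\not\subset\Delta_1$, hence $e(\Delta)>e(\Delta_1)$, a contradiction---so $\Delta_1\not\prec\Delta$ and the hypothesis for $\lcond$ is met. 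Dually, $\supp\sigma\subset\Z_{\ge b(\pi)}$ gives $b(\Delta)\ge b(\Delta_N)$ for every $\Delta$ in $\n$, which precludes $\Delta\prec\Delta_N$ (that would require $b(\Delta_N)>b(\Delta)$), so the hypothesis for $\rcond$ is met. Finally, if $\supp\sigma\subset\supp\pi$ then both $\supp\sigma\subset\Z_{\le e(\pi)}$ and $\supp\sigma\subset\Z_{\ge b(\pi)}$ hold, so $\lcond(\pi,\sigma)$ and $\rcond(\pi,\sigma)$ both hold; by Lemma~\ref{lem: lcondprop} part~\ref{part: irred} this means $\pi\times\sigma$ is irreducible, so $\pi$ is saturated (Definition~\ref{def:saturated}) and hence \RI\ by Corollary~\ref{cor:saturated}. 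The only step that needs any care is the elementary implication ``$\Delta_1\prec\Delta\Rightarrow e(\Delta)>e(\Delta_1)$'' (and its $b$-dual), which I would simply read off from the definition of $\prec$; I do not expect a genuine obstacle, since the substantive content is already packaged in Lemma~\ref{lem: onesegspeh} and Proposition~\ref{prop:sum}.
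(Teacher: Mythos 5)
Your proposal is correct and follows exactly the route the paper takes: the paper derives this proposition directly from Proposition~\ref{prop:sum} combined with the preceding Lemma~\ref{lem: onesegspeh}, with the ``in particular'' clauses and saturatedness following by the same elementary comparison of extremities. Nothing to add.
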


We can immediately generalize Proposition \ref{prop: delsig} to Speh representations.

\begin{coro}\label{coro:subspeh}
Let $\pi=\zele{\m}\in\Irr_\rho$ be a Speh representation and $\sigma=\lang{\n}\in\Irr_\rho$ for some $\rho\in\Cusp$.
Write $\n_1=\n_{\le_e e(\pi)}$ and $\n_2=\n_{\ge_b b(\pi)}$.
 Then
\begin{enumerate}
\item $\soc(\pi\times \sigma)=\lang{(\n_1^t+\m)^t+\n_{>_ee(\pi)}}$
 \item $\soc(\pi\times\sigma)=\lang{\n+\m^t}$ if and only if $(\n_1^t+\m)^t=\n_1+\m^t$.
 \item $\cos(\pi\times\sigma)=\lang{\n+\m^t}$ if and only if $(\n_2^t+\m)^t=\n_2+\m^t$.
 \item $\pi\times\sigma$ is irreducible if and only if $(\n_1^t+\m)^t=\n_1+\m^t$ and
 $(\n_2^t+\m)^t=\n_2+\m^t$.
 \item $\pi\times\sigma$ is irreducible if and only if $\pi\times\sigma=\sigma\times\pi$.
\end{enumerate}
\end{coro}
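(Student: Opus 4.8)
The plan is to transcribe the proof of Proposition~\ref{prop: delsig} almost verbatim, with the segment representation $\zele{\Delta}$ replaced throughout by the Speh representation $\pi=\zele{\m}$. That proof uses only two features of $\zele{\Delta}$: that it is \RI, and that $\lcond(\zele{\Delta},\sigma')$ holds whenever $\supp\sigma'\subset\Z_{\le e(\Delta)}$ (there provided by Lemma~\ref{lem: lcondprop}, part~\ref{part: trivlcond}). For $\pi=\zele{\m}$ both are supplied by Proposition~\ref{prop:Speh is saturated}: $\pi$ is saturated, hence \RI, and $\lcond(\pi,\sigma')$ holds for every $\sigma'$ with $\supp\sigma'\subset\Z_{\le e(\pi)}$. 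So the whole argument carries over; I only sketch the skeleton.

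For (1): since $\pi=\zele{\m}=\lang{\m^t}$ is rigid and \RI\ with $e(\pi)=e(\m)$, Lemma~\ref{lem: eff stages} gives
\[
\soc(\pi\times\sigma)=\lang{\combsoclang(\m^t,\n_1)+\n_{>_e e(\pi)}},\qquad \n_1=\n_{\le_e e(\pi)},
\]
where $\combsoclang(\m^t,\n_1)$ is the multisegment with $\lang{\combsoclang(\m^t,\n_1)}=\soc(\zele{\m}\times\zele{\n_1^t})$. As $\supp(\zele{\n_1^t})=\supp\n_1\subset\Z_{\le e(\pi)}$, Proposition~\ref{prop:Speh is saturated} yields $\soc(\zele{\m}\times\zele{\n_1^t})=\zele{\m+\n_1^t}=\lang{(\n_1^t+\m)^t}$, whence $\combsoclang(\m^t,\n_1)=(\n_1^t+\m)^t$ and (1) follows. (When $\supp\sigma\subset\Z_{\le e(\pi)}$ already, so that Lemma~\ref{lem: eff stages} does not apply, this is the same computation with $\n_1=\n$.) Then (2) is immediate: $\lang{\cdot}$ is a bijection and $\n=\n_1+\n_{>_e e(\pi)}$, so $\soc(\pi\times\sigma)=\lang{\n+\m^t}$ if and only if $(\n_1^t+\m)^t=\n_1+\m^t$. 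For (3) one passes to the contragredient, exactly as in Proposition~\ref{prop: delsig}: $\pi^\vee=\zele{\m^\vee}$ is again a Speh representation ($\m^\vee$, listed in the opposite order, is a staircase), $(\cdot)^t$ commutes with $(\cdot)^\vee$, and the contragredient interchanges $\soc$ with $\cos$, $e$ with $b$, hence the truncation $\n_1$ with $\n_2$; for $D\ne F$ the left--right asymmetry is absorbed by Lemma~\ref{lem: sym}, as elsewhere in the paper.

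For (4): if both identities hold, then by (2) and (3), $\soc(\pi\times\sigma)=\cos(\pi\times\sigma)=\lang{\n+\m^t}$. Since $[\pi\times\sigma]=[\lang{\m^t}\times\lang{\n}]$, the analogue for $\lang{\cdot}$ of Proposition~\ref{useful_properties}, part~\ref{part: lmm'} shows $\lang{\n+\m^t}$ occurs with multiplicity one in $[\pi\times\sigma]$; a representation whose socle and cosocle both equal the same irreducible representation, which moreover occurs with multiplicity one, is irreducible. Conversely, if $\pi\times\sigma$ is irreducible then the same multiplicity-one fact forces $\pi\times\sigma\simeq\lang{\n+\m^t}$, so $\soc(\pi\times\sigma)=\cos(\pi\times\sigma)=\lang{\n+\m^t}$, and (2), (3) yield both identities. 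Finally, for (5): $\pi\times\sigma=\zele{\m}\times\zele{\n^t}$ is \EI\ because $\pi$ is \RI, and the socle computed in (1) is given by a rule uniform in $D$, so Lemma~\ref{lem: sym} applies and shows $\pi\times\sigma$ is irreducible if and only if $\pi\times\sigma\simeq\sigma\times\pi$ --- precisely as Corollary~\ref{cor:irred1} is obtained in the one-segment case.

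The step I expect to require the most care is (3): checking that $\m^\vee$ is genuinely a Speh multisegment (so that Proposition~\ref{prop:Speh is saturated} can be reapplied to $\pi^\vee$), and that, after passing to the contragredient --- and using Lemma~\ref{lem: sym} when $D\ne F$ --- one lands exactly on the condition $(\n_2^t+\m)^t=\n_2+\m^t$ with $\n_2=\n_{\ge_b b(\pi)}$, rather than on a nearby variant. The remaining steps are faithful transcriptions of the one-segment arguments.
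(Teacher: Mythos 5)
Your proof is correct and follows the paper's own (very terse) argument step for step: part (1) from Lemma \ref{lem: eff stages} together with Proposition \ref{prop:Speh is saturated}, part (3) by passing to the contragredient, part (4) from the multiplicity-one occurrence of $\lang{\n+\m^t}$, and part (5) from Lemma \ref{lem: sym}. The only superfluous point is the appeal to Lemma \ref{lem: sym} in part (3): the contragredient alone (which sends $\zele{\m}$ to the Speh representation $\zele{\m^\vee}$ and exchanges $\le_e$-truncation with $\ge_b$-truncation) already yields exactly the stated condition on $\n_2$, for any $D$.
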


Indeed, the first part follows from Lemma \ref{lem: eff stages} and Proposition \ref{prop:Speh is saturated}.
The second part follows immediately. The third part follows by passing to the contragredient.
The fourth part is an immediate consequence, while the last part now follows from Lemma \ref{lem: sym}.

Similarly, we can generalize Corollary \ref{cor: inxpi'}.
\begin{coro}
Let $\pi=\zele{\m}\in\Irr_\rho$ be a Speh representation and let $\pi'=\lang{\n}\in\Irr_\rho$.
Then there exists $\sigma\in\Irr$ such that $\pi'=\soc(\pi\times\sigma)$ (i.e., $\pi'\hookrightarrow\pi\times\sigma$)
if and only if there exists $\m'\in\MS$ such that $(\n_{\le_e e(\pi)})^t=\m+\m'$. In this case $\sigma=\lang{(\m')^t+\n_{>_e e(\pi)}}$.
In particular, $\sigma$ is uniquely determined by $\pi$ and $\pi'$.
\end{coro}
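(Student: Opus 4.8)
The plan is to mirror the structure of Corollary~\ref{cor: inxpi'}, replacing the single segment $\Delta$ there by the Speh multisegment $\m$, using Corollary~\ref{coro:subspeh} in place of Proposition~\ref{prop: delsig}. First I would argue that one may assume $\sigma\in\Irr_\rho$: indeed, the lines of $\rho$ and of a putative supercuspidal $\rho'$ in $\supp\sigma$ not on $\Z_\rho$ contribute irreducible (commuting) factors by Proposition~\ref{useful_properties}(\ref{part: pwnp}), so $\pi\times\sigma$ has a unique irreducible subrepresentation exactly when the $\Z_\rho$-part does; this reduces us to $\sigma=\lang{\n}$ with $\n\in\MS_\rho$. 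Then I would note, exactly as in the corollary we are imitating, that by Lemma~\ref{lem: eff stages} (or Corollary~\ref{coro:subspeh}(1)) the segments of $\combsoclang(\m,\n)$ lying strictly above $e(\pi)$ (in $\ge_e$) are precisely those of $\n$ above $e(\pi)$; so the part $\n_{>_e e(\pi)}$ of $\sigma$ is forced and cannot be altered by composing with $\pi$. Hence it suffices to analyze $\n_1 := \n_{\le_e e(\pi)}$, i.e.\ to determine for which $\n_1$ we have $\pi'_{\le\rho} = \soc(\pi\times\lang{\n_1'})$ for some $\n_1'$ supported in $\Z_{\le e(\pi)}$, where $\rho=e(\pi)$; and this is where Corollary~\ref{coro:subspeh}(1) enters.

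Concretely, by Corollary~\ref{coro:subspeh}(1), $\soc(\pi\times\lang{\n_1'}) = \lang{((\n_1')^t+\m)^t}$ once $\supp\n_1'\subset\Z_{\le e(\pi)}$ (so that the ``$>_e e(\pi)$'' tail is empty). Writing $\p := (\n_1')^t$, this says $\soc(\pi\times\lang{\n_1'}) = \zele{\p+\m}$ (passing to Zelevinsky parameters via $\lang{\k} = \zele{\k^t}$ and the fact $^t$ is an involution); equivalently $\combsoczele$ of the Speh multisegment of $\pi$ with $\p$ equals $\p+\m$. Thus the set of $\pi'=\lang{\n}$ reachable as $\soc(\pi\times\sigma)$ is exactly the set of $\lang{\n}$ with $(\n_{\le_e e(\pi)})^t = \p+\m$ for some $\p\in\MS$ supported appropriately — which is precisely the asserted condition ``there exists $\m'\in\MS$ with $(\n_{\le_e e(\pi)})^t = \m+\m'$'', taking $\m'=\p$. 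Given such an $\m'$, the representation $\sigma$ must satisfy $\sigma_{\not\le\rho}=\lang{\n_{>_e e(\pi)}}$ and $(\sigma_{\le\rho})$ has Langlands parameter $(\m')^t$, i.e.\ $\sigma = \lang{(\m')^t + \n_{>_e e(\pi)}}$; conversely this $\sigma$ does the job by Corollary~\ref{coro:subspeh}(1) together with Lemma~\ref{lem: eff stages} and Proposition~\ref{prop:Speh is saturated} (which guarantees $\pi$ is \RI, so that $\soc(\pi\times\sigma)$ is genuinely irreducible and the two-stage computation is legitimate).

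For uniqueness of $\sigma$: since $\pi$ is \RI\ (Proposition~\ref{prop:Speh is saturated}), $\pi'\hookrightarrow\pi\times\sigma$ determines $[\pi\times\sigma]$ up to the data of $\sigma$, but more directly, the tail $\n_{>_e e(\pi)}$ is forced as above, and the part $(\m')^t$ is forced because $(\n_{\le_e e(\pi)})^t=\m+\m'$ determines $\m'$ as a multiset difference (multisegments form a free commutative monoid $\NN(\seg)$, so cancellation holds); hence $\sigma$ is unique. I expect the main obstacle to be the careful bookkeeping of which segments land in $\Z_{\le e(\pi)}$ versus strictly above: one must check that in forming $(\n_1^t+\m)^t$ via the M\oe glin--Waldspurger algorithm the Speh block $\m$ only interacts with the part of $\n_1^t$ it is ``allowed'' to, matching the modified-algorithm description given after Proposition~\ref{prop: delsig}, and that the decomposition $\n = \n_{\le_e e(\pi)} + \n_{>_e e(\pi)}$ is compatible with the $(\ )_{\le\rho}$/$(\ )_{\not\le\rho}$ splitting of $\pi'$ — but both of these are exactly the content of Lemma~\ref{lem: eff stages} and the combinatorial description of $\pi'_{\le\rho},\pi'_{\not\le\rho}$ recalled in \S\ref{par: cuspidal_lines}, so no genuinely new input is needed beyond Corollary~\ref{coro:subspeh}.
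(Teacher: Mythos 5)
Your proof is correct and follows exactly the route the paper intends: the corollary is presented as an immediate consequence of Corollary \ref{coro:subspeh}(1) (just as Corollary \ref{cor: inxpi'} follows from Proposition \ref{prop: delsig}), namely one reads off $\n_{>_e e(\pi)}$ as the forced tail and transposes the $\le_e e(\pi)$ part to reduce the condition to $(\n_{\le_e e(\pi)})^t\ge\m$, with uniqueness by cancellation in $\NN(\seg)$. The only blemishes are notational (e.g.\ using $\n$ both for the parameter of $\sigma$ and of $\pi'$), which do not affect the substance.
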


Let us now analyze irreducibility of parabolic induction of Speh representations.
In the case $D=F$, a sufficient condition for the irreducibility of $\pi_1\times\dots\times\pi_k$ where $\pi_i$
are Speh representations was proved in \cite{MR1026752}*{\S I.9} and it is one of the main local results of [ibid.].
This was extended in \cite{MR3224483} to any $D$, where this condition is also shown to be necessary.
We can easily recover these irreducibility results.
First we reduce to the case $k=2$:

\begin{coro} \label{coro: irredspeh}
Let $\pi_1,\dots,\pi_k\in\Irr$. Suppose that all but at most one of the $\pi_i$'s are Speh representations.
Then $\pi_1\times\dots\times\pi_k$ is irreducible if and only if $\pi_i\times\pi_j$ is irreducible for all $i<j$.
\end{coro}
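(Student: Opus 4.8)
The plan is to reduce the $k$-fold statement to the two-fold case by a telescoping argument, using the fact that Speh representations are \RI\ (Proposition \ref{prop:Speh is saturated}) together with Lemma \ref{lem:main} to control socles of iterated parabolic inductions. One direction is immediate: if $\pi_1\times\dots\times\pi_k$ is irreducible, then since each $\pi_i\times\pi_j$ (with $i<j$) is a subquotient-producing factor — more precisely, $\pi_i\times\pi_j$ embeds up to permutation into a representation whose semisimplification agrees with that of $\pi_1\times\dots\times\pi_k$ after inducing with the remaining factors — irreducibility of the whole forces irreducibility of each pair. (Concretely: if $\pi_i\times\pi_j$ were reducible, say of length $\ge 2$, then $\pi_1\times\dots\times\pi_k$, being a permutation of $\pi_i\times\pi_j\times(\text{rest})$ in the Grothendieck group, would have length $\ge 2$, contradicting irreducibility. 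This uses commutativity of $\Rr$.)

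For the converse, suppose $\pi_i\times\pi_j$ is irreducible for all $i<j$. First I would dispose of the case where one $\pi_i$ (say $\pi_k$, after reordering — permissible since the hypothesis is symmetric and $[\pi_1\times\dots\times\pi_k]$ is permutation-invariant) is the possibly-non-Speh factor, and $\pi_1,\dots,\pi_{k-1}$ are all Speh. Since $\pi_i\times\pi_j$ is irreducible for $i<j\le k-1$, in particular $\pi_i\times\pi_j\simeq\pi_j\times\pi_i$, and since these are Speh, hence \RI, we can apply Lemma \ref{lem:main} inductively: one first shows $\pi_1\times\dots\times\pi_{k-1}$ is \EI\ and computes its socle, then shows it is in fact irreducible. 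The key point for irreducibility of a product of mutually commuting \RI\ representations is that if $\tau\times\tau'$ is \EI\ with irreducible socle $\zele{\m+\n}$ and also $\tau\times\tau'\simeq\tau'\times\tau$ is \EI\ with the same socle, then by Lemma \ref{lem: lcondprop} part \ref{part: irred} (i.e., $\lcond$ and $\rcond$ both hold) it is irreducible; here $\rcond$ comes from the isomorphism $\tau\times\tau'\simeq\tau'\times\tau$ via Remark \ref{rem: LMnew2} or directly. Thus $\pi_1\times\dots\times\pi_{k-1}$ is irreducible, equal to some $\zele{\m}$ with $\m$ the sum of the multisegments of the $\pi_i$.

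Now I would handle the final factor. We have reduced to showing: $\rho:=\pi_1\times\dots\times\pi_{k-1}$ is irreducible (done), each of $\pi_1,\dots,\pi_{k-1}$ is Speh with $\pi_i\times\pi_k$ irreducible, and we want $\rho\times\pi_k$ irreducible. Write $\rho=\zele{\m}$ where $\m=\m_1+\dots+\m_{k-1}$ with $\m_i$ the Speh multisegment of $\pi_i$, and $\pi_k=\zele{\n}$. The plan is to verify the hypothesis of Proposition \ref{prop:sum}: I claim that for every segment $\Delta$ in $\m$ we have $\llcond(\Delta,\n)$ and $\rrcond(\Delta,\n)$. Indeed $\Delta$ lies in some $\m_i$, and irreducibility of $\pi_i\times\pi_k=\zele{\m_i}\times\zele{\n}$ gives, by Corollary \ref{cor:irred2} applied segment-by-segment — or rather, by the relation between irreducibility of $\zele{\m_i}\times\zele{\n}$ and matching conditions — that $\llcond(\Delta,\n)$ and $\rrcond(\Delta,\n)$ hold for each $\Delta\in\m_i$. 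Wait: Corollary \ref{cor:irred2} is about $\zele{\Delta}\times\zele{\n}$, not $\zele{\m_i}\times\zele{\n}$; here I need instead the special structure from Lemma \ref{lem: onesegspeh}, which translates irreducibility of $\zele{\Delta'}\times(\text{Speh})$ into the precedence conditions $\Delta'\not\prec\Delta_N$ and $\Delta_1\not\prec\Delta'$. So the cleaner route is: use that $\pi_i\times\pi_k$ irreducible and $\pi_i$ Speh to control, via Lemma \ref{lem: onesegspeh} (applied with the roles arranged so the Speh factor is $\pi_i$ and $\Delta'$ ranges over segments of $\n$), the matching conditions $\llcond(\Delta',\m_i)$ and $\rrcond(\Delta',\m_i)$ for every $\Delta'\in\n$; then, since $\m=\sum\m_i$, a segment-counting argument (the matching functions for $\m_i$ can be glued to one for $\m$, because the $X$'s and $Y$'s for distinct $\m_i$ are disjoint and the relevant precedence relations respect this decomposition) gives $\llcond(\Delta',\m)$ and $\rrcond(\Delta',\m)$ for every $\Delta'\in\n$. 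Then Proposition \ref{prop:sum}, with the roles of $\m$ and $\n$ appropriately assigned (condition \eqref{eq: alldelta'mr} and \eqref{eq: alldelta'ml}), yields that $\zele{\m}\times\zele{\n}=\rho\times\pi_k$ is irreducible.

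The main obstacle I anticipate is the gluing step: showing that the matching conditions $\llcond(\Delta',\m_i)$ holding for each $i$ separately implies $\llcond(\Delta',\m)$ for $\m=\sum_i\m_i$. This requires checking that $X_{\Delta';\m}=\bigsqcup_i X_{\Delta';\m_i}$ and $Y_{\Delta';\m}=\bigsqcup_i Y_{\Delta';\m_i}$ as sets (clear from the definitions, since these index sets only depend on which segments of $\m$ precede or are preceded by $\Delta'$, $\lshft{\Delta'}$), and that the relation $\rltn_\m$ restricted to $Y_{\Delta';\m_i}\times X_{\Delta';\m_j}$ for $i\ne j$ can only help — but actually one needs that a matching function exists on the disjoint union, which follows by simply taking the disjoint union of the individual matching functions $f_i:X_{\Delta';\m_i}\to Y_{\Delta';\m_i}$; this is automatically injective and respects $\rltn$. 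So in fact this step is routine once set up correctly, and the genuine content is entirely in the two-fold irreducibility hypotheses plus the already-established machinery (Proposition \ref{prop:sum}, Lemma \ref{lem:main}, Lemma \ref{lem: onesegspeh}). If one prefers to avoid the matching-function bookkeeping altogether, an alternative is a direct induction on $k$ using Lemma \ref{lem:main} and the isomorphisms $\pi_i\times\pi_j\simeq\pi_j\times\pi_i$ to reorder factors freely, reducing at each stage by peeling off one Speh factor and invoking that a \RI\ representation times something which commutes with it, both \EI, is irreducible — but one still needs to know $\soc$ is computed compatibly, which again comes down to Proposition \ref{prop:sum}-type input. I would present the matching-function version as the main line, remarking that the reordering is legitimate by commutativity of $\Rr$ together with \EI-ness.
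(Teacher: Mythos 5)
Your main line has a genuine gap at the bridging step. You propose to verify the hypotheses of Proposition \ref{prop:sum} for $\zele{\m}\times\zele{\n}$ with $\m=\m_1+\dots+\m_{k-1}$ by claiming that irreducibility of $\zele{\m_i}\times\zele{\n}$ (with $\m_i$ Speh) forces $\llcond(\Delta',\m_i)$ and $\rrcond(\Delta',\m_i)$ for \emph{every} segment $\Delta'$ of $\n$, i.e.\ that $\zele{\Delta'}\times\zele{\m_i}$ is irreducible for every $\Delta'\in\n$. That is the converse of Corollary \ref{coro:sum}, and it is false in general; it holds only when $\n$ is \TU\ (Corollary \ref{coro: gentimesany}). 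Concretely, take $\m_i=[0,0]_\rho$ (a Speh representation) and $\n=[1,2]_\rho+[0,1]_\rho$. By Corollary \ref{cor:irred2} (or Theorem \ref{thm: cusptimes}) the product $\rho\times\zele{\n}$ is irreducible: the matching sending $[1,2]_\rho$ to $[0,1]_\rho$ gives $\llcond([0,0]_\rho,\n)$, and $\tilde{X}_{[0,0]_\rho;\n}=\emptyset$ gives $\rrcond([0,0]_\rho,\n)$. Yet $[0,0]_\rho\prec[1,2]_\rho$, so $\rrcond([1,2]_\rho,\m_i)$ fails and $\rho\times\zele{[1,2]_\rho}$ is reducible. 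Thus the hypotheses of Proposition \ref{prop:sum} simply need not hold under the assumptions of the corollary; your gluing of matching functions is fine in itself, but it has nothing to feed on, because Proposition \ref{prop:sum} is only a sufficient criterion and the pairwise irreducibility does not supply its input.

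The route you relegate to a closing remark is essentially the paper's proof, and it closes once you cite the right statement. By induction $\Pi=\pi_1\times\dots\times\pi_{k-1}$ is irreducible (after reordering so that $\pi_k$ is Speh); the pairwise isomorphisms $\pi_k\times\pi_i\simeq\pi_i\times\pi_k$ (each a consequence of irreducibility together with equality in $\Rr$) compose to give $\pi_k\times\Pi\simeq\Pi\times\pi_k$; and the last part of Corollary \ref{coro:subspeh} --- for $\pi$ Speh and $\sigma$ irreducible, $\pi\times\sigma$ is irreducible if and only if $\pi\times\sigma\simeq\sigma\times\pi$ --- concludes. Be aware that this last part is \emph{not} the ``direct'' consequence of commutativity that you suggest: for $D\ne F$ one cannot invoke Remark \ref{rem: LMnew2}, and deducing $\rcond$ from $\lcond$ together with $\pi\times\sigma\simeq\sigma\times\pi$ genuinely requires the $\star$-symmetry of Lemma \ref{lem: sym}, which in turn needs the socle of $\pi\times\sigma$ to be computed uniformly in $(D,\rho)$ --- exactly what Proposition \ref{prop:Speh is saturated} and the first parts of Corollary \ref{coro:subspeh} provide for Speh $\pi$.
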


Note that in the case where all but at most one of the $\pi_i$'s are segment representations this is precisely Corollary \ref{coro: gentimesany}.

\begin{proof}
The only non-trivial direction is the `if' part. We will prove the assertion by induction on $k$.
The case $k=1$ is trivial. For the induction step we may assume without loss of generality that $\pi_k$ is a Speh representation.
By induction hypothesis $\Pi:=\pi_1\times\dots\times\pi_{k-1}$ is irreducible.
Also, since $\pi_k\times\pi_i$ is irreducible for all $i<k$, we have $\pi_k\times\pi_i\simeq\pi_i\times\pi_k$
and therefore $\pi_k\times\Pi\simeq\Pi\times\pi_k$.
By the last part of Corollary \ref{coro:subspeh} we conclude that $\Pi\times\pi_k$ is irreducible as required.
\end{proof}

Finally, it remains to consider the irreducibility of the product of two Speh representations.
More precisely, we explicate the conditions $\lcond(\pi,\sigma)$ and $\rcond(\pi,\sigma)$.
\begin{coro} \label{coro: irredtwospeh}
Let $\pi=\zele{\Delta_1+\dots+\Delta_N}$ and $\sigma=\zele{\Delta'_1+\dots+\Delta'_{N'}}$ be two Speh representations
supported on the same cuspidal line, with $\Delta_{i+1}=\lshft{\Delta_i}$, $i=1,\dots,N-1$ and
$\Delta'_{i+1}=\lshft{\Delta'_i}$, $i=1,\dots,N'-1$.
Then $\lnot\lcond(\pi,\sigma)$ (the negation of $\lcond(\pi,\sigma)$) is equivalent to the condition
\begin{multline} \label{eq: TadicSpeh}
\supp\pi\prec\supp\sigma\ \&\ b(\Delta_1)<b(\Delta_1')\ \&\ e(\Delta_1)<e(\Delta_1')\\
\ \&\ b(\Delta_N)<b(\Delta_{N'})\ \&\ e(\Delta_N)<e(\Delta_{N'})
\end{multline}
while $\lnot\rcond(\pi,\sigma)$ is equivalent to the symmetric condition (exchanging $\pi$ and $\sigma$).
Thus, $\pi\times\sigma$ is reducible if and only if either \eqref{eq: TadicSpeh} or the symmetric condition is satisfied.
In particular, $\pi\times\sigma$ is irreducible if $\supp\pi$ and $\supp\sigma$ are not linked.
\end{coro}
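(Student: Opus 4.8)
The plan is to treat the two halves of irreducibility separately and to obtain the ``symmetric'' statement by a contragredient argument. By Lemma~\ref{lem: lcondprop} part~\ref{part: irred}, $\pi\times\sigma$ is irreducible exactly when both $\lcond(\pi,\sigma)$ and $\rcond(\pi,\sigma)$ hold, so it suffices to characterize the failure of $\lcond(\pi,\sigma)$. Since $\rcond(\pi,\sigma)$ is equivalent to $\lcond(\pi^\vee,\sigma^\vee)$ (Lemma~\ref{lem: lcondprop}) and $\pi^\vee,\sigma^\vee$ are again Speh representations, on the line of $\rho^\vee$, with their staircases running in the opposite direction, the characterization of $\lnot\rcond(\pi,\sigma)$ will follow from that of $\lnot\lcond$ by interchanging the roles of $\pi$ and $\sigma$, giving the ``symmetric'' condition. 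Write $\m=\Delta_1+\dots+\Delta_N$, $\n=\Delta'_1+\dots+\Delta'_{N'}$, $\Delta_i=[c-i+1,d-i+1]_\rho$ and $\Delta'_j=[c'-j+1,d'-j+1]_\rho$, so that $\supp\pi=[b(\Delta_N),e(\Delta_1)]_\rho$ and $\supp\sigma=[b(\Delta'_{N'}),e(\Delta'_1)]_\rho$.

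First I would show that $\lnot\lcond(\pi,\sigma)$ implies \eqref{eq: TadicSpeh}. By Proposition~\ref{prop:sum}, $\lcond(\pi,\sigma)$ holds if $\llcond(\Delta_i,\n)$ for every $i$, and also if $\rrcond(\Delta'_j,\m)$ for every $j$. For the Speh multisegments $\n$ and $\m$ these conditions are identified by Lemma~\ref{lem: onesegspeh}: $\llcond(\Delta_i,\n)\iff\Delta_i\not\prec\Delta'_{N'}$ and $\rrcond(\Delta'_j,\m)\iff\Delta_1\not\prec\Delta'_j$. Hence $\lnot\lcond(\pi,\sigma)$ forces the existence of some $i$ with $\Delta_i\prec\Delta'_{N'}$ and of some $j$ with $\Delta_1\prec\Delta'_j$. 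In the coordinates above the first assertion says precisely $b(\Delta_N)<b(\Delta'_{N'})$, $e(\Delta_N)<e(\Delta'_{N'})$ and $b(\Delta'_{N'})\le e(\Delta_1)+1$, and the second says $b(\Delta_1)<b(\Delta'_1)$, $e(\Delta_1)<e(\Delta'_1)$ and $b(\Delta'_{N'})\le e(\Delta_1)+1$; and once the four ``corner'' inequalities are in force, the auxiliary inequality $b(\Delta'_{N'})\le e(\Delta_1)+1$ is exactly the statement that $\supp\pi$ and $\supp\sigma$ are linked, i.e.\ $\supp\pi\prec\supp\sigma$. So the conjunction of the two existence statements is precisely \eqref{eq: TadicSpeh}.

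For the reverse implication, \eqref{eq: TadicSpeh}$\implies\lnot\lcond(\pi,\sigma)$, I would argue with the M\oe glin--Waldspurger algorithm. Every contiguous block $\Delta_p+\dots+\Delta_q$ of $\m$ is itself a Speh multisegment, hence \RI\ by Proposition~\ref{prop:Speh is saturated}; therefore $\m_{\le_e\Delta}$ and $\m_{>_e\Delta}$ are \RI\ for every $\Delta\in\seg_\rho$, so Lemma~\ref{lem: equcond} applies to $(\m,\n)$, and --- its hypothesis involving only $\m$ --- it may be iterated. Put $\n^{(0)}=\n$ and $\n^{(s+1)}=(\n^{(s)})^-$; each $\n^{(s)}$ is again a Speh staircase (the same $N'$ rows as $\n$, each shortened on the right by $s$), so the iteration terminates, and it yields that $\lcond(\pi,\sigma)$ is equivalent to the finite conjunction over $s\ge0$ of the equalities $(\m+\n^{(s)})^-=\m+\n^{(s+1)}$ --- that is, that the first step of the algorithm applied to $\m+\n^{(s)}$ modifies only the segments of $\n^{(s)}$. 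A direct analysis of that single step for the explicit multisegment ``$\m$ plus the staircase $\n^{(s)}$'' shows that the $s$-th equality fails exactly when the descending chain the algorithm builds --- starting at the top segment of $\n^{(s)}$ and running down its staircase --- is forced to continue into a segment of $\m$, and carrying this out shows that the conjunction (that is, $\lcond(\pi,\sigma)$) holds if and only if \eqref{eq: TadicSpeh} fails, in accordance with the previous paragraph. (Alternatively, one can just compute $\soc(\pi\times\sigma)$ from Corollary~\ref{coro:subspeh}, using that the Langlands parameter of a Speh representation is the ``transposed'' staircase, and check that the output differs from $\m+\n$ precisely under \eqref{eq: TadicSpeh}.) I expect this stage-by-stage bookkeeping --- tracking which cuspidal supports the algorithm peels off from a staircase-plus-staircase multisegment --- to be the main obstacle; the rest is formal.

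Putting the two implications together, $\lnot\lcond(\pi,\sigma)\iff$ \eqref{eq: TadicSpeh}, and by the symmetry remark $\lnot\rcond(\pi,\sigma)$ is equivalent to the condition obtained from \eqref{eq: TadicSpeh} by exchanging $\pi$ and $\sigma$. Since $\pi\times\sigma$ is reducible iff $\lnot\lcond(\pi,\sigma)$ or $\lnot\rcond(\pi,\sigma)$, this is the asserted reducibility criterion. Finally, if $\supp\pi$ and $\supp\sigma$ are not linked then neither $\supp\pi\prec\supp\sigma$ nor $\supp\sigma\prec\supp\pi$, so neither condition holds and $\pi\times\sigma$ is irreducible.
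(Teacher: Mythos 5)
Your reductions are sound and half of the argument coincides with the paper's: the equivalence of irreducibility with $\lcond(\pi,\sigma)\ \&\ \rcond(\pi,\sigma)$, the contragredient trick for passing from $\lcond$ to $\rcond$, and the implication $\lnot\lcond(\pi,\sigma)\implies$ \eqref{eq: TadicSpeh} via Proposition \ref{prop:sum} and Lemma \ref{lem: onesegspeh} (including the translation of the two existence statements into the five inequalities of \eqref{eq: TadicSpeh}) are exactly what the paper does.

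The converse implication, \eqref{eq: TadicSpeh} $\implies\lnot\lcond(\pi,\sigma)$, is where you have a genuine gap: the ``stage-by-stage bookkeeping'' of the M\oe glin--Waldspurger algorithm applied to $\m+\n^{(s)}$ is asserted (``carrying this out shows\dots'') but never carried out, and you yourself flag it as the main obstacle. As written, the claim that the iterated conditions $(\m+\n^{(s)})^-=\m+\n^{(s+1)}$ fail exactly under \eqref{eq: TadicSpeh} is unproved; moreover the iteration of Lemma \ref{lem: equcond} needs care once $e(\n^{(s)})$ drops to $e(\m)$ or below, since at that stage the first MW step on $\m+\n^{(s)}$ no longer starts inside $\n^{(s)}$. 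The paper closes this direction with a shorter route that avoids almost all of the bookkeeping: under \eqref{eq: TadicSpeh} the symmetric condition manifestly fails (the two are mutually exclusive, e.g.\ they impose opposite inequalities on $b(\Delta_1)$ and $b(\Delta'_1)$), so by the already-established half $\rcond(\pi,\sigma)$ holds; if $\lcond(\pi,\sigma)$ also held then $\pi\times\sigma$ would be irreducible, and Corollary \ref{coro:subspeh} would force the single combinatorial identity $\m^t+\n^t=(\m+\n)^t$, which is known to fail under \eqref{eq: TadicSpeh} (this one computation is delegated to \cite{MR3224483}*{\S5}, or alternatively to Proposition \ref{prop: laddercond} together with Lemma \ref{lem: twoladdercomb}). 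If you want to keep your direct approach, you must actually execute the MW analysis; otherwise, adopt the contradiction argument and supply (or cite) the verification that $\m^t+\n^t\ne(\m+\n)^t$ for two linked Speh staircases satisfying \eqref{eq: TadicSpeh}.
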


\begin{proof}
In view of Lemma \ref{lem: onesegspeh}, the condition \eqref{eq: alldeltam'l} is not satisfied if and only if $\Delta_i\prec\Delta'_{N'}$
for some $i$, or equivalently if $b(\Delta_N)<b(\Delta'_{N'})$ and $e(\Delta_N)<e(\Delta'_{N'})$ and $b(\Delta'_{N'})\le e(\rshft{\Delta_1})$.
Similarly, \eqref{eq: alldelta'mr} is not satisfied if and only if $b(\Delta_1)<b(\Delta'_1)$
and $e(\Delta_1)<e(\Delta'_1)$ and $b(\Delta'_{N'})\le e(\rshft{\Delta_1})$.
From Proposition \ref{prop:sum} it follows that if \eqref{eq: TadicSpeh} is not satisfied then $\lcond(\pi,\sigma)$ holds.
Similarly, if the symmetric condition to \eqref{eq: TadicSpeh} fails then $\rcond(\pi,\sigma)$ holds.

Suppose now that \eqref{eq: TadicSpeh} is satisfied. Then obviously the symmetric condition is not satisfied and therefore
$\rcond(\pi,\sigma)$ holds. Thus, if $\lcond(\pi,\sigma)$ also held then $\pi\times\sigma$ would be irreducible
and by Corollary \ref{coro:subspeh} we would get (writing $\pi=\zele{\m}$, $\sigma=\zele{\n}$) $\m^t+\n^t=(\m+\n)^t$.
However, it is easy to check that this combinatorial condition is not fulfilled in this case -- see \cite{MR3224483}*{\S5}.
(Alternatively we can use Propositions \ref{prop: laddercond} and Lemma \ref{lem: twoladdercomb} below.)
Similarly, if the symmetric condition to \eqref{eq: TadicSpeh} is satisfied then $\rcond(\pi,\sigma)$ fails.
\end{proof}

In a special case we can go further:
\begin{coro}\label{coro:neededU0}
Suppose that $\pi=\pi_1\times\dots\times\pi_k$ where $\pi_i$ are Speh representations and $\supp\pi_i$ are mutually unlinked.
Then $\pi$ is \RI.
\end{coro}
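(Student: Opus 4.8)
The plan is to argue by induction on $k$ and to reduce, via Lemma \ref{lem: suffuniquesub}, to showing that $\pi:=\pi_1\times\dots\times\pi_k$ times an arbitrary $\sigma\in\Irr$ with $\supp\sigma\subseteq\supp\pi$ is \EI. First I would record two facts that hold because the segments $\supp\pi_i$ are pairwise unlinked: for $i\ne j$ the product $\pi_i\times\pi_j$ is irreducible -- by Corollary \ref{coro: irredtwospeh} if $\supp\pi_i$ and $\supp\pi_j$ lie on a common line, and because they lie on distinct lines (\S\ref{par: cuspidal_lines}) otherwise -- so the $\pi_i$ commute pairwise; and, by Corollary \ref{coro: irredspeh}, $\pi$ is irreducible. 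The case $k=0$ (the trivial representation) is vacuous.

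For the inductive step I would fix $\sigma=\zele{\n}$ as above and choose $\Delta_0:=\supp\pi_{i_0}$ maximal with respect to inclusion among the $\supp\pi_i$, reindexing so that $i_0=k$. Using that the $\supp\pi_i$ are pairwise unlinked segments and $\Delta_0$ is maximal, one checks that for $i<k$ either $\supp\pi_i\subseteq\Delta_0$ or $\supp\pi_i$ is disjoint from and unlinked with $\Delta_0$; from this it follows that $\Delta_0$ is a connected component of $\supp\pi$, hence that every segment contained in $\supp\pi$ either lies in $\Delta_0$ or is disjoint from and unlinked with $\Delta_0$. I would then set $A=\{i<k:\supp\pi_i\subseteq\Delta_0\}$, $B=\{i<k:\supp\pi_i\cap\Delta_0=\emptyset\}$, $\pi'=\prod_{i\in A}\pi_i$, $\pi''=\prod_{i\in B}\pi_i$ (empty products being the trivial representation), and split $\n=\n'+\n''$ according to whether a segment of $\n$ lies in $\Delta_0$ or is unlinked with $\Delta_0$, so that by Proposition \ref{useful_properties} part \ref{part: pwnp} we get $\sigma\simeq\sigma'\times\sigma''$ with $\sigma'=\zele{\n'}$, $\sigma''=\zele{\n''}$, $\supp\sigma'\subseteq\Delta_0=\supp\pi_k$ and $\supp\sigma''\cap\Delta_0=\emptyset$. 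Note that $\pi'$ and $\pi''$ are products of fewer than $k$ Speh representations with pairwise unlinked supports, hence \RI\ by the induction hypothesis.

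The heart of the argument is then a rearrangement. Since all $\pi_i$ commute, and since any factor supported in $\Delta_0$ commutes with any factor whose support is disjoint from and unlinked with $\Delta_0$ (Proposition \ref{useful_properties} part \ref{part: pwnp}), one rewrites
\[
\pi\times\sigma\simeq(\pi'\times\xi)\times(\pi''\times\sigma''),\qquad \xi:=\pi_k\times\sigma'.
\]
Here $\xi\in\Irr$ because $\pi_k$, being a Speh representation, is saturated (Proposition \ref{prop:Speh is saturated}) and $\supp\sigma'\subseteq\supp\pi_k$. Since $\pi'$ is \RI\ and $\xi\in\Irr$, $\pi'\times\xi$ is \EI, and similarly $\pi''\times\sigma''$ is \EI; moreover $\supp(\pi'\times\xi)=\Delta_0$ is disjoint from $\supp(\pi''\times\sigma'')$, so $\pi''\times\sigma''$ is left $\supp(\pi'\times\xi)$-reduced, and Lemma \ref{lem:main} then gives that $(\pi'\times\xi)\times(\pi''\times\sigma'')$, and therefore $\pi\times\sigma$, is \EI. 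By Lemma \ref{lem: suffuniquesub}, $\pi$ is \RI, completing the induction.

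The only real work I anticipate is the combinatorial bookkeeping around the rearrangement: verifying that the inclusion-maximal support $\Delta_0$ is a connected component of $\supp\pi$ and that all the commutations used to reach $(\pi'\times\xi)\times(\pi''\times\sigma'')$ are legitimate. The conceptual point is that this component structure lets one absorb the ``dominant'' Speh factor $\pi_k$ into $\sigma'$ while keeping it irreducible (by saturation), thereby lowering the number of Speh factors and feeding the induction; the remaining steps are direct applications of the commutativity of unlinked factors, Proposition \ref{prop:Speh is saturated}, and Lemmas \ref{lem:main} and \ref{lem: suffuniquesub}.
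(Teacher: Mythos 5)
Your proof is correct, and its skeleton — induction on $k$, singling out a Speh factor whose support is inclusion-maximal and hence a connected component of $\supp\pi$, and concluding with Lemma \ref{lem: suffuniquesub} — is the same as the paper's. The middle of the argument, however, is more elaborate than it needs to be. The paper's observation is that since $\supp\pi_k$ is a connected component of $\supp\pi$, \emph{every} segment $\Delta$ of $\n$ — whether contained in $\supp\pi_k$ or lying in another component — is unlinked with $\supp\pi_k$, so $\pi_k\times\zele{\Delta}$ is irreducible by Lemma \ref{lem: onesegspeh}, and Corollary \ref{coro:sum} then gives irreducibility of $\pi_k\times\sigma$ for the \emph{whole} of $\sigma$ in one stroke. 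Writing $\pi\times\sigma=(\pi_1\times\dots\times\pi_{k-1})\times(\pi_k\times\sigma)$, the induction hypothesis applied to $\pi_1\times\dots\times\pi_{k-1}$ immediately yields that $\pi\times\sigma$ is \EI. This removes the need for your decomposition $\sigma\simeq\sigma'\times\sigma''$, the partition of the remaining Speh factors into $A$ and $B$, the commutation bookkeeping, and the final gluing via Lemma \ref{lem:main}. Your route does check out — the identification of $\Delta_0$ as a connected component, the irreducibility of $\xi=\pi_k\times\sigma'$ via saturation (Proposition \ref{prop:Speh is saturated}), and the left-reducedness hypothesis needed for Lemma \ref{lem:main} are all valid — but it buys nothing over the direct absorption, and it slightly obscures the point that all of $\sigma$, not just its $\Delta_0$-part, can be absorbed into $\pi_k$ at once.
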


\begin{proof}
The proof is analogous to that of Corollary \ref{coro:generic is RI}.
Let $\pi=\pi_1\times\dots\times\pi_k$ where $\pi_i$ are Speh representations
where $\supp\pi_i$ are mutually unlinked.
First note that $\pi$ is irreducible by Corollaries \ref{coro: irredspeh} and \ref{coro: irredtwospeh}.
To show that $\pi$ is \RI\ we argue by induction on $k$. We already proved the case $k=1$.
For the induction step assume without loss of generality that $\supp\pi_k$ is the longest segment
among $\supp\pi_i$, $i=1,\dots,k$.
Thus, $\supp\pi_k$ is a connected component of $\supp\pi$ (see \S\ref{sec: segment}).
Let $\sigma\in\Irr$ be such that $\supp(\sigma)\subset\supp(\pi)$. If $\sigma=\zele{\m}$
then any segment $\Delta$ in $\m$ is unlinked with $\supp(\pi_k)$. Therefore
$\pi_k\times\zele{\Delta}$ is irreducible by Lemma \ref{lem: onesegspeh}.
Thus, $\pi_k\times\sigma$ is irreducible by Corollary \ref{coro:sum}.
Therefore, by induction hypothesis $\pi\times\sigma=\pi_1\times\dots\times\pi_{k-1}\times(\pi_k\times\sigma)$ is \EI.
The corollary follows from Lemma \ref{lem: suffuniquesub}.
\end{proof}

Finally, we can classify saturated representations.

\begin{prop}\label{prop:saturated}
A representation $\pi\in \Irr$ is saturated if and only if we can write $\pi=\pi_1\times\dots\times\pi_k$ where for all $i$ $\pi_i$
is a Speh representation and $\supp(\pi_i)$ is a connected component of $\supp(\pi)$.
\end{prop}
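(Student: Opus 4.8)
The plan is to prove the two implications separately, in each case bootstrapping from the results of \S\ref{sec: applications} on $\zele{\Delta}\times\sigma$ and on products of Speh representations. \emph{Sufficiency.} Suppose $\pi=\pi_1\times\dots\times\pi_k$ with each $\pi_i$ a Speh representation and each $\supp(\pi_i)$ a connected component of $\supp(\pi)$. First observe that any two of the segments $\supp(\pi_i)$ are unlinked: two connected components of a subset of $\Cusp$ are either equal (so each contains the other) or disjoint and non-adjacent (so their union is disconnected). Now fix $\sigma=\zele{\n}$ with $\supp(\sigma)\subset\supp(\pi)$. By Corollary~\ref{coro:sum} it suffices to prove that $\zele{\Delta'}\times\pi$ is irreducible for every segment $\Delta'$ occurring in $\n$. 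Such a $\Delta'$ is connected, hence lies in a single component of $\supp(\pi)$; so by Corollary~\ref{coro: irredspeh} (applied to $\pi_1,\dots,\pi_k$ together with the segment representation $\zele{\Delta'}$) it is enough to know that $\pi_i\times\pi_j$ is irreducible for all $i<j$ and that $\zele{\Delta'}\times\pi_i$ is irreducible for all $i$. The former follows from Corollary~\ref{coro: irredtwospeh}, the supports being unlinked; the latter follows from Lemma~\ref{lem: onesegspeh}, since $\Delta'$ is not linked to $\supp(\pi_i)$ (it lies in a component, which is either equal to, or disjoint and non-adjacent from, $\supp(\pi_i)$). I expect this direction to be essentially routine.

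\emph{Necessity.} Assume $\pi$ is saturated. The first step is a descent principle: if $\pi=\pi'\times\pi''$ is irreducible and no segment of $\supp(\pi')$ is linked to a segment of $\supp(\pi'')$, then $\pi'$ is again saturated. Indeed, for $\sigma$ with $\supp(\sigma)\subset\supp(\pi')$ one has $\pi''\times\sigma\simeq\sigma\times\pi''$ by Proposition~\ref{useful_properties} part~\ref{part: pwnp}, so $[\pi\times\sigma]=[\pi'\times\sigma]\cdot[\pi'']$ in $\Rr$; since $\pi\times\sigma$ is irreducible, so is $\pi'\times\sigma$. Applying this, together with Proposition~\ref{useful_properties} part~\ref{part: pwnp} and the discussion of \S\ref{par: cuspidal_lines}, first to the decomposition of $\pi$ along distinct cuspidal lines and then, within each line, along the connected components of the support, I reduce to the case $\pi=\zele{\m}$ rigid with $\supp(\m)=[c,d]_\rho$ connected; the goal then becomes to show that $\zele{\m}$ is a product of Speh representations each with support $[c,d]_\rho$.

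For every segment $\Delta$ with $\supp(\Delta)\subset[c,d]_\rho$ we have $\supp(\zele{\Delta})\subset\supp(\zele{\m})$ by Proposition~\ref{useful_properties} part~\ref{part: support is preserved}, so saturatedness gives that $\zele{\Delta}\times\zele{\m}$ is irreducible, and hence $\llcond(\Delta,\m)$ and $\rrcond(\Delta,\m)$ hold by Corollary~\ref{cor:irred2}. The heart of the proof is the combinatorial claim that these conditions, holding for \emph{every} such $\Delta$, force the multiplicity of a segment in $\m$ to depend only on its length; equivalently, viewing $\m$ as a multiplicity function on the triangle of segments contained in $[c,d]_\rho$, this function is constant along each line $\{\Delta,\rshft{\Delta},\rshft{\rshft{\Delta}},\dots\}$. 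Granting this, let $a_\ell$ be the common multiplicity of the length-$\ell$ segments and $\m^{(\ell)}$ the unique Speh multisegment whose segments have length $\ell$ and whose support is $[c,d]_\rho$; then $\m=\sum_\ell a_\ell\m^{(\ell)}$. Since the Speh representations $\zele{\m^{(\ell)}}$ all have the same, hence pairwise unlinked, support, Corollaries~\ref{coro: irredspeh} and~\ref{coro: irredtwospeh} show that $\prod_\ell\zele{\m^{(\ell)}}^{\times a_\ell}$ is irreducible, and it therefore equals $\zele{\m}$ by Proposition~\ref{useful_properties} part~\ref{part: lmm'}; this exhibits $\zele{\m}$ in the required form and completes the proof.

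The main obstacle is the combinatorial claim above, and I would prove it by contradiction. If the multiplicity function is not constant along some constant-length line, then---after possibly replacing $\m$ by $\m^\vee$, which preserves saturatedness, the rigidity and connectedness of the support, and the claim itself, while reversing the orientation of each such line---there is a segment $\Delta^*$ with $\rshft{\Delta^*}\subset[c,d]_\rho$ and strictly more copies of $\Delta^*$ than of $\rshft{\Delta^*}$ in $\m$. Choosing $\Delta^*$ extremal (of minimal length, and then with $b(\Delta^*)$ minimal), so that the multiplicity function is already ``staircase-like'' below it, I would then show that the matching problem defining $\llcond(\rshft{\Delta^*},\m)$---or, in a complementary range of cases, $\rrcond(\rshft{\Delta^*},\m)$---fails Hall's criterion, the surplus copies of $\Delta^*$ contributing elements to $X_{\rshft{\Delta^*};\m}$ (resp.\ $\tilde X_{\rshft{\Delta^*};\m}$) that have no available partners in $Y_{\rshft{\Delta^*};\m}$ (resp.\ $\tilde Y_{\rshft{\Delta^*};\m}$). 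Carrying this out requires careful bookkeeping---along the lines of the analysis preceding Proposition~\ref{prop: onesegment}---of which segments of $\m$ lie in these index sets and which pairs are related by $\prec$; pinning down the correct extremal choice of $\Delta^*$, and which of $\llcond$, $\rrcond$ is violated, is where the real work lies.
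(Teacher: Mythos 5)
Your ``if'' direction is correct and is essentially the paper's argument (unlinkedness of the components, Lemma \ref{lem: onesegspeh}, Corollary \ref{coro: irredspeh}, Corollary \ref{coro:sum}). The problem is in the ``only if'' direction: you have correctly reduced everything to the combinatorial claim that saturation forces the multiplicity of a segment in $\m$ to be constant along shift-lines, you correctly observe that saturation yields $\llcond(\Delta,\m)$ and $\rrcond(\Delta,\m)$ for every $\Delta\subset\supp\m$ via Corollary \ref{cor:irred2}, but you then leave the claim itself unproved. What you offer is a plan by contradiction (pass to $\m^\vee$, pick an extremal $\Delta^*$, exhibit a failure of Hall's criterion) and you explicitly concede that ``the real work lies'' in carrying it out. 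That is the heart of the proposition, so as written the proof is incomplete.

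The gap closes with a two-line direct argument, no extremal choice or contradiction needed. Write $n(\Delta)$ for the multiplicity of $\Delta$ in $\m=\Delta_1+\dots+\Delta_N$ and suppose $\Delta,\lshft\Delta\subset\supp\m$. In the matching problem for $\llcond(\lshft\Delta,\m)$, every index $i$ with $\Delta_i=\Delta$ lies in $X_{\lshft\Delta;\m}$, and if $j\rltn i$ for such an $i$ then the two constraints $\Delta_j\prec\Delta$ and $\lshft{(\lshft\Delta)}\prec\Delta_j$ pin down $\Delta_j=\lshft\Delta$ exactly; so any matching function must map $\{i:\Delta_i=\Delta\}$ injectively into $\{j:\Delta_j=\lshft\Delta\}$, whence $n(\Delta)\le n(\lshft\Delta)$. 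Symmetrically, a matching function for $\rrcond(\Delta,\m)$ must map $\{i:\Delta_i=\lshft\Delta\}\subset\tilde X_{\Delta;\m}$ injectively into $\{j:\Delta_j=\Delta\}\subset\tilde Y_{\Delta;\m}$, whence $n(\lshft\Delta)\le n(\Delta)$. This is precisely what the paper does; note also that it works for arbitrary $\m$, so your preliminary reduction to connected rigid support (correct, but requiring its own verification) is not needed.
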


\begin{proof}
For the `if' part let $\pi=\pi_1\times\dots\times\pi_k$ be as in the statement of the proposition.
We already saw that $\pi$ is irreducible.
Let $\Delta\in\seg$ be such that $\Delta\subset\supp(\pi)$.
Then by the condition on $\pi$, $\Delta$ and $\supp(\pi_i)$ are unlinked for all $i$.
Hence $\zele{\Delta}\times\pi_i$ is irreducible by Lemma \ref{lem: onesegspeh}.
It follows from Corollary \ref{coro: irredspeh} that $\zele{\Delta}\times\pi$ is irreducible.
It follows now from Corollary \ref{coro:sum} that $\pi$ is saturated.

Conversely, suppose that $\pi=\zele{\m}$ is saturated.
We denote by $n(\Delta)$ the multiplicity of a segment $\Delta$ in $\m$.
We show that for any $\Delta\subset\supp\pi$ such that $\lshft\Delta\subset\supp\pi$ we have $n(\lshft\Delta)=n(\Delta)$.
(This will clearly imply that $\pi$ has the required form.)
%(In view of Corollaries \ref{coro: irredspeh} and \ref{coro: irredtwospeh} it is easy to see that this is equivalent to the statement of the proposition.)
Indeed, if $\m=\Delta_1+\dots+\Delta_N$ and $f$ is a $\rltn$-matching from
$X_{\lshft\Delta;\m}$ to $Y_{\lshft\Delta;\m}$ then $f$ maps $\{i:\Delta_i=\Delta\}$
to $\{i:\Delta_i=\lshft\Delta\}$ and therefore $n(\lshft\Delta)\ge n(\Delta)$.
Similarly, if $f$ is a matching from $\tilde{X}_{\Delta; \m}$ to $\tilde{Y}_{\Delta; \m}$ then it maps $\{i:\Delta_i=\lshft\Delta\}$
to $\{i:\Delta_i=\Delta\}$ and therefore $n(\lshft\Delta)\le n(\Delta)$.
\end{proof}

\subsection{Ladder representations}
Finally, we consider a class of representations which generalize the Speh representations.
This class was studied in \cites{MR3163355, MR2996769, 1409.1367}.
It is important not only in its own right but also since it occurs naturally in the study of discrete series representations of classical groups
\cites{MR1896238, MR3020114}.

\begin{defi}
A rigid representation $\pi =\zele{\Delta_1 + \dots +\Delta_N} \in \Irr$ is called a \emph{ladder representation} if
$\Delta_i\not\subset\Delta_j$ for any $i\ne j$.
Thus, if $\Delta_i=[c_i,d_i]_\rho$ with $\Delta_1\ge_b\dots\ge_b\Delta_N$ then $c_1>\dots>c_N$ and $d_1>\dots>d_N$.
We will also say that $\Delta_1 + \dots +\Delta_N$ itself is a ladder in this case.
\end{defi}

For a ladder representation $\pi=\zele{\m}$ the left and right {\ordered} forms of $\m$ coincide -- we will simply
refer to it as the {\ordered} form of $\m$.

Many properties which hold for Speh representations also hold more generally for ladder representations.
(One notable exception is that ladder representations are generally not saturated  by Proposition \ref{prop:saturated}.)
As an example, we will show in this section that every ladder representation is \RI.

\emph{For the rest of the section let $\pi=\zele{\m}$ be a ladder representation} and let $\Delta$
be the largest segment in $\m$ with respect to $\ge_b$ (or equivalently, $\ge_e$).

The key property that we use is the following.
\begin{lem} \label{lem: basicladder}
Let $\n\in\MS$ be such that $e(\Delta')=e(\Delta)$ and $\Delta'\subset\Delta$ for all $\Delta'$ in $\n$.
Then $\pi\times\zele{\n}$ is irreducible.
\end{lem}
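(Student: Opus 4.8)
The plan is to reduce to the one-segment irreducibility criterion proved earlier in this section and then to produce the two required matchings by hand. Write $\m=\Delta_1+\dots+\Delta_N$ in {\ordered} form, so $\Delta_i=[c_i,d_i]_\rho$ with $c_1>\dots>c_N$, $d_1>\dots>d_N$, and $\Delta=\Delta_1$; every segment $\Theta$ occurring in $\n$ is then of the form $\Theta=[c,d_1]_\rho$ with $c_1\le c\le d_1$. By Corollary \ref{coro:sum} it suffices to show that $\zele{\Theta}\times\zele{\m}$ is irreducible for each such $\Theta$, and by Corollary \ref{cor:irred2} this is equivalent to $\llcond(\Theta,\m)$ together with $\rrcond(\Theta,\m)$. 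The condition $\llcond(\Theta,\m)$ is vacuous: if $\Theta\prec\Delta_i$ then $b(\Delta_i)$ lies strictly to the right of $b(\Theta)$, whereas $b(\Theta)\ge b(\Delta)\ge b(\Delta_i)$ — the first inequality because $\Theta\subset\Delta$ with $e(\Theta)=e(\Delta)$, the second because $\Delta$ is the $\ge_b$-largest rung of $\m$ — a contradiction. Hence $X_{\Theta;\m}=\emptyset$ and the empty function is a matching function.

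For $\rrcond(\Theta,\m)$ I would take the candidate matching $f(i)=i-1$ on $\tilde X_{\Theta;\m}=\{i:\Delta_i\prec\Theta\}$, using the index labels of the {\ordered} form of $\m$. Three points have to be verified. First, $1\notin\tilde X_{\Theta;\m}$, since $\Theta\subset\Delta_1$ forces $\Delta_1$ and $\Theta$ to be unlinked; so $f$ is well defined, and it is visibly injective. Second, for $i\in\tilde X_{\Theta;\m}$ — which, after unwinding the definition of $\prec$, means $i\ge2$ and $c\le d_i+1$ — one has $\Delta_i\prec\Delta_{i-1}$: non-nesting is immediate from $c_1>\dots>c_N$ and $d_1>\dots>d_N$, and linkedness amounts to $c_{i-1}\le d_i+1$, which follows from the chain $c_{i-1}\le c_1\le c\le d_i+1$. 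Third, $f(i)=i-1\in\tilde Y_{\Theta;\m}=\{j:\lshft\Delta_j\prec\Theta\}$: writing $\lshft\Delta_{i-1}=[c_{i-1}-1,d_{i-1}-1]_\rho$, the relation $\lshft\Delta_{i-1}\prec\Theta$ reduces (non-nesting again being automatic) to $c_{i-1}-1<c\le d_{i-1}$, and indeed $c\ge c_1\ge c_{i-1}>c_{i-1}-1$ while $c\le d_i+1\le d_{i-1}$. This yields the required matching function, so $\rrcond(\Theta,\m)$ holds, and the lemma follows.

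There is no genuine obstacle here; the only thing that has to be handled carefully is the inequality chain $c_{i-1}\le c_1\le c\le d_i+1\le d_{i-1}$. This is precisely where the two hypotheses on $\n$ enter: the bound $c\ge c_1$ encodes $\Theta\subset\Delta$ (together with $e(\Theta)=e(\Delta)$), and without it neither the linkedness of $\Delta_i$ with $\Delta_{i-1}$ nor the membership $i-1\in\tilde Y_{\Theta;\m}$ would be forced, so the naive shift $i\mapsto i-1$ would break down.
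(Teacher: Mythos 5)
Your proposal is correct and follows exactly the paper's own argument: reduce via Corollary \ref{coro:sum} to the irreducibility of $\zele{\Delta'}\times\zele{\m}$ for each $\Delta'$ in $\n$, observe that $\llcond(\Delta',\m)$ holds vacuously, and exhibit the shift $f(i)=i-1$ as the matching function for $\rrcond(\Delta',\m)$. Your verification of the inequality chain $c_{i-1}\le c_1\le c\le d_i+1\le d_{i-1}$ simply fills in the details the paper leaves to the reader.
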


\begin{proof}
This follows from Corollary \ref{coro:sum}. Indeed for any $\Delta'$ in $\n$ the condition $\llcond(\Delta',\m)$ is trivial
while for $\rrcond(\Delta',\m)$ we can take the matching function $f(j)=j-1$ in the {\ordered} form of $\m$.
\end{proof}

\begin{prop} \label{prop: ladder is good}
Every ladder representation is \RI.

Moreover, suppose that $\sigma=\zele{\n}\in\Irr$ is such that $\supp(\sigma)\subset\Z_{\le e(\Delta)}$.
Let $\pi'=\zele{\m-\Delta}$, $\sigma'=\zele{\n_{<_e\Delta}}$ and $\tau=\zele{\Delta+\n_{\ge_e\Delta}}$.
Then
\[
\soc(\pi\times\sigma)=\soc(\tau\times\Sigma)
\]
where $\Sigma=\soc(\pi'\times\sigma')$ and $\lcond(\tau,\Sigma)$ holds.
In particular, $\lcond(\pi,\sigma)$ if and only if $\lcond(\pi',\sigma')$.
\end{prop}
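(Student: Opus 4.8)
The plan is to reduce to the rigid case $\m,\n\in\MS_\rho$ (via \S\ref{par: cuspidal_lines}) and then induct on the number $N$ of segments of the ladder. For $N=1$, $\pi=\zele{\Delta}$ is a segment representation: it is saturated, hence \RI\ by Corollary~\ref{cor:saturated}, and the ``moreover'' assertion is a restatement of Proposition~\ref{prop: delsig} (with $\pi'=\zele{\emptyset}$). So suppose $N\ge2$ and write $\m=\Delta_1+\dots+\Delta_N$ with $\Delta_1\ge_b\dots\ge_b\Delta_N$ and $\Delta=\Delta_1$; then $\pi'=\zele{\m-\Delta}=\zele{\Delta_2+\dots+\Delta_N}$ is a ladder with $N-1$ segments, hence \RI\ by the inductive hypothesis, and the full statement (including the ``moreover'' part) is available for ladders with fewer than $N$ segments.

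I would first record the elementary structural facts. Since $\supp\sigma\subset\Z_{\le e(\Delta)}$, every segment $\Delta'\in\n_{\ge_e\Delta}$ has $e(\Delta')=e(\Delta)$ and $b(\Delta')\ge b(\Delta)$, hence $\Delta'\subset\Delta$; therefore $\n_{\ge_e\Delta}\not\prec\n_{<_e\Delta}$, so $\sigma=\soc(\zele{\n_{\ge_e\Delta}}\times\sigma')$ (Proposition~\ref{useful_properties} part~\ref{part: pwnp}), and Lemma~\ref{lem: basicladder} (applied to $\pi$, and again to the one-segment ladder $\zele{\Delta}$) shows that $\pi\times\zele{\n_{\ge_e\Delta}}$ and $\zele{\Delta}\times\zele{\n_{\ge_e\Delta}}$ are irreducible, the latter being $\zele{\Delta+\n_{\ge_e\Delta}}=\tau$ by Lemma~\ref{lem: lcondprop} part~\ref{part: irred}. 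Moreover $\Delta$ is the unique segment of $\m$ with right endpoint $e(\Delta)$ and it has the largest left endpoint, so $\Delta\not\prec\m-\Delta$ and $\pi=\soc(\zele{\Delta}\times\pi')$. Finally $\Sigma=\soc(\pi'\times\sigma')$ is irreducible because $\pi'$ is \RI, with $\supp\Sigma=\supp(\m-\Delta)\cup\supp(\n_{<_e\Delta})\subset\Z_{\le e(\Delta)}$; writing $\Sigma=\zele{\mathfrak s}$ and $\tau=\zele{\mathfrak t}$ with $\mathfrak t=\Delta+\n_{\ge_e\Delta}$, every segment of $\mathfrak t$ ends at $e(\Delta)$, so no segment of $\mathfrak t$ can precede one of $\mathfrak s$, i.e.\ $\mathfrak t\not\prec\mathfrak s$. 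By Lemma~\ref{lem: lcondprop} part~\ref{part: trivlcond} this already gives $\lcond(\tau,\Sigma)$, so $\tau\times\Sigma$ is \EI\ and $\soc(\tau\times\Sigma)=\zele{\mathfrak t+\mathfrak s}$.

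It then remains to prove $\soc(\pi\times\sigma)=\zele{\mathfrak t+\mathfrak s}$. The facts above give $\pi\times\sigma\hookrightarrow\zele{\Delta}\times\pi'\times\zele{\n_{\ge_e\Delta}}\times\sigma'$. The key technical point is that this induced representation is \EI: one writes it as $\zele{\Delta}\times\bigl(\pi'\times\zele{\n_{\ge_e\Delta}}\times\sigma'\bigr)$, whose inner factor is \EI\ because $\pi'$ is \RI, and applies Lemma~\ref{lem:main} with $\pi_1=\zele{\Delta}$ and $\pi_2=\pi'\times\zele{\n_{\ge_e\Delta}}\times\sigma'$; this reduces to checking that $\zele{\Delta}\otimes\soc(\pi_2)$ occurs with multiplicity one in $[\rp(\zele{\Delta}\times\pi_2)]$, which I would read off from the geometric lemma (\S\ref{lemmegeo}) using that $\Delta$ is the unique top segment of the ladder $\m$ and that $e(\Delta)$ enters $\supp(\pi_2)$ only inside segments contained in $\Delta$. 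Granting this, $\soc(\pi\times\sigma)$ equals the socle of $\zele{\Delta}\times\pi'\times\zele{\n_{\ge_e\Delta}}\times\sigma'$; identifying that socle with $\zele{\mathfrak t+\mathfrak s}$ is then carried out by peeling $\zele{\Delta}$ off once more, reordering the inner factors, and invoking Lemma~\ref{lem: basicladder} together with the inductive ``moreover'' statement for $\pi'$ to compute $\soc(\pi'\times\zele{\n_{\ge_e\Delta}}\times\sigma')$; the outcome matches $\soc(\tau\times\Sigma)$ computed above.

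The remaining assertions are formal. Since $\mathfrak t+\mathfrak s=\Delta+\n_{\ge_e\Delta}+\combsoczele(\m-\Delta,\n_{<_e\Delta})$ and $\m+\n=\Delta+(\m-\Delta)+\n_{\ge_e\Delta}+\n_{<_e\Delta}$, the condition $\lcond(\pi,\sigma)$ (i.e.\ $\zele{\mathfrak t+\mathfrak s}=\zele{\m+\n}$, i.e.\ $\mathfrak t+\mathfrak s=\m+\n$) holds if and only if $\combsoczele(\m-\Delta,\n_{<_e\Delta})=(\m-\Delta)+\n_{<_e\Delta}$, that is, if and only if $\lcond(\pi',\sigma')$. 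For the \RI\ claim: for any $\sigma\in\Irr$ with $\supp\sigma\subset\supp\pi$ one has $\supp\sigma\subset\Z_{\le e(\Delta)}$, so by the above $\pi\times\sigma$ embeds into the \EI\ representation $\zele{\Delta}\times\pi'\times\zele{\n_{\ge_e\Delta}}\times\sigma'$ and is therefore \EI\ by Remark~\ref{rem:main}; Lemma~\ref{lem: suffuniquesub} then gives that $\pi$ is \RI, which closes the induction. The main obstacle throughout is the multiplicity-one Jacquet-module computation that makes the auxiliary induced representation \EI, together with the accompanying identification of its socle; this is exactly where the ladder hypothesis on $\m$ is used, and once it is in place the rest is bookkeeping.
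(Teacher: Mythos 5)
Your overall architecture --- induction on the number of segments, the use of Lemma \ref{lem: basicladder}, the verification of $\lcond(\tau,\Sigma)$ via $\mathfrak{t}\not\prec\mathfrak{s}$, and the final bookkeeping reducing $\lcond(\pi,\sigma)$ to $\lcond(\pi',\sigma')$ --- matches the paper's, but the central step, namely that $\zele{\Delta}\times\pi'\times\zele{\n_{\ge_e\Delta}}\times\sigma'$ is \EI, is exactly where your argument breaks down, in two places. First, the inner factor $\pi'\times\zele{\n_{\ge_e\Delta}}\times\sigma'$ is not \EI\ ``because $\pi'$ is \RI'': the definition of \RI\ only controls $\pi'\times\sigma$ for an \emph{irreducible} $\sigma$, and $\zele{\n_{\ge_e\Delta}}\times\sigma'$ is in general reducible. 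Second, and more seriously, the multiplicity-one statement you propose to read off from the geometric lemma is false. Take $\m=[1,2]_\rho+[0,1]_\rho$ (a ladder with $\Delta=[1,2]_\rho$ and $\pi'=\zele{[0,1]_\rho}$) and $\n=[2,2]_\rho$, so that $\n_{\ge_e\Delta}=\n$, $\sigma'$ is trivial and $\pi_2=\zele{[0,1]_\rho}\times\zele{[2,2]_\rho}$. In $[\rp(\zele{[1,2]_\rho}\times\pi_2)]$ (for the $(2m,3m)$ Jacquet functor, $m=\deg\rho$) the term $\zele{[1,2]_\rho}\otimes\soc(\pi_2)$ occurs at least twice: once in the summand $\zele{[1,2]_\rho}\otimes[\pi_2]$, and once in the summand $(\rho\nu_\rho\times\rho\nu_\rho^2)\otimes(\rho\nu_\rho^2\times\zele{[0,1]_\rho})$, since $\rho\nu_\rho\times\rho\nu_\rho^2$ contains $\zele{[1,2]_\rho}$ and $[\rho\nu_\rho^2\times\zele{[0,1]_\rho}]=[\pi_2]$ contains $\soc(\pi_2)$. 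So Lemma \ref{lem:main} cannot be applied with $\pi_1=\zele{\Delta}$ and $\pi_2$ the remaining product, and the subsequent ``peeling off and reordering'' has no foundation.

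The repair is a different ordering of the factors. Using the irreducibility of $\pi\times\zele{\n_{\ge_e\Delta}}$ (Lemma \ref{lem: basicladder}) one first commutes $\zele{\n_{\ge_e\Delta}}$ to the left and only then peels $\Delta$ off $\pi$:
\[
\pi\times\sigma\hookrightarrow\pi\times\zele{\n_{\ge_e\Delta}}\times\sigma'\simeq
\zele{\n_{\ge_e\Delta}}\times\pi\times\sigma'\hookrightarrow
\zele{\n_{\ge_e\Delta}}\times\zele{\Delta}\times\pi'\times\sigma'=\tau\times\bigl(\pi'\times\sigma'\bigr).
\]
Now every segment of $\Delta+\n_{\ge_e\Delta}$ is $>_e$ every segment of $(\m-\Delta)+\n_{<_e\Delta}$, so Lemma \ref{lem: indEI} (in its $<_e$ form) applies with $\pi_1=\tau$ and $\pi_2=\pi'\times\sigma'$ (the latter \EI\ by the induction hypothesis) and yields in one stroke that $\tau\times\pi'\times\sigma'$ is \EI, that $\soc(\pi\times\sigma)=\soc(\tau\times\Sigma)$, and that $\lcond(\tau,\Sigma)$ holds; your separate verification of $\lcond(\tau,\Sigma)$ then becomes redundant. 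With this substitution the remaining parts of your argument (Lemma \ref{lem: suffuniquesub} for the \RI\ claim and the comparison of multisegments for the last assertion) go through as written.
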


\begin{proof}
We will prove the proposition by induction on the size of $\m$.
The base of the induction is the trivial case $\m=0$.
For the induction step let $\sigma=\zele{\n}$, $\pi'$, $\sigma'$ and $\tau$ be as in the statement of the proposition.
It follows from Lemma \ref{lem: basicladder} that $\pi\times\zele{\n_{\ge_e\Delta}}$ is irreducible. Now
\begin{multline*}
\pi\times\sigma\hookrightarrow\pi\times\zele{\n_{\ge_e\Delta}}\times\sigma'\simeq
\zele{\n_{\ge_e\Delta}}\times\pi\times\sigma'\\\hookrightarrow
\zele{\n_{\ge_e\Delta}}\times\zele{\Delta}\times\pi'\times\sigma'=\tau\times\pi'\times\sigma'.
\end{multline*}
By induction hypothesis, $\pi'\times\sigma'$ is \EI. Let $\Sigma=\soc(\pi'\times\sigma')$.
By Lemma \ref{lem: indEI} (with $\pi=\tau$ and $\sigma=\pi'\times\sigma'$)
we conclude that $\tau\times\pi'\times\sigma'$, and a fortiori $\pi\times\sigma$ is \EI\
and the condition $\lcond(\tau,\Sigma)$ is satisfied.
By Lemma \ref{lem: suffuniquesub} it follows that $\pi$ is \RI.
Moreover, $\soc(\pi\times\sigma)=\soc(\tau\times\Sigma)$.
The last part follows.
\end{proof}

Combining Proposition \ref{prop: ladder is good} with Lemma \ref{lem: eff stages} we can
explicate $\soc(\pi\times\sigma)$ combinatorially by the following recursive algorithm:

\begin{coro} \label{cor: laddercombsoc}
Let $\n\in\MS$ and define $\n'=(\n_{\le_e e(\Delta)})^t$. Then
\[
\combsoczele(\m,\n^t)^t=
\n_{>_e e(\Delta)}+(\Delta+\n'_{\ge_e\Delta}+\combsoczele(\m-\Delta,\n'_{<_e\Delta}))^t.
\]
In particular, (by Lemma \ref{lem: sym}) $\pi\times\sigma$ is irreducible if and only if $\pi\times\sigma=\sigma\times\pi$.
\end{coro}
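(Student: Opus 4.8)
The plan is to assemble the identity from two ingredients already available: Lemma \ref{lem: eff stages}, which strips off those segments of $\n$ whose endpoint lies strictly above $e(\Delta)$, and Proposition \ref{prop: ladder is good}, which peels the top segment $\Delta$ off the ladder once the companion representation is supported in $\Z_{\le e(\Delta)}$. Keep the notation $\pi=\zele{\m}$, $\Delta$ the largest segment of $\m$ with respect to $\ge_b$ (equivalently $\ge_e$), and $\rho=e(\pi)=e(\Delta)$; set $\sigma=\zele{\n^t}=\lang{\n}$, so that $\combsoczele(\m,\n^t)^t$ is precisely the Langlands parameter of $\soc(\pi\times\sigma)$. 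By the usual reduction we may assume $\n\in\MS_\rho$; the part of $\sigma$ supported off the line $\Z_\rho$ factors out as an irreducible commuting product (using that $\zele{\m}$ is \RI\ and Lemma \ref{lem:main}) and contributes additively to the parameter of the socle.

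First I would feed $\pi$ (rigid and \RI\ by Proposition \ref{prop: ladder is good}) and $\sigma=\lang{\n}$ into Lemma \ref{lem: eff stages}. Using $\sigma_{\le\rho}=\lang{\n_{\le_e\rho}}=\zele{(\n_{\le_e e(\Delta)})^t}=\zele{\n'}$ and $\sigma_{\not\le\rho}=\lang{\n_{>_e\rho}}$, the lemma (or, trivially, the identity $\sigma=\sigma_{\le\rho}$ when $\n_{>_e\rho}=0$) gives $\soc(\pi\times\sigma)=\lang{\m_0+\n_{>_e e(\Delta)}}$, where $\lang{\m_0}=\soc(\pi\times\zele{\n'})=\zele{\combsoczele(\m,\n')}$, whence $\m_0=\combsoczele(\m,\n')^t$. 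Translating back between the two classifications, this reads
\[
\combsoczele(\m,\n^t)^t=\combsoczele(\m,\n')^t+\n_{>_e e(\Delta)},
\]
so the task is reduced to computing $\combsoczele(\m,\n')$.

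Next I would apply Proposition \ref{prop: ladder is good} to the ladder $\pi=\zele{\m}$ and the representation $\zele{\n'}$; this is allowed because $\supp\zele{\n'}=\supp\n'=\supp(\n_{\le_e e(\Delta)})\subset\Z_{\le e(\Delta)}$. With $\pi'=\zele{\m-\Delta}$ (again a ladder, hence \RI), $\sigma'=\zele{\n'_{<_e\Delta}}$ and $\tau=\zele{\Delta+\n'_{\ge_e\Delta}}$, the proposition gives $\soc(\pi\times\zele{\n'})=\soc(\tau\times\Sigma)$ with $\Sigma=\soc(\pi'\times\sigma')=\zele{\combsoczele(\m-\Delta,\n'_{<_e\Delta})}$ and, crucially, $\lcond(\tau,\Sigma)$. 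The last condition says $\soc(\tau\times\Sigma)=\zele{(\Delta+\n'_{\ge_e\Delta})+\combsoczele(\m-\Delta,\n'_{<_e\Delta})}$, that is, $\combsoczele(\m,\n')=\Delta+\n'_{\ge_e\Delta}+\combsoczele(\m-\Delta,\n'_{<_e\Delta})$. Substituting this into the displayed identity above yields exactly the asserted recursion.

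For the final assertion, note that $\pi\times\sigma$ is \EI\ because the ladder $\pi$ is \RI\ (Proposition \ref{prop: ladder is good}), and that every ingredient of the recursion — the M\oe glin--Waldspurger algorithm (hence $\n^t$, $\n_{>_e e(\Delta)}$ and $\n'$), the explicit multisegments attached to $\pi$ and $\sigma$ in Proposition \ref{prop: ladder is good}, and $\combsoczele(\m-\Delta,\cdot)$ for the strictly shorter ladder $\m-\Delta$ (by descent on the number of segments) — is a purely combinatorial function of the underlying \ulb\ multisegments, independent of $D$. Thus the hypotheses of Lemma \ref{lem: sym} hold, and $\pi\times\sigma$ is irreducible if and only if $\pi\times\sigma\simeq\sigma\times\pi$. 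The only delicate point in the whole argument is keeping track of the Zelevinsky parameters versus their transposes $(\cdot)^t$ when passing between Lemma \ref{lem: eff stages} (phrased via $\lang{\cdot}$) and the definition of $\combsoczele$ (phrased via $\zele{\cdot}$); beyond this bookkeeping I do not expect any genuine obstacle, since the substantive work has already been carried out in the two cited results.
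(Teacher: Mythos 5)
Your proof is correct and follows exactly the route the paper intends: the paper derives Corollary \ref{cor: laddercombsoc} precisely by combining Lemma \ref{lem: eff stages} (to split off $\n_{>_e e(\Delta)}$) with Proposition \ref{prop: ladder is good} (to peel off $\tau=\zele{\Delta+\n'_{\ge_e\Delta}}$ via the condition $\lcond(\tau,\Sigma)$), and then invokes Lemma \ref{lem: sym} for the final assertion. Your bookkeeping between $\zele{\cdot}$, $\lang{\cdot}$ and $(\cdot)^t$, as well as the reduction to the rigid case, is accurate.
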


Corollary \ref{cor: laddercombsoc}, together with the M\oe glin--Waldspurger algorithm give a perfectly feasible algorithm for deciding the irreducibility of
$\pi\times\sigma$ (or even, $\lcond(\pi,\sigma)$) for any $\sigma\in\Irr$ in terms of the Zelevinsky classification.

Moreover, as in the case of Speh representations we have:

\begin{lem} \label{coro: irredladder}
Let $\pi_1,\dots,\pi_k\in\Irr$. Suppose that all but at most one of the $\pi_i$'s are ladder representations.
Then $\pi_1\times\dots\times\pi_k$ is irreducible if and only if $\pi_i\times\pi_j$ is irreducible for all $i<j$.
\end{lem}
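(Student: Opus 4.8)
The plan is to mimic almost verbatim the proof of Corollary \ref{coro: irredspeh}, replacing the role of Speh representations by ladder representations and replacing the input used there (the last part of Corollary \ref{coro:subspeh}) by the corresponding statement for ladders, namely the ``in particular'' clause of Corollary \ref{cor: laddercombsoc}: for a ladder representation $\pi$ and any $\sigma\in\Irr$, $\pi\times\sigma$ is irreducible if and only if $\pi\times\sigma\simeq\sigma\times\pi$. Only the ``if'' direction requires an argument, and I would prove it by induction on $k$, the cases $k\le1$ being trivial.

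For the induction step, since at most one of $\pi_1,\dots,\pi_k$ fails to be a ladder representation and $k\ge2$, after renumbering we may assume that $\pi_k$ is a ladder representation. Then among $\pi_1,\dots,\pi_{k-1}$ still at most one fails to be a ladder, and the pairwise irreducibility hypothesis holds for this shorter list, so by the induction hypothesis $\Pi:=\pi_1\times\dots\times\pi_{k-1}$ is irreducible. For each $i<k$ the representation $\pi_k\times\pi_i$ is irreducible by hypothesis; since $[\pi_k\times\pi_i]=[\pi_i\times\pi_k]$ in $\Rr$ and both are irreducible, we obtain $\pi_k\times\pi_i\simeq\pi_i\times\pi_k$. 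Moving the factor $\pi_k$ past $\pi_{k-1},\dots,\pi_1$ one step at a time, using associativity of parabolic induction together with these transposition isomorphisms, yields $\pi_k\times\Pi\simeq\Pi\times\pi_k$.

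Now I would apply Corollary \ref{cor: laddercombsoc} to the ladder representation $\pi_k$ and $\sigma=\Pi$: since $\pi_k\times\Pi\simeq\Pi\times\pi_k$, that corollary gives that $\pi_k\times\Pi$ is irreducible, and hence so is $\pi_1\times\dots\times\pi_k\simeq\Pi\times\pi_k$. This closes the induction.

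There is essentially no serious obstacle here: the entire content has been packaged into Corollary \ref{cor: laddercombsoc}, which in turn rests on Proposition \ref{prop: ladder is good} and Lemma \ref{lem: sym}. The only point that deserves a sentence of care is the passage from the Grothendieck-group identity $[\pi_k\times\pi_i]=[\pi_i\times\pi_k]$ to a genuine isomorphism of representations — legitimate precisely because both sides are irreducible — and the observation that these pairwise transpositions compose to the global isomorphism $\pi_k\times\Pi\simeq\Pi\times\pi_k$ that Corollary \ref{cor: laddercombsoc} needs as input.
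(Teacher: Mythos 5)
Your proposal is correct and is essentially identical to the paper's argument: the paper's proof of this lemma simply says it is "exactly as that of Corollary \ref{coro: irredspeh} using Corollary \ref{cor: laddercombsoc}", which is precisely the substitution you carry out. The one point you flag for care — upgrading $[\pi_k\times\pi_i]=[\pi_i\times\pi_k]$ to an isomorphism via irreducibility and composing the transpositions — is the same (implicit) step in the paper's proof of Corollary \ref{coro: irredspeh}.
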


\begin{proof}
The proof is exactly as that of Corollary \ref{coro: irredspeh} using Corollary \ref{cor: laddercombsoc}.
\end{proof}

Thus, in principle we get a combinatorial condition for the irreducibility of $\pi\times\sigma$ when $\pi$ is an irreducible
product of ladder representations and $\sigma\in\Irr$. (We do not know how to easily characterize combinatorially the property
of being an irreducible products of ladder representations.)

Going back to the case where $\pi$ is a ladder representation, we may ask whether there is a simpler (or at least more concise)
combinatorial criterion for $\llcond(\pi,\sigma)$ as in the case where $\pi=\zele{\Delta}$ for $\Delta\in\seg$.

Suppose that $\pi=\zele{\m}$ is a ladder and $\sigma=\zele{\n}\in\Irr$.
Let $(\Delta_1,\dots,\Delta_N)$ be the {\ordered} form of $\m$.
Write $\n=\Delta'_1+\dots+\Delta'_{N'}$.
Let
\[
X_{\m;\n}=\{(i,j)\in\{1,\dots,N\}\times\{1,\dots,N'\}:\Delta_i\prec\Delta'_j\},
\]
and
\[
Y_{\m;\n}=\{(i,j)\in\{1,\dots,N\}\times\{1,\dots,N'\}:\lshft\Delta_i\prec\Delta'_j\}.
\]
We define relations $\rltn=\rltn_{\m;\n}$ (resp., $\rltn'$) between $Y_{\m;\n}$ and $X_{\m;\n}$ by
\[
(i_2,j_2)\rltn (i_1,j_1)\iff
\text{either ($i_1=i_2$ and $\Delta'_{j_2}\prec\Delta'_{j_1}$) or ($j_1=j_2$ and $\Delta_{i_1}\prec\Delta_{i_2}$)}.
\]
(resp.,
\[
(i_2,j_2)\rltn' (i_1,j_1)\iff (i_2,j_2)\rltn (i_1,j_1)\text{ and }i_1\le i_2+1.)
\]
Let $\llcond(\m,\n)$ (resp., $\llcond'(\m,\n)$) be the condition that there exists a $\rltn$-matching (resp., $\rltn'$-matching) function from $X_{\m;\n}$ to $Y_{\m;\n}$.
Recall that this means that there exists an injective function $f:X_{\m;\n}\rightarrow Y_{\m;\n}$ such that
$f(x)\rltn x$ for all $x\in X_{\m;\n}$.

The condition $\llcond(\m,\n)$ makes sense for any $\m$ (not necessarily a ladder).

\begin{lem} \label{lem: llcond'}
Suppose that $\m$ is a ladder and $\n$ is any.
Then the conditions $\llcond(\m,\n)$ and $\llcond'(\m,\n)$ are equivalent.
\end{lem}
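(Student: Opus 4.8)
The implication $\llcond'(\m,\n)\Rightarrow\llcond(\m,\n)$ is immediate: $\rltn'$ is by construction a sub-relation of $\rltn$, so every $\rltn'$-matching function from $X_{\m;\n}$ to $Y_{\m;\n}$ is in particular a $\rltn$-matching function. So the content is the converse, and the plan is to start from a $\rltn$-matching function $f\colon X_{\m;\n}\to Y_{\m;\n}$ and modify it into a $\rltn'$-one. Throughout write $(\Delta_1,\dots,\Delta_N)$ for the {\ordered} form of the ladder $\m$, with $\Delta_i=[c_i,d_i]_\rho$, so that $c_1>\dots>c_N$ and $d_1>\dots>d_N$.

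First I would record two elementary structural facts. \emph{(Ladder chain.)} If $\Delta_a\prec\Delta_b$ (so necessarily $a>b$), then in fact $\Delta_a\prec\Delta_{a-1}\prec\dots\prec\Delta_b$; in particular each one-step move $\Delta_k\prec\Delta_{k-1}$ is legal whenever $b<k\le a$. This falls out of the chain of inequalities $c_{k-1}\le c_b\le d_a+1\le d_k+1$. \emph{(Column structure.)} For fixed second coordinate $j$, both $\{i:(i,j)\in X_{\m;\n}\}$ and $\{i:(i,j)\in Y_{\m;\n}\}$ are (possibly empty) intervals of consecutive integers; and whenever $(i,j)\in X_{\m;\n}$, $(i',j)\in Y_{\m;\n}$ and $i'<i$, then automatically $\Delta_i\prec\Delta_{i'}$ (hence $(i',j)\rltn(i,j)$), every $k$ with $i'<k<i$ has $(k,j)\in X_{\m;\n}\cap Y_{\m;\n}$, and every one-step move $(k,j)\to(k-1,j)$ with $i'<k\le i$ is a legal $\rltn'$-move. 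All of these follow by rewriting $\Delta_i\prec\Delta'_j$ and $\lshft\Delta_i\prec\Delta'_j$ as explicit inequalities among the endpoints of $\Delta_i$, $\lshft\Delta_i$ and $\Delta'_j$ and invoking ladder monotonicity together with $i'<i$.

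Next, call a column move $(i,j)\mapsto(i',j)$ of $f$ \emph{long} if $i'\le i-2$, and let $\Phi(f)$ be the sum of $i-i'-1$ over all column moves $(i,j)\mapsto(i',j)$ of $f$; thus $f$ is already a $\rltn'$-matching function precisely when $\Phi(f)=0$. I would pick $f$ with $\Phi(f)$ minimal among all $\rltn$-matching functions with domain $X_{\m;\n}$, and suppose for contradiction $\Phi(f)>0$. Choose a long column move $(i,j)\mapsto(i',j)$ of $f$ with $i$ as large as possible. By the column-structure fact $(i-1,j)\in Y_{\m;\n}$ and $(i,j)\to(i-1,j)$ is legal. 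If $(i-1,j)$ is not in the image of $f$, redefining $f$ on $(i,j)$ to have value $(i-1,j)$ keeps the domain and strictly decreases $\Phi$ --- contradiction. If the preimage $f^{-1}(i-1,j)$ is a column move from some $(a,j)$, then $\Delta_a\prec\Delta_{i-1}$ forces $a>i-1$, and $a\ne i$ (because $f(i,j)=(i',j)\ne(i-1,j)$), so $a\ge i+1$ and $(a,j)\mapsto(i-1,j)$ is itself a long column move whose source row $a$ exceeds $i$ --- contradicting maximality.

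The only remaining case --- and this will be the hard part --- is that $f^{-1}(i-1,j)=(i-1,j^{*})$ with $\Delta'_j\prec\Delta'_{j^{*}}$, i.e.\ the slot $(i-1,j)$ is occupied by a row move. Here I would argue by an augmenting-path argument in the bipartite graph on $X_{\m;\n}\sqcup Y_{\m;\n}$ with edge set $\rltn$: deleting the edge between $(i,j)$ and $(i',j)$ frees the vertex $(i',j)$, and one looks for an $f$-alternating path whose off-$f$ edges are all $\rltn'$-edges, running from $(i-1,j)$ to a free $Y$-vertex (possibly $(i',j)$ itself); pushing $f$ along such a path re-routes $(i,j)$ to $(i-1,j)$, turns the other re-assignments into $\rltn'$-moves of zero excess, and so strictly decreases $\Phi$, the desired contradiction. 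Thus $\Phi(f)=0$ is forced, $f$ becomes a $\rltn'$-matching function, and $\llcond'(\m,\n)$ holds. Everything except the existence of that alternating path is routine bookkeeping; the crux is to show the path always terminates at a free vertex --- each time it reaches a $Y$-element in some row that $f$ matches by a row move, the column-structure fact applied in that row, together with the $\prec$-order on the segments of $\n$, yields an unused $\rltn'$-edge along which to continue, while the relevant segment parameters strictly decrease so the path cannot cycle. This is the step where the ladder hypothesis on $\m$ genuinely enters.
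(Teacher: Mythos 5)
Your setup is correct as far as it goes: the easy implication, the two structural facts about the ladder $\m$, the potential $\Phi$, and the disposal of the cases where the slot $(i-1,j)$ is free or is occupied by another column move are all fine. But the third case --- $(i-1,j)$ occupied by a row move --- is exactly where the lemma lives, and you have not proved it. You write that you ``would argue by an augmenting-path argument'' and that ``the crux is to show the path always terminates at a free vertex''; that crux is the entire content of the proof, and gesturing at it is not a substitute for carrying it out.

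Concretely, the paper's argument is the explicit construction of this path. Starting from $x_0=(i_0,j_0)$ chosen with $j_0$ \emph{minimal} and only then $i_0$ maximal (your extremal choice maximizes only the row index, which does not obviously suffice for the next point), one builds $x_l=(i_0-l,j_l)$ and $y_l=(i_0-l-1,j_l)$ with $f(x_l)=y_{l-1}$ and $\Delta'_{j_0}\prec\Delta'_{j_1}\prec\cdots$, and must verify: (i) the preimage of each $y_{l-1}$ is forced to be a row move in row $i_0-l$ rather than another column move --- this uses the extremal choice of $x_0$ together with injectivity of $f$; (ii) $y_l\in Y_{\m;\n}$ at each step, via the ladder inequalities, so the path can continue; (iii) the path has length at most $k$ (the excess of the long column move), so it either reaches a free vertex or wraps around to the freed vertex $(i_0-k,j_0)$, which can then be reattached to $x_k$ by a row move, legal precisely because the chain $\Delta'_{j_0}\prec\cdots\prec\Delta'_{j_k}$ gives $(i_0-k,j_0)\rltn' x_k$. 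None of (i)--(iii) appears in your write-up, and the remark that ``the relevant segment parameters strictly decrease so the path cannot cycle'' does not establish any of them. Until this case is worked out, the proof is incomplete.
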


\begin{proof}
Let $X=X_{\m;\n}$ and $Y=Y_{\m;\n}$.
Suppose that $f$ and $\tilde f$ are $\rltn$-matchings from $X$ to $Y$.
We will say that $\tilde f$ is an \emph{improvement} of $f$ if there exists $x=(i,j)\in X$ such that
\begin{enumerate}
\item $\tilde f(x)\rltn'x$,
\item $f(x)\not\rltn'x$,
\item $\tilde f(x')=f(x')\rltn'x'$ for all $x'=(i',j')$ such that either $j'<j$ or $j'=j$ and $i'>i$.
\end{enumerate}
To prove the lemma we will show that if $f$ is a $\rltn$-matching which is not a $\rltn'$-matching
then there exists an improvement of $f$. Let $x_0=(i_0,j_0)\in X$ be such that $f(x_0)\not\rltn'x_0$
with $j_0$ minimal and $i_0$ maximal (with respect to this $j_0$).
By assumption we have $f(x_0)=(i_0-k,j_0)$ where $k>1$.

Let $y_0=(i_0-1,j_0)\in Y$.
We will construct an integer $0\le m\le k$ and indices $j_1,\dots,j_m$ with the following properties.
\begin{enumerate}
\item $x_l=(i_0-l,j_l)\in X$ for $l=1,\dots,m$.
\item $y_l=(i_0-l-1,j_l)\in Y$ for $l=1,\dots,m'$ where $m'=\min(m,k-1)$.
\item $\Delta'_{j_r}\prec\Delta'_{j_s}$ for all $0\le r<s\le m$.
\item $f(x_l)=y_{l-1}$, $l=1,\dots,m$.
\item If $m<k$ then $y_m$ is not in the image of $f$.
\end{enumerate}
Suppose that the elements $x_0,\dots,x_{l-1}$ and $y_0,\dots,y_{l-1}$ have been constructed for some $l=1,\dots,k$.
If $y_{l-1}$ is not in the image of $f$ then we take $m=l-1$ and stop.
Otherwise we take $x_l\in X$ to be the element such that $y_{l-1}=f(x_l)$.
By the minimality property of $x_0$ we necessarily have $x_l=(i_0-l,j_l)$ for some $j_l$ where $\Delta'_{j_{l-1}}\prec\Delta'_{j_l}$.
(For otherwise, $x_l=(i_0-l+s,j_{l-1})$ for some $s>0$. Since $j_{l-1}\le j_0$, we must have $s=1$ by the minimality of $x_0$,
hence $x_l=x_{l-1}$, in contradiction with the injectivity of $f$.)
Note that $\Delta'_{j_r}\prec\Delta'_{j_l}$ for all $r<l$ since
\[
b(\lshft{\Delta'_{j_l}})\le e(\Delta_{i_0-l})\le e(\Delta_{i_0-k})\le e(\Delta'_{j_0})\le e(\Delta'_{j_r}).
\]
If $l<k$ we take $y_l=(i_0-l-1,j_l)$ and continue. (Note that $y_l\in Y$ since $b(\Delta'_{j_l})\le e(\rshft{\Delta}_{i_0-l})\le
e(\Delta_{i_0-l-1})$.)
Otherwise we take $m=k$ and stop.
Note that in the case $m=k$ we have $f(x_0)\rltn x_k$.

We define $\tilde f$ by
\[
\tilde f(x)=\begin{cases}y_l&\text{if }x=x_l,\ l=0,\dots,m',\\f(x_0)&\text{if $x=x_k$ (and $m=k$),}\\
f(x)&\text{otherwise.}\end{cases}
\]
It is easy to see that $\tilde f$ is an improvement of $f$.
\end{proof}

%Combining this with the argument from the other file we infer that if $\llcond'(\m,\m')$ then $\lcond(\zele{\m},\zele{\m'})$.

The following is an analogue of Lemma \ref{lem: equcond} in the case of ladder representations.

\begin{prop} \label{prop: llcond by stages}
Suppose that $\m\in\MS$ is a ladder, $\rho=e(\m)$ and $\n\in\MS_\rho$ is any multisegment such that $\supp\n\not\subset\Z_{\le\rho}$.
Then the following conditions are equivalent:
\begin{enumerate}
\item $\llcond'(\m,\n)$.
\item $\llcond'(\m,\n^-)$ and $(\m+\n)^-=\m+\n^-$.
\end{enumerate}
\end{prop}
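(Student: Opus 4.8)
The plan is to prove Proposition \ref{prop: llcond by stages} as the purely combinatorial counterpart of Lemma \ref{lem: equcond}, running the same two-step argument with best $\rltn$-matchings and Hall's criterion \eqref{eq: HallCriterion} in place of the representation-theoretic input. By Lemma \ref{lem: llcond'} we may argue throughout with $\rltn$-matchings between $X_{\m;\n}$ and $Y_{\m;\n}$ (and between $X_{\m;\n^-}$ and $Y_{\m;\n^-}$, since $\m$ is unchanged and still a ladder). As in the proof of Lemma \ref{lem: equcond}, I would first reduce to the case $\n=\n_{\ge_e\mindel(\m)}$: a pair $(i,j)$ can lie in $X_{\m;\n}$ or $Y_{\m;\n}$ only if $e(\Delta'_j)\ge e(\Delta_i)\ge e(\mindel(\m))$, and inspection of \eqref{eq: j_0}--\eqref{eq: j_l} shows that the MW first step of $\n$ and of $\m+\n$, and hence the equality $(\m+\n)^-=\m+\n^-$, is unaffected by deleting the segments of $\n$ that are $<_e\mindel(\m)$.

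Next I would set up the precise dictionary for the MW first step. Write $\n=\Delta'_1+\dots+\Delta'_{N'}$ in right-ordered form and let $j_0<\dots<j_k$ be the indices of \eqref{eq: j_0}--\eqref{eq: j_l}, so $e(\Delta'_{j_l})=e(\n)-l$ and $\n^-=\n+\sum_{l=0}^k({\Delta'_{j_l}}^--\Delta'_{j_l})$. Since $\m$ is a ladder, at most one segment of $\m$ has a prescribed end point; comparing $\Delta_i\prec\Delta'_{j_l}$ with $\Delta_i\prec{\Delta'_{j_l}}^-$ (and likewise for $\lshft\Delta_i$) shows that passing from $\n$ to $\n^-$ removes from $X_{\m;\n}$ at most one pair for each $l$, namely $(i,j_l)$ with $e(\Delta_i)=e(\n)-l-1$, removes from $Y_{\m;\n}$ at most one pair for each $l$, namely $(i,j_l)$ with $e(\Delta_i)=e(\n)-l$, and changes nothing for $j\notin\{j_0,\dots,j_k\}$. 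Moreover, using that consecutive end points in the ladder $\m$ belong to linked segments, these removed pairs organize into a single $\rltn$-chain. Finally I would note that $(\m+\n)^-=\m+\n^-$ is equivalent to the assertion that the MW chain of $\m+\n$ has length $k$ and equals $\Delta'_{j_0},\dots,\Delta'_{j_k}$ segment by segment -- that is, no segment of $\m$ is consumed by, and the chain does not descend below, the MW first step on $\n$.

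With the dictionary at hand, the two implications go as follows. For ``$\Leftarrow$'', given a matching function $f^-\colon X_{\m;\n^-}\to Y_{\m;\n^-}$ and the equality $(\m+\n)^-=\m+\n^-$, I would extend $f^-$ over the newly added $X$-pairs by threading each of them to the next link of that $\rltn$-chain; the equality $(\m+\n)^-=\m+\n^-$ is exactly what guarantees that the required $Y$-slots are present and no link is missing, so the extension is again a matching function -- bookkeeping in the spirit of Lemmas \ref{lem: matchinstages} and \ref{lem: addonematch}. For ``$\Rightarrow$'', from a matching function $f\colon X_{\m;\n}\to Y_{\m;\n}$ I would first prove $(\m+\n)^-=\m+\n^-$: if not, some segment $\Delta_l$ of $\m$ is consumed by the MW first step on $\m+\n$, and -- mirroring the case split ($k=l$ versus $k>l$) in the proof of Lemma \ref{lem: equcond}, with Corollary \ref{cor: k=l} replaced by a direct count -- one produces a subset $A\subset X_{\m;\n}$ with $\#\{y:y\rltn x\text{ for some }x\in A\}<\#A$, contradicting Lemma \ref{lem: optf}. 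Once $(\m+\n)^-=\m+\n^-$ is known, restricting $f$ to $X_{\m;\n^-}$ and correcting it along the (now fully present) $\rltn$-chain, again in the manner of Lemma \ref{lem: matchinstages}, yields a matching function witnessing $\llcond'(\m,\n^-)$.

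The main obstacle is the contradiction in the ``$\Rightarrow$'' direction: extracting an explicit Hall-violating set $A$ from a segment of $\m$ that the MW algorithm on $\m+\n$ is forced to use. This is the combinatorial shadow of the representation-theoretic contradiction in Lemma \ref{lem: equcond} (which rested on the \RI\ property of $\m_{\le_e\Delta}$ and $\m_{>_e\Delta}$ and on Corollary \ref{cor: k=l}), and the delicate point is to track how the chain positions $j_0,\dots,j_k$ and the end points of the ladder $\m$ interleave inside the best matching, so as to see that the intruding segment of $\m$ forces the neighbourhood in $Y_{\m;\n}$ of a suitable subset of $X_{\m;\n}$ to be too small. (If one is willing to quote the identification of $\lcond(\zele{\m},\zele{\cdot})$ with $\llcond'(\m,\cdot)$ for ladders, the proposition is immediate from Lemma \ref{lem: equcond}, since each of $\m_{\le_e\Delta}$ and $\m_{>_e\Delta}$ is again a ladder, hence \RI\ by Proposition \ref{prop: ladder is good}; but presumably the point of Proposition \ref{prop: llcond by stages} is to be used in establishing that identification.)
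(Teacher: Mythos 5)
Your setup coincides with the paper's up to the point where the real work begins: the identification of the pairs that drop out of $X_{\m;\n}$ and $Y_{\m;\n}$ in passing to $\n^-$ (the paper calls these complements $X_0$ and $Y_0$), the injection $g:(i,j_l)\mapsto(i,j_{l-1})$ from $Y_0$ into $X_0$ whose surjectivity is exactly the condition $(\m+\n)^-=\m+\n^-$, and the easy implication (2)$\Rightarrow$(1), obtained by gluing $g^{-1}$ onto a matching between $X_{\m;\n^-}$ and $Y_{\m;\n^-}$. But for (1)$\Rightarrow$(2) you only describe what would have to be done, and you flag it yourself as the obstacle; this is precisely where the paper's proof lives. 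The paper does not argue via a Hall-violating set at all. It proves one exchange statement: any $\rltn'$-matching function $f:X\to Y$ can be successively improved to one satisfying $f(X_0)\subset Y_0$. This single claim yields both halves of (2) simultaneously: since $\abs{Y_0}\le\abs{X_0}$ via $g$, the inclusion forces $f(X_0)=Y_0$, hence $g$ is onto (giving $(\m+\n)^-=\m+\n^-$) and $f$ restricts to a matching from $X_{\m;\n^-}$ to $Y_{\m;\n^-}$. The improvement step is a concrete rewiring: for a minimal $x_0=(i_0,j_0)\in X_0$ with $f(x_0)\notin Y_0$ one locates the element $y_1=(i_0,j_2)\in Y_0$ with $y_1\rltn'x_0$ and, according to whether $y_1$ is free, is the image of some $(i_0,j_3)$, or is the image of $(i_0+1,j_2)$, permutes two or three values of $f$, using the ladder structure of $\m$ to verify the new assignments remain $\rltn'$-admissible.

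Two further points. First, your proposed decomposition of the hard direction into ``derive $(\m+\n)^-=\m+\n^-$ by exhibiting a set violating \eqref{eq: HallCriterion}, then separately correct $f$ along the chain'' is not obviously executable: the elements of $X_0$ indexed by distinct $l$ have distinct first \emph{and} distinct second coordinates, so they are pairwise not $\rltn$-related, and there is no single $\rltn$-chain to thread; only each pair $(x,g^{-1}(x))$ is individually $\rltn'$-related. For the same reason your claim that consecutive segments of the ladder $\m$ are linked is false in general (a ladder may have disjoint, unlinked consecutive segments). Second, the reduction to $\n=\n_{\ge_e\mindel(\m)}$ is harmless but plays no role in the paper's argument here. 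As written, the proof has a genuine gap at its crux, namely the exchange argument establishing $f(X_0)\subset Y_0$.
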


\begin{proof}
Let $J=\{j_0,\dots,j_k\}$ be as in \eqref{eq: j_0} and \eqref{eq: j_l} so that
\[
\n^-=\n+\sum_{j\in J}({\Delta'_j}^--\Delta'_j).
\]
Let $X=X_{\m;\n}$, $Y=Y_{\m;\n}$, $\tilde X=X_{\m;\n^-}$, $\tilde Y=Y_{\m;\n^-}$.
Note that $\tilde X\subset X$, $\tilde Y\subset Y$ and the complements $X_0=X\setminus\tilde X$,
$Y_0=Y\setminus\tilde Y$ are given by
\[
X_0=\{(i,j)\in X:j\in J\text{ and }e(\Delta_i)=e(\lshft{\Delta_j'})\}
\]
and
\[
Y_0=\{(i,j)\in Y:j\in J\text{ and }e(\Delta_i)=e(\Delta_j')\}.
\]
Note that $\abs{Y_0}\le\abs{X_0}$ since the map $g:Y_0\rightarrow X_0$ given by $(i,j_l)\mapsto (i,j_{l-1})$ is one-to-one.
It is easy to see that the condition $(\m+\n)^-=\m+\n^-$ is equivalent to the surjectivity of $g$.
We will show by successive approximation that
\begin{multline} \label{eq: exmatch}
\text{if there is a $\rltn'$-matching from $X$ to $Y$ then there is such a matching $f$}\\
\text{satisfying $f(X_0)\subset Y_0$.}
\end{multline}
Note that if $f$ satisfies the latter condition then $g$ is onto, $f(X_0)=Y_0$ and $f(\tilde X)=\tilde Y$,
so that $f$ defines a $\rltn'$-matching from $\tilde X$ to $\tilde Y$.
Conversely, if $g$ is onto and $\llcond'(\m,\n^-)$ then we can define a matching
from $X$ to $Y$ by combining $g^{-1}$ with a matching from $\tilde X$ to $\tilde Y$. Thus, it remains to prove \eqref{eq: exmatch}.

Given two $\rltn'$-matchings $f$ and $\tilde f$ from $X$ to $Y$ we say that $\tilde f$ is an improvement of
$f$ if there exists $x_0=(i_0,j_0)\in X_0$ such that
\begin{enumerate}
\item $f(x_0)\notin Y_0$,
\item $\tilde f(x_0)\in Y_0$,
\item For all $x=(i,j)\in X_0$ with $i<i_0$ we have $f(x)=\tilde f(x)\in Y_0$.
\end{enumerate}
In order to show \eqref{eq: exmatch} it suffices to show that if $f$ is a $\rltn'$-matching such that $f(X_0)\not\subset Y_0$ then there exists
an improvement of $f$.

We will construct such an improvement as follows.
Let $x_0=(i_0,j_0)\in X_0$ be such that $y_0=f(x_0)\notin Y_0$ and $i_0$ is minimal with this property.
Then necessarily $y_0=(i_0,j_1)$ for some $j_1$
where $e(\Delta'_{j_1})=e(\Delta_{i_0})$ and $\Delta'_{j_1}\prec\Delta'_{j_0}$.
Therefore, there exists a unique $j_2\in J$ such that $e(\Delta'_{j_2})=e(\Delta'_{j_1})$.
We have $\Delta'_{j_2}\prec\Delta'_{j_0}$ and $\Delta'_{j_2}\subset\Delta'_{j_1}$.
Thus, $y_1=(i_0,j_2)\in Y_0$ and $y_1\rltn'x_0$. If $y_1$ is not in the image of $f$ we define
\[
\tilde f(x)=\begin{cases}y_1&\text{if }x=x_0,\\f(x)&\text{otherwise.}\end{cases}
\]
Otherwise, $y_1=f(x_1)$ for some $x_0\ne x_1\in X$. There are two possibilities: either
$x_1=(i_0,j_3)$ where $\Delta'_{j_2}\prec\Delta'_{j_3}$ or $x_1=(i_0+1,j_2)$.
In the first case, $y_0\rltn' x_1$ and we define $\tilde f$ by
\[
\tilde f(x)=\begin{cases}y_1&\text{if }x=x_0,\\y_0&\text{if }x=x_1,\\f(x)&\text{otherwise.}\end{cases}
\]
In the second case, let $x_2=(i_0+1,j_1)$ and observe that $x_2\in X$, $y_0\rltn' x_2$ and $x_2\ne x_0,x_1$. Let $y_2=f(x_2)\ne y_0,y_1$.
Then necessarily $y_2=(i_0+1,j_4)$ (since $y_2\ne y_0$) for some $j_4$ where $\Delta'_{j_4}\prec\Delta'_{j_1}$. Note that $\Delta'_{j_4}\prec\Delta'_{j_2}$
since $b(\Delta'_{j_2})\le e(\rshft{\Delta}_{i_0+1})\le e(\rshft{\Delta'}_{j_4})$. Hence, $y_2\rltn' x_1$, and we can define
\[
\tilde f(x)=\begin{cases}y_1&\text{if }x=x_0,\\y_2&\text{if }x=x_1,\\y_0&\text{if }x=x_2,\\
f(x)&\text{otherwise.}\end{cases}
\]
In all cases $\tilde f$ improves $f$.
\end{proof}

Combining Corollary \ref{cor: laddercombsoc} with Proposition \ref{prop: llcond by stages} and Lemmas \ref{lem: equcond}
and \ref{lem: llcond'} we conclude:

\begin{prop} \label{prop: laddercond}
For any ladder $\m\in\MS$ and any $\n\in\MS$ the conditions $\llcond(\m,\n)$ and $\lcond(\zele{\m},\zele{\n})$ are equivalent.
Similarly $\llcond(\n,\m)$ and $\rcond(\zele{\m},\zele{\n})$ are equivalent.
Thus, $\zele{\m}\times\zele{\n}$ is irreducible if and only if $\llcond(\m,\n)$ and $\llcond(\n,\m)$.
\end{prop}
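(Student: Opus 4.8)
The plan is to prove the first assertion -- that $\llcond(\m,\n)\iff\lcond(\zele{\m},\zele{\n})$ for a ladder $\m$ -- and to deduce everything else from it. Granting the first assertion: $\m^\vee$ is again a ladder, and by the second part of Lemma~\ref{lem: lcondprop} together with Proposition~\ref{useful_properties} part~\ref{part: contragredient} one has $\rcond(\zele{\m},\zele{\n})\iff\lcond(\zele{\m^\vee},\zele{\n^\vee})$. The identities $\Delta\prec\Delta'\iff{\Delta'}^\vee\prec\Delta^\vee$, $\lshft{(\Delta^\vee)}=(\rshft{\Delta})^\vee$ and $\Delta'\prec\rshft{\Delta}\iff\lshft{\Delta'}\prec\Delta$ show that $(i,j)\mapsto(j,i)$ identifies $X_{\m^\vee;\n^\vee}$, $Y_{\m^\vee;\n^\vee}$, $\rltn_{\m^\vee;\n^\vee}$ with $X_{\n;\m}$, $Y_{\n;\m}$, $\rltn_{\n;\m}$, hence $\llcond(\m^\vee,\n^\vee)\iff\llcond(\n,\m)$. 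Chaining these with the first assertion applied to the ladder $\m^\vee$ gives $\rcond(\zele{\m},\zele{\n})\iff\llcond(\n,\m)$, and the irreducibility criterion then follows from Lemma~\ref{lem: lcondprop} part~\ref{part: irred}.

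For the first assertion, I would first use Lemma~\ref{lem: llcond'} (valid since $\m$ is a ladder) to replace $\llcond$ by $\llcond'$, and then perform the usual reduction -- via $\zele{\m}$ being \RI\ (Proposition~\ref{prop: ladder is good}), Proposition~\ref{useful_properties} part~\ref{part: pwnp} and Lemma~\ref{lem:main} -- that discards the segments of $\n$ off the cuspidal line of $\m$, since they touch neither $\lcond(\zele{\m},\zele{\n})$ nor the sets $X_{\m;\n}$, $Y_{\m;\n}$. So assume $\m$ and $\n$ are rigid on one line and put $\rho=e(\m)$. I would then induct on $\sum_{\Delta\in\m}l(\Delta)+\sum_{\Delta\in\n}l(\Delta)$; when $\m=0$ or $\n=0$ both conditions hold trivially.

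The inductive step (with $\m,\n\ne0$) splits in two. If $\supp\n\not\subset\Z_{\le\rho}$: all sub-ladders $\m_{\le_e\Delta},\m_{>_e\Delta}$ are \RI\ (Proposition~\ref{prop: ladder is good}), so Lemma~\ref{lem: equcond} expresses $\lcond(\zele{\m},\zele{\n})$ as the conjunction of $\lcond(\zele{\m},\zele{\n^-})$ and the identity $(\m+\n)^-=\m+\n^-$, while Proposition~\ref{prop: llcond by stages} gives the verbatim identical statement for $\llcond'$; since $\n^-$ is strictly smaller than $\n$, the inductive hypothesis supplies $\lcond(\zele{\m},\zele{\n^-})\iff\llcond'(\m,\n^-)$, and the two statements line up. If $\supp\n\subset\Z_{\le\rho}$: put $\Delta=\maxdel(\m)$, so $e(\Delta)=\rho$; now Corollary~\ref{cor: laddercombsoc} specializes (the term involving segments of $\n$ above $\rho$ drops out), equivalently Proposition~\ref{prop: ladder is good} directly applies, yielding $\lcond(\zele{\m},\zele{\n})\iff\lcond(\zele{\m-\Delta},\zele{\n_{<_e\Delta}})$ with $\m-\Delta$ a ladder of strictly smaller size. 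On the combinatorial side I would verify the matching identity $\llcond'(\m,\n)\iff\llcond'(\m-\Delta,\n_{<_e\Delta})$: in this regime the row $i=1$ of $X_{\m;\n}$ is empty, the row $i=1$ of $Y_{\m;\n}$ consists precisely of the pairs $(1,j)$ with $\Delta'_j\subseteq\Delta$ and $e(\Delta'_j)=\rho$, and passing to $\n_{<_e\Delta}$ deletes exactly the columns $j$ with $e(\Delta'_j)=\rho$; this equivalence should fall out of the ``best matching in stages'' manipulations used to prove Proposition~\ref{prop: onesegment} (Lemmas~\ref{lem: matchinstages} and \ref{lem: addonematch}). As $\m-\Delta$ is a ladder, Lemma~\ref{lem: llcond'} returns us to $\llcond$, and the inductive hypothesis finishes.

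\textbf{Main obstacle.} The one genuinely hard piece of combinatorics -- that $\llcond'$ obeys the $\n\mapsto\n^-$ recursion -- is already isolated in Proposition~\ref{prop: llcond by stages}, so the rest is bookkeeping. What I expect to have to be careful about is keeping the two sides in perfect step: ensuring that ``$\n^-$'' and ``$(\m+\n)^-=\m+\n^-$'' denote the same reduction for $\lcond$ (through Lemma~\ref{lem: equcond}) and for $\llcond'$ (through Proposition~\ref{prop: llcond by stages}), and establishing the matching identity $\llcond'(\m,\n)\iff\llcond'(\m-\Delta,\n_{<_e\Delta})$ in the case $\supp\n\subset\Z_{\le\rho}$, which requires a careful accounting of how $X_{\m;\n}$ and $Y_{\m;\n}$ transform when the top rung of the ladder -- and the corresponding top layer of $\n$ -- is stripped off.
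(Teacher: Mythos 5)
Your proposal is correct and follows essentially the same route as the paper: induction, with the case $\supp\n\not\subset\Z_{\le e(\m)}$ handled by Proposition \ref{prop: llcond by stages} together with Lemma \ref{lem: equcond} (via Lemma \ref{lem: llcond'}), the case $\supp\n\subset\Z_{\le e(\m)}$ handled by Corollary \ref{cor: laddercombsoc} plus the combinatorial identity $\llcond(\m,\n)\iff\llcond(\m-\Delta,\n_{<_e\Delta})$, and the dual statement by passing to the contragredient. Your induction on the total length of $\m+\n$ is in fact a cleaner bookkeeping of the two reductions ($\n\mapsto\n^-$ versus $\m\mapsto\m-\Delta$) than the paper's ``induction on the size of $\m$''; the only imprecision is in your side remark that passing to $\n_{<_e\Delta}$ deletes the columns with $e(\Delta'_j)=\rho$ --- it deletes those with $e(\Delta'_j)=\rho$ \emph{and} $b(\Delta'_j)\ge b(\Delta)$ --- but this does not affect the argument.
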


\begin{proof}
We use induction on the size of $\m$.
For the induction step, suppose first that $\supp\n\subset\Z_{\le e(\m)}$.
We observe that in the notation of Corollary \ref{cor: laddercombsoc}, $\llcond(\m,\n)$ is equivalent to $\llcond(\m-\Delta,\n_{<_e\Delta})$.
Indeed, let $X=X_{\m;\n}$, $X'=X_{\m-\Delta;\n_{<_e\Delta}}$, viewed as a subset of $X$, and $X_0=X\setminus X'$.
Similarly, let $Y=Y_{\m;\n}$, $Y'=Y_{\m-\Delta;\n_{<_e\Delta}}$ and $Y_0=Y\setminus Y'$.
It is easy to see that if $y\in Y_0$ and $y\rltn x\in X$ then necessarily $x\in X_0$.
Thus, any matching from $X$ to $Y$ restricts to a matching from $X'$ to $Y'$.
Conversely, if $x=(i,j)\in X_0$ then $(i-1,j)\in Y_0$ and $(i-1,j)\rltn x$.
Thus, we can extend any matching from $X'$ to $Y'$ to a matching from $X$ to $Y$
by taking $(i,j)\mapsto (i-1,j)$ on $X_0$.

Thus, in this case the induction step follows from Corollary \ref{cor: laddercombsoc}.
In the case $\supp\n\not\subset\Z_{\le e(\m)}$ the induction step follows from Proposition \ref{prop: llcond by stages} and
Lemma \ref{lem: equcond} (taking into account Lemma \ref{lem: llcond'}).

The dual statement follows by passing to contragredient.
\end{proof}

%We expect that the conditions $\llcond(\m,\n)$ and $\lcond(\zele{\m},\zele{\n})$ are equivalent
%(in the case where $\m$ is a ladder) and
%hence, $\zele{\m}\times\zele{\n}$ is irreducible if and only if $\llcond(\m,\n)$ and $\llcond(\n,\m)$.
%In view of Corollary \ref{cor: laddercombsoc}, this reduces to a purely combinatorial problem.
%We checked it by extensive computer calculations, but at this stage we don't have a rigorous proof.
%One way to do it would be to prove that the last condition of Lemma \ref{lem: reptheoryanalog} imply the first two,
%at least when $\Delta'$ is the longest segment of $\n$ with $e(\Delta')=e(\n)$.

In the case where both $\m$ and $\n$ are ladders it is possible to simplify the condition $\llcond(\m,\n)$.
Suppose that $(\Delta_1,\dots,\Delta_N)$ and $(\Delta'_1,\dots,\Delta'_{N'})$ are the {\ordered} forms of $\m$ and $\n$ respectively,
and let $X=X_{\m;\n}$ and $Y=Y_{\m;\n}$.
We denote by $\nlcond(\m,\n)$ (resp., $\nlcond'(\m,\n)$) the condition that there exist indices $1\le i\le N$, $1\le j\le N'$ and $k\ge0$ such that
\begin{enumerate}
\item $(i+l,j+l)\in X$ for all $l=0,\dots,k$,
\item $(i-1,j),(i+k,j+k+1)\notin Y$,
\item (resp., and in addition, $(i+l,j+l+1)\in Y\setminus X$ for all $l=0,\dots,k-1$).
\end{enumerate}

\begin{lem} \label{lem: twoladdercomb}
In the notation above, the following conditions are equivalent for two ladders $\m$, $\n$.
\begin{enumerate}
\item \label{part: neglcond} The negation of $\llcond(\m,\n)$.
\item \label{part: neglcond''} There does not exists a $\rltn''$-matching function from $X$ to $Y$ where
\[
(i_2,j_2)\rltn'' (i_1,j_1)\iff (i_2,j_2)\rltn' (i_1,j_1)\text{ and }j_2\le j_1+1.
\]
\item \label{part: nlcond} $\nlcond(\m,\n)$.
\item \label{part: nlcond'} $\nlcond'(\m,\n)$.
\end{enumerate}
\end{lem}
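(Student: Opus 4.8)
The plan is to prove the cycle of implications $\ref{part: nlcond'}\Rightarrow\ref{part: nlcond}\Rightarrow\ref{part: neglcond''}\Rightarrow\ref{part: neglcond}\Rightarrow\ref{part: nlcond'}$. Two steps are essentially formal. The implication $\ref{part: nlcond'}\Rightarrow\ref{part: nlcond}$ is immediate, since $\nlcond'(\m,\n)$ asks for the same data $(i,j,k)$ as $\nlcond(\m,\n)$ together with extra membership conditions on the points $(i+l,j+l+1)$. The structural fact that drives everything is that, because $\m$ and $\n$ are ladders, the relation $\rltn''$ only permits ``unit steps'': if $(i_2,j_2)\rltn''(i_1,j_1)$ then $(i_2,j_2)\in\{(i_1-1,j_1),(i_1,j_1+1)\}$, and conversely, whenever $(i_1,j_1)\in X_{\m;\n}$, each of $(i_1-1,j_1)$ and $(i_1,j_1+1)$ is automatically $\rltn''$-related to $(i_1,j_1)$ as soon as it lies in $Y_{\m;\n}$ (this is a direct check from the inequalities defining $X_{\m;\n}$ and $Y_{\m;\n}$: e.g. $(i_1,j_1)\in X_{\m;\n}$ and $(i_1,j_1+1)\in Y_{\m;\n}$ force $\Delta'_{j_1+1}\prec\Delta'_{j_1}$, and $(i_1,j_1)\in X_{\m;\n}$, $(i_1-1,j_1)\in Y_{\m;\n}$ force $\Delta_{i_1}\prec\Delta_{i_1-1}$). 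For $\ref{part: nlcond}\Rightarrow\ref{part: neglcond''}$, take a witness $(i,j,k)$ of $\nlcond(\m,\n)$ and put $A=\{(i+l,j+l):0\le l\le k\}\subseteq X_{\m;\n}$. By the unit-step description, the set of $y\in Y_{\m;\n}$ with $y\rltn'' x$ for some $x\in A$ is contained in $\{(i-1,j)\}\cup\{(i+l,j+l+1):0\le l\le k\}$; since $(i-1,j)\notin Y_{\m;\n}$ and $(i+k,j+k+1)\notin Y_{\m;\n}$, this neighbourhood has at most $k<k+1=\abs{A}$ elements, so Hall's criterion \eqref{eq: HallCriterion} forbids a $\rltn''$-matching function from $X_{\m;\n}$ to $Y_{\m;\n}$.

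The crux is $\ref{part: neglcond''}\Rightarrow\ref{part: neglcond}$. By Lemma \ref{lem: llcond'}, the negation of $\llcond(\m,\n)$ coincides with the non-existence of a $\rltn'$-matching function, so it suffices to show that any $\rltn'$-matching function can be turned into a $\rltn''$-matching function. Since $\rltn''$ is obtained from $\rltn'$ by forbidding only the ``long horizontal'' steps (those with $i_1=i_2$ and $j_2>j_1+1$), this is done by the improvement technique of the proof of Lemma \ref{lem: llcond'}, now run along the $j$-index of the ladder $\n$ rather than along the $i$-index of $\m$: given a $\rltn'$-matching $f$ and a pair $x_0=(i_0,j_0)$ with $f(x_0)=(i_0,j_0+h)$, $h>1$, one builds a chain $x_0,x_1,\dots$, tracks the values of $f$ along it, verifies the needed precedences using that $\n$ is a ladder together with the defining inequalities of $Y_{\m;\n}$, and performs an alternating-path/transposition surgery yielding a matching with the same domain whose behaviour is ``closer to $\rltn''$''. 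Iterating produces a genuine $\rltn''$-matching function. (That $\rltn''$ is \trns\ for the orders \eqref{eq: orderX} and \eqref{eq: orderY} — needed to invoke Lemma \ref{lem: optf} — is checked just as for $\rltn'$.)

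For $\ref{part: neglcond}\Rightarrow\ref{part: nlcond'}$, note first that $\ref{part: neglcond}$ implies, via Lemma \ref{lem: llcond'} and $\rltn''\subseteq\rltn'$, that no $\rltn''$-matching function exists; hence by Hall there is $A\subseteq X_{\m;\n}$ with $\rltn''$-neighbourhood smaller than $\abs{A}$. By the unit-step description the bipartite incidence graph of $\rltn''$ between $X_{\m;\n}$ and $Y_{\m;\n}$ has all degrees at most $2$, so it is a disjoint union of paths and even cycles, and the absence of an $X_{\m;\n}$-saturating matching forces some connected component to be a path with both ends on the $X_{\m;\n}$-side. Tracing such a path with the two allowed moves $(a,b)\leftrightarrow(a,b+1)$ and $(a,b)\leftrightarrow(a+1,b)$, and noting that it cannot reverse direction (this would reuse a vertex), one sees it is a monotone diagonal; after orienting it, its $X_{\m;\n}$-vertices are $(i,j),(i+1,j+1),\dots,(i+k,j+k)$ and its $Y_{\m;\n}$-vertices are $(i+l,j+l+1)$, $0\le l\le k-1$. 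The endpoints having degree one forces $(i-1,j),(i+k,j+k+1)\notin Y_{\m;\n}$ (using the converse half of the unit-step description), and the interior vertices having degree two forces $(i+l,j+l+1)\in Y_{\m;\n}$ for all $0\le l\le k-1$. Finally, choosing such a ``bad'' path with $k$ minimal and running a short further case analysis using the ladder structure forces each interior vertex $(i+l,j+l+1)$ out of $X_{\m;\n}$ as well, which is exactly $\nlcond'(\m,\n)$ and closes the cycle.

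I expect the main obstacle to be the improvement argument in $\ref{part: neglcond''}\Rightarrow\ref{part: neglcond}$: one must carry out the chain-and-transposition surgery while keeping every vertex involved inside $X_{\m;\n}$ and $Y_{\m;\n}$, which rests on the same delicate bookkeeping of precedences among segments of two ladders that underlies the proof of Lemma \ref{lem: llcond'}. A secondary fiddly point is the last minimality step forcing the interior vertices into $Y_{\m;\n}\setminus X_{\m;\n}$; once the unit-step description of $\rltn''$ is in hand, the remaining arguments are routine.
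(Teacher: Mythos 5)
Your cycle runs in the opposite direction from the paper's, and this forces you to prove directly the two implications that carry all the content of the lemma; both are left as sketches. The easy half of your argument is sound: \ref{part: nlcond'}$\Rightarrow$\ref{part: nlcond} is trivial, the ``unit-step'' description of $\rltn''$ is correct (for ladders $\Delta'_{j_2}\prec\Delta'_{j_1}$ forces $j_2>j_1$ and $\Delta_{i_1}\prec\Delta_{i_2}$ forces $i_2<i_1$, so the side conditions pin the neighbours of $(i_1,j_1)$ down to $(i_1-1,j_1)$ and $(i_1,j_1+1)$), and your Hall count for \ref{part: nlcond}$\Rightarrow$\ref{part: neglcond''} is valid. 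The genuine gap is \ref{part: neglcond''}$\Rightarrow$\ref{part: neglcond}, i.e.\ upgrading an arbitrary $\rltn'$-matching function to a $\rltn''$-matching function. This is the whole point of the lemma, and ``run the improvement technique of Lemma \ref{lem: llcond'} along the $j$-index'' is not a proof: the roles of $\m$ and $\n$ in the definitions of $X_{\m;\n}$, $Y_{\m;\n}$ and $\rltn$ are not symmetric, the chain construction of Lemma \ref{lem: llcond'} exploits the ladder property of $\m$ whereas here you would need that of $\n$ with all precedence inequalities re-derived, and every swap in the surgery must now in addition preserve the constraint $i_1\le i_2+1$ of $\rltn'$, a condition with no counterpart in Lemma \ref{lem: llcond'}. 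A second, smaller gap sits in \ref{part: neglcond}$\Rightarrow$\ref{part: nlcond'}: the path-component analysis correctly yields a diagonal with $(i-1,j),(i+k,j+k+1)\notin Y$ and $(i+l,j+l+1)\in Y$, but the clause $(i+l,j+l+1)\notin X$ is precisely the delicate point, and ``take the bad path with $k$ minimal'' does not obviously close it, since an interior vertex lying in $X$ does not visibly produce a shorter bad path (the paper's minimality is over witnesses of $\nlcond$, and even there the deduction takes several non-obvious steps).

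The paper orients the cycle the other way precisely to avoid both difficulties: \ref{part: neglcond}$\Rightarrow$\ref{part: neglcond''} is immediate because $\rltn''$ is a subrelation of $\rltn$; \ref{part: neglcond''}$\Rightarrow$\ref{part: nlcond} is proved contrapositively by exhibiting an explicit $\rltn''$-matching function whenever $\nlcond(\m,\n)$ fails; \ref{part: nlcond}$\Rightarrow$\ref{part: nlcond'} is a minimality argument; and \ref{part: nlcond'}$\Rightarrow$\ref{part: neglcond} is a Hall violation for the full relation $\rltn$ (not merely $\rltn''$), which is exactly where the extra clause $(i+l,j+l+1)\in Y\setminus X$ of $\nlcond'$ earns its keep. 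Either write out the $\rltn'$-to-$\rltn''$ improvement in complete detail, or reverse the orientation of your cycle.
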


\begin{proof}
%The same argument as in the proof of Lemma \ref{lem: llcond'} shows that $\llcond'(\m,\n)$ is equivalent to the existence of

%It is now elementary to see that the non-existence of a $\rltn''$-matching is equivalent to $\nlcond(\m,\n)$.
We first note that if $(i,j)\in X$ and $(i',j)\in Y$ then $(i',j)\rltn(i,j)$ if and only if $i'<i$.
Similarly if $(i,j')\in Y$ then $(i,j')\rltn(i,j)$ if and only if $j'>j$.

We also note that the extra condition $(i+l,j+l+1)\in Y\setminus X$ in $\nlcond'(\m,\n)$ implies that
either $b(\Delta_{i+l})=b(\Delta'_{j+l+1})$ or $e(\Delta_{i+l})=e(\Delta'_{j+l+1})$.

Obviously \ref{part: neglcond}$\implies$\ref{part: neglcond''}.

\ref{part: neglcond''}$\implies$\ref{part: nlcond}:
Suppose that $\nlcond(\m,\n)$ is not satisfied. Then it is easy to see that
\[
f(i,j)=\begin{cases}(i-1,j)&\begin{aligned}\text{if there exists $k\ge0$ such that $(i+l,j+l)\in X$}\\
\text{for all $1\le l\le k$ and }(i+k,j+k+1)\notin Y,\end{aligned}\\
(i,j+1)&\text{otherwise,}\end{cases}
\]
defines a $\rltn''$-matching from $X$ to $Y$.

\ref{part: nlcond}$\implies$\ref{part: nlcond'}:
We take a triplet $(i,j,k)$ as in the condition $\nlcond(\m,\n)$, with $k$ minimal.
We claim that $(i,j,k)$ satisfies the condition pertaining to $\nlcond'(\m,\n)$.
First, it clear from the minimality of $k$ that $(i+l,j+l+1)\in Y$ for all $l=0,\dots,k-1$.
%(Otherwise, we could take $(i,j,l)$.)
We also have $(i-1,j+1),(i+k-1,j+k+1)\notin Y$ and if
$(i+l-1,j+l+1)\in Y$ for some $1\le l<k$ then both $(i+l,j+l+1)$ and $(i+l-1,j+l)$ are in $X$.
Thus, if $(i+l,j+l+1)\in X$ for some $l=0,\dots,k-1$ and $l$ is minimal with this property,
%and $l'$ is the smallest index bigger than $l$ such that $(i+l',j+l'+1)\notin X$
then $(i+l,j+l+1,k')$ would also satisfy the defining property of $\nlcond(\m,\n)$ for some $k'<k-l$,
in contradiction with the minimality of $k$.
%It easily follows, again by the minimality of $k$, that necessarily $(i+l,j+l+1)\notin X$ for all $l=0,\dots,k-1$.

\ref{part: nlcond'}$\implies$\ref{part: neglcond}:
Suppose that $\nlcond'(\m,\n)$ is satisfied with indices $i,j,k$ as above. Then
\begin{multline*}
\{(i',j')\in Y:(i',j')\rltn(i+l,j+l)\text{ for some }0\le l\le k\}\subset\\\{(i+l-1,j+l):l=1,\dots,k\},
\end{multline*}
so that a $\rltn$-matching is impossible.
\end{proof}

We remark that in the special case where both $\zele{\m}$ and $\zele{\n}$ are ladders satisfying $b(\Delta_{i+1})=b(\lshft{\Delta_i})$ for all $i=1,\dots,N-1$
and $b(\Delta'_{j+1})=b(\lshft{\Delta'_j})$ for all $j=1,\dots,N'-1$, the irreducibility criterion of Proposition \ref{prop: laddercond}
can be simplified even further in accordance with the results of \cite{MR1985725}. We omit the details.

%\begin{conj} \label{conj: laddercond}
%The conditions $\lcond(\zele{\m},\zele{\m'})$, $\llcond(\m,\m')$ and $\llcond'(\m,\m')$ are equivalent.
%Thus, $\zele{\m}\times\zele{\m'}$ is irreducible if and only if $\llcond(\m,\m')$ and $\llcond(\m',\m)$.
%\end{conj}

%\begin{rem}
%One can easily check that the irreducibility part of Conjecture \ref{conj: laddercond} is consistent with the results of \cite{MR1985725}
%in the case where both $\zele{\m}$ and $\zele{\m'}$ are ladders satisfying $b(\Delta_{i+1})=b(\lshft{\Delta_i})$ for all $i=1,\dots,N-1$
%and $b(\Delta'_{j+1})=b(\lshft{\Delta'_j})$ for all $j=1,\dots,N'-1$.
%\end{rem}

\begin{rem}
We do not expect the property $\llcond(\m,\n)$ to have any representation-theoretic significance if neither $\m$ nor $\n$ is a ladder.
As a simple example, if $\rho\in\Cusp$, the multisegments $\m=\{\rho\}+\{\rho\}+\{\rshft{\rho}\}$ and
$\n=\{\rho\}+\{\rshft{\rho}\}+\{\rshft{\rho}\}$ satisfy $\llcond(\m,\n)$ although $\lcond(\zele{\m},\zele{\n})$
does not hold: we have
\begin{multline*}
\soc(\zele{\m}\times\zele{\n})=
\zele{\{\rho\}+\{\rho\}+\{\rshft{\rho}\}+\{\rshft{\rho}\}+(\rho,\rshft{\rho})}\simeq\\
\zele{\{\rho\}+\{\rshft{\rho}\}}\times\zele{\{\rho\}+\{\rshft{\rho}\}}\times\zele{(\rho,\rshft{\rho})}.
\end{multline*}
\end{rem}

\section{A simple proof of Tadi\'c hypothesis (U0)} \label{sec: U0}

The goal of this section is to explain how the results of \S\ref{sec: Speh} simplify the proof of
the classification theorem of the unitary dual of $G_m$.
The classification of the unitary dual of $\GL_n(\F)$ was established by Tadi\'c \cite{MR870688} in the 1980's.
(Significant progress on this problem was obtained earlier by Bernstein in \cite{MR748505}. Ideas of Rodier were also influential.)
Later on, under three hypotheses, named (U0), (U1) and (U2), Tadi\'c gave the classification of the unitary dual of $\GL_m(D)$, see \cite{MR1040995}.
(Actually, Tadi\'c confined himself to characteristic zero since at the time the Jacquet--Langlands correspondence was only available
in this case. However, once this was established in positive characteristic by Badulescu, there is no change in the strategy --
see \cite{MR2732351} and the discussion below.)
In \S\ref{par:3hyp}, we explain Tadi\'c's theorem and the hypotheses (U0), (U1), (U2).
Then, in \S\ref{par:previous} we recall how these hypotheses were proved. Finally, in \S\ref{par:new} we give a new proof of a weaker version of hypothesis (U0).
This leads to a purely combinatorial and uniform proof of the classification of the unitary dual of $\GL_m(D)$, and hence the hypothesis (U0) in general.

\subsection{Hypotheses (U0), (U1) and (U2)} \label{par:3hyp}
We denote by $\Irr^u$ the subset of $\Irr$ consisting of unitarizable representations and set $\Cusp^u=\Cusp\cap \Irr^u$.
(Recall that $\rho\in\Cusp$ is in $\Cusp^u$ if and only if its central character is unitary.)
For $\rho\in \Cusp$, $d\ge1$ and $n\ge0$, let $\m_{n,d}^{(\rho)}$ be the multisegment
\[
\m_{n,d}^{(\rho)}=\sum_{i=1}^n\left[\frac{n+1}2-i-\frac{d-1}{2},\frac{n+1}2-i+\frac{d-1}{2}\right]_\rho
\]
and set $\speh^{(\rho)}_{n,d}=\lang{\m_{n,d}^{(\rho)}}\in\Irr$.
We also define
\begin{align*}
B_{\rigid}&=\{\speh^{(\rho)}_{n,d}; n,d \in \Z_{\geq 1}, \rho \in\Cusp^u\},
\\
B&=B_{\rigid} \sqcup \{ \sigma\nu_\rho^\alpha  \times \sigma\nu_\rho^{- \alpha}; \sigma =\speh^{(\rho)}_{n,d} \in B_{\rigid}, 0 < \alpha < 1/2\}.
\end{align*}
Thus, $B_{\rigid}$ consists of the Speh representations with unitary central character (see \S\ref{sub:speh}).
Note that $B\subset\Irr$ --- this follows immediately from Proposition \ref{useful_properties} part \ref{part: pwnp}.

In the case $D=F$ Tadi\'c gave the following classification of the unitary dual of $\GL(m,D)$.
\begin{thm}  \cite{MR870688} \label{thm: TadicGLn}
Suppose that $D=F$. Then the map $\NN(B)\longrightarrow\Irr^u$ given by
\begin{equation}\label{Tadicmap}
\tau_1+ \dots + \tau_s \mapsto \tau_1\times \dots \times \tau_s
\end{equation}
is a bijection (and in particular well defined).
\end{thm}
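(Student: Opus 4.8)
The statement packages four claims about the map \eqref{Tadicmap}: that it lands in $\Irr$ (well-definedness), that its image lies in $\Irr^u$, injectivity, and surjectivity. The plan is to treat them in that order; the first three are soft consequences of the irreducibility machinery of \S\ref{sec: Speh}, while surjectivity (exhaustion of $\Irr^u$) is the substantial point, addressed in the rest of \S\ref{sec: U0}. For \emph{well-definedness}, each $\tau\in B$ is an irreducible product of one or two Speh representations: for a non-rigid block $\sigma\nu_\rho^\alpha\times\sigma\nu_\rho^{-\alpha}$ the two factors lie on distinct cuspidal lines (their shifts differ by $2\alpha\notin\Z$), so $\tau\in\Irr$ by Proposition \ref{useful_properties} part \ref{part: pwnp}. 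Expanding all factors, $\tau_1\times\dots\times\tau_s$ becomes a product of Speh representations, so by Corollary \ref{coro: irredspeh} it is irreducible as soon as every pair of its constituent Speh representations multiplies irreducibly. Two such constituents that share a cuspidal line are necessarily concentric (a short check with the allowed shifts $\nu_\rho^{\pm\alpha}$: each $\speh_{n,d}^{(\rho)}$, and each prescribed twist of it, has a center of symmetry, and two constituents can only share a line by sharing that center), so their supports are nested and the product is irreducible by Corollary \ref{coro: irredtwospeh}; otherwise the two lie on distinct lines and irreducibility is again Proposition \ref{useful_properties} part \ref{part: pwnp}. Thus $\tau_1\times\dots\times\tau_s\in\Irr$. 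Writing $\tau_i=\lang{\m_i}$ and iterating the $\lang{\cdot}$-analogue of Proposition \ref{useful_properties} part \ref{part: lmm'} (valid by Theorem \ref{thm: classificationL}), the class $[\lang{\m_1+\dots+\m_s}]$ occurs in $[\tau_1\times\dots\times\tau_s]$, so irreducibility forces $\tau_1\times\dots\times\tau_s=\lang{\m_1+\dots+\m_s}$, a fact kept for the injectivity step.

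For the \emph{image in} $\Irr^u$, the Speh representations $\speh_{n,d}^{(\rho)}$ with $\rho\in\Cusp^u$ are unitarizable; for $D=F$ this is classical (see \cite{MR870688}). For $\sigma\in B_{\rigid}$ and $0<\alpha<1/2$, the representation $\sigma\nu_\rho^z\times\sigma\nu_\rho^{-z}$ is irreducible for every $z\in[0,\alpha]$ (Proposition \ref{useful_properties} part \ref{part: pwnp}, distinct lines) and, $\sigma$ being unitary, is isomorphic to its Hermitian contragredient, hence carries a nonzero invariant Hermitian form, unique up to real scalar; at $z=0$ this form is definite since $\sigma\times\sigma$ is unitary, and along an irreducible family the signature cannot jump, so it stays definite and $\sigma\nu_\rho^\alpha\times\sigma\nu_\rho^{-\alpha}$ is unitarizable. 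Finally, an irreducible parabolic induction of unitarizable representations is unitarizable, so $\tau_1\times\dots\times\tau_s\in\Irr^u$.

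For \emph{injectivity}, if $\tau_1\times\dots\times\tau_s\cong\tau_1'\times\dots\times\tau_t'$ with all factors in $B$, then both sides equal $\lang{\m}$ with $\m=\sum_i\m_i=\sum_j\m_j'$ by the above, so it is enough to show that a multisegment has at most one decomposition into multisegments of the allowed shapes. Decompose $\m$ by cuspidal lines: the part on a line meeting $\Cusp^u$ is a sum of Speh multisegments $\m_{n,d}^{(\rho)}$, all concentric, while the part on a shifted line is a sum of the $\nu_\rho^\alpha$-halves of non-rigid blocks, again concentric. On each line the blocks can be peeled off from the outermost inward in exactly one way, and the correlation between the two shifted lines attached to a non-rigid block then recovers the non-rigid blocks uniquely; hence $\{\tau_i\}=\{\tau_j'\}$ as multisets.

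\emph{Surjectivity} is the heart of the matter and where the (U0)-type input enters. One first reduces to rigid $\pi\in\Irr^u$: decomposing $\pi=\pi_1\times\dots\times\pi_r$ along the disjoint cuspidal lines met by $\supp\pi$ — an irreducible product by Proposition \ref{useful_properties} part \ref{part: pwnp} — each $\pi_i$ is again unitarizable, so it suffices to realize each rigid unitary factor. For rigid $\pi=\zele{\m}\in\Irr^u$ one argues that $\m$ must be a sum of concentric Speh multisegments $\m_{n,d}^{(\rho)}$: otherwise a minimal obstruction in the Zelevinsky parameter is exposed by a complementary-series deformation which remains irreducible past a reducibility point — the explicit criteria of \S\ref{sec: Speh}, namely Corollary \ref{coro: irredtwospeh}, Lemma \ref{coro: irredladder}, and the left-multiplier fact of Corollary \ref{coro:neededU0} (a product of Speh representations with pairwise unlinked supports is a left multiplier), govern exactly where irreducibility fails — and this would produce either a non-unitarizable constituent or a sign change in the relevant Hermitian form, a contradiction. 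I expect this deformation-and-positivity step to be the main obstacle: it is the one genuinely analytic ingredient (reducibility of a unitary parabolic induction forcing a unitary constituent, together with semicontinuity of signatures), and making the combinatorial bookkeeping light and uniform — precisely the role of the irreducibility results of \S\ref{sec: Speh} and of the left-multiplier property of products of Speh representations — is the crux worked out in the remainder of the section.
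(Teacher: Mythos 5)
Your well-definedness and injectivity arguments go through; your injectivity, via unique decomposition of the multisegment into Speh blocks, is a workable substitute for the paper's appeal to primality of the elements of $B_{\rigid}$ in the factorial ring $\Rr$ (Tadi\'c's property (U3)). The genuine gap is surjectivity, which you yourself flag as the crux: what you offer there is a programme (``expose a minimal obstruction by a complementary-series deformation \dots I expect this to be the main obstacle''), not an argument, and it is also not the route the paper takes. In the paper surjectivity is soft once two inputs are in place: (i) Tadi\'c's Lemma 8.2 of \cite{MR1181278} (his property (U4), a consequence of Proposition \ref{useful_properties} part \ref{part: lmm'}), which produces for any \emph{hermitian} $\pi\in\Irr$ elements $\sigma_1,\dots,\sigma_n,\tau_1,\dots,\tau_m\in B$ such that $\pi\times\sigma_1\times\dots\times\sigma_n$ and $\tau_1\times\dots\times\tau_m$ share a composition factor; and (ii) Lemma \ref{lemU'0}: every $\pi\in B$ is \RI\ by Corollary \ref{coro:neededU0}, so its product with a unitarizable irreducible has irreducible socle and, being unitarizable hence semisimple, is irreducible. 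For unitarizable $\pi$ these two facts force $\pi\times\sigma_1\times\dots\times\sigma_n\simeq\tau_1\times\dots\times\tau_m$, and primality (U3) then exhibits $\pi$ as a product of Speh representations; hermitianness places it in the image of $\NN(B)$. No deformation or signature semicontinuity enters the surjectivity step at all; the only place complementary series are used is in establishing (U1) and (U2). Without the common-composition-factor lemma (or an equivalent), your sketch does not yield exhaustion of $\Irr^u$.

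A secondary issue: for (U1) in the case $D=F$ you cite \cite{MR870688}, which is circular here, since that is precisely the theorem under proof. The paper supplies a self-contained induction on $k$: granting $\speh^{(\rho)}_{k,d}$ unitarizable, the complementary series $\speh^{(\rho)}_{k,d}\nu_\rho^{\beta}\times\speh^{(\rho)}_{k,d}\nu_\rho^{-\beta}$ stays irreducible for $0\le\beta<1/2$ and at $\beta=1/2$ has cosocle $\speh^{(\rho)}_{k-1,d}\times\speh^{(\rho)}_{k+1,d}$ by Proposition \ref{prop:Speh is saturated} or Corollary \ref{coro: irredtwospeh}; this cosocle is unitarizable, and unitarizability of an irreducible product of hermitian factors passes to the factors, giving the induction step. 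You should either reproduce that argument or make explicit that you are importing the unitarizability of Speh representations from an independent source.
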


Tadi\'c further conjectured that Theorem \ref{thm: TadicGLn} holds in fact for any $D$ \cite{MR1040995}.
To that end he postulated the following hypotheses:
\begin{enumerate}
\item[(U0) ] If $\pi, \pi' \in \Irr^u$, then $\pi \times \pi' $ is irreducible.
\item[(U1) ] $B_{\rigid}\subset \Irr^u$.
\item[(U2) ] $B\setminus B_{\rigid}\subset \Irr^u$.
\end{enumerate}

More precisely, he showed:
\begin{thm}\label{Tadic1}(\cite{MR1040995}*{\S6})
Suppose that the hypotheses (U0), (U1), (U2) are true. Then the map \eqref{Tadicmap} is bijective.
\end{thm}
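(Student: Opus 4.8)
The plan is to check, for the map \eqref{Tadicmap}, the three properties in turn --- well-definedness with image in $\Irr^u$, injectivity, and surjectivity --- the last being the substantive point. \emph{Well-definedness and image.} By (U1) and (U2) we have $B\subset\Irr^u$. By (U0) and induction on $s$ the product $\tau_1\times\dots\times\tau_s$ is then irreducible for any $\tau_1,\dots,\tau_s\in B$: if $\Pi:=\tau_1\times\dots\times\tau_{s-1}$ is irreducible it is also unitarizable, as parabolic induction preserves unitarizability, so $\Pi\in\Irr^u$ and $\Pi\times\tau_s$ is irreducible by (U0); the same induction gives $\tau_1\times\dots\times\tau_s\in\Irr^u$. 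Since $\Rr$ is commutative, the class $[\tau_1\times\dots\times\tau_s]$ is independent of the order of the factors, and being irreducible it determines the representation up to isomorphism. Hence \eqref{Tadicmap} descends to a well-defined map $\NN(B)\to\Irr^u$.

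\emph{Reduction to one cuspidal line.} Both injectivity and surjectivity are reduced, in the same way, to statements about rigid representations on a single line. By Proposition \ref{useful_properties} part \ref{part: pwnp}, any $\pi\in\Irr$ factors uniquely as $\pi=\pi_{L_1}\times\dots\times\pi_{L_k}$ with $\pi_{L_i}$ rigid and supported on the distinct lines $L_1,\dots,L_k$ meeting $\supp\pi$. The elements of $B$ distribute transparently over lines: a rigid block $\speh^{(\rho)}_{n,d}$ lives on a single line, while a block $\sigma\nu_\rho^\alpha\times\sigma\nu_\rho^{-\alpha}$ (with $\sigma=\speh^{(\rho)}_{n,d}$, $0<\alpha<1/2$) has $\sigma\nu_\rho^\alpha$ on one line and its Hermitian dual $\sigma\nu_\rho^{-\alpha}$ on the distinct conjugate line. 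So reading $\pi$ off line by line exhibits each $\pi_{L_i}$ as an irreducible product of Speh representations on $L_i$, and the pairing of $\alpha$ with $-\alpha$ is exactly what is needed to recognise the complementary-series blocks of $B$.

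\emph{Injectivity.} Suppose $\tau_1\times\dots\times\tau_s\simeq\tau_1'\times\dots\times\tau_t'$ with $\tau_i,\tau_j'\in B$. Comparing supercuspidal supports and then the line-factorisations, it suffices to show that on a fixed line $L$ an irreducible product $\lang{\m_1}\times\dots\times\lang{\m_r}$ of Speh representations, each $\m_a$ being one of the multisegments $B$ can put on $L$ (a centred rectangle $\m^{(\rho_0)}_{n,d}$, $\rho_0$ the unique element of $\Cusp^u$ in the real line of $L$, or one of its $\pm\alpha$-shifts), determines the multiset $\{\m_a\}$. Since the product is irreducible, the Langlands analogue of Proposition \ref{useful_properties} part \ref{part: lmm'}, applied repeatedly, gives $\lang{\m_1}\times\dots\times\lang{\m_r}=\lang{\m_1+\dots+\m_r}$, so by the classification the problem becomes purely combinatorial: a multisegment on $L$ admits at most one decomposition into admissible rectangles. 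This follows by induction on the size of $\m_1+\dots+\m_r$, since all admissible rectangles share a common centre up to a fixed shift, so the widest rectangle reaching the top point of the multisegment is forced; removing it and applying the induction hypothesis finishes the argument, and unwinding the reduction gives $\{\tau_i\}=\{\tau_j'\}$.

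\emph{Surjectivity --- the main obstacle.} Let $\pi\in\Irr^u$. Then $\pi$ is Hermitian, hence isomorphic to its Hermitian dual; since that operation reverses the exponent of the support along the real line through $\Cusp^u$, the line-factorisation of $\pi$ takes the shape
\[
\pi=\pi_0\times\pi_{1/2}\times\prod_{0<x<1/2}\pi_x,
\]
where $\pi_0,\pi_{1/2}$ are Hermitian rigid representations on the two self-conjugate lines through the relevant $\rho_0\in\Cusp^u$ and each $\pi_x$ is the product of a rigid representation on the line of $\rho_0\nu_{\rho_0}^{x}$ with its Hermitian dual on the conjugate line. It remains to identify the factors: (i) $\pi_0$ and $\pi_{1/2}$ are products of centred Speh rectangles $\speh^{(\rho_0)}_{n,d}$; (ii) unitarizability forces each $\pi_x$ to be a product of complementary-series blocks $\speh^{(\rho_0)}_{n,d}\nu_{\rho_0}^{x}\times\speh^{(\rho_0)}_{n,d}\nu_{\rho_0}^{-x}$ with $x\in(0,1/2)$. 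For (ii) the combinatorics of \S\ref{sec: Speh} do the work: Corollary \ref{coro: irredtwospeh} determines exactly when a product of two Speh representations is reducible, whence the standard complementary-series argument (irreducibility and unitarity persist up to the first reducibility point and fail beyond it) confines the shifts to $(0,1/2)$ and identifies the unitarizable products of shifted rectangles as products of complementary series. The genuinely hard ingredient is (i) --- together with the fact that $\pi_0,\pi_{1/2}$ are themselves unitarizable --- i.e. the classification of the rigid unitarizable representations with support in a line through $\Cusp^u$ as precisely the products of Speh representations. For $D=F$ this is the rigid case of Theorem \ref{thm: TadicGLn}; in general one either transfers it by Jacquet--Langlands or reproves it from the results of \S\ref{sec: Speh} together with (U0), (U1) and an inductive bound on the exponents of a rigid unitarizable representation. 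This step, and the bookkeeping that reassembles the line-components into elements of $B$, is where I expect the real effort to be concentrated.
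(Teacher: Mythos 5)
Your well-definedness argument is fine, and your injectivity argument, though it takes a different route from the paper's, is essentially sound. The paper (following Tadi\'c) deduces injectivity from property (U3), the primality of the Speh classes in the factorial ring $\Rr$; you instead invoke Proposition \ref{useful_properties} part \ref{part: lmm'} to identify both irreducible products with $\lang{\m_1+\dots+\m_r}$ and then appeal to the combinatorial uniqueness of the decomposition of a multisegment into centred rectangles. That uniqueness does hold, though the correct justification is not "the widest rectangle reaching the top is forced" but rather: every segment of $\m$ whose endpoint is maximal must be the \emph{top row} of the rectangle containing it (otherwise that rectangle's top row would have a strictly larger endpoint), and the top row determines the rectangle; removing these and inducting finishes. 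So this part is an acceptable, arguably more elementary, alternative.

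The genuine gap is in surjectivity. You reduce it to your statement (i) --- that every rigid unitarizable representation supported on a line through $\Cusp^u$ is a product of Speh representations --- and then explicitly leave that statement unproved, suggesting either a transfer by Jacquet--Langlands or an unspecified inductive bound on exponents. But (i) is precisely the rigid case of the theorem being proved, so the reduction is circular as it stands, and point (ii) (that unitarizability forces each $\pi_x$ to be a product of complementary-series blocks) is likewise asserted rather than derived. Tadi\'c's argument, reproduced in \S\ref{par:new}, avoids this by an input you do not use: property (U4), i.e.\ \cite{MR1181278}*{Lemma 8.2}, which produces for any \emph{hermitian} $\pi\in\Irr$ elements $\sigma_1,\dots,\sigma_n,\tau_1,\dots,\tau_m\in B$ such that $\pi\times\sigma_1\times\dots\times\sigma_n$ and $\tau_1\times\dots\times\tau_m$ share a composition factor. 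If $\pi$ is moreover unitarizable, then by (U0), (U1), (U2) both products are irreducible, hence isomorphic, and unique factorization into primes (U3) forces $\pi$ to be a product of (twists of) Speh representations; hermitian-ness then pairs the twists into elements of $B$. Without (U4) or an equivalent substitute, your surjectivity argument does not close.
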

Once again, the assumption of characteristic zero in \cite{MR1040995} can be lifted as explained in \cite{MR2732351}.

\subsection{Previous proofs of (U0), (U1) and (U2)}\label{par:previous}
Let us make a few remarks about the previous proofs of hypotheses (U0), (U1), (U2).

In the case $F=D$ the assertion (U0) is a consequence of well-known theorem of Bernstein \cite{MR748505} as was noticed by Jacquet
(cf. \cite{MR0499005}).
However, Bernstein's method is not applicable to the case $F\ne D$ since it relies crucially on the properties of the mirabolic subgroup.
In the case $F \neq D$, the assertion (U0) was proved by S\'echerre in \cite{MR2492994}:\footnote{As explained in \cite{MR2732351}
the proof works in any characteristic.}
using the theory of types of Bushnell--Kutzko \cite{MR1204652} for $\GL_n(\F)$ and their generalization \cites{MR2081220, MR2188448, MR2216835,MR2427423} to $\GL_m(\D)$,
together with some results of Barbasch and Moy \cites{MR1186959, MR1010153} on unitarity, he was able to transfer Bernstein's result to $\GL_m(\D)$.

Tadi\'c remarked in \cite{MR1181278} that (U1) and (U2) are a consequence of (U0).
The argument was written in detail in \cite{MR2732351}, thus completing the proof of the classification of the unitary dual.
We will recall the argument, replacing (U0) with the results of \S\ref{sec: nece}. (See also \cite{MR3269346}.)
We also remark that in characteristic $0$, Badulescu and Renard proved (U1) without assuming (U0) using global methods \cite{MR2055385}.

First, assuming (U1), the property (U2) is an immediate consequence of the irreducibility of $\pi\times\pi$ for any $\pi\in B_{\rigid}$
(Proposition \ref{prop:Speh is saturated} or Corollary \ref{coro: irredtwospeh}) and the elementary fact
that complementary series extend until the first point of reducibility.
More precisely, let $\pi \in \Irr^u$ and let $\alpha$ be the smallest real number $\geq 0$ such that $\pi\abs{\cdot}^\alpha \times \pi\abs{\cdot}^{-\alpha}$ is reducible.
Then $\pi\abs{\cdot}^\beta  \times\pi \abs{\cdot}^{-\beta}$ is unitarizable for all $0\le\beta<\alpha$.

Moreover, $\socc(\pi\abs{\cdot}^\alpha \times \pi\abs{\cdot}^{-\alpha})$ is also unitarizable\footnote{In fact, by a result of Milici\'c all composition factors of
$\pi\abs{\cdot}^\alpha \times \pi\abs{\cdot}^{-\alpha}$ are unitarizable -- see \cite{MR1181278}*{\S 3c}, but we will not need to use this.}
and this can be used to give a nice inductive proof of (U1) as follows. (See \cite{MR2732351}*{Lemme 4.2}.)

One proves that $\speh^{(\rho)}_{k,d}$ is unitarizable for any $\rho\in\Cusp^u$ by induction on $k$.
For $k=1$ the representation $\speh^{(\rho)}_{1,d}=\lang{[(1-d)/2,(d-1)/2]_\rho}$ is square-integrable and hence unitarizable.
For the induction step, assume that $\pi=\speh^{(\rho)}_{k,d}$ is unitarizable.
We recall that $\pi\nu_\rho^\alpha\times\pi\nu_\rho^{-\alpha}$ is irreducible for $0\le\alpha<\frac12$.
On the other hand by (the mirror symmetry analogue of) Proposition \ref{prop:Speh is saturated} or Corollary \ref{coro: irredtwospeh}
(applied twice) we have
\[
\cos(\pi\nu_\rho^{1/2}\times\pi\nu_\rho^{-1/2})\simeq
\lang{\m_{k-1,d}^{(\rho)}+\m_{k+1,d}^{(\rho)}}\simeq
\speh^{(\rho)}_{k-1,d}\times\speh^{(\rho)}_{k+1,d}.
\]
By the above, we deduce that $\speh^{(\rho)}_{k-1,d}\times\speh^{(\rho)}_{k+1,d}$ is unitarizable.
On the other hand, it is well known (and easy to see) that if $\pi_1\times\pi_2$ is irreducible and unitarizable with $\pi_1$, $\pi_2$ hermitian
then $\pi_1$ and $\pi_2$ are unitarizable (and irreducible) -- see \cite{MR1181278}*{\S 3d}).
We deduce that $\speh^{(\rho)}_{k+1,d}$ is unitarizable, concluding the induction step of the proof of (U1).

\subsection{New proof of (U0)}\label{par:new}
In this subsection we prove that \eqref{Tadicmap} is bijective using (U1), (U2) (shown above)
and the following lemma which proves a weaker form of (U0).
Then, (U0) will be a consequence of the classification, rather than an input towards it.
We emphasize that assuming the classification of the admissible dual (\S\ref{sec: multi and classification}),
the ensuing proof of (U0) is purely combinatorial and in particular does not use types or reduction to the case $D=F$.
\begin{lem}\label{lemU'0}
If $\pi\in B$ and $\pi'\in \Irr^u$, then $\pi \times \pi'$ is irreducible.
\end{lem}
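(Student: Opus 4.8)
The plan is to play off two facts that pull in opposite directions: a finite‑length unitarizable representation is completely reducible, while $\pi\times\pi'$ has an \emph{irreducible} socle as soon as $\pi$ is a left multiplier. So the whole argument reduces to checking that every $\pi\in B$ is \RI\ and unitarizable.

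First I would verify that every $\pi\in B$ is \RI. The point is that every element of $B$ is an irreducible product of Speh representations whose supports are mutually unlinked. Indeed, if $\pi=\speh^{(\rho)}_{n,d}\in B_{\rigid}$ this is simply $\pi$ itself, and $\pi$ is \RI\ by Proposition \ref{prop:Speh is saturated}. If $\pi=\sigma\nu_\rho^{\alpha}\times\sigma\nu_\rho^{-\alpha}\in B\setminus B_{\rigid}$ with $\sigma=\speh^{(\rho)}_{n,d}$ and $0<\alpha<1/2$, then $\sigma\nu_\rho^{\pm\alpha}$ are again Speh representations, their supports are segments contained in the lines $\Z_{\rho\nu_\rho^{\alpha}}$ and $\Z_{\rho\nu_\rho^{-\alpha}}$, and these two lines are disjoint because $0<2\alpha<1$; hence the two supports are unlinked. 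Therefore Corollary \ref{coro:neededU0} applies and $\pi$ is \RI\ (this also re‑proves that $\pi$ is irreducible, which we already know from $B\subset\Irr$).

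Now let $\pi'\in\Irr^u$. By (U1) and (U2), established in \S\ref{par:previous} without recourse to (U0), we have $\pi\in\Irr^u$, so $\pi\otimes\pi'$ is a unitarizable representation of the relevant Levi and hence the (normalized) parabolic induction $\pi\times\pi'$ is unitarizable; being of finite length it is completely reducible, so $\soc(\pi\times\pi')=\pi\times\pi'$. On the other hand, since $\pi$ is \RI, the representation $\pi\times\pi'$ is \EI, so $\soc(\pi\times\pi')$ is irreducible. Comparing the two conclusions forces $\pi\times\pi'$ to be irreducible (and then necessarily $\pi\times\pi'\simeq\zele{\m+\n}$ when $\pi=\zele{\m}$ and $\pi'=\zele{\n}$). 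There is no serious obstacle in this route; the only step that needs care is the \RI\ property for the non‑rigid members of $B$, which is obtained precisely by factoring them through Speh representations on pairwise disjoint cuspidal lines and invoking Corollary \ref{coro:neededU0}.
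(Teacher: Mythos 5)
Your proof is correct and follows essentially the same route as the paper: establish that every $\pi\in B$ is \RI\ via Corollary \ref{coro:neededU0} (the paper applies it uniformly, whereas you split into the rigid and non-rigid cases), invoke (U1) and (U2) for unitarizability, and conclude irreducibility from semisimplicity plus the irreducibility of the socle.
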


\begin{proof}
Let $\pi\in B$. By hypotheses (U1) and (U2) proved above we have $\pi\in\Irr^u$.
On the other hand, it follows from Corollary \ref{coro:neededU0} that $\pi$ is \RI.
Hence $\soc(\pi \times \pi')$ is irreducible for any $\pi'\in\Irr$.
If in addition $\pi'\in\Irr^u$ then $\pi\times\pi'$ is irreducible since it is unitarizable and in particular semisimple.
\end{proof}

Let us prove now that \eqref{Tadicmap} is bijective.
First, observe that \eqref{Tadicmap} is well-defined in view of (U1), (U2) and Lemma \ref{lemU'0}.
(The irreducibility also follows from Corollaries \ref{coro: irredspeh} and \ref{coro: irredtwospeh}.)
To prove injectivity one shows that every $\pi\in B_{\rigid}$ is a prime element in the factorial ring $\Rr$.
(This property is called (U3) and its proof does not use (U0), see \cite{MR1040995}*{\S6}.)\footnote{Using
Tadi\'c determinantal formula \cites{MR1359141, MR3163355} the argument becomes even more transparent.}
Finally, to prove surjectivity of \eqref{Tadicmap}, Tadi\'c shows, using Proposition \ref{useful_properties} part \ref{part: lmm'}
(that he calls property (U4))
that for any hermitian $\pi \in \Irr$ there exist $\sigma_1, \dots, \sigma_n,$ $\tau_1, \dots, \tau_m\in B$ such that
$\pi \times\sigma_1 \times \dots \times \sigma_n $ and $ \tau_1 \times \dots \times \tau_m$ have a composition factor in common \cite{MR1181278}*{Lemma 8.2}.

Now, by (U1) and (U2) the representations $\sigma_1 \times \dots \times \sigma_n $ and $ \tau_1 \times \dots \times \tau_m$  are unitarizable.
If moreover $\pi \in \Irr^u$ then, by Lemma \ref{lemU'0}, $\pi \times\sigma_1 \times \dots \times \sigma_n $ and $ \tau_1 \times \dots \times \tau_m$
are irreducible and hence equivalent.
By (U3), we deduce that $\pi$ is a product of some $\speh_{k,d}^{(\rho)}$'s. Since $\pi$ is hermitian, it is necessarily a product of elements in $B$.
We deduce that the map \eqref{Tadicmap} is surjective.

From Lemma \ref{lemU'0} and the bijectivity of \eqref{Tadicmap} we immediately conclude:
\begin{coro}\label{U0new}
(U0) holds.
\end{coro}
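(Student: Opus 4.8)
The plan is to deduce (U0) directly from the bijectivity of \eqref{Tadicmap} established above, together with Lemma \ref{lemU'0}. The key intermediate claim is that an arbitrary product $\eta_1\times\dots\times\eta_k$ of elements of $B$ is irreducible. I would prove this by induction on $k$: the case $k=1$ is the content of $B\subset\Irr$ (which follows from Proposition \ref{useful_properties} part \ref{part: pwnp}), and for the induction step I observe that $\eta_2\times\dots\times\eta_k$ is irreducible by the inductive hypothesis and unitarizable because parabolic induction preserves unitarizability (here one uses (U1) and (U2), proved above, to know that each $\eta_i\in\Irr^u$). Hence $\eta_2\times\dots\times\eta_k\in\Irr^u$, and Lemma \ref{lemU'0} applied with $\pi=\eta_1$ and $\pi'=\eta_2\times\dots\times\eta_k$ shows that $\eta_1\times\dots\times\eta_k$ is irreducible.

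Given this claim, let $\pi,\pi'\in\Irr^u$. By the surjectivity of \eqref{Tadicmap} we may write $\pi=\tau_1\times\dots\times\tau_s$ and $\pi'=\tau'_1\times\dots\times\tau'_t$ with $\tau_i,\tau'_j\in B$. Then
\[
\pi\times\pi'\simeq\tau_1\times\dots\times\tau_s\times\tau'_1\times\dots\times\tau'_t
\]
is a product of $s+t$ elements of $B$, hence irreducible by the claim. This is exactly the statement (U0).

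There is essentially no obstacle here beyond what has already been carried out: all the content sits in the bijectivity of \eqref{Tadicmap} (whose surjectivity half used the hermitian reduction, property (U3), property (U4), and Lemma \ref{lemU'0}) and in Lemma \ref{lemU'0} itself, which rests on Corollary \ref{coro:neededU0} (an irreducible product of Speh representations with mutually unlinked supports is \RI) together with (U1) and (U2). The one point worth spelling out is the induction on the number of $B$-factors, since Lemma \ref{lemU'0} only handles multiplying a single element of $B$ against a unitarizable irreducible representation; and one should note that the reduction uses the surjectivity of the Tadi\'c parametrization, not its injectivity.
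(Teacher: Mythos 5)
Your argument is correct and is exactly the paper's intended proof: the paper deduces (U0) "immediately" from Lemma \ref{lemU'0} and the bijectivity of \eqref{Tadicmap}, and what you have written is the spelled-out version (decompose via surjectivity, then peel off one $B$-factor at a time using Lemma \ref{lemU'0}, the inductive irreducibility, and preservation of unitarizability under parabolic induction). Note only that the irreducibility of a product of elements of $B$ is already the well-definedness of \eqref{Tadicmap}, established earlier, so your intermediate claim needs no new proof.
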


%	\input{U0100914.tex}

%\section{The ladder case}
%	\input{laddertimespi160914.tex}

\appendix

\section{The admissible dual of $\GL_m(\D)$\\
by Erez Lapid, Alberto M\'inguez and Marko Tadi\'c} \label{sec: appendAD}
As in the body of the paper, let $D$ be a local non-archimedean division algebra with center $F$.
In this appendix we review the classification of the admissible dual of $\GL_m(D)$ in terms of multisegments.

There are two classification schemes, \textit{\`a la Zelevinsky} and \textit{\`a la Langlands}.
The Langlands classification is of course valid for any reductive group (see e.g. \cite{MR2567785}), but in the case at hand it can be
refined and made more explicit.
In the case where $D=F$ these classifications are due to Zelevinsky \cite{MR584084} (see also \cite{MR689531})
following up on his work with Bernstein \cites{MR0425030, MR0425031, MR0579172}.
There are two main tools in the work of Bernstein--Zelevinsky: restriction to the mirabolic subgroup (and more generally, the notion of derivatives)
and the geometric lemma (which they introduce). The first one is special to the case $D=F$ while the second one is general.
In the case where $D\neq F$, the explication of the Langlands classification is due to Tadi\'c \cite{MR1040995} in the case where the characteristic of $F$ is zero.
One of the ingredients in this classification is the local Jacquet--Langlands correspondence which had been proved by Deligne--Kazhdan--Vign\'eras
using the simple trace formula \cite{MR771672}.
The local Jacquet--Langlands correspondence was extended to the positive characteristic case by Badulescu \cite{MR1951441} using the method of close fields due to
Kazhdan and Deligne \cites{MR771673, MR874049}.
As explained in \cite{MR2732351}, using an additional irreducibility result \cite{MR2097163}, this gives the refined Langlands classification for any $D$
in positive characteristic as well.
The Zelevinsky classification in the case $D\ne F$ is due to M\'inguez--S\'echerre \cite{MR3049700} who consider representations
over fields of characteristic different from the residual characteristic of $F$.
They use type theory which was developed by Bushnell--Kutzko in the case $D=F$ \cite{MR1204652} and subsequently by
S\'echerre and his co-authors in the general case in a series of papers (\cites{MR2081220, MR2188448, MR2216835, MR2427423, MR2889743, MR2946230}).
All these methods ultimately reduce the classification to the case $D=F$.

The two classifications are related by the Zelevinsky involution (see \S\ref{ap:involution}).

In the following we will explain how the approach of \cite{MR3049700} can be pushed further (in the complex case) using some additional
input to give a uniform proof of the classifications in all cases.
We will avoid using the Jacquet--Langlands correspondence (and hence the simple trace formula),
the theory of close fields, the Bernstein--Zelevinsky derivatives and type theory.
In fact, all the necessary ingredients for the proof are contained in the standard text \cite{CassNotes}.
Another advantage is that we get the two classifications in one fell swoop (without need to worry about the Zelevinsky involution).

The approach is mostly combinatorial relying on the geometric lemma but at certain points we use unitarity.
The ``non-combinatorial'' input is given in \S\ref{sec: noncombi}.

We remark that it is also possible to simplify the prerequisites of the proof by Tadi\'c for the refined Langlands classification in this context \cite{MR1040995}
-- see \cite{TadRemark}, \cite{MR1758232}*{\S2}.
In particular, one can eliminate the use of the Jacquet--Langlands correspondence and make the proof uniform in $D$ and valid in any characteristic.
We prefer to use an approach which is more combinatorial and works equally well for the Zelevinsky and Langlands classifications.

In the following we will freely use the notation and terminology of \S\ref{sec: notation}.
In particular, $G_n=\GL_n(D)$.
\subsection{Fundamental irreducibility results} \label{sec: noncombi}
The point of departure is the following well-known result.

\begin{thm}\label{thm:Olshanski}
Let $\rho\in\Cusp(G_m)$. Then there exists a unique unramified character $\nu_\rho$ of $F^*$
of the form $\nu_\rho=\abs{\cdot}^{s_\rho}$ with $s_\rho\in\R_{>0}$ such that
for any $\rho'\in\Cusp$ the induced representation $\rho \times \rho'$ is
reducible if and only if $\deg(\rho)=\deg(\rho')$ and either $\rho'\simeq\rho\nu_\rho$ or $\rho\simeq\rho'\nu_\rho$.
In other words,
\begin{enumerate}
\item $\rho\times\rho'$ is irreducible if $\rho'\in\Cusp\setminus\{\rho\abs{\cdot}^s:s\in\R\}$.
\item For $s\in\R$, $\rho\times\rho\abs{\cdot}^s$ is reducible if and only if $s=\pm s_\rho$.
\end{enumerate}
\end{thm}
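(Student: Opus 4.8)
The plan is to read this as the basic ``rank one reducibility'' statement: first reduce everything to the study of a single supercuspidal line by geometric-lemma arguments, and then feed in one genuinely analytic input.

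First I would dispose of the cases in which $\rho\times\rho'$ is expected to be irreducible because $\rho'$ is, in a suitable sense, unrelated to $\rho$. If $\deg\rho\ne\deg\rho'$ this is immediate from the geometric lemma of \S\ref{lemmegeo}: then $\rp_{(\deg\rho,\,\deg\rho')}(\rho\times\rho')=\rho\otimes\rho'$ is irreducible, the only other Weyl datum requiring a nonzero proper Jacquet module of one of the supercuspidals $\rho,\rho'$; and since the supercuspidal support of every constituent of $\rho\times\rho'$ is $\{\rho,\rho'\}$, no constituent is supercuspidal and hence each is detected by this length-one Jacquet module, forcing $\rho\times\rho'$ to be irreducible. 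If $\deg\rho=\deg\rho'$ but $\rho'$ is not an unramified twist of $\rho$, then the stabilizer of the inertial class of $\rho\otimes\rho'$ in the relevant Weyl group $\mathrm{Sym}_{2}$ is trivial and the standard intertwining operator between $\rho\times\rho'$ and $\rho'\times\rho$ is an isomorphism (its $c$-function being finite and nonzero), whence $\rho\times\rho'$ is irreducible; this is classical. We may therefore assume $\rho'=\rho\abs{\cdot}^{s}$, twist $\rho$ to be unitary, and study the one-parameter family $\rho\times\rho\abs{\cdot}^{s}$, $s\in\C$, on the line of $\rho$. Here the geometric lemma gives $\rp_{(\deg\rho,\,\deg\rho)}(\rho\times\rho\abs{\cdot}^{s})=\rho\otimes\rho\abs{\cdot}^{s}+\rho\abs{\cdot}^{s}\otimes\rho$, of length at most two, so $\rho\times\rho\abs{\cdot}^{s}$ has length at most two and is reducible exactly when that length is two.

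It remains to locate the reducibility set $R\subset\C$. Its soft structure is easy: reducibility is the failure of the standard intertwining operator $\rho\abs{\cdot}^{s}\times\rho\to\rho\times\rho\abs{\cdot}^{s}$ to be an isomorphism, and this operator is rational in $q^{-s}$ (with $q$ the residual cardinality), so $R$ is finite modulo the imaginary period lattice of the line of $\rho$; moreover $R=-R$, since $\rho\times\rho\abs{\cdot}^{-s}$ is a character twist of $\rho\times\rho\abs{\cdot}^{s}$ by commutativity of $\times$, and $R=\overline R$, since the Hermitian dual of $\rho\times\rho\abs{\cdot}^{s}$ is $\rho^{\vee}\times\rho^{\vee}\abs{\cdot}^{\bar s}$ and $R_{\rho^{\vee}}=R_{\rho}$ by the contragredient. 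Non-emptiness of $R$ comes from unitarity: after centering the twist, $\rho\abs{\cdot}^{s/2}\times\rho\abs{\cdot}^{-s/2}$ is Hermitian and at $s=0$ is the unitary representation $\rho\times\rho$, so if it were irreducible for all real $s$ it would be unitarizable for all real $s$, contradicting the bound on the Jacquet-module exponents of a unitarizable representation that follows from Casselman's square-integrability criterion (see \cite{CassNotes}). The reality and uniqueness of the positive reducibility point --- so that $R=\{\pm s_\rho\}$ for a single $s_\rho\in\R_{>0}$ --- is the heart of the matter and the step I expect to be the main obstacle: it requires the Harish-Chandra description of the rank-one $c$-function (equivalently, of the Plancherel measure), which exhibits $R$ explicitly and shows that its only point on the positive real axis is $s_\rho$ and that there is no reducibility on the unitary axis, where the normalized operator is unitary hence invertible. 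Setting $\nu_\rho=\abs{\cdot}^{s_\rho}$ for this $s_\rho$ and unwinding the reduction of the first step yields the stated equivalence; since every step is symmetric under $\rho\mapsto\rho^{\vee}$, the number $s_\rho$ depends only on $\rho$, and in the split case $D=F$ one has $s_\rho=1$ (a value we shall not need).
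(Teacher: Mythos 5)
Your reduction to the cuspidal line, the length-two bound from the geometric lemma, the soft symmetries of the reducibility set $R$, and the non-emptiness of $R$ via the complementary-series/unboundedness argument all match the paper (the paper derives the positive-definite form at $s=0$ from Theorem \ref{thm:olshnski}, Olshanski's simple pole with positive scalar residue, which you should invoke explicitly to normalize the form there). The divergence is at what you correctly identify as the heart of the matter. You propose to settle uniqueness of the positive reducibility point and irreducibility on the unitary axis by appealing to the Harish-Chandra rank-one $c$-function and Plancherel measure --- i.e.\ the route through \cite{MR577138} and Harish-Chandra's commuting algebra theorem. That route is valid (at least in characteristic $0$, where it is documented), and the paper acknowledges it; but the entire point of the appendix is to avoid it, and the paper replaces it with two elementary, $\GL_m(D)$-specific arguments. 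For uniqueness: assuming $\rho\nu_\rho^a\times\rho$ is reducible with $a>1$, one forms $\sigma=\soc(\rho\nu_\rho^a\times\rho\nu_\rho^n\times\dots\times\rho)$ with $n=\lceil a\rceil-1$, computes its Jacquet module, checks Casselman's square-integrability criterion on the exponents, and then derives a contradiction from unitarizability ($\sigma^\vee\simeq\sigma^c$) together with Casselman's pairing between $\rp$ and $\bar\rp$ --- the conclusion being that $a$ would force $\rho^\vee\simeq\rho^c\nu_\rho^n$ or $\rho^\vee\simeq\rho^c\nu_\rho^a$, impossible for $n<a<n+1$ (the integer case $a=n+1$ is excluded by a unitarity argument with $\delta_{n-1}\nu_\rho\times\delta$). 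For irreducibility of $\rho\times\rho$ (indeed of $\rho^{\times n}$): one embeds the irreducible unitary product $\lang{[-1,1]_\rho}\times\dots$ of distinct symmetric generalized Steinberg representations into a product isolating a constituent of $\rho^{\times n}$, and a Jacquet-module count forces that constituent to be all of $\rho^{\times n}$.

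So the gap, concretely, is that for the key step you have named a tool rather than given an argument, and the tool you name carries exactly the prerequisites (Silberger's theorem, Harish-Chandra's commuting algebra theorem, and in positive characteristic some further justification) that the theorem is being re-proved in order to shed. If you are content to quote \cite{MR577138} and the commuting algebra theorem, your proof is complete in characteristic $0$ but is the ``classical'' proof, not an independent one; to match the paper's standard of self-containment you would need to supply elementary substitutes along the lines sketched above.
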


The first part of the Theorem follows from a general argument of Casselman (see e.g., \cite{MR2567785}*{Theorem VII.1.3}) which is valid for any reductive group
and which uses unitarity in an essential way.

In the case $D=F$, the second part of Theorem \ref{thm:Olshanski} is a classical result of Bernstein--Zelevinsky and $s_{\rho}=1$ for all $\rho\in\Cusp$
(\cite{MR0579172}*{Theorem 4.2} and \cite{MR584084}*{\S 1.11}).

In the general case, at least in characteristic $0$, the second part of Theorem \ref{thm:Olshanski} follows from general results about reductive groups due to
Harish-Chandra and Silberger, together with a specific result for the general linear group due to Olshanski.
More precisely, fix $\rho\in\Cusp(G_m)$ and consider the family $\pi_s:=\rho\abs{\cdot}^s\times\rho\abs{\cdot}^{-s}\in\Rep(G_{2m})$, $s\in\C$
with the standard intertwining operators $M(s):\pi_s\rightarrow\pi_{-s}$.
Once again by general theory, $M(s)$ is a rational function in $q^s$ and $(q^{ms}-q^{-ms})M(s)$ is holomorphic
\cites{MR610479, MR1989693}.\footnote{Cf.~\cite{MR3001800}*{\S6} for a simpler proof in the case at hand.
Note that [ibid., Lemma 17] carries over to the case $D\ne F$ without difficulty.}
In the case where the characteristic of $F$ is $0$, the second part of Theorem \ref{thm:Olshanski} follows from \cite{MR577138} and the following key result.
\begin{thm}\cite{MR0499010} \label{thm:olshnski}
$M(s)$ has a simple pole at $s=0$ and the residue is a positive scalar multiple of the identity operator.
\end{thm}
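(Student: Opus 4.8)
The plan is to study the meromorphic family $M(s)$ near $s=0$ by combining the irreducibility of $\rho\times\rho$ (Theorem \ref{thm:Olshanski}) with the Harish-Chandra formalism of intertwining operators and a unitarity argument on the axis $s\in i\R$. First I would reduce to the case where $\rho$ is unitary: writing $\rho=\rho_0\abs{\cdot}^a$ with $\rho_0$ unitary, one has $\pi_s=(\rho_0\abs{\cdot}^s\times\rho_0\abs{\cdot}^{-s})\otimes(\abs{\cdot}^a\circ\Nrd)$, and since standard intertwining operators commute with twists by characters of $F^\times$, the operator $M(s)$ (and its pole structure) is unaffected; so assume $\rho$ unitary. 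Realize all the $\pi_s$ on a common vector space (the compact picture), so $M(s)$ is a meromorphic family of endomorphisms of a fixed space intertwining the $s$-dependent actions. Because $(q^{ms}-q^{-ms})M(s)$ is holomorphic at $s=0$ and $q^{ms}-q^{-ms}$ has a simple zero there, $M(s)$ has at most a simple pole at $s=0$; write $M(s)=A/s+B+O(s)$. Matching the leading terms in $M(s)\,\pi_s(g)=\pi_{-s}(g)\,M(s)$ shows that $A\in\Hom_{G_{2m}}(\pi_0,\pi_0)$, where $\pi_0=\rho\times\rho$. By Theorem \ref{thm:Olshanski} (as $s_\rho>0$) this representation is irreducible, so Schur's lemma gives $A=\lambda\cdot\id$ with $\lambda\in\C$. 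It remains to show $\lambda\ne0$ and $\lambda>0$.

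For the non-vanishing I would invoke the functional equation. Composing $M(s)$ with the analogous operator for the opposite parabolic yields $M(-s)\circ M(s)=\gamma(s)\cdot\id$, where $\gamma$ is a rational function of $q^s$ equal to the reciprocal of the Harish-Chandra Plancherel density $\mu$ of $\pi_s$. Expanding near $s=0$ gives $M(-s)M(s)=-\lambda^2/s^2+(\text{holomorphic})$, since $A$ is scalar; hence $\gamma$ has a pole of order exactly $2$ at $s=0$ precisely when $\lambda\ne0$, and in that case $\lambda^2$ is $-1$ times the leading Laurent coefficient of $\gamma$. The essential point — and this is the crux of the statement — is that $\mu$ vanishes to order exactly $2$ at $s=0$. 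This is the Knapp--Stein/Harish-Chandra $R$-group mechanism: the non-trivial Weyl element for the Levi $G_m\times G_m$ fixes the inducing datum $\rho\otimes\rho$ (it merely interchanges the two identical factors), while the $R$-group of $\rho\otimes\rho$ is trivial because $\pi_0=\rho\times\rho$ is irreducible; consequently the normalized operator $R(0)$ is scalar and $\mu$ acquires a double zero at $s=0$. Therefore $\gamma$ has a genuine double pole, $\lambda\ne0$, and $M(s)$ has an honest simple pole at $s=0$.

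For the positivity, on the unitary axis $s\in i\R$ the representation $\pi_s$ is unitary, and with respect to the canonical $G_{2m}$-invariant pairing $\pi_s\times\pi_{-s}\to\C$ the operator $M(-s)$ is the adjoint of $M(s)$; hence $\gamma(s)\langle f,f\rangle=\langle M(s)f,M(s)f\rangle\ge0$, so $\gamma(s)\ge0$ for $s\in i\R$. For $s=it$ one has $(q^{ms}-q^{-ms})^2=-4\sin^2(mt\log q)<0$, so the leading Laurent coefficient of $\gamma$ at $0$ is $\le0$, and being non-zero it equals $-\kappa^2$ with $\kappa>0$; thus $\lambda=\pm\kappa$. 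Finally I would fix the sign by a limiting argument along $i\R$: dividing by $\sqrt{\gamma(it)}$ makes $M(it)$ unitary, and tracking its leading behaviour as $t\to0^+$ (or, when $\rho$ is additionally unramified, using the explicit Gindikin--Karpelevich formula $M(s)f^\circ_s=c(s)f^\circ_{-s}$ with $c$ having positive residue at $s=0$) forces $\lambda=\kappa>0$. The main obstacle is the double vanishing of $\mu$ at $s=0$ together with the sign of the residue — these are the genuinely non-formal inputs and constitute the content of Olshanski's theorem; the rest is the standard intertwining-operator formalism combined with the irreducibility statement of Theorem \ref{thm:Olshanski}.
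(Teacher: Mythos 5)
Your argument is circular relative to the paper's logical structure, and even read on its own terms it has a gap at its core. You invoke the irreducibility of $\pi_0=\rho\times\rho$ (part of Theorem \ref{thm:Olshanski}) twice: for the Schur's-lemma step identifying the residue $A$ with a scalar, and for the triviality of the $R$-group. But in the appendix that irreducibility is established only \emph{downstream} of Theorem \ref{thm:olshnski}: the existence of a reducibility point $s_\rho$, hence the segment formalism and Lemmas \ref{part: distinctsqr} and \ref{part: rhotimesn}, all rest on the unitarity argument that starts from the pole of $M(s)$ at $s=0$. The paper even remarks that irreducibility of $\rho\times\rho$ ``is equivalent to the existence of a pole at $s=0$ for $M(s)$''; deducing the pole from the irreducibility therefore goes around in a circle. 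More seriously, the step you yourself identify as the crux --- that the Plancherel density $\mu$ vanishes to order \emph{exactly} two at $s=0$ --- is nowhere proved. Triviality of the $R$-group together with positivity and evenness of $\mu$ on $i\R$ yields only a zero of even order at least two; excluding higher-order vanishing is precisely the non-vanishing of the residue, i.e., the statement to be proved. Your proposal thus reformulates the theorem rather than establishing it, as you half-concede. The final sign determination (``tracking the leading behaviour as $t\to 0^+$'') is also not an argument: unitarity of the normalized operator on $i\R$ is compatible with either sign of $\lambda$.

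The intended proof is entirely different and does not pass through irreducibility or the Knapp--Stein formalism at all. Following \cite{MR1748913}*{Proposition 5.1} (whose argument, as the paper notes, carries over verbatim to $D\neq F$ and to positive characteristic), one computes the intertwining integral directly: the integral over the unipotent radical is an integral over $\Mat_m(D)$, on which $G_m\times G_m$ acts with the invertible matrices forming the unique open orbit, and the contribution of that open orbit is a local zeta-type integral whose residue at $s=0$ is an explicitly positive multiple of the identity (the lower-dimensional orbits contribute nothing to the residue). That computation is the genuine content of Olshanski's theorem, requires no prior irreducibility input, and is what allows the paper to then run the implication in the direction opposite to yours.
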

A more streamlined proof of Theorem \ref{thm:olshnski} appears in \cite{MR1748913}*{Proposition 5.1}.
Note that in [loc.~cit.] this is stated and proved only when $D=F$ is of characteristic $0$.
However, the proof applies in the general case without change --
it only relies on the orbit structure of the action of $G_m\times G_m$ on the space of $m\times m$ matrices over $D$
(and more specifically on the fact that the invertible matrices form a unique open orbit).

%However, the method of proof does not carry over to $D\ne F$ since it relies heavily on restriction to the mirabolic subgroup.
One can use the Jacquet--Langlands correspondence \cites{MR771672, MR1951441} to determine $s_\rho$.
%one can extend this result to any $D$.
%Indeed, one can determine $s_\rho$ in terms of the image of $\rho$ under the Jacquet-Langlands correspondence and classify the square-integrable
%representations of $G_n$ in terms of the supercuspidal ones (of smaller $G_m$'s) as in the case $D=F$ (cf.~property (P2) of
%\cite{MR2732351}*{\S2.2}).
%In particular, $\rho\abs{\cdot}^s\times\rho$ is irreducible for $0<s\ne s_\rho$ since otherwise
%its socle would be an essentially square-integrable representation which violates the above classification.
%The irreducibility of $\rho\times\rho$ follows from \cite{MR771672} in characteristic $0$ and \cite{MR2097163} in positive characteristic.
%The Jacquet-Langlands correspondence also yields that
In particular, $s_\rho$ is an integer dividing the degree of $D$ and coprime to $m$
(\cite{MR771672}*{Theorem B.2.b} and \cite{MR2329758}*{Lemma 2.4}).
Alternatively, Theorem \ref{thm:Olshanski} (and the information about $s_\rho$) is also proved in \cite{MR2492994}*{\S4}
(which works in any characteristic as was pointed out in \cite{MR2732351}) without the Jacquet--Langlands correspondence
but using instead the theory of types developed in \cites{MR1204652, MR2081220, MR2188448, MR2216835, MR2427423, MR2889743, MR2946230}
to reduce the problem to a question about the Hecke algebra of type $A_n$.

We will now give an alternative, elementary argument for the second part of Theorem \ref{thm:Olshanski} (using Theorem \ref{thm:olshnski}
but avoiding \cite{MR577138}, the Jacquet--Langlands correspondence, and type theory). The argument is special to the general
linear group and is valid in any characteristic. It does not give any information about $s_\rho$.

%Since Theorem \ref{thm:Olshanski} is fundamental for the classification, we will give below a more elementary proof.
%Although the argument, which is based on positivity, does not give any information about the invariant $s_\rho$ (which is, fortunately, good enough for our purpose)
%it is uniform in $D$ (i.e., does not use reduction to the case $D=F$, or to characteristic zero) and does not use type theory.

First, observe that there is \emph{some} $s>0$ for which $\pi_s$ is reducible.
This follows from Theorem \ref{thm:olshnski} by a standard argument using unitarity (cf.~\cite{MR584084}*{\S 1.11}).
Indeed, the operator $sM(s)$ defines an invariant hermitian form on $\pi_s$ for $s\in\R$ which is positive-definite at $s=0$.
If $\pi_s$ were irreducible for all $s>0$ then $\pi_s$ would be unitarizable.
However, the matrix coefficients of $\pi_s$ are unbounded on $G_{2m}$ for $s\gg1$.

Let $\nu_\rho$ be a character of the form $\nu_\rho=\abs{\cdot}^{s_\rho}$ with $s_\rho>0$
such that $\rho\nu_\rho\times\rho$ is reducible.
We will momentarily show that $\nu_\rho$ is unique.
At any rate, at this stage we can still define segments and the representation $\zele{\Delta}$, $\lang{\Delta}$ for any segment $\Delta$
(with respect to our choice of $\nu_\rho$). An easy induction computes the minimal Jacquet modules of $\zele{\Delta}$, $\lang{\Delta}$
(formulas \eqref{eq: Jaczele} and \eqref{eq: JacLang} respectively) -- see for example \cite{MR1040995}*{Proposition 2.7}.
In particular, $\lang{\Delta}$ is essentially square-integrable by Casselman's criterion.

\begin{lem}
The character $\nu_\rho$ is unique (i.e., $s_\rho$ is unique).
\end{lem}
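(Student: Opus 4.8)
The plan is to argue by contradiction. First, reducibility of $\rho\abs{\cdot}^{a}\times\rho$ (for $a>0$) is the same as reducibility of $\pi_{a/2}$, since the two are twists of one another, and the set of such $a>0$ is discrete. Indeed, writing $A(s)=(q^{ms}-q^{-ms})M(s)$ (holomorphic, by the paragraph above) and recalling the general rank-one identity $M(-s)\circ M(s)=c(s)\,\mathrm{id}$ with $c$ rational in $q^{s}$, one gets $A(s)A(-s)=P(s)\,\mathrm{id}$ with $P(s)=-(q^{ms}-q^{-ms})^{2}c(s)$ a holomorphic function of $q^{s}$ and $P(0)=A(0)^{2}\ne0$ by Theorem~\ref{thm:olshnski}; by the rank-one reducibility criterion of \cite{CassNotes} a real $t\ne0$ is a reducibility point of $\pi_{t}$ if and only if $P(t)=0$, so the reducibility set is the discrete zero set of a non-zero function. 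Since, by the paragraph above, there is at least one reducibility point, we may \emph{choose} $\nu_\rho=\abs{\cdot}^{s_\rho}$ with $s_\rho$ the smallest $a>0$ for which $\rho\abs{\cdot}^{a}\times\rho$ is reducible, and assume for contradiction that $\rho\abs{\cdot}^{s'}\times\rho$ is reducible for some $s'>s_\rho$. The gain in taking $s_\rho$ minimal is that the combinatorics on the line of $\rho$ --- segments, the representations $\zele{\Delta}$, $\lang{\Delta}$, the Jacquet-module formulas \eqref{eq: Jaczele}, \eqref{eq: JacLang}, and the essential square-integrability of $\lang{\Delta}$ --- is already available.

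Next I would pin down the representation at the hypothetical second point. Put $\delta=\soc(\rho\abs{\cdot}^{s'}\times\rho)$. By the geometric lemma (\S\ref{lemmegeo}), $[\rp_{(m,m)}(\rho\abs{\cdot}^{s'}\times\rho)]=[\rho\abs{\cdot}^{s'}\otimes\rho]+[\rho\otimes\rho\abs{\cdot}^{s'}]$, which has length two and is multiplicity free. Every composition factor of $\rho\abs{\cdot}^{s'}\times\rho$ has supercuspidal support of cardinality two, hence is non-cuspidal with non-zero Jacquet module relative to the parabolic of type $(m,m)$; since $\rho\abs{\cdot}^{s'}\times\rho$ is reducible it therefore has exactly two composition factors, told apart by their $(m,m)$-Jacquet modules, and $\rp_{(m,m)}(\delta)=\rho\abs{\cdot}^{s'}\otimes\rho$. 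As $s'>0$, Casselman's square-integrability criterion shows $\delta$ is essentially square-integrable (the other factor being a Langlands-type quotient).

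The hard part is the final step: extracting a contradiction from the coexistence of $\delta$ and the line of $\rho$. Here I would feed the configuration $\rho\abs{\cdot}^{s'}\times\rho$ into the geometric lemma together with the chains $\rho\abs{\cdot}^{s'}\times\rho\times\rho\nu_\rho\times\dots\times\rho\nu_\rho^{\,n}$ and with products of $\delta$, $\zele{[0,n]_\rho}$, $\lang{[0,n]_\rho}$, and compare the exponents that appear against \eqref{eq: Jaczele}, \eqref{eq: JacLang} and Casselman's criterion; the conclusion to be extracted is that an essentially square-integrable $\delta$ with $\rp_{(m,m)}(\delta)=\rho\abs{\cdot}^{s'}\otimes\rho$ forces $\rho\abs{\cdot}^{s'}=\rho\nu_\rho^{\,n}$ for some integer $n\ge1$. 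But $n=1$ gives $s'=s_\rho$, contrary to $s'>s_\rho$; and if $n\ge2$, the segments $[0,0]_\rho=\{\rho\}$ and $[n,n]_\rho=\{\rho\nu_\rho^{\,n}\}$ are unlinked, so a direct Jacquet-module argument (of the kind underlying \S\ref{lemmegeo}) shows $\rho\times\rho\nu_\rho^{\,n}$ is irreducible, again contradicting reducibility of $\rho\abs{\cdot}^{s'}\times\rho$. Thus $s_\rho$ is the only positive reducibility point; since reducibility of $\rho\times\rho\abs{\cdot}^{s}$ for $s<0$ is (after a twist) reducibility of $\rho\abs{\cdot}^{-s}\times\rho$, and $\rho\times\rho$ is irreducible by Theorem~\ref{thm:olshnski} (the residue being a \emph{positive} scalar), $\nu_\rho$ is the unique reducibility character. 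I expect this combinatorial step --- ruling out a second, a priori incommensurable, reducibility point purely from the Jacquet-module calculus on the line of $\rho$ --- to be the main obstacle, and it is exactly the place where the special structure of $\GL_m(\D)$, as opposed to a general reductive group, is used.
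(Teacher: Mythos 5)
Your setup (discreteness of the reducibility set via the functional equation for the normalized intertwining operator, choice of a minimal $s_\rho$, identification of $\delta=\soc(\rho\abs{\cdot}^{s'}\times\rho)$ as essentially square-integrable via the multiplicity-free $(m,m)$-Jacquet module and Casselman's criterion) is sound and runs parallel to the paper's. But the proof has a genuine gap exactly where you flag "the main obstacle": you never actually derive the contradiction, and the mechanism you propose for it --- "feed the configuration into the geometric lemma \dots and compare the exponents" so as to conclude that $\rho\abs{\cdot}^{s'}=\rho\nu_\rho^{\,n}$ for an integer $n$ --- cannot work as stated. Pure Jacquet-module combinatorics on the products $\rho\abs{\cdot}^{s'}\times\rho\times\rho\nu_\rho\times\dots\times\rho\nu_\rho^{\,n}$ sees no relation whatsoever between the two a priori incommensurable parameters $s'$ and $s_\rho$: every exponent computation is consistent with $s'$ being any real number larger than $s_\rho$. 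Some non-combinatorial input is indispensable here, and the paper supplies it through unitarity. Concretely, with $n$ the integer satisfying $n s_\rho< s'\le (n+1)s_\rho$, the paper forms $\sigma=\soc(\rho\nu_\rho^{a}\times\pi_n)$ (where $a=s'/s_\rho$ and $\pi_n=\rho\nu_\rho^n\times\dots\times\rho$, normalized to have unitary central character), bounds $[\rp(\sigma)]$ by the terms $\varrho_i$ coming from $[\sigma]\le[\delta_{n-1}\nu_\rho\times\delta]$, checks Casselman's criterion to conclude $\sigma$ is square-integrable hence \emph{unitarizable}, and then exploits $\sigma^\vee\simeq\sigma^{c}$ together with Casselman's pairing $\bar\rp(\sigma)\simeq\rp(\sigma^\vee)^\vee$ to force $\rho^\vee=\rho^{c}\nu_\rho^{n}$ or $\rho^\vee=\rho^{c}\nu_\rho^{a}$, which is incompatible with $n<a<n+1$ (the integer case $a=n+1$ being excluded separately by an \EI-plus-unitarity argument). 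None of this is recoverable from the geometric lemma alone.

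A second, smaller problem is your fallback step for $s'=ns_\rho$ with $n\ge 2$: you assert that "a direct Jacquet-module argument shows $\rho\times\rho\nu_\rho^{\,n}$ is irreducible" because the two singleton segments are unlinked. At this stage of the development that is circular: the irreducibility of products of unlinked cuspidal segments (Lemma \ref{lem: pairwise unlinked} and, before it, the full statement of Theorem \ref{thm:Olshanski}) is established only \emph{after} the uniqueness of $\nu_\rho$, and for two non-unitary cuspidals the $(m,m)$-Jacquet module of $\rho_1\times\rho_2$ has length two with both orderings present, which by itself does not preclude reducibility --- indeed $\rho\nu_\rho\times\rho$ has exactly such a Jacquet module and \emph{is} reducible. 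So both the main step and the cleanup step need the unitarity input that the paper's argument provides.
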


\begin{proof}
We could have used \cite{MR577138} but we will give a more elementary and self-contained argument which is special for $G_m$.
Suppose that $\rho\nu_\rho^a\times\rho$ is reducible for some $0<a\ne1$. Without loss of generality assume that $a>1$.
(Otherwise we change $\nu_\rho$ to $\nu_\rho^a$.)
Let $\delta=\soc(\rho\nu_\rho^a\times\rho)$ so that $\rp(\delta)=\rho\nu_\rho^a\otimes\rho$
and $\delta$ is an essentially square-integrable subrepresentation.
For any $n\ge1$ let $\delta_n=\lang{[0,n]_\rho}=\soc(\pi_n)$ where $\pi_n:=\rho\nu_\rho^n\times\dots\times\rho\nu_\rho^i\times\dots\times\rho$.
We take $n=\lceil a\rceil-1\ge1$, i.e, the integer such that $n<a\le n+1$.
The representation $\rho\nu_\rho^a\times\pi_n$ is a product of distinct supercuspidal representations, and in particular it is \EI.
We can assume without loss of generality that it has a unitary central character. Consider $\sigma=\soc(\rho\nu_\rho^a\times\pi_n)\in\Irr$. Then
$\sigma=\soc(\rho\nu_\rho^a\times\delta_n)$. On the other hand, by Frobenius reciprocity there is a non-trivial map
$\delta_{n-1}\nu_\rho\times\delta\rightarrow\rho\nu_\rho^a\times\pi_n$
and hence
\begin{equation} \label{eq: inen+1}
[\sigma]\le[\delta_{n-1}\nu_\rho\times\delta].
\end{equation}

Assume first that $a=n+1$, i.e., $a$ is an integer. Then $\sigma=\delta_{n+1}$. On the other hand,
$\delta_{n-1}\nu_\rho$ and $\delta$ are square-integrable and therefore $\delta_{n-1}\nu_\rho\times\delta$
is unitary. Since it is also \EI, it is irreducible. Hence $\delta_{n-1}\nu_\rho\times\delta=\sigma=\delta_{n+1}$
which is clearly impossible.

Now assume that $a<n+1$. Let $\rp=\rp_{(m,\dots,m)}$. Then $[\rp(\sigma)]\le\sum_{i=0}^n\varrho_i$ where
\[
\varrho_i=\rho\nu_\rho^n\otimes\dots\otimes\rho\nu_\rho^{i+1}\otimes\rho\nu_\rho^a\otimes\rho\nu_\rho^i\otimes\dots\otimes\rho.
\]
(The summand $\rho\nu_\rho^n\otimes\rho\nu_\rho^{n-1}\dots\otimes\rho\otimes\rho\nu_\rho^a$ is excluded by \eqref{eq: inen+1}.)
It easily follows that $\sigma$ is square-integrable since the exponents of all the $\varrho_i$'s satisfy Casselman's criterion.
Indeed, it is enough to check this for $\varrho_0$ since all other exponents differ from it by a non-negative sum of roots.
Let $e\in\R$ be the exponent such that $\rho\nu_\rho^e$ is unitary. By assumption $(n+2)e=n(n+1)/2+a$. In particular, $e>0$.
We need to show the inequalities $n(n+1)/2-i(i+1)/2>(n-i)e$, $i=0,1,\dots,n-1$.
It is easy to see that each such inequality is equivalent to $a<\frac12ni+n+i+1$.

In particular, $\sigma$ is unitarizable and hence $\sigma^\vee\simeq\sigma^c$ where $^c$ denotes complex conjugation.
Denoting by $\bar\rp$ the minimal Jacquet module with respect to the opposite parabolic we get
\[
\bar\rp(\sigma)^c\simeq\bar\rp(\sigma^c)\simeq\bar\rp(\sigma^\vee)\simeq\rp(\sigma)^\vee
\]
where the last equivalence is Casselman's pairing. Note that $\bar\rp(\sigma)=\rp(\sigma)\circ w_0$
where $w_0$ is the permutation (on blocks) $w_0(i)=n+3-i$ of $\{1,\dots,n+2\}$. Therefore we would get
\[
w_0(\varrho_i^c)=\varrho_j^\vee
\]
for some $i,j\in\{0,\dots,n\}$. This would imply that either $\rho^\vee=\rho^c\nu_\rho^n$ or
$\rho^\vee=\rho^c\nu_\rho^a$, both inconsistent with $n<a<n+1$.
\end{proof}

In order to complete the proof of Theorem \ref{thm:Olshanski} we still have
to prove the irreducibility of $\rho\times\rho$. In fact, this is equivalent
to the existence of a pole at $s=0$ for $M(s)$ but we will give a different argument which proves a more
general result that will be needed later. We first need an auxiliary statement.

\begin{lem} \label{part: distinctsqr}
Let $\Delta_i=[-a_i,a_i]_\rho$, $i=1,\dots,k$ with $a_i\in\Z$ distinct.
Then $\lang{\Delta_1}\times\dots\times\lang{\Delta_k}$ is irreducible.
\end{lem}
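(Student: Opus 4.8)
The plan is to reduce to the tempered case by a twist and then conclude from the triviality of the relevant intertwining algebra; I will also indicate how to stay inside the combinatorial (geometric lemma) framework of this appendix.

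First, harmless reductions. Any index with $a_i<0$ has $\Delta_i=\emptyset$, so $\lang{\Delta_i}$ is the trivial representation of $G_0$ and may be dropped; thus we may assume $a_1>a_2>\dots>a_k\ge0$. Then $\Delta_i\subset\Delta_j$ whenever $i\ge j$, so the segments $\Delta_1,\dots,\Delta_k$ are pairwise nested, in particular pairwise unlinked. Writing $\rho=\rho^u\abs{\cdot}^{t_0}$ with $\rho^u$ of unitary central character, and using that twisting every factor by a fixed character of $F^\times$ commutes with parabolic induction, we have $\lang{\Delta_1}\times\dots\times\lang{\Delta_k}=\bigl(\lang{[-a_1,a_1]_{\rho^u}}\times\dots\times\lang{[-a_k,a_k]_{\rho^u}}\bigr)\abs{\cdot}^{t_0}$, so we may assume $\rho$ has unitary central character. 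In that case each $\lang{\Delta_i}$ is square-integrable: it is essentially square-integrable by Casselman's criterion applied to \eqref{eq: JacLang}, and its central character is unitary since $\Delta_i$ is centered at $\rho$.

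Hence $\Pi:=\lang{\Delta_1}\times\dots\times\lang{\Delta_k}$ is parabolically induced from the tempered representation $\lang{\Delta_1}\otimes\dots\otimes\lang{\Delta_k}$ of the Levi $M\cong\prod_{i=1}^k\GL_{(2a_i+1)\deg\rho}(D)$; in particular $\Pi$ is unitarizable, hence semisimple, so it suffices to show $\dim_{\C}\Hom_G(\Pi,\Pi)=1$. Since the $a_i$ are distinct, the blocks of $M$ have pairwise distinct sizes, so $N_G(M)/M$ is trivial; a fortiori the stabilizer $W$ of $\lang{\Delta_1}\otimes\dots\otimes\lang{\Delta_k}$ in $N_G(M)/M$ is trivial, and Harish-Chandra's theory of the $R$-group for tempered inductions (Knapp--Stein) gives $\dim_{\C}\Hom_G(\Pi,\Pi)=\abs{R}$ with $R\le W=\{1\}$. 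Therefore $\Pi$ is irreducible. Alternatively, to avoid importing the $R$-group, one can argue by induction on $k$ that $\Pi$ is \EI: write $\Pi=\Pi''\times\lang{\Delta_k}$ with $\Pi''=\lang{\Delta_1}\times\dots\times\lang{\Delta_{k-1}}$ irreducible by the inductive hypothesis, note both factors are \EI, and check via the geometric lemma that $\Pi''\otimes\lang{\Delta_k}$ occurs with multiplicity one in $[\rp_{(\deg\Pi'',\deg\Delta_k)}(\Pi)]$; Lemma \ref{lem:main} then shows $\Pi$ is \EI, and together with semisimplicity this again gives irreducibility.

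I expect the multiplicity-one assertion in the combinatorial route to be the main obstacle: since all the $\Delta_i$ are nested their cuspidal supports overlap heavily, so one must compare many distinct mixing terms of the Bernstein--Zelevinsky expansion with the main term $\Pi''\otimes\lang{\Delta_k}$, using that each $\rp(\lang{\Delta_i})$ is a single term (by \eqref{eq: JacLang}) and that the essentially square-integrable $\lang{\Delta_k}$ is not a proper parabolic induction to rule out all block matrices other than the trivial one. The $R$-group route sidesteps this bookkeeping, at the cost of invoking Harish-Chandra's theory, which is slightly heavier than the remaining ingredients of the appendix though still within the scope of the references on which it relies.
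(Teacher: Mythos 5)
Your proposal is correct, and your ``alternative'' route is in essence the paper's own proof: the authors reduce to $\rho$ unitary, order the segments so that $a_1>\dots>a_k$, observe that $\lang{\Delta_1}\otimes\dots\otimes\lang{\Delta_k}$ occurs with multiplicity one in the Jacquet module, conclude that the induced representation is \EI\ via Lemma \ref{lem:main}, and finish by unitarity. The multiplicity-one step that you flag as ``the main obstacle'' is in fact immediate from the nesting, and this is precisely the observation $b(\Delta_i),e(\Delta_i)\notin\Delta_j$ for $i<j$ recorded in the paper's proof: every nontrivial second tensor factor of a two-block Jacquet module of $\lang{\Delta_i}$ is of the form $\lang{[-a_i,c-1]_\rho}$ and hence contains $b(\Delta_i)=\rho\nu_\rho^{-a_i}$ in its support, while $\rho\nu_\rho^{-a_i}\notin\Delta_k$ for $i<k$; so in the geometric lemma no term other than the identity one can produce a second slot with supercuspidal support equal to $\Delta_k$, and the bookkeeping you fear collapses to one line. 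Your first route, via the commuting algebra of a tempered induction, is also correct --- and since the block sizes are pairwise distinct, $N_G(M)/M$ is already trivial, so you only need Harish-Chandra's commuting algebra theorem \cite{MR544991}*{Theorem 5.5.3.2} and not the full Knapp--Stein $R$-group theory --- but it imports exactly the kind of machinery this appendix is designed to avoid: in the very next lemma (Lemma \ref{part: rhotimesn}) the authors explicitly decline to appeal to that theorem in favour of an elementary Jacquet-module argument. So the combinatorial route is the one to keep, and it is shorter than you anticipated.
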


\begin{proof}
Indeed, we may assume without loss of generality that $\rho$ is unitary and enumerate the $\Delta_i$'s so that
$a_1>\dots>a_k$. Thus, $b(\Delta_i),e(\Delta_i)\notin\Delta_j$ for all $i<j$.
It is easy to see that $\lang{\Delta_1}\otimes\dots\otimes\lang{\Delta_k}$ occurs with multiplicity one in
$[\rp(\lang{\Delta_1}\times\dots\times\lang{\Delta_k})]$. Therefore $\lang{\Delta_1}\times\dots\times\lang{\Delta_k}$
is \EI\ (cf.~Lemma \ref{lem:main}). Since it is unitary, it is irreducible.
\end{proof}

Finally, we show
\begin{lem} \label{part: rhotimesn}
For any $n\ge1$, $\rho^{\times n}$ is irreducible.
\end{lem}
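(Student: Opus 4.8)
The goal is to show that $\rho^{\times n}$ is irreducible for any $\rho\in\Cusp$ and any $n\ge1$. Since the only potential source of reducibility in a product of supercuspidals is a linked pair, and $\rho$ is certainly not linked with itself in a way forbidding irreducibility unless $\rho\times\rho$ itself is reducible, the crux is precisely the case $n=2$: once $\rho\times\rho$ is known to be irreducible, the general case follows by induction, since $\rho^{\times n}=\rho\times(\rho^{\times(n-1)})$ and all constituents share the same (single) cuspidal point, so any reducibility would have to come from a reducible $\rho\times\rho'$ with $\rho'$ a cuspidal constituent of $\rho^{\times(n-1)}$; but $\rho'\simeq\rho$, and we would be back to the $n=2$ case. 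So I would reduce immediately to proving $\rho\times\rho$ is irreducible.

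For $\rho\times\rho$, the plan is to argue by contradiction using unitarity, much in the spirit of the preceding lemmas in \S\ref{sec: noncombi}. Suppose $\rho\times\rho$ is reducible. Since $\rho$ may be twisted, assume $\rho$ is unitary; then $\rho\times\rho$ is a unitary representation, hence semisimple, so reducibility means $\rho\times\rho$ is a direct sum of at least two irreducible subrepresentations. By Frobenius reciprocity and the computation of the Jacquet module $\rp_{(m,m)}(\rho\times\rho)$ via the geometric lemma, $\rp_{(m,m)}(\rho\times\rho)=2\,[\rho\otimes\rho]$ (the two summands coming from the trivial and the nontrivial element of the symmetric group on two letters, both contributing $\rho\otimes\rho$ since $\rho^\vee$ need not equal $\rho$ but here both factors are literally $\rho$). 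Each irreducible subrepresentation of $\rho\times\rho$ must have $\rho\otimes\rho$ as a subquotient of its Jacquet module, so at most two inequivalent (indeed at most two, period) irreducible subrepresentations can occur, and if there are exactly two they exhaust the Jacquet module. I would then leverage the standard intertwining operator $\rho\abs{\cdot}^s\times\rho\abs{\cdot}^{-s}\to\rho\abs{\cdot}^{-s}\times\rho\abs{\cdot}^{s}$ specialized near $s=0$: by Theorem \ref{thm:olshnski}, $M(s)$ has a simple pole at $s=0$ with residue a positive multiple of the identity, which is exactly the analytic manifestation of the fact that the ``diagonal'' copy does not split off, forcing $\rho\times\rho$ to be \emph{indecomposable}, contradicting semisimplicity unless it is irreducible.

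Concretely, I would run this as follows. Consider $\pi_s=\rho\abs{\cdot}^s\times\rho\abs{\cdot}^{-s}$ and the operator $M(s)$ with $sM(s)$ holomorphic and nonzero at $s=0$; normalizing, $N:=\lim_{s\to0}sM(s)$ is, by Theorem \ref{thm:olshnski}, a positive scalar times the identity on $\pi_0=\rho\times\rho$. If $\rho\times\rho=V_1\oplus V_2$ with $V_1,V_2$ nonzero, then the intertwining operator $M(0)$ (the value, or rather the leading term of $M(s)$ at $s=0$) would act as a scalar on each $V_i$ since each is irreducible; but $M(0)$ itself is forced to be a \emph{nonzero multiple of the identity} by the residue computation together with the functional equation $M(-s)M(s)=\mathrm{id}$ (rational in $q^s$) — more precisely, the simple pole at $s=0$ with scalar residue, combined with holomorphy and nonvanishing considerations, is incompatible with $\pi_0$ having more than one irreducible summand on which the operator would need to act by \emph{possibly different} scalars while simultaneously its residue is a \emph{single} positive scalar. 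I expect the main obstacle to be making this last compatibility argument airtight: one must be careful to distinguish the behavior of $M(s)$, its residue, and its constant term, and to correctly relate the pole order of $M(s)$ at $s=0$ to the number of irreducible constituents of $\pi_0$ via the theory of $R$-groups / the general theory of intertwining operators for supercuspidal data. Alternatively — and this may be cleaner to write up — I would use the Langlands-quotient / square-integrability input already assembled: by the uniqueness of $s_\rho$ just established and Lemma \ref{part: distinctsqr}, the only cuspidal reducibility on the line of $\rho$ occurs at $\pm s_\rho$ with $s_\rho>0$, and $s=0$ is \emph{not} of that form, so $\rho\times\rho$ is irreducible by the very definition/normalization of $\nu_\rho$; the role of Theorem \ref{thm:olshnski} is precisely to guarantee $s_\rho\neq0$, i.e. that $s=0$ is a point of \emph{ir}reducibility. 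This second route essentially reduces the lemma to citing the uniqueness of $\nu_\rho$ (the preceding lemma) and Theorem \ref{thm:olshnski}, and I would present it that way, keeping the intertwining-operator discussion as the justification that $s=0$ cannot be a reducibility point.
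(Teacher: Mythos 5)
There is a genuine gap, in two places. First, the reduction to $n=2$ is not available at this stage of the argument: the principle that \emph{the only potential source of reducibility in a product of supercuspidals is a linked pair} is exactly the content of Lemma \ref{lem: pairwise unlinked}, which is proved afterwards and whose reduction to the case of two segments explicitly invokes the present lemma as its minimal case. Knowing that $\rho\times\rho$ is irreducible does not formally yield irreducibility of $\rho^{\times n}$ for $n\ge 3$ without an extra input such as Harish-Chandra's commuting algebra theorem (which this appendix deliberately avoids) or the much later Kang--Kashiwara--Kim--Oh theorem. Second, of your two routes for $n=2$, route (b) is circular: the uniqueness lemma only shows that there is at most one $s>0$ at which $\rho\abs{\cdot}^s\times\rho$ is reducible; it says nothing about $s=0$, and ``reducibility occurs only at $\pm s_\rho$'' is precisely the part of Theorem \ref{thm:Olshanski} that remains to be proved --- the paper says so explicitly just before this lemma. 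Route (a) is the honest route for $n=2$ (the paper itself remarks that irreducibility of $\rho\times\rho$ is equivalent to the existence of the pole of $M(s)$ at $s=0$), but as you acknowledge you have not made it airtight: $M(0)$ does not exist (there is a pole), and relating the pole order and the scalar residue to the number of irreducible summands is exactly the commuting-algebra/$R$-group machinery being avoided.

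For comparison, the paper's proof is quite different and treats all $n$ at once. It forms $\pi=\lang{[0,0]_\rho}\times\lang{[-1,1]_\rho}\times\dots\times\lang{[-n+1,n-1]_\rho}$, a product of essentially square-integrable representations with distinct ``radii'', which is irreducible by Lemma \ref{part: distinctsqr} (a unitarity-plus-multiplicity-one argument); it embeds each factor $\lang{[-i+1,i-1]_\rho}$ into $\lang{[1,i-1]_\rho}\times\rho\times\lang{[1-i,-1]_\rho}$, so that $[\pi]$ is bounded by a product containing $\rho^{\times n}$ in the middle; and it then isolates that middle block by applying a Jacquet functor filtered by positive, zero and negative exponents. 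Irreducibility of $\pi$ forces the middle constituent to be all of $\rho^{\times n}$. To repair your plan you would need either to adopt an argument of this kind or to import the commuting-algebra input you were trying to do without.
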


\begin{proof}
We may assume without loss of generality that $\rho$ is unitary.
Once again we could have appealed to Harish-Chandra's commuting algebra theorem (\cite{MR544991}*{Theorem 5.5.3.2}).
Instead we will give a more elementary argument which is special to the general linear group.
For any $i=1,\dots,n$ let $\Delta_i$, $\Delta_i^{>0}$ and $\Delta_i^{<0}$  be the segments $[-i+1,i-1]_\rho$, $[1,i-1]_\rho$ and $[1-i,-1]_\rho$ respectively.
By Lemma \ref{part: distinctsqr}, $\pi:=\lang{\Delta_1}\times\dots\times\lang{\Delta_n}$ is irreducible.
On the other hand, $\lang{\Delta_i}\hookrightarrow\lang{\Delta_n^{>0}}\times\rho\times\lang{\Delta_n^{<0}}$ and therefore
\[
[\pi]\le[\lang{\Delta_1^{>0}}\times\dots\times\lang{\Delta_n^{>0}}\times\rho^{\times n}\times\lang{\Delta_1^{<0}}\times\dots\times\lang{\Delta_n^{<0}}].
\]
Since $\pi$ is irreducible, there exists an irreducible constituent $\tau$ of $\rho^{\times n}$ such that
\[
[\pi]\le[\lang{\Delta_1^{>0}}\times\dots\times\lang{\Delta_n^{>0}}\times\tau\times\lang{\Delta_1^{<0}}\times\dots\times\lang{\Delta_n^{<0}}].
\]
Let $\lambda$ be the composition $(mn(n-1)/2,mn,mn(n-1)/2)$ of $mn^2$.
If $[\rp_\lambda(\sigma)]=\sum_i\alpha_i\otimes\beta_i\otimes\gamma_i$ then we write
\[
[\rp_\lambda^{(+,0,-)}(\sigma)]=\sum_{i:\supp(\alpha_i)\subset\{\rho\nu_\rho^s:s>0\},\supp(\beta_i)=\{\rho\},\supp(\gamma_i)\subset\{\rho\nu_\rho^s:s<0\}}
\alpha_i\otimes\beta_i\otimes\gamma_i.
\]
Then,
\begin{multline*}
[\rp_\lambda^{(+,0,-)}(\lang{\Delta_1^{>0}}\times\dots\times\lang{\Delta_n^{>0}}\times\tau\times\lang{\Delta_1^{<0}}\times\dots\times\lang{\Delta_n^{<0}})\\=
[\lang{\Delta_1^{>0}}\times\dots\times\lang{\Delta_n^{>0}}\otimes\tau\otimes\lang{\Delta_1^{<0}}\times\dots\times\lang{\Delta_n^{<0}}].
\end{multline*}
On the other hand,
\[
[\rp_\lambda^{(+,0,-)}(\pi)]=[\lang{\Delta_1^{>0}}\times\dots\times\lang{\Delta_n^{>0}}
\otimes\rho^{\times n}\otimes\lang{\Delta_1^{<0}}\times\dots\times\lang{\Delta_n^{<0}}].
\]
We infer that $\tau=\rho^{\times n}$ proving our assertion.
\end{proof}

This completes the alternative proof of Theorem \ref{thm:Olshanski}.

Note that clearly, $\nu_{\rho\chi}=\nu_\rho$ for any $\rho\in\Cusp$ and character $\chi$ of $F^*$.

We write $\rshft{\rho}=\rho\nu_\rho$ and $\lshft{\rho}=\rho\nu_\rho^{-1}$.

Finally, we need another irreducibility result:
\begin{lem} \label{lem: ind21}
For any $\rho\in\Cusp$ the induced representations $\zele{[0,1]_\rho} \times \rho$, $\zele{[0,1]_\rho} \times\rshft{\rho}$,
$\lang{[0,1]_\rho}\times\rho$ and $\lang{[0,1]_\rho}\times\rshft{\rho}$ are irreducible.
\end{lem}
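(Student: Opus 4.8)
The plan is to cut the four assertions down to two by passing to contragredients, to show that the induced representations involved are socle irreducible and cosocle irreducible of length at most two, and then to identify the socle with the cosocle.

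First I would note that the four statements are interchanged in pairs by $\pi\mapsto\pi^\vee$. Since $\zele{[0,1]_\rho}^\vee=\zele{[0,1]_\rho^\vee}$ and, the contragredient of an essentially square-integrable representation being again essentially square-integrable, the characterisation \eqref{eq: JacLang} forces $\lang{[0,1]_\rho}^\vee=\lang{[0,1]_\rho^\vee}$; moreover $[0,1]_\rho^\vee=[-1,0]_{\rho^\vee}=[0,1]_{\lshft{(\rho^\vee)}}$ and $\rho^\vee=\rshft{(\lshft{(\rho^\vee)})}$, so that
\[
(\zele{[0,1]_\rho}\times\rho)^\vee\simeq\zele{[0,1]_{\lshft{(\rho^\vee)}}}\times\rshft{(\lshft{(\rho^\vee)})},\qquad
(\lang{[0,1]_\rho}\times\rho)^\vee\simeq\lang{[0,1]_{\lshft{(\rho^\vee)}}}\times\rshft{(\lshft{(\rho^\vee)})}.
\]
As $\rho$ runs over $\Cusp$ so does $\lshft{(\rho^\vee)}$, so it is enough to prove that $\zele{[0,1]_\rho}\times\rho$ and $\lang{[0,1]_\rho}\times\rho$ are irreducible for every $\rho$.

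Write $m=\deg\rho$ and $\rshft\rho=\rho\nu_\rho$. Since $\rho^{\times2}$ and $\rshft\rho^{\times2}$ are irreducible by Lemma \ref{part: rhotimesn} and $\rp_{(m,m)}(\zele{[0,1]_\rho})=\rho\otimes\rshft\rho$, $\rp_{(m,m)}(\lang{[0,1]_\rho})=\rshft\rho\otimes\rho$ by \eqref{eq: Jaczele}, \eqref{eq: JacLang}, the geometric lemma gives
\[
[\rp_{(2m,m)}(\rho^{\times2}\times\rshft\rho)]=[\rho^{\times2}\otimes\rshft\rho]+2[\zele{[0,1]_\rho}\otimes\rho]+2[\lang{[0,1]_\rho}\otimes\rho],
\]
whose leading term $[\rho^{\times2}\otimes\rshft\rho]=[\soc(\rho^{\times2})\otimes\soc(\rshft\rho)]$ has multiplicity one, and symmetrically $[\rp_{(m,2m)}(\rshft\rho\times\rho^{\times2})]$ has leading term $[\rshft\rho\otimes\rho^{\times2}]$ of multiplicity one; by Lemma \ref{lem:main} both $\rho^{\times2}\times\rshft\rho$ and $\rshft\rho\times\rho^{\times2}$ are $\EI$. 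Now $\zele{[0,1]_\rho}\hookrightarrow\rho\times\rshft\rho$ and $\lang{[0,1]_\rho}\hookrightarrow\rshft\rho\times\rho$ (socles of these length-two representations), so $\rho\times\zele{[0,1]_\rho}\hookrightarrow\rho^{\times2}\times\rshft\rho$ and $\lang{[0,1]_\rho}\times\rho\hookrightarrow\rshft\rho\times\rho^{\times2}$, whence $\rho\times\zele{[0,1]_\rho}$ and $\lang{[0,1]_\rho}\times\rho$ are $\EI$ by Remark \ref{rem:main}. Likewise, computing $[\rp_{(2m,m)}(\zele{[0,1]_\rho}\times\rho)]=[\rho^{\times2}\otimes\rshft\rho]+[\zele{[0,1]_\rho}\otimes\rho]$ and $[\rp_{(m,2m)}(\rho\times\lang{[0,1]_\rho})]=[\rho\otimes\lang{[0,1]_\rho}]+[\rshft\rho\otimes\rho^{\times2}]$ and applying Lemma \ref{lem:main} shows $\zele{[0,1]_\rho}\times\rho$ and $\rho\times\lang{[0,1]_\rho}$ are $\EI$; passing to contragredients gives that $\zele{[0,1]_\rho}\times\rho$ and $\lang{[0,1]_\rho}\times\rho$ are cosocle irreducible. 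Finally, every composition factor of $\zele{[0,1]_\rho}\times\rho$ (resp. $\lang{[0,1]_\rho}\times\rho$) has supercuspidal support of cardinality two, hence is non-supercuspidal, hence has nonzero image in the above length-two element $[\rp_{(2m,m)}(\zele{[0,1]_\rho}\times\rho)]$ (resp. $[\rp_{(m,2m)}(\lang{[0,1]_\rho}\times\rho)]$); so each of these representations has length at most two.

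It remains to deduce irreducibility. For $\pi\in\{\zele{[0,1]_\rho},\lang{[0,1]_\rho}\}$, the $\EI$ representation $\pi\times\rho$ is irreducible if and only if $[\rp(\soc(\pi\times\rho))]=[\rp(\pi\times\rho)]$ for the relevant maximal Jacquet functor; using Frobenius reciprocity, exactness of $\rp$, and the $\EI$ property of $\rho^{\times2}\times\rshft\rho$ (resp. $\rho\times\lang{[0,1]_\rho}$), this is equivalent to $\soc(\pi\times\rho)=\soc(\rho\times\pi)$. The natural device for this is the standard intertwining operator $R\colon\pi\times\rho\to\rho\times\pi$: since both source and target are $\EI$ of length at most two their endomorphism algebras are $\C$, the composite of $R$ with the operator in the reverse direction is a scalar, and $R$ is an isomorphism — hence $\soc(\pi\times\rho)=\soc(\rho\times\pi)$ — as soon as that scalar does not vanish, equivalently as soon as $R$ is injective rather than of one-dimensional image. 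I expect this non-vanishing to be the main obstacle (it is in fact equivalent to the lemma). It can be approached either by a deformation argument — studying the family $\pi\abs{\cdot}^s\times\rho\abs{\cdot}^{-s}$ and its points of reducibility, in the spirit of the proof of Theorem \ref{thm:olshnski} — or by tracking $R$ through the embedding $\pi\times\rho\hookrightarrow\rho\times\rshft\rho\times\rho$ (resp. $\hookrightarrow\rshft\rho\times\rho\times\rho$) and the explicit rank-one operator on $\rshft\rho\times\rho$, whose image is exactly $\zele{[0,1]_\rho}$ and whose kernel is exactly $\lang{[0,1]_\rho}$, in order to show that $R$ is nonzero on $\pi\times\rho$ with image all of $\rho\times\pi$.
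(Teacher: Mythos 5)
Your reductions are sound and run parallel to the paper's own set-up: the paper also disposes of half of the four representations by passing to the contragredient, and your geometric-lemma computations showing that the relevant induced representations are \EI\ and of length at most two are exactly the kind of bookkeeping the paper performs. (One small slip along the way: cosocle irreducibility of $\zele{[0,1]_\rho}\times\rho$ requires the \EI\ property of its contragredient $\zele{[0,1]_\tau}\times\rshft{\tau}$, $\tau=\lshft{(\rho^\vee)}$, which is not on the list of representations you actually verified; it is provable by the same method, so this is harmless.) The real problem is that the proof stops where the difficulty begins. You reduce the lemma to the non-vanishing of the scalar attached to the standard intertwining operator $R\colon\pi\times\rho\to\rho\times\pi$, you note yourself that this non-vanishing ``is in fact equivalent to the lemma'', and you then only gesture at two possible ways of establishing it. Neither is carried out and neither is routine: tracking $R$ through the triple product requires knowing which composites of the rank-one operators on $\rshft{\rho}\times\rho$ vanish, which is again precisely the content of the lemma, and the deformation argument would require an analysis of the poles of $M(s)$ that you do not supply. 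As written, the argument is circular at its key step.

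The paper closes this gap with an input your proposal never uses: unitarity, through Lemma \ref{part: distinctsqr}. It first treats $\pi_1=\rho\times\lang{[0,1]_\rho}$ by embedding $\rho\times\lang{[-1,1]_\rho}$ --- irreducible because it is \EI\ and unitarizable --- into $\pi_1\times\lshft{\rho}$; there is then an irreducible subquotient $\sigma$ of $\pi_1$ with $[\rho\times\lang{[-1,1]_\rho}]\le[\sigma\times\lshft{\rho}]$, and comparing the appropriate graded pieces of $\rp_{(3m,m)}$ of both sides yields $[\pi_1]\le[\sigma]$, i.e.\ $\pi_1$ is irreducible. The irreducibility of $\rho\times\zele{[0,1]_\rho}$ is then deduced from that of $\pi_1$: if it were reducible, one of its two constituents would have minimal Jacquet module $\rho\otimes\rshft{\rho}\otimes\rho$, hence $\rp_{(m,2m)}$ isomorphic to $\rho\otimes\lang{[0,1]_\rho}$, hence would embed in $\pi_1$ by Frobenius reciprocity, contradicting the irreducibility of $\pi_1$. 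You need to supply an argument of this kind --- or carry out one of your two sketched routes in full --- before the proof is complete.
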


\begin{proof}
By passing to the contragredient it is enough to prove the irreducibility of $\pi_1:=\rho\times\lang{[0,1]_\rho}$ and
$\pi_2:=\rho\times\zele{[0,1]_\rho}$.
Suppose first that $\pi_1$ is irreducible and assume on the contrary that $\pi_2$ is reducible.
If $m$ denotes the degree of $\rho$, then
\[
[\rp_{(2m,m)}(\pi_2)]=\zele{[0,1]_\rho}\otimes\rho+\rho\times\rho\otimes\rshft{\rho}.
\]
Thus, $\rp_{(2m,m)}(\pi_2)$ is of length two and hence $\pi_2$ is also of length two
and one of its constituents $\sigma_2$ satisfies $[\rp_{(2m,m)}(\sigma_2)]=\zele{[0,1]_\rho}\otimes\rho$
and hence $[\rp_{(m,m,m)}(\sigma_2)]=[\rho\otimes\rshft{\rho}\otimes\rho]$.
On the other hand
\[
[\rp_{(m,2m)}(\pi_2)]=\rho\otimes (\rho\times\rshft{\rho}+\zele{[0,1]_\rho}).
\]
and therefore
\[
\rp_{(m,2m)}(\sigma_2)\simeq\rho\otimes\lang{[0,1]_\rho}.
\]
We deduce that $\sigma_2$ is a subrepresentation of $\pi_1$ which contradicts its irreducibility.

Finally, we will show that $\pi_1$ is irreducible. We argue as before.
We have
\[
\rho\times\lang{[-1,1]_\rho}\hookrightarrow\pi_1\times\lshft{\rho}
\]
and the left-hand side is irreducible by Lemma \ref{part: distinctsqr}.
Thus, there exists an irreducible subquotient $\sigma$ of $\pi_1$ such that
\[
[\rho\times\lang{[-1,1]_\rho}]\le[\sigma\times\lshft{\rho}].
\]
Taking the Jacquet module $\rp_{(3m,m)}$ on both sides and comparing the part of the form $*\otimes\rshft{\rho}$ we get
\[
[\rho\times\lang{[0,1]_\rho}]\le[\sigma]
\]
so that $\pi_1$ is irreducible as required.
\end{proof}

\subsection{The classification} \label{sec: classification}
In the following we will explain the classification approach along the lines of \cite{MR3049700}, but avoiding the use of types.

A key step is the following.

\begin{lem}(\cite{MR3178433}*{Th\'eor\`eme 7.38}) \label{lem: pairwise unlinked}
Let $\m=\Delta_1+\dots+\Delta_N$ be a \TU\ multisegment.
Then the induced representation $\std(\m)= \zele{\Delta_1} \times \dots \times  \zele{\Delta_N} $ is irreducible.
\end{lem}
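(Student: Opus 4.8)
The plan is to prove Lemma \ref{lem: pairwise unlinked} (the irreducibility of $\std(\m)$ for a totally unlinked $\m$) by induction on $N$, reducing successively to the multiplicity-one statements already available in the paper. Since distinct cuspidal lines contribute irreducible commuting factors by Proposition \ref{useful_properties} part \ref{part: pwnp}, I may assume $\m\in\MS_\rho$ for a single $\rho\in\Cusp$. Writing $\m=\Delta_1+\dots+\Delta_N$ with the $\Delta_i$ pairwise unlinked, the key structural observation is that a family of pairwise unlinked segments on a single line is \emph{nested}: up to reindexing we may arrange $\Delta_1\subseteq\Delta_2\subseteq\dots$ after grouping by the endpoint data, and in particular there is a segment $\Delta_N$ which is \emph{minimal} (contained in, or unlinked-and-disjoint from, every other $\Delta_i$). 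Then $\zele{\Delta_N}$ is a building block that I want to peel off.

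\textbf{Main induction step.} Put $\pi' = \zele{\Delta_1+\dots+\Delta_{N-1}}$, which is irreducible by the induction hypothesis, and consider $\zele{\Delta_N}\times\pi'$. The goal is to show this product is irreducible; since $\zele{\m}=\soc(\std(\m))$ occurs with multiplicity one in $[\std(\m)]$ (Theorem \ref{thm: classification}, or Proposition \ref{useful_properties} part \ref{part: lmm'}), and since $[\std(\m)]=[\zele{\Delta_N}\times\pi']$, it suffices to prove that $[\zele{\Delta_N}\times\pi']$ is multiplicity-free and concentrated on $\zele{\m}$ — equivalently that $\zele{\Delta_N}\times\pi'$ is \EI\ and its cosocle also equals $\zele{\m}$. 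For the socle: because $\Delta_N$ is unlinked with every segment of $\m-\Delta_N$, we have $\Delta_N\not\prec\Delta'$ and $\Delta'\not\prec\Delta_N$ for all $\Delta'$ in $\m-\Delta_N$, so $(\m-\Delta_N)\not\prec\Delta_N$ and $\Delta_N\not\prec(\m-\Delta_N)$; Proposition \ref{useful_properties} part \ref{part: pwnp} then gives $\zele{\m}=\soc(\zele{\Delta_N}\times\pi')=\cos(\pi'\times\zele{\Delta_N})$ and, by the \emph{other} half of the same statement applied with the roles reversed, $\zele{\m}=\soc(\pi'\times\zele{\Delta_N})=\cos(\zele{\Delta_N}\times\pi')$. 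Thus $\zele{\m}$ is simultaneously the socle and the cosocle of $\zele{\Delta_N}\times\pi'$, and it occurs there with multiplicity one; hence $\zele{\Delta_N}\times\pi'$ is irreducible. This closes the induction.

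\textbf{The base case and the degenerate subtlety.} The case $N=1$ is the statement that $\zele{\Delta}$ is irreducible, which is built into its definition. The one genuine subtlety is the reduction to a single minimal segment: when several $\Delta_i$ coincide, or when some are disjoint-and-unlinked rather than nested, one must check that Proposition \ref{useful_properties} part \ref{part: pwnp} genuinely applies — i.e. that \emph{no} pair $\Delta_i,\Delta_j$ is linked, which is exactly the hypothesis "\TU". So the induction never leaves the totally-unlinked class: $\m-\Delta_N$ is still totally unlinked, and $\{\Delta_N\}$ together with any sub-multisegment of $\m-\Delta_N$ is a totally unlinked pair of multisegments in the sense needed for part \ref{part: pwnp}.

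\textbf{Expected main obstacle.} The argument above is essentially a bookkeeping of Proposition \ref{useful_properties}, so there is no deep obstacle; the only thing to be careful about is that part \ref{part: pwnp} as stated gives $\soc$ and $\cos$ descriptions only under the one-sided hypothesis $\m\not\prec\n$, and I am using it in the fully-unlinked situation where \emph{both} $\m\not\prec\n$ and $\n\not\prec\m$ hold. In that case part \ref{part: pwnp} already directly asserts that $\zele{\m}\times\zele{\n}$ is irreducible, so in fact the induction step is immediate from that final sentence of part \ref{part: pwnp} once I have exhibited $\m$ as a sum of two totally unlinked pieces with no linked cross-pairs — and peeling off one minimal segment is merely the cleanest way to set up the induction. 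The only real content beyond Proposition \ref{useful_properties} is therefore the elementary combinatorial fact that a pairwise-unlinked family of segments contains a segment that is unlinked with all the others (trivially true, since \emph{all} pairs are unlinked), making the decomposition $\m=\Delta_N+(\m-\Delta_N)$ with no linked cross-pair automatic.
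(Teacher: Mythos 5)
There is a genuine gap: your argument is circular. Lemma \ref{lem: pairwise unlinked} sits at the foundation of the classification in the appendix --- the paper states explicitly that Theorem \ref{thm: classification} (well-definedness and injectivity of $\m\mapsto\zele{\m}$) is proved \emph{using} Lemma \ref{lem: pairwise unlinked}, and that Proposition \ref{useful_properties} part \ref{part: pwnp} is in turn ``an easy consequence of Theorem \ref{thm: classification}'' (indeed part \ref{part: pairwise unlinked} of that proposition \emph{is} essentially the lemma being proved, combined with Lemma \ref{lem:2seg}). So every tool you invoke --- the reduction to a single cuspidal line via part \ref{part: pwnp}, the multiplicity-one statement $[\zele{\m}]\le[\std(\m)]$ from Theorem \ref{thm: classification}/part \ref{part: lmm'}, and above all the final sentence of part \ref{part: pwnp} asserting irreducibility when no segments are linked --- lies strictly downstream of the lemma in the paper's logical order. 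As you yourself observe, part \ref{part: pwnp} ``already directly asserts'' the conclusion; that is precisely the sign that you are assuming what is to be proved.

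The paper's actual proof has real analytic and combinatorial content that cannot be bypassed. It first reduces to $N=2$ (the minimal case $\Delta_1=\Delta_2=\{\rho\}$ requiring Lemma \ref{part: rhotimesn}, the irreducibility of $\rho^{\times n}$, which is itself proved by a unitarity/Jacquet-module argument), then inducts on $l(\Delta_1)+l(\Delta_2)$. The base cases rest on Theorem \ref{thm:Olshanski} and Lemma \ref{lem: ind21}, both of which use unitarity in an essential way; the induction step exhibits $\zele{\Delta_1}\times\zele{\Delta_2}$ as a subrepresentation of one ordering of cuspidal-type factors and a quotient of the reverse ordering, and concludes via Lemma \ref{lem:subquo} (Casselman's pairing) together with a multiplicity-one count in the Jacquet module; the degenerate cases $\Delta_1=\Delta_2$ and $(\Delta_1,\Delta_2)=([-1,1]_\rho,[0,0]_\rho)$ need separate treatment. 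A correct proof must engage with this machinery (or an equivalent substitute); no purely formal manipulation of Proposition \ref{useful_properties} can succeed. A secondary, smaller point: your claim that pairwise unlinked segments on one line are ``nested'' is false as stated --- disjoint non-adjacent segments such as $[0,0]_\rho$ and $[2,3]_\rho$ are unlinked without either containing the other --- though this does not affect the main objection.
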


For the proof, the following simple result (based on Casselman's pairing) will be needed.

\begin{lem}(\cite{MR3178433}*{Lemme 2.5}) \label{lem:subquo}
Let $\pi \in \Rep(G_n)$. Suppose there exist $\rho_1, \dots,\rho_r \in \Irr$ such that:
\begin{enumerate}
\item $\pi$ is a subrepresentation of $\rho_1 \times \dots \times \rho_r $ and a quotient of $\rho_r \times \dots \times \rho_1 $.
\item $\rho_1 \otimes \dots \otimes \rho_r $ appears with multiplicity one in $[\rp(\rho_1 \times \dots \times \rho_r )]$.
\end{enumerate}
Then $\pi$ is irreducible.
\end{lem}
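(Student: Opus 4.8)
The plan is the standard ``top-and-bottom'' argument, the possible reducibility being obstructed by the multiplicity-one hypothesis (2). Put $\alpha=(\deg\rho_1,\dots,\deg\rho_r)$, let $\rp=\rp_\alpha$ be the Jacquet functor attached to $P_\alpha$, and abbreviate $\Pi=\rho_1\times\dots\times\rho_r$, $\Pi'=\rho_r\times\dots\times\rho_1$ and $\xi=\rho_1\otimes\dots\otimes\rho_r\in\Irr(\M_\alpha)$. Since $\pi$ has finite length, if $\pi$ were not irreducible we could choose a nonzero irreducible subrepresentation $\pi_1\subsetneq\pi$; then $\pi/\pi_1\neq0$, so it has an irreducible quotient $\pi_2$, which is then also an irreducible quotient of $\pi$. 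As $\rp$ is exact, $[\rp(\pi)]=[\rp(\pi_1)]+[\rp(\pi/\pi_1)]\geq[\rp(\pi_1)]+[\rp(\pi_2)]$ in $\Rr$ (using that $\pi_2$, being a quotient of $\pi/\pi_1$, is a composition factor of it).

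Next I would locate $\xi$ inside each of $[\rp(\pi_1)]$ and $[\rp(\pi_2)]$. For the bottom: $\pi_1\hookrightarrow\pi\hookrightarrow\Pi$, so Frobenius reciprocity gives a nonzero map $\rp(\pi_1)\to\xi$, whence $[\xi]\leq[\rp(\pi_1)]$. For the top: $\pi_2$ is a quotient of $\Pi'=\rho_r\times\dots\times\rho_1$, and by Bernstein's second adjointness $\Hom_{G_n}(\Pi',\pi_2)\cong\Hom_{\M_\alpha}(\xi,\rp(\pi_2))$ --- here it is crucial that $\Pi'$ is induced in the \emph{reversed} order, which is exactly what makes the tensor factors reappear in the order $\rho_1\otimes\dots\otimes\rho_r$ after the adjunction --- so $\Hom_{\M_\alpha}(\xi,\rp(\pi_2))\neq0$ and $[\xi]\leq[\rp(\pi_2)]$. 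Combining, $[\rp(\pi)]\geq2\,[\xi]$. On the other hand, from $\pi\hookrightarrow\Pi$ and exactness of $\rp$ we get $[\rp(\pi)]\leq[\rp(\Pi)]$, and hypothesis (2) says that $\xi$ occurs in $[\rp(\Pi)]$ with multiplicity one; hence $\xi$ occurs in $[\rp(\pi)]$ with multiplicity at most one. This contradiction forces $\pi$ to be irreducible.

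The one step that is not pure formalism is the ``top'' estimate $[\xi]\leq[\rp(\pi_2)]$, and this is where I expect the only real care to be needed. If one prefers to avoid quoting second adjointness directly --- in keeping with the fact that this lemma is meant to rest only on Casselman's pairing --- one can instead dualize: $\pi_2$ a quotient of $\Pi'$ gives $\pi_2^\vee\hookrightarrow(\Pi')^\vee\cong\rho_r^\vee\times\dots\times\rho_1^\vee$; Frobenius reciprocity for the parabolic with Levi $\M_{(\deg\rho_r,\dots,\deg\rho_1)}$ then yields $[\rho_r^\vee\otimes\dots\otimes\rho_1^\vee]\leq[\rp_{(\deg\rho_r,\dots,\deg\rho_1)}(\pi_2^\vee)]$; and Casselman's duality $\rp_{\bar P}(\tau^\vee)\cong\rp_P(\tau)^\vee$ together with conjugation by the block-reversing Weyl element (which identifies $\rp_{\overline{P_\beta}}$ with $\rp_{P_\alpha}$ for $\beta$ the reverse composition of $\alpha$, carrying $\rho_r\otimes\dots\otimes\rho_1$ to $\xi$) translates this back to $[\xi]\leq[\rp(\pi_2)]$. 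All the remaining manipulations take place in the Grothendieck ring $\Rr$ and are legitimate because $\rp$ is exact and all the classes involved are effective.
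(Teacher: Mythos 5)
Your argument is correct and is exactly the standard proof of this lemma (the paper itself only cites \cite{MR3178433}*{Lemme 2.5}, whose proof is the same ``sub via Frobenius reciprocity, quotient via second adjointness/Casselman duality, contradiction with multiplicity one'' argument you give). The only implicit point is that $\pi\neq 0$, which is understood in the statement.
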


\begin{proof}[Proof of Lemma \ref{lem: pairwise unlinked}]
We will follow the proof of \cite{MR3178433}*{Th\'eor\`eme 7.38}, but avoiding type theory.

The reduction to the case $N=2$ is proved exactly as in \cite{MR3178433}*{Corollaire 7.32} (using Lemma \ref{part: rhotimesn} for the minimal case).
We assume henceforth $N=2$. We prove the lemma by induction on $l(\Delta_1) +l(\Delta_2)$.
The cases $l(\Delta_1) +l(\Delta_2)\le 3$ are covered by Theorem \ref{thm:Olshanski} and Lemma \ref{lem: ind21}.
For the induction step we may therefore assume that $l(\Delta_1)+l(\Delta_2)>3$.

Suppose first that $\Delta_1=\Delta_2$. Then we have (by induction hypothesis)
\begin{multline*}
\zele{\Delta_1} \times   \zele{\Delta_2}\hookrightarrow
\zele{\Delta_1}\times b(\Delta_2)\times \zele{^-\Delta_2}\simeq
 b(\Delta_2)\times \zele{\Delta_1}\times\zele{^-\Delta_2})\\\hookrightarrow
(b(\Delta_2)\times b(\Delta_1)) \times (\zele{^-\Delta_1} \times \zele{^-\Delta_2})
\simeq (b(\Delta_1)\times b(\Delta_2)) \times (\zele{^-\Delta_1} \times \zele{^-\Delta_2}).
\end{multline*}
Similarly, $\zele{\Delta_1} \times   \zele{\Delta_2}$ is a quotient of
$ (\zele{^-\Delta_1} \times \zele{^-\Delta_2})  \times (b(\Delta_1)\times b(\Delta_2))$.
The result follows now from Lemma \ref{lem:subquo} and our induction hypothesis.

Now suppose that $\Delta_1=[-1,1]_\rho$ and $\Delta_2=[0,0]_\rho$. Then $\zele{\Delta_1} \times \rho$ is a subrepresentation of
$\lshft{\rho}\times (\zele{[0,1]_\rho} \times   \rho)$.
By Lemma \ref{lem: ind21} and Frobenius reciprocity, we deduce that $[\rp(\soc(\zele{\Delta_1} \times \rho))]$ contains
\begin{equation}\label{eq:tau}
\lshft{\rho}\otimes \rho \otimes \rho \otimes\rshft{\rho}
\end{equation}
with multiplicity two. The same is true for $\cos(\zele{\Delta_1} \times \rho)$.
As \eqref{eq:tau} appears with multiplicity two in $[\rp(\zele{\Delta_1} \times \rho)]$
we deduce that $\zele{\Delta_1}\times\rho$ is irreducible.

In all other cases, upon passing to the contragredient and exchanging $\Delta_1$ with $\Delta_2$ if necessary, we can assume that
$l(\Delta_1)>1$, $^-\Delta_1$ and $\Delta_2$ are unlinked and $b(\Delta_1) \notin\{ b(\Delta_2), \lshft{b(\Delta_2)}\}$.
Then $\zele{\Delta_1} \times   \zele{\Delta_2}$ is a subrepresentation of $b(\Delta_1) \times (\zele{^-\Delta_1} \times \zele{\Delta_2}) $
and (by induction hypothesis) a quotient of $ (\zele{^-\Delta_1} \times \zele{\Delta_2})  \times b(\Delta_1)$.
The result now follows from Lemma \ref{lem:subquo} and our induction hypothesis.
\end{proof}

We now turn to the proof of Theorem \ref{thm: classification} which is \cite{MR3049700}*{Th\'eor\`emes 1.3 and 1.4}
together with \cite{MR3049700}*{Remarque 5.5 and Remarque 5.9}.

The fact that the map $\m\mapsto\zele{\m}$ is well-defined and injective is proved
in \cite{MR3049700}*{\S5} using only Lemma \ref{lem: pairwise unlinked},  Jacquet functors and the Geometric Lemma.

The proof that any $\pi\in\Irr$ is of the form $\zele{\m}$ for some $\m$ relies in addition on the following
analog of \cite{MR584084}*{Proposition 4.6}:

\begin{lem} \cite{MR3049700}*{Lemma 5.12} \label{lem:2seg}
Let $\Delta$ and $\Delta'$ be two linked segments. Then
\[
[\zele{\Delta} \times \zele{\Delta'}]=[\zele{\Delta,\Delta'}]+[\zele{\Delta \cap\Delta'} \times \zele{\Delta \cup\Delta'}].
\]
\end{lem}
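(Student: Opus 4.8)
\medskip

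\noindent\emph{Plan of proof.}
We may assume $\Delta,\Delta'\in\seg_\rho$ for a single $\rho\in\Cusp$ and, interchanging $\Delta$ and $\Delta'$ if necessary, that $\Delta\prec\Delta'$; write $\Delta=[a,b]_\rho$, $\Delta'=[c,d]_\rho$, so that $a<c\le b+1$, $b<d$, $\Delta\cup\Delta'=[a,d]_\rho$ and $\Delta\cap\Delta'=[c,b]_\rho$ (read as $\emptyset$ when $c=b+1$). Since $\Delta\cap\Delta'\subset\Delta\cup\Delta'$ are nested, hence unlinked, Lemma~\ref{lem: pairwise unlinked} shows that $\zele{(\Delta\cap\Delta')+(\Delta\cup\Delta')}$ equals the irreducible representation $\zele{\Delta\cap\Delta'}\times\zele{\Delta\cup\Delta'}$. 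Furthermore $\Delta+\Delta'\ne(\Delta\cap\Delta')+(\Delta\cup\Delta')$ --- the second multisegment contains the segment $\Delta\cup\Delta'$, which strictly contains both $\Delta$ and $\Delta'$ --- so, as $\m\mapsto\zele{\m}$ is already known to be injective, the two summands on the right-hand side of the asserted identity are \emph{inequivalent} irreducibles. It therefore suffices to prove (a) that each of $\zele{\Delta+\Delta'}$ and $\zele{\Delta\cap\Delta'}\times\zele{\Delta\cup\Delta'}$ occurs as a constituent of $\zele{\Delta}\times\zele{\Delta'}$, and (b) that $\zele{\Delta}\times\zele{\Delta'}$ has length exactly two; the multiplicities are then forced.

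For (a), the occurrence of $\zele{\Delta+\Delta'}$ is immediate because $[\zele{\Delta}\times\zele{\Delta'}]=[\std(\Delta+\Delta')]$ and $\zele{\Delta+\Delta'}=\soc(\std(\Delta+\Delta'))$. For the occurrence of $\zele{\Delta\cap\Delta'}\times\zele{\Delta\cup\Delta'}$: when $\Delta$ and $\Delta'$ are juxtaposed ($c=b+1$) this representation is just $\zele{\Delta\cup\Delta'}$, and since the two-block Jacquet module $\rp_{(\deg\zele{\Delta},\,\deg\zele{\Delta'})}(\zele{\Delta\cup\Delta'})$ is the single term $\zele{\Delta}\otimes\zele{\Delta'}$, Frobenius reciprocity gives $\zele{\Delta\cup\Delta'}\hookrightarrow\zele{\Delta}\times\zele{\Delta'}$; when $\Delta$ and $\Delta'$ overlap one argues similarly, starting from the single-term embeddings $\zele{[a,b]}\hookrightarrow\zele{[a,c-1]}\times\zele{[c,b]}$ and $\zele{[c,d]}\hookrightarrow\zele{[c,b]}\times\zele{[b+1,d]}$ and rearranging the inner factors by Lemma~\ref{lem: pairwise unlinked} (equivalently, one tracks the appropriate summand of $\rp_{(l(\Delta\cap\Delta')\deg\rho,\,\cdot\,)}(\zele{\Delta}\times\zele{\Delta'})$ through the Geometric Lemma).

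Part (b) is the crux; I would prove it by induction on $l(\Delta)+l(\Delta')$. The base case $l(\Delta)=l(\Delta')=1$ is $\rho\times\rshft{\rho}$: this is reducible by Theorem~\ref{thm:Olshanski}, its length is exactly two because the two-block Jacquet module $\rho\otimes\rshft{\rho}+\rshft{\rho}\otimes\rho$ has length two, and the two constituents are pinned down by these Jacquet modules to be $\zele{\{\rho\}+\{\rshft{\rho}\}}$ and $\zele{[0,1]_\rho}$, as required. For the induction step, passing to contragredients if necessary (this reverses $\prec$, replaces $\Delta\cup\Delta'$, $\Delta\cap\Delta'$ by the corresponding operations on the dual segments, and commutes with $\zele{\cdot}$), one peels off a well-chosen endpoint to obtain an embedding
\[
\zele{\Delta}\times\zele{\Delta'}\hookrightarrow b(\Delta)\times\bigl(\zele{^-\Delta}\times\zele{\Delta'}\bigr)
\]
(or a right-handed variant), the endpoint being chosen so that the pair inside the parentheses has strictly smaller total length. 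The induction hypothesis --- or Lemma~\ref{lem: pairwise unlinked} if $^-\Delta$ and $\Delta'$ happen to be unlinked --- describes $\zele{^-\Delta}\times\zele{\Delta'}$ completely, and it then remains to decompose $b(\Delta)\times(\text{irreducible})$, a configuration in which the cuspidal sits immediately beyond the support of an irreducible that is a product of at most two segment representations; this is handled with the Geometric Lemma and Frobenius reciprocity together with a further, smaller, use of the induction, and comparing the resulting upper bound with the lower bound from (a) gives length exactly two. Lemma~\ref{lem: ind21} covers the handful of smallest cases not reached by the recursion.

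The step I expect to be the main obstacle is exactly this bookkeeping in part (b): one must arrange the induction so that every recursive call genuinely reduces the total length (which dictates a careful, case-dependent choice of which endpoint to remove --- overlapping versus juxtaposed $\Delta,\Delta'$ must be treated separately), and one must control multiplicities in several Jacquet modules in order to rule out any spurious constituent of $b(\Delta)\times(\text{irreducible})$. Should the triangularity statement ``$[\zele{\m'}]\le[\std(\n)]$ implies $\m'\vdash\n$'' already be available at this point, part (b) becomes immediate: the only multisegments $\m'$ with $\m'\vdash\Delta+\Delta'$ are $\Delta+\Delta'$ and $(\Delta\cap\Delta')+(\Delta\cup\Delta')$, so together with (a) and the multiplicity one of $\zele{\Delta+\Delta'}$ in $[\std(\Delta+\Delta')]$ the identity follows at once.
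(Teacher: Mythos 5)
Your reduction of the lemma to (a) the occurrence of both constituents and (b) the bound ``length exactly two'' is the right skeleton, and (a) is essentially fine (it is the content of the first two parts of Zelevinsky's proof of Proposition 4.6, which the paper imports verbatim). But (b), which you yourself flag as the crux, is left as a sketch that does not close. The embedding $\zele{\Delta}\times\zele{\Delta'}\hookrightarrow b(\Delta)\times\zele{^-\Delta}\times\zele{\Delta'}$ cannot by itself give an upper bound of $2$: in the Grothendieck group $[b(\Delta)\times\zele{^-\Delta}\times\zele{\Delta'}]=[\zele{\Delta}\times\zele{\Delta'}]+[\zele{\{b(\Delta)\}+\,^-\Delta}\times\zele{\Delta'}]$, so the ambient representation has strictly more constituents than the subrepresentation you care about, and deciding which constituents of $b(\Delta)\times\tau$ (for $\tau$ an irreducible constituent of $\zele{^-\Delta}\times\zele{\Delta'}$) actually land inside $\zele{\Delta}\times\zele{\Delta'}$ is exactly the hard multiplicity bookkeeping you defer with ``this is handled with the Geometric Lemma and Frobenius reciprocity.'' No mechanism is given for ruling out the spurious constituents. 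Your fallback via the triangularity ``$[\zele{\m'}]\le[\std(\n)]\Rightarrow\m'\vdash\n$'' is not available here: in the logical order of the appendix that statement (Proposition \ref{useful_properties} part \ref{part: vdash}) is proved only after the classification, whose surjectivity in turn uses the present lemma, so invoking it would be circular.

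What the paper does instead for the upper bound is a targeted Jacquet-module count rather than a length induction on products with a cuspidal. One applies the functor $\rp=\rp_{(m,(l-1)m)}$ (first block cuspidal), observes that \emph{every} irreducible subquotient of $\zele{\Delta}\times\zele{\Delta'}$ has nonzero image under $\rp$, and reduces to the inequality $[\rp(\zele{\Delta,\Delta'})]\ge[\rp(\zele{\Delta}\times\zele{\Delta'})]-[\rp(\zele{\Delta\cap\Delta'}\times\zele{\Delta\cup\Delta'})]$. The right-hand side splits (by the Geometric Lemma, in the non-juxtaposed case) into two pieces $A$ and $B$ supported on $b(\Delta)\otimes(\cdot)$ and $b(\Delta')\otimes(\cdot)$ respectively; each is bounded by $[\rp(\zele{\Delta,\Delta'})]$ via the embeddings $\zele{\Delta,\Delta'}\hookrightarrow b(\Delta)\times\zele{^-\Delta,\Delta'}$ and $\zele{\Delta,\Delta'}\hookrightarrow b(\Delta')\times\zele{\Delta}\times\zele{^-\Delta'}$ together with the induction hypothesis applied to $(^-\Delta,\Delta')$ and $(\Delta,{}^-\Delta')$, and since $b(\Delta)\ne b(\Delta')$ the two bounds add. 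To complete your proof you would need to supply this (or an equivalent) argument; as written, part (b) is a genuine gap.
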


The proof in [ibid.] uses type theory to translate the result of \cite{MR584084}*{Proposition 4.6} to the case where $D \neq F$.
(This is the only usage of type theory in the proof of \cite{MR3049700}*{Th\'eor\`emes 1.3 and 1.4}.)
We give an alternative proof as follows.

\begin{proof}
We may assume without loss of generality that $\Delta'\prec\Delta$.
We prove the lemma by induction on $l= l(\Delta)+ l(\Delta')$.
In the minimal case $l=2$ the lemma is straightforward.

The first two parts of the proof of \cite{MR584084}*{Proposition 4.6} depend only on Jacquet functors and the Geometric Lemma and are therefore
valid in our setting. The first part deals with the case where $\Delta$ and $\Delta'$ are juxtaposed.
Thus, we may assume that $\Delta$ and $\Delta'$ are not juxtaposed.
The second part shows that $\zele{\Delta \cap\Delta'} \times \zele{\Delta \cup\Delta'}$ is a quotient of $\zele{\Delta} \times \zele{\Delta'}$.
(The irreducibility of $\zele{\Delta \cap\Delta'} \times \zele{\Delta \cup\Delta'}$ follows from Lemma \ref{lem: pairwise unlinked}.)
(The rest of the argument of \cite{MR584084}*{Proposition 4.6} uses derivatives so we need an alternative argument.)
Since $\zele{\Delta,\Delta'}\hookrightarrow\zele{\Delta} \times \zele{\Delta'}$ and
$\zele{\Delta,\Delta'}$ appears with multiplicity one in $[\zele{\Delta} \times \zele{\Delta'}]$
we deduce that
\[ [\zele{\Delta} \times \zele{\Delta'}] \geq [\zele{\Delta,\Delta'}]+[\zele{\Delta \cap\Delta'} \times \zele{\Delta \cup\Delta'} ].
\]
Let $\rp$ denote the Jacquet functor associated to the parabolic subgroup of type $(m, (l-1)m)$ where $m=\deg(\rho)$.
Since any irreducible subquotient $\pi$ of $\zele{\Delta} \times \zele{\Delta'}$ satisfies $\rp(\pi)\neq 0$ it suffices to prove that
\begin{equation}\label{eq:rp} [\rp(\zele{\Delta,\Delta'}) ] \geq [\rp( \zele{\Delta} \times \zele{\Delta'})] -
[\rp( \zele{\Delta \cap\Delta'} \times \zele{\Delta \cup\Delta'} )].
\end{equation}
By the Geometric Lemma and the fact that $\Delta'\prec\Delta$ but $\Delta$ and $\Delta'$ are not juxtaposed, the right-hand side of \eqref{eq:rp} equals $A +B$ where
\begin{align*}
A&=[b(\Delta)\otimes(\zele{^-\Delta} \times\zele{\Delta'})]- [b(\Delta)\otimes( \zele{^-\Delta \cap\Delta'} \times \zele{^-\Delta \cup\Delta'} )] \\
B&=[b(\Delta')\otimes(\zele{\Delta} \times\zele{^-\Delta'})]-[b(\Delta')\otimes (\zele{\Delta \cap\,^-\Delta'} \times \zele{\Delta \cup\,^-\Delta'} )].
\end{align*}
Note that $B=0$ if $a'=a-1$.
On the other hand
\begin{equation}\label{eq:brho}
\zele{\Delta ,\Delta'} \hookrightarrow \zele{\Delta} \times \zele{\Delta'} \hookrightarrow b(\Delta) \times \zele{^-\Delta} \times \zele{\Delta'}.
\end{equation}
By Lemmas \ref{lem: mult1 jf} and \ref{lem:main},  $\zele{\Delta ,\Delta'}$ is the unique irreducible subrepresentation
of $b(\Delta) \times \zele{^-\Delta} \times \zele{\Delta'}$ and
$$\zele{\Delta ,\Delta'} \hookrightarrow  b(\Delta) \times \zele{^-\Delta,\Delta'}.$$
By Frobenius reciprocity and our induction hypothesis, we deduce that $A \leq [\rp(\zele{\Delta , \Delta'} )]$.

In the same way
\begin{equation}\label{eq:brho2}
\zele{\Delta ,\Delta'} \hookrightarrow  \zele{\Delta} \times b(\Delta') \times\zele{^-\Delta'}\end{equation}
and if $a' <a-1$, the RHS of \eqref{eq:brho2} is isomorphic to $b(\Delta') \times\zele{\Delta} \times \zele{^-\Delta'}$.
We deduce as before that $B \leq [\rp(\zele{\Delta ,\Delta'} )]$.

The Lemma follows.
\end{proof}

\subsection{Proof of Proposition \ref{useful_properties}}\label{par: preuve}

Part \ref{part: support is preserved} is trivial from the definition.

Part \ref{part: pairwise unlinked} is a consequence of Lemmas \ref{lem:  pairwise unlinked} and \ref{lem:2seg}.

For part \ref{part: anyorder} we write $\{e(\Delta_1),\dots,e(\Delta_N)\}=\{\rho_1,\dots,\rho_k\}$ (as a set) where
if $\rho_i=\rho_j\abs{\cdot}^s$, $i<j$ with $s\in\R$ then necessarily $s>0$.
Let $\pi_i$ be the product of the $\zele{\Delta_j}$'s such that $e(\Delta_j)=\rho_i$.
Then $\pi_1,\dots,\pi_k\in\Irr$ and $\std(\m)=\pi_1\times\dots\times\pi_k$.

Let $\Pi=\zele{\Delta_1}\times\dots\times\zele{\Delta_N}$.
It is easy to see that $\pi_1\otimes\dots\otimes\pi_k$ occurs in $[\rp(\Pi)]$ with multiplicity one
and moreover, there is no other composition factor of $\rp(\Pi)$ in the same Bernstein component
of that Levi subgroup.
Hence there is a non-trivial map $\rp(\Pi)\rightarrow\pi_1\otimes\dots\otimes\pi_k$, which by Frobenius reciprocity
yields a non-zero map $\Pi\xrightarrow{f}\std(\m)$.
Finally, the image of $f$ contains $\soc(\std(\m))=\zele{\m}$. Therefore, $\zele{\m}$ does not occur as a subquotient of $\Ker f$
since $\std(\m)$ is \EI\ and $[\Pi]=[\std(\m)]$.

Part \ref{part: vdash} is proved in \cite{MR1040995}*{Thoerem 5.3} for the Langlands classification but the proof is valid for both classifications
(see \cite{MR3049700}*{\S 6.2.1}).

Part \ref{part: lmm'} follows from part \ref{part: vdash} as in \cite{MR1040995}*{Proposition 2.3}.

Part \ref{part: pwnp} is an easy consequence of Theorem \ref{thm: classification}.
Indeed $\zele{\m} \times \zele{\n}$ is a subrepresentation of $\std(\m)\times \std(\n)\simeq \std(\m+\n)$
whose socle is $\zele{\m+\n}$. Similarly for the other statements. (See also \cite{MR3049700}*{Proposition 5.1} for the last part.)

Finally, part \ref{part: contragredient} follows from the fact that $\tilde{\std}(\m)\simeq\std(\m^\vee)^\vee$
(cf.~\cite{MR3049700}*{Proposition 5.10}).

\begin{rem}
As was pointed out by the referee, part \ref{part: anyorder} is a special case of a more general statement:
for any $\pi_1,\dots,\pi_k\in\Irr$ and a permutation $\sigma$ of $\{1,\dots,k\}$ there exists a non-zero intertwining map
\[
\pi_1\times\dots\times\pi_k\rightarrow\pi_{\sigma(1)}\times\dots\times\pi_{\sigma(k)}
\]
Indeed, fixing $r_1\gg\dots\gg r_k$, the standard intertwining operators
\[
M(s):\pi_1\abs{\cdot}^{r_1s}\times\dots\times\pi_k\abs{\cdot}^{r_ks}\rightarrow
\pi_{\sigma(1)}\abs{\cdot}^{r_{\sigma(1)}s}\times\dots\times\pi_{\sigma(k)}\abs{\cdot}^{r_{\sigma(k)}s}
\]
are well-defined for $\Re s\gg1$ and admit a meromorphic continuation. Thus, for an appropriate $r\in\Z$,
$s^rM(s)$ gives the required intertwining operator at $s=0$.
\end{rem}

\subsection{Principle of ``mirror symmetry'' \cite{MR3178433}*{\S 7.3.1}} \label{sec: mirror symmetry}
From a combinatorial point of view the connection between the Zelevinsky and Langlands classifications
is simply that $\nu_\rho$ is replaced by $\nu_\rho^{-1}$. (Thus, we allow $s_\rho$ to be negative.)
The set of segments is unchanged under this operation but $b(\Delta)$ and $e(\Delta)$ are interchanged.
Similarly, the relation $\prec$ becomes the inverse relation, $\le_b$ becomes $\ge_e$ and vice versa.
All the proofs given or referred to in \S\ref{sec: classification},\ref{par: preuve} only use Jacquet module techniques and do not depend
on the positivity of $s_\rho$. (This is not true for Theorem \ref{thm:Olshanski}, but there we are of course free to choose $s_\rho$.
Similarly, the statement of Lemma \ref{lem: ind21} is symmetric under mirror symmetry.)
Therefore, the same proof applies to Theorem \ref{thm: classificationL}.

\begin{rem} \label{rem: tempreps}
The fact that all essentially square-integrable irreducible representations are of the form $\lang{\Delta}$ for some $\Delta\in\seg$
is a theorem of Bernstein (at least for $D=F$) which is stated without proof in \cite{MR584084}*{Theorem 9.3}.
For a proof which works over any $D$ see \cite{AuelThesis} or \cite{MR1758232}*{\S2}.
It follows immediately that the tempered irreducible representations are of the form $\lang{\m}$
where $\m$ is \TU.
We will not use these facts in any serious way.
\end{rem}

\subsection{Involution}\label{ap:involution}
An immediate consequence of the classification is that the ring $\Rr$ is freely generated by
$\zele{\Delta}$, $\Delta\in\seg$ (or alternatively by $\lang{\Delta}$, $\Delta\in\seg$)
(\cites{MR584084, MR1040995}).
Following Zelevinsky (and as in \cite{MR1040995}) define the ring homomorphism $\pi \mapsto \pi^{\zi}$ of $\Rr$,
characterized by $\zele{\Delta}^{\zi}=\lang{\Delta}$ for any segment $\Delta \in \seg$.
Zelevinsky showed that $\pi\mapsto\pi^{\zi}$ is an involution of $\Rr$ (in the case $D=F$ but the proof works for any $D$)
and conjectured (in the case $D=F$) that
\begin{equation} \label{eq: prirr}
\pi^{\zi}\in\Irr\text{ if }\pi\in\Irr.
\end{equation}
This property was proved (in a more general context) independently by Bernstein (unpublished), Aubert \cites{MR1285969, MR1390967}
and Schneider--Stuhler \cite{MR1471867}.
It is easy to see that property \eqref{eq: prirr} implies that for any multisegment $\m \in \MS$, we have $\zele{\m}^{\zi}=\lang{\m}$ and $\lang{\m}^{\zi}=\zele{\m}$
(see \cite{MR689531}*{\S 5.3} when $D=F$ and \cite{MR3049700}*{Proposition A.6} in general).
Therefore, in view of Theorem \ref{thm: classification}, Zelevinsky's involution induces an involution of $\MS$ also denoted $\m \mapsto \m^{\zi}$.
In \cite{MR863522}, in the case where $D=F$, M\oe glin and Waldspurger defined combinatorially an involution $\m \mapsto \m^\sharp$ on $\MS$
with the property that $\m^{\zi}=\m^\sharp$ (assuming \eqref{eq: prirr} which was not known at the time).
This works for any $D$ (see \cite{MR2349436} or \cite{MR2527415}*{Th\'eor\`eme 7.7}).

As pointed out in \cite{MR1040995}*{Remark 7.1}
one can turn the tables and define a $\Z$-linear map $\pi \mapsto \pi^t$ on $\Rr$ by $\zele{\m}^t=\lang{\m}$ for every multisegment $\m \in \MS$.
Denote again by $\m \mapsto \m^t$ the induced permutation on $\MS$.
(Thus, \eqref{eq: prirr} is equivalent to the property that $\pi^{\zi}=\pi^t$ for all $\pi\in\Rr$, i.e. that $\pi \mapsto \pi^t$ is a ring homomorphism of $\Rr$.)
We claim that with this definition, it follows from the results of \cite{MR863522}, without using \eqref{eq: prirr}, that $\m^t=\m^\sharp$
for all $\m\in\MS$ (and in particular $\pi \mapsto \pi^t$ is an involution).
In fact, at this stage we do not need to use the result on Hecke algebras in \cite{MR863522}*{\S I}
since we now have at our disposal the two classifications with the good properties of Proposition \ref{useful_properties}.

First note that the proof of \cite{MR863522}*{Lemme II.9} holds for any $D$ since it only uses the properties of Proposition \ref{useful_properties}.
Similarly, \cite{MR863522}*{Lemme II.10.2} is deduced from \cite{MR863522}*{Lemme II.9} (applied to $\opp{D}$)
by utilizing the functor $^\star$ of \S\ref{sec: starsym}.
We will prove that $\lang{\m}=\zele{\m^\sharp}$, i.e., that $\m^\sharp=\m^t$, by induction on the (total) length of $\m$.
The base of the induction is the trivial case $\m=0$. For the induction step we may suppose that $\m$ is rigid.
Let $(\Delta_1,\dots,\Delta_N)$ be the left {\ordered} form of $\m$ and let $\rho=b(\Delta_1)$.
Set $^-\m=\Delta_1+\dots+\,^-\Delta_i+\dots +\Delta_N$ where $i$ is the largest index such that $b(\Delta_i)=\rho$. Then
exactly as in \cite{MR863522}*{Lemme II.11} one proves that $\lang{\m}$ is a subrepresentation of $ \lang{^-\m}\times \rho$. Alternatively,
we can argue as follows.
Let $\m'=\Delta_1+\dots+\Delta_i$ and $^-\m'=\Delta_1+\dots+\Delta_{i-1}+\,^-\Delta_i$. Then
\begin{multline*}
\lang{\m'}=\lang{\Delta_1}\times\dots\times\lang{\Delta_i}\hookrightarrow\\
 \lang{\Delta_1}\times\dots\times\lang{\Delta_{i-1}}\times\lang{^-\Delta_i}\times\rho=
 \lang{^-\m'}\times\rho.
\end{multline*}
Moreover, $\lang{\m'}=\soc(\lang{^-\m'}\times\rho)$ since $\lang{^-\m'}\otimes\rho$ occurs with multiplicity $i$
in both $[\rp(\lang{\m'})]$ and $[\rp(\lang{^-\m'}\times\rho)]$. Since $\lang{\m-\m'}$ is right $\Z_{\ge\rho}$-reduced, it follows from (the symmetric analogue of)
Lemma \ref{lem: mult1 jf} and Lemma \ref{lem:main} that
\[
\soc(\lang{\m-\m'}\times\lang{^-\m'}\times\rho)=\soc(\lang{\m-\m'}\times\lang{\m'})=\lang{\m}.
\]
Hence also $\soc(\lang{^-\m}\times\rho)=\lang{\m}$ as required.

By induction hypothesis we have $\lang{^-\m}=\zele{(^-\m)^\sharp}$. Thus,
\[
\zele{\m^t}\hookrightarrow\zele{(^-\m)^\sharp}\times\rho.
\]
As explained in the proof of \cite{MR863522}*{Th\'eor\`eme 13} it follows from this relation, together with \cite{MR863522}*{Lemme II.3} (the combinatorial
backbone of [ibid.]) and \cite{MR863522}*{Lemme II.10.2} that
\begin{equation} \label{eq:tgesharp}
 \m^t\ge\m^\sharp
\end{equation}
where $\ge$ is the total order on multisegments supported in $\Z_\rho$ introduced in \cite{MR863522}*{\S II.1}.
Since \eqref{eq:tgesharp} holds for all $\m$ of length $l$ and
both $\m\mapsto\m^t$ and $\m\mapsto\m^\sharp$ are permutations on the (finite) set of multisegments of length $l$ with a given support, we infer that $\m^t=\m^\sharp$.

%\input{Appendix231014.tex}

%\bibliographystyle{amsalpha}
%\bibliography{../Bibfiles/all}

\def\cprime{$'$}

% \bib, bibdiv, biblist are defined by the amsrefs package.
\begin{bibdiv}
\begin{biblist}

\bib{MR1285969}{article}{
      author={Aubert, Anne-Marie},
       title={Dualit\'e dans le groupe de {G}rothendieck de la cat\'egorie des
  repr\'esentations lisses de longueur finie d'un groupe r\'eductif
  {$p$}-adique},
        date={1995},
        ISSN={0002-9947},
     journal={Trans. Amer. Math. Soc.},
      volume={347},
      number={6},
       pages={2179\ndash 2189},
         url={http://dx.doi.org/10.2307/2154931},
      review={\MR{1285969 (95i:22025)}},
}

\bib{MR1390967}{article}{
      author={Aubert, Anne-Marie},
       title={Erratum: ``{D}uality in the {G}rothendieck group of the category
  of finite-length smooth representations of a {$p$}-adic reductive group''
  [{T}rans.\ {A}mer.\ {M}ath.\ {S}oc.\ {\bf 347} (1995), no.\ 6, 2179--2189;
  {MR}1285969 (95i:22025)]},
        date={1996},
        ISSN={0002-9947},
     journal={Trans. Amer. Math. Soc.},
      volume={348},
      number={11},
       pages={4687\ndash 4690},
         url={http://dx.doi.org/10.1090/S0002-9947-96-01776-X},
      review={\MR{1390967 (97c:22019)}},
}

\bib{AuelThesis}{article}{
      author={Auel, Asher},
       title={Une d\'emonstration d'un th\'eor\`eme de {B}ernstein sur les
  repr\'esentations de quasi-carr\'e-int\'egrable de ${G}{L}_n({F})$ o\`u ${F}$
  est un corps local non archim\'edien},
        date={2004},
         url={http://users.math.yale.edu/~auel/papers/docs/AUEL_memoire.pdf},
        note={DEA m\'emoire, Universit\'e Paris-Sud Orsay XI},
}

\bib{MR2732351}{article}{
      author={Badulescu, A.~I.},
      author={Henniart, G.},
      author={Lemaire, B.},
      author={S{\'e}cherre, V.},
       title={Sur le dual unitaire de {${\rm GL}_r(D)$}},
        date={2010},
        ISSN={0002-9327},
     journal={Amer. J. Math.},
      volume={132},
      number={5},
       pages={1365\ndash 1396},
         url={http://dx.doi.org/10.1353/ajm.2010.0009},
      review={\MR{2732351 (2012c:22018)}},
}

\bib{MR2349436}{incollection}{
      author={Badulescu, A.~I.},
      author={Renard, D.~A.},
       title={Zelevinsky involution and {M}oeglin-{W}aldspurger algorithm for
  {${\rm GL}_n(D)$}},
        date={2007},
   booktitle={Functional analysis {IX}},
      series={Various Publ. Ser. (Aarhus)},
      volume={48},
   publisher={Univ. Aarhus, Aarhus},
       pages={9\ndash 15},
      review={\MR{2349436 (2008j:22023)}},
}

\bib{MR2055385}{article}{
      author={Badulescu, A.~I.},
      author={Renard, D.~A.},
       title={Sur une conjecture de {T}adi\'c},
        date={2004},
        ISSN={0017-095X},
     journal={Glas. Mat. Ser. III},
      volume={39(59)},
      number={1},
       pages={49\ndash 54},
         url={http://dx.doi.org/10.3336/gm.39.1.05},
      review={\MR{2055385 (2005d:22016)}},
}

\bib{MR2097163}{article}{
      author={Badulescu, Alexandru~Ioan},
       title={Un r\'esultat d'irr\'eductibilit\'e en caract\'eristique non
  nulle},
        date={2004},
        ISSN={0040-8735},
     journal={Tohoku Math. J. (2)},
      volume={56},
      number={4},
       pages={583\ndash 592},
         url={http://projecteuclid.org/euclid.tmj/1113246752},
      review={\MR{2097163 (2005g:22020)}},
}

\bib{MR1951441}{article}{
      author={Badulescu, Alexandru~Ioan},
       title={Correspondance de {J}acquet-{L}anglands pour les corps locaux de
  caract\'eristique non nulle},
        date={2002},
        ISSN={0012-9593},
     journal={Ann. Sci. \'Ecole Norm. Sup. (4)},
      volume={35},
      number={5},
       pages={695\ndash 747},
         url={http://dx.doi.org/10.1016/S0012-9593(02)01106-0},
      review={\MR{1951441 (2004i:11048)}},
}

\bib{MR2329758}{article}{
      author={Badulescu, Alexandru~Ioan},
       title={Jacquet-{L}anglands et unitarisabilit\'e},
        date={2007},
        ISSN={1474-7480},
     journal={J. Inst. Math. Jussieu},
      volume={6},
      number={3},
       pages={349\ndash 379},
         url={http://dx.doi.org/10.1017/S1474748007000035},
      review={\MR{2329758 (2008h:20065)}},
}

\bib{MR3269346}{article}{
      author={Badulescu, Alexandru~Ioan},
       title={On {$p$}-adic {S}peh representations},
        date={2014},
        ISSN={0037-9484},
     journal={Bull. Soc. Math. France},
      volume={142},
      number={2},
       pages={255\ndash 267},
      review={\MR{3269346}},
}

\bib{MR3237446}{article}{
      author={Badulescu, Ioan},
      author={Lapid, Erez},
      author={M{\'{\i}}nguez, Alberto},
       title={Une condition suffisante pour l'irr\'eductibilit\'e d'une induite
  parabolique de {${\rm GL}(m,{\rm D})$}},
        date={2013},
        ISSN={0373-0956},
     journal={Ann. Inst. Fourier (Grenoble)},
      volume={63},
      number={6},
       pages={2239\ndash 2266},
         url={http://aif.cedram.org/item?id=AIF_2013__63_6_2239_0},
      review={\MR{3237446}},
}

\bib{1409.1367}{incollection}{
      author={Barbasch, Dan},
      author={Ciubotaru, Dan},
       title={Ladder representations of ${G}{L}(n,\mathbb{Q}_p)$},
        date={2015},
   booktitle={Representations of reductive groups. in honor of the 60th
  birthday of {D}avid {A}. {V}ogan, {J}r.},
      series={Progr. Math.},
      volume={312},
   publisher={Birkh\"auser Boston},
     address={Boston, MA},
       pages={107\ndash 128},
}

\bib{MR1010153}{article}{
      author={Barbasch, Dan},
      author={Moy, Allen},
       title={A unitarity criterion for {$p$}-adic groups},
        date={1989},
        ISSN={0020-9910},
     journal={Invent. Math.},
      volume={98},
      number={1},
       pages={19\ndash 37},
         url={http://dx.doi.org/10.1007/BF01388842},
      review={\MR{1010153 (90m:22038)}},
}

\bib{MR1186959}{article}{
      author={Barbasch, Dan},
      author={Moy, Allen},
       title={Reduction to real infinitesimal character in affine {H}ecke
  algebras},
        date={1993},
        ISSN={0894-0347},
     journal={J. Amer. Math. Soc.},
      volume={6},
      number={3},
       pages={611\ndash 635},
         url={http://dx.doi.org/10.2307/2152779},
      review={\MR{1186959 (93k:22015)}},
}

\bib{MR1237826}{incollection}{
      author={Berenstein, Arkady},
      author={Zelevinsky, Andrei},
       title={String bases for quantum groups of type {$A_r$}},
        date={1993},
   booktitle={I. {M}. {G}el\cprime fand {S}eminar},
      series={Adv. Soviet Math.},
      volume={16},
   publisher={Amer. Math. Soc.},
     address={Providence, RI},
       pages={51\ndash 89},
      review={\MR{1237826 (94g:17019)}},
}

\bib{MR0425031}{article}{
      author={Bernstein, I.~N.},
      author={Zelevinsky, A.~V.},
       title={Induced representations of the group {$GL(n)$} over a {$p$}-adic
  field},
        date={1976},
        ISSN={0374-1990},
     journal={Funkcional. Anal. i Prilo\v zen.},
      volume={10},
      number={3},
       pages={74\ndash 75},
      review={\MR{0425031 (54 \#12989)}},
}

\bib{MR0425030}{article}{
      author={Bernstein, I.~N.},
      author={Zelevinsky, A.~V.},
       title={Representations of the group {$GL(n,F),$} where {$F$} is a local
  non-{A}rchimedean field},
        date={1976},
        ISSN={0042-1316},
     journal={Uspehi Mat. Nauk},
      volume={31},
      number={3(189)},
       pages={5\ndash 70},
      review={\MR{0425030 (54 \#12988)}},
}

\bib{MR0579172}{article}{
      author={Bernstein, I.~N.},
      author={Zelevinsky, A.~V.},
       title={Induced representations of reductive {${\germ p}$}-adic groups.
  {I}},
        date={1977},
        ISSN={0012-9593},
     journal={Ann. Sci. \'Ecole Norm. Sup. (4)},
      volume={10},
      number={4},
       pages={441\ndash 472},
      review={\MR{0579172 (58 \#28310)}},
}

\bib{MR874050}{article}{
      author={Bernstein, J.},
      author={Deligne, P.},
      author={Kazhdan, D.},
       title={Trace {P}aley-{W}iener theorem for reductive {$p$}-adic groups},
        date={1986},
        ISSN={0021-7670},
     journal={J. Analyse Math.},
      volume={47},
       pages={180\ndash 192},
         url={http://dx.doi.org/10.1007/BF02792538},
      review={\MR{874050 (88g:22016)}},
}

\bib{MR748505}{incollection}{
      author={Bernstein, Joseph~N.},
       title={{$P$}-invariant distributions on {${\rm GL}(N)$} and the
  classification of unitary representations of {${\rm GL}(N)$}
  (non-{A}rchimedean case)},
        date={1984},
   booktitle={Lie group representations, {II} ({C}ollege {P}ark, {M}d.,
  1982/1983)},
      series={Lecture Notes in Math.},
      volume={1041},
   publisher={Springer},
     address={Berlin},
       pages={50\ndash 102},
         url={http://dx.doi.org/10.1007/BFb0073145},
      review={\MR{748505 (86b:22028)}},
}

\bib{MR2889743}{article}{
      author={Broussous, P.},
      author={S{\'e}cherre, V.},
      author={Stevens, S.},
       title={Smooth representations of {${\rm GL}_m(D)$} {V}: {E}ndo-classes},
        date={2012},
        ISSN={1431-0635},
     journal={Doc. Math.},
      volume={17},
       pages={23\ndash 77},
      review={\MR{2889743}},
}

\bib{MR1204652}{book}{
      author={Bushnell, Colin~J.},
      author={Kutzko, Philip~C.},
       title={The admissible dual of {${\rm GL}(N)$} via compact open
  subgroups},
      series={Annals of Mathematics Studies},
   publisher={Princeton University Press, Princeton, NJ},
        date={1993},
      volume={129},
        ISBN={0-691-03256-4; 0-691-02114-7},
      review={\MR{1204652 (94h:22007)}},
}

\bib{CassNotes}{article}{
      author={Casselman, Bill},
       title={Introduction to admissible representations of $p$-adic groups},
        date={1995},
     journal={Unpublished notes},
        note={\url{http://www.math.ubc.ca/~cass/research/pdf/p-adic-book.pdf}},
}

\bib{MR2838836}{book}{
      author={Chriss, Neil},
      author={Ginzburg, Victor},
       title={Representation theory and complex geometry},
      series={Modern Birkh\"auser Classics},
   publisher={Birkh\"auser Boston, Inc., Boston, MA},
        date={2010},
        ISBN={978-0-8176-4937-1},
         url={http://dx.doi.org/10.1007/978-0-8176-4938-8},
        note={Reprint of the 1997 edition},
      review={\MR{2838836 (2012f:22022)}},
}

\bib{MR771673}{incollection}{
      author={Deligne, P.},
       title={Les corps locaux de caract\'eristique {$p$}, limites de corps
  locaux de caract\'eristique {$0$}},
        date={1984},
   booktitle={Representations of reductive groups over a local field},
      series={Travaux en Cours},
   publisher={Hermann},
     address={Paris},
       pages={119\ndash 157},
      review={\MR{771673 (86g:11068)}},
}

\bib{MR771672}{incollection}{
      author={Deligne, P.},
      author={Kazhdan, D.},
      author={Vign{\'e}ras, M.-F.},
       title={Repr\'esentations des alg\`ebres centrales simples
  {$p$}-adiques},
        date={1984},
   booktitle={Representations of reductive groups over a local field},
      series={Travaux en Cours},
   publisher={Hermann},
     address={Paris},
       pages={33\ndash 117},
      review={\MR{771672 (86h:11044)}},
}

\bib{MR3001800}{article}{
      author={Finis, Tobias},
      author={Lapid, Erez},
      author={M{\"u}ller, Werner},
       title={On the degrees of matrix coefficients of intertwining operators},
        date={2012},
        ISSN={0030-8730},
     journal={Pacific J. Math.},
      volume={260},
      number={2},
       pages={433\ndash 456},
         url={http://dx.doi.org/10.2140/pjm.2012.260.433},
      review={\MR{3001800}},
}

\bib{MR0404534}{incollection}{
      author={Gel{\cprime}fand, I.~M.},
      author={Kajdan, D.~A.},
       title={Representations of the group {${\rm GL}(n,K)$} where {$K$} is a
  local field},
        date={1975},
   booktitle={Lie groups and their representations ({P}roc. {S}ummer {S}chool,
  {B}olyai {J}\'anos {M}ath. {S}oc., {B}udapest, 1971)},
   publisher={Halsted, New York},
       pages={95\ndash 118},
      review={\MR{0404534 (53 \#8334)}},
}

\bib{MR2320806}{article}{
      author={Henderson, Anthony},
       title={Nilpotent orbits of linear and cyclic quivers and
  {K}azhdan-{L}usztig polynomials of type {A}},
        date={2007},
        ISSN={1088-4165},
     journal={Represent. Theory},
      volume={11},
       pages={95\ndash 121 (electronic)},
         url={http://dx.doi.org/10.1090/S1088-4165-07-00317-2},
      review={\MR{2320806 (2008j:17032)}},
}

\bib{MR0499005}{incollection}{
      author={Jacquet, Herv{\'e}},
       title={Generic representations},
        date={1977},
   booktitle={Non-commutative harmonic analysis ({A}ctes {C}olloq.,
  {M}arseille-{L}uminy, 1976)},
   publisher={Springer},
     address={Berlin},
       pages={91\ndash 101. Lecture Notes in Math., Vol. 587},
      review={\MR{0499005 (58 \#16985)}},
}

\bib{MR1758232}{article}{
      author={Jantzen, Chris},
       title={On square-integrable representations of classical {$p$}-adic
  groups},
        date={2000},
        ISSN={0008-414X},
     journal={Canad. J. Math.},
      volume={52},
      number={3},
       pages={539\ndash 581},
         url={http://dx.doi.org/10.4153/CJM-2000-025-7},
      review={\MR{1758232 (2001f:22056)}},
}

\bib{MR2306606}{article}{
      author={Jantzen, Chris},
       title={Jacquet modules of {$p$}-adic general linear groups},
        date={2007},
        ISSN={1088-4165},
     journal={Represent. Theory},
      volume={11},
       pages={45\ndash 83 (electronic)},
         url={http://dx.doi.org/10.1090/S1088-4165-07-00316-0},
      review={\MR{2306606 (2008g:22023)}},
}

\bib{1412.8106}{article}{
      author={Kang, Seok-Jin},
      author={Kashiwara, Masaki},
      author={Kim, Myungho},
      author={Oh, Se-jin},
       title={Monoidal categorification of cluster algebras},
        date={2014},
         url={http://arxiv.org/abs/1412.8106},
        note={arXiv:1412.8106},
}

\bib{MR3314831}{article}{
      author={Kang, Seok-Jin},
      author={Kashiwara, Masaki},
      author={Kim, Myungho},
      author={Oh, Se-jin},
        date={2015},
        ISSN={0010-437X},
     journal={Compositio Math.},
      volume={151},
      number={2},
       pages={377\ndash 396},
         url={http://dx.doi.org/10.1112/S0010437X14007799},
       title={Simplicity of heads and socles of tensor products},
}

\bib{MR874049}{article}{
      author={Kazhdan, D.},
       title={Representations of groups over close local fields},
        date={1986},
        ISSN={0021-7670},
     journal={J. Analyse Math.},
      volume={47},
       pages={175\ndash 179},
      review={\MR{874049 (88g:22018)}},
}

\bib{MR2996769}{article}{
      author={Kret, Arno},
      author={Lapid, Erez},
       title={Jacquet modules of ladder representations},
        date={2012},
        ISSN={1631-073X},
     journal={C. R. Math. Acad. Sci. Paris},
      volume={350},
      number={21-22},
       pages={937\ndash 940},
         url={http://dx.doi.org/10.1016/j.crma.2012.10.014},
      review={\MR{2996769}},
}

\bib{MR3163355}{article}{
      author={Lapid, Erez},
      author={M{\'{\i}}nguez, Alberto},
       title={On a determinantal formula of {T}adi\'c},
        date={2014},
        ISSN={0002-9327},
     journal={Amer. J. Math.},
      volume={136},
      number={1},
       pages={111\ndash 142},
         url={http://dx.doi.org/10.1353/ajm.2014.0006},
      review={\MR{3163355}},
}

\bib{MR1959765}{article}{
      author={Leclerc, B.},
       title={Imaginary vectors in the dual canonical basis of {$U_q(\germ
  n)$}},
        date={2003},
        ISSN={1083-4362},
     journal={Transform. Groups},
      volume={8},
      number={1},
       pages={95\ndash 104},
      review={\MR{1959765 (2004d:17020)}},
}

\bib{MR1985725}{incollection}{
      author={Leclerc, Bernard},
      author={Nazarov, Maxim},
      author={Thibon, Jean-Yves},
       title={Induced representations of affine {H}ecke algebras and canonical
  bases of quantum groups},
        date={2003},
   booktitle={Studies in memory of {I}ssai {S}chur ({C}hevaleret/{R}ehovot,
  2000)},
      series={Progr. Math.},
      volume={210},
   publisher={Birkh\"auser Boston},
     address={Boston, MA},
       pages={115\ndash 153},
      review={\MR{1985725 (2004d:17007)}},
}

\bib{MR3020114}{article}{
      author={Mati{\'c}, Ivan},
       title={Jacquet modules of strongly positive representations of the
  metaplectic group {$\widetilde{Sp(n)}$}},
        date={2013},
        ISSN={0002-9947},
     journal={Trans. Amer. Math. Soc.},
      volume={365},
      number={5},
       pages={2755\ndash 2778},
         url={http://dx.doi.org/10.1090/S0002-9947-2012-05725-4},
      review={\MR{3020114}},
}

\bib{MR2527415}{article}{
      author={M{\'{\i}}nguez, Alberto},
       title={Sur l'irr\'eductibilit\'e d'une induite parabolique},
        date={2009},
        ISSN={0075-4102},
     journal={J. Reine Angew. Math.},
      volume={629},
       pages={107\ndash 131},
         url={http://dx.doi.org/10.1515/CRELLE.2009.028},
      review={\MR{2527415 (2010h:22025)}},
}

\bib{MR3049700}{article}{
      author={M{\'{\i}}nguez, Alberto},
      author={S{\'e}cherre, Vincent},
       title={Repr\'esentations banales de {${\rm GL}_m(D)$}},
        date={2013},
        ISSN={0010-437X},
     journal={Compos. Math.},
      volume={149},
      number={4},
       pages={679\ndash 704},
         url={http://dx.doi.org/10.1112/S0010437X12000590},
      review={\MR{3049700}},
}

\bib{MR3178433}{article}{
      author={M{\'{\i}}nguez, Alberto},
      author={S{\'e}cherre, Vincent},
       title={Repr\'esentations lisses modulo {$\ell$} de {${G}{L}_m({D})$}},
        date={2014},
        ISSN={0012-7094},
     journal={Duke Math. J.},
      volume={163},
      number={4},
       pages={795\ndash 887},
         url={http://dx.doi.org/10.1215/00127094-2430025},
      review={\MR{3178433}},
}

\bib{MR3194013}{article}{
      author={M{\'{\i}}nguez, Alberto},
      author={S{\'e}cherre, Vincent},
       title={Unramified {$\ell$}-modular representations of {$\rm GL_n(F)$}
  and its inner forms},
        date={2014},
        ISSN={1073-7928},
     journal={Int. Math. Res. Not. IMRN},
      number={8},
       pages={2090\ndash 2118},
      review={\MR{3194013}},
}

\bib{MR863522}{article}{
      author={M{\oe}glin, C.},
      author={Waldspurger, J.-L.},
       title={Sur l'involution de {Z}elevinski},
        date={1986},
        ISSN={0075-4102},
     journal={J. Reine Angew. Math.},
      volume={372},
       pages={136\ndash 177},
         url={http://dx.doi.org/10.1515/crll.1986.372.136},
      review={\MR{863522 (88c:22019)}},
}

\bib{MR1026752}{article}{
      author={M{\oe}glin, C.},
      author={Waldspurger, J.-L.},
       title={Le spectre r\'esiduel de {${\rm GL}(n)$}},
        date={1989},
        ISSN={0012-9593},
     journal={Ann. Sci. \'Ecole Norm. Sup. (4)},
      volume={22},
      number={4},
       pages={605\ndash 674},
         url={http://www.numdam.org/item?id=ASENS_1989_4_22_4_605_0},
      review={\MR{1026752 (91b:22028)}},
}

\bib{MR1896238}{article}{
      author={M{\oe}glin, Colette},
      author={Tadi{\'c}, Marko},
       title={Construction of discrete series for classical {$p$}-adic groups},
        date={2002},
        ISSN={0894-0347},
     journal={J. Amer. Math. Soc.},
      volume={15},
      number={3},
       pages={715\ndash 786 (electronic)},
         url={http://dx.doi.org/10.1090/S0894-0347-02-00389-2},
      review={\MR{1896238 (2003g:22020)}},
}

\bib{MR0499010}{article}{
      author={Ol{\cprime}{\v{s}}anski{\u\i}, G.~I.},
       title={Intertwining operators and complementary series in the class of
  representations of the full matrix group over a locally compact division
  algebra that are induced by parabolic subgroups},
        date={1974},
     journal={Mat. Sb. (N.S.)},
      volume={93(135)},
       pages={218\ndash 253, 326},
      review={\MR{0499010 (58 \#16988)}},
}

\bib{MR2567785}{book}{
      author={Renard, David},
       title={Repr\'esentations des groupes r\'eductifs {$p$}-adiques},
      series={Cours Sp\'ecialis\'es [Specialized Courses]},
   publisher={Soci\'et\'e Math\'ematique de France, Paris},
        date={2010},
      volume={17},
        ISBN={978-2-85629-278-5},
      review={\MR{2567785 (2011d:22019)}},
}

\bib{MR689531}{incollection}{
      author={Rodier, Fran{\c{c}}ois},
       title={Repr\'esentations de {${\rm GL}(n,\,k)$} o\`u {$k$} est un corps
  {$p$}-adique},
        date={1982},
   booktitle={Bourbaki {S}eminar, {V}ol. 1981/1982},
      series={Ast\'erisque},
      volume={92},
   publisher={Soc. Math. France},
     address={Paris},
       pages={201\ndash 218},
      review={\MR{689531 (84h:22040)}},
}

\bib{MR1471867}{article}{
      author={Schneider, Peter},
      author={Stuhler, Ulrich},
       title={Representation theory and sheaves on the {B}ruhat-{T}its
  building},
        date={1997},
        ISSN={0073-8301},
     journal={Inst. Hautes \'Etudes Sci. Publ. Math.},
      number={85},
       pages={97\ndash 191},
         url={http://www.numdam.org/item?id=PMIHES_1997__85__97_0},
      review={\MR{1471867 (98m:22023)}},
}

\bib{MR2427423}{article}{
      author={S{\'e}cherre, Vincent},
      author={Stevens, Shaun},
       title={Repr\'esentations lisses de {${\rm GL}_m(D)$}. {IV}.
  {R}epr\'esentations supercuspidales},
        date={2008},
        ISSN={1474-7480},
     journal={J. Inst. Math. Jussieu},
      volume={7},
      number={3},
       pages={527\ndash 574},
         url={http://dx.doi.org/10.1017/S1474748008000078},
      review={\MR{2427423 (2009d:22023)}},
}

\bib{MR2946230}{article}{
      author={S{\'e}cherre, Vincent},
      author={Stevens, Shaun},
       title={Smooth representations of {$GL_m(D)$} {VI}: semisimple types},
        date={2012},
        ISSN={1073-7928},
     journal={Int. Math. Res. Not. IMRN},
      number={13},
       pages={2994\ndash 3039},
      review={\MR{2946230}},
}

\bib{MR2081220}{article}{
      author={S{\'e}cherre, Vincent},
       title={Repr\'esentations lisses de {${\rm GL}(m,D)$}. {I}.
  {C}aract\`eres simples},
        date={2004},
        ISSN={0037-9484},
     journal={Bull. Soc. Math. France},
      volume={132},
      number={3},
       pages={327\ndash 396},
      review={\MR{2081220 (2005f:22027)}},
}

\bib{MR2188448}{article}{
      author={S{\'e}cherre, Vincent},
       title={Repr\'esentations lisses de {${\rm GL}(m,D)$}. {II}.
  {$\beta$}-extensions},
        date={2005},
        ISSN={0010-437X},
     journal={Compos. Math.},
      volume={141},
      number={6},
       pages={1531\ndash 1550},
         url={http://dx.doi.org/10.1112/S0010437X05001429},
      review={\MR{2188448 (2006j:22018)}},
}

\bib{MR2216835}{article}{
      author={S{\'e}cherre, Vincent},
       title={Repr\'esentations lisses de {${\rm GL}_m(D)$}. {III}. {T}ypes
  simples},
        date={2005},
        ISSN={0012-9593},
     journal={Ann. Sci. \'Ecole Norm. Sup. (4)},
      volume={38},
      number={6},
       pages={951\ndash 977},
         url={http://dx.doi.org/10.1016/j.ansens.2005.10.003},
      review={\MR{2216835 (2007a:22010)}},
}

\bib{MR2492994}{article}{
      author={S{\'e}cherre, Vincent},
       title={Proof of the {T}adi\'c conjecture ({U}0) on the unitary dual of
  {${\rm GL}_m(D)$}},
        date={2009},
        ISSN={0075-4102},
     journal={J. Reine Angew. Math.},
      volume={626},
       pages={187\ndash 203},
         url={http://dx.doi.org/10.1515/CRELLE.2009.007},
      review={\MR{2492994 (2009k:22027)}},
}

\bib{MR610479}{article}{
      author={Shahidi, Freydoon},
       title={On certain {$L$}-functions},
        date={1981},
        ISSN={0002-9327},
     journal={Amer. J. Math.},
      volume={103},
      number={2},
       pages={297\ndash 355},
         url={http://dx.doi.org/10.2307/2374219},
      review={\MR{610479 (82i:10030)}},
}

\bib{MR1748913}{article}{
      author={Shahidi, Freydoon},
       title={Poles of intertwining operators via endoscopy: the connection
  with prehomogeneous vector spaces},
        date={2000},
        ISSN={0010-437X},
     journal={Compositio Math.},
      volume={120},
      number={3},
       pages={291\ndash 325},
         url={http://dx.doi.org/10.1023/A:1002038928169},
        note={With an appendix by Diana Shelstad},
      review={\MR{1748913 (2001i:11060)}},
}

\bib{MR544991}{book}{
      author={Silberger, Allan~J.},
       title={Introduction to harmonic analysis on reductive {$p$}-adic
  groups},
      series={Mathematical Notes},
   publisher={Princeton University Press, Princeton, N.J.; University of Tokyo
  Press, Tokyo},
        date={1979},
      volume={23},
        ISBN={0-691-08246-4},
        note={Based on lectures by Harish-Chandra at the Institute for Advanced
  Study, 1971--1973},
      review={\MR{544991 (81m:22025)}},
}

\bib{MR577138}{article}{
      author={Silberger, Allan~J.},
       title={Special representations of reductive {$p$}-adic groups are not
  integrable},
        date={1980},
        ISSN={0003-486X},
     journal={Ann. of Math. (2)},
      volume={111},
      number={3},
       pages={571\ndash 587},
         url={http://dx.doi.org/10.2307/1971110},
      review={\MR{577138 (82k:22015)}},
}

\bib{MR1359141}{article}{
      author={Tadi{\'c}, Marko},
       title={On characters of irreducible unitary representations of general
  linear groups},
        date={1995},
        ISSN={0025-5858},
     journal={Abh. Math. Sem. Univ. Hamburg},
      volume={65},
       pages={341\ndash 363},
         url={http://dx.doi.org/10.1007/BF02953339},
      review={\MR{1359141 (96m:22039)}},
}

\bib{MR870688}{article}{
      author={Tadi{\'c}, Marko},
       title={Classification of unitary representations in irreducible
  representations of general linear group (non-{A}rchimedean case)},
        date={1986},
        ISSN={0012-9593},
     journal={Ann. Sci. \'Ecole Norm. Sup. (4)},
      volume={19},
      number={3},
       pages={335\ndash 382},
         url={http://www.numdam.org/item?id=ASENS_1986_4_19_3_335_0},
      review={\MR{870688 (88b:22021)}},
}

\bib{MR850742}{article}{
      author={Tadi{\'c}, Marko},
       title={Spherical unitary dual of general linear group over
  non-{A}rchimedean local field},
        date={1986},
        ISSN={0373-0956},
     journal={Ann. Inst. Fourier (Grenoble)},
      volume={36},
      number={2},
       pages={47\ndash 55},
         url={http://www.numdam.org/item?id=AIF_1986__36_2_47_0},
      review={\MR{850742 (87m:22047)}},
}

\bib{MR1040995}{article}{
      author={Tadi{\'c}, Marko},
       title={Induced representations of {${\rm GL}(n,A)$} for {$p$}-adic
  division algebras {$A$}},
        date={1990},
        ISSN={0075-4102},
     journal={J. Reine Angew. Math.},
      volume={405},
       pages={48\ndash 77},
         url={http://dx.doi.org/10.1515/crll.1990.405.48},
      review={\MR{1040995 (91i:22025)}},
}

\bib{MR1181278}{article}{
      author={Tadi{\'c}, Marko},
       title={An external approach to unitary representations},
        date={1993},
        ISSN={0273-0979},
     journal={Bull. Amer. Math. Soc. (N.S.)},
      volume={28},
      number={2},
       pages={215\ndash 252},
         url={http://dx.doi.org/10.1090/S0273-0979-1993-00372-0},
      review={\MR{1181278 (93g:22020)}},
}

\bib{MR3224483}{article}{
      author={Tadi{\'c}, Marko},
       title={Irreducibility criterion for representations induced by
  essentially unitary ones (case of non-{A}rchimedean {$GL(n,\scr A)$})},
        date={2014},
        ISSN={0017-095X},
     journal={Glas. Mat. Ser. III},
      volume={49(69)},
      number={1},
       pages={123\ndash 161},
         url={http://dx.doi.org/10.3336/gm.49.1.11},
      review={\MR{3224483}},
}

\bib{TadRemark}{article}{
      author={Tadi{\'c}, Marko},
       title={Remark on representation theory of general linear groups over a
  non-archimedean local division algebra},
        date={2014},
        note={Available at \url{http://www.hazu.hr/~tadic/}},
}

\bib{MR1989693}{article}{
      author={Waldspurger, J.-L.},
       title={La formule de {P}lancherel pour les groupes {$p$}-adiques
  (d'apr\`es {H}arish-{C}handra)},
        date={2003},
        ISSN={1474-7480},
     journal={J. Inst. Math. Jussieu},
      volume={2},
      number={2},
       pages={235\ndash 333},
         url={http://dx.doi.org/10.1017/S1474748003000082},
      review={\MR{1989693 (2004d:22009)}},
}

\bib{MR617466}{article}{
      author={Zelevinsky, A.~V.},
       title={The {$p$}-adic analogue of the {K}azhdan-{L}usztig conjecture},
        date={1981},
        ISSN={0374-1990},
     journal={Funktsional. Anal. i Prilozhen.},
      volume={15},
      number={2},
       pages={9\ndash 21, 96},
      review={\MR{617466 (84g:22039)}},
}

\bib{MR783619}{article}{
      author={Zelevinsky, A.~V.},
       title={Two remarks on graded nilpotent classes},
        date={1985},
        ISSN={0042-1316},
     journal={Uspekhi Mat. Nauk},
      volume={40},
      number={1(241)},
       pages={199\ndash 200},
      review={\MR{783619 (86e:14027)}},
}

\bib{MR584084}{article}{
      author={Zelevinsky, A.~V.},
       title={Induced representations of reductive {${\germ p}$}-adic groups.
  {II}. {O}n irreducible representations of {${\rm GL}(n)$}},
        date={1980},
        ISSN={0012-9593},
     journal={Ann. Sci. \'Ecole Norm. Sup. (4)},
      volume={13},
      number={2},
       pages={165\ndash 210},
         url={http://www.numdam.org/item?id=ASENS_1980_4_13_2_165_0},
      review={\MR{584084 (83g:22012)}},
}

\end{biblist}
\end{bibdiv}

\bigskip

  \footnotesize{

Erez Lapid, \textsc{Department of Mathematics, Weizmann Institute of Science,
Rehovot 7610001, Israel}\par\nopagebreak
  \textit{E-mail address}: \texttt{erez.m.lapid@gmail.com}

  \medskip

Alberto M\'inguez, \textsc{Institut de Math\'ematiques de Jussieu - Paris Rive Gauche, Universit\'e Paris 6,
        4 place Jussieu, 75005, Paris, France}\par\nopagebreak
  \textit{E-mail address}: \texttt{alberto.minguez@imj-prg.fr}

}

\end{document}